\newtheorem{dref}{Definition}[section] \newtheorem{lemma}[dref]{Lemma}
\newtheorem{theo}[dref]{Theorem} \newtheorem{prop}[dref]{Proposition}
\newtheorem{remark}[dref]{Remark} 
\newtheorem{cor}[dref]{Corollary}
\newenvironment{proof}{\par\noindent{{\bf Proof.}}}{\hfill$\Box$
\medskip} 
\newenvironment{proofof}{\par\noindent{{\bf Proof} of }}{\hfill$\Box$
\medskip} 
\newcommand{\ekv}[2]{\begin{equation}\label{#1}#2\end{equation}}
\newcommand{\no}[1]{(\ref{#1})} 
\title{Weyl law for semi-classical resonances with randomly perturbed
  potentials} \author{Johannes Sj{\"o}strand \footnote{Ce travail a
    b\'en\'efici\'e d'une aide de l'Agence Nationale de la Recherche
    portant les r\'ef\'erences JC05-52556 et ANR-08-BLAN-0228-01 ainsi que d'une
    bourse FABER du conseil r\'egional de Bourgogne}\\\small Institut de
  Math\'ematiques de Bourgogne,
  Universit\'e de Bourgogne\\
  \small 9 avenue Alain Savary - BP 47870\\
  \small 21078 Dijon cedex\\ \footnotesize
  johannes.sjostrand@u-bourgogne.fr\\
  \footnotesize and UMR 5584 du CNRS} \date{}
\begin{document}
\maketitle
\abstract{In this work we consider semi-classical Schr\"odinger
  operators with potentials supported in a bounded strictly convex
  subset ${\cal O}$ of ${\bf R}^n$ with smooth boundary. Letting $h$
  denote the semi-classical parameter, we consider classes of
  small random perturbations and show that with probability very close
  to 1, the number of resonances in rectangles $[a,b]-i[0,ch^{2/3}[$,
  is equal to the number of eigenvalues in $[a,b]$ of the Dirichlet
  realization of the unperturbed operator in ${\cal O}$ up to a small
  remainder.

\medskip\centerline{\bf R\'esum\'e}

\medskip Dans ce travail on consid\`ere des op\'erateurs de
Schr\"odinger dont les potentiels ont leur supports dans un
ensemble strictement convexe ${\cal O}\Subset {\bf R}^n$ \`a bord
lisse. Avec $h$ d\'esignant le param\`etre semi-classique nous
consid\'erons des classes de petites perturbations al\'eatoires et
montrons qu'avec probabilit\'e tr\`es proche de 1, le nombre de
r\'esonances dans des rectangles $[a,b]-i[0,ch^{2/3}[$, est \'egal au
nombre de valeurs propres dans $[a,b]$ de la r\'ealisation de
Dirichlet de l'op\'erateur dans ${\cal O}$, \`a un petit reste pr\`es.}

\tableofcontents

\section{Introduction}\label{intro}
\setcounter{equation}{0}

There is now a very large literature about the distribution of
scattering poles (resonances) often using methods from non-self-adjoint spectral
theory and microlocal analysis, including many results about upper and
lower bounds
on the density of resonances. See for instance \cite{St06},
\cite{Chr10} and the references given there. Less is known about
actual asymptotics for the number of resonances in various domains. In
this paper we shall give such a result for the
semi-classical Schr\"odinger operator
\ekv{intro.1}{P=-h^2\Delta +V(x),}
on ${\bf R}^n$ where $V\in L^\infty ({\bf R}^n;{\bf R})$ has compact
support. 

Recall that the resonances or scattering poles of the operator
(\ref{intro.1}) can be defined as the poles of the meromorphic
extension of the resolvent $(P-z)^{-1}:C_0^\infty ({\bf R}^n)\to
H^2_\mathrm{loc} ({\bf R}^n)$ across the positive
real axis, to the logarithmic covering space of ${\bf
  C}\setminus \{ 0\}$ when $n$ is even and to the double covering when
$n$ is odd. Alternatively we can continue $(P-k^2)^{-1}$ from the
upper half-plane across ${\bf R}\setminus \{ 0\}$ which gives a
meromorphic function on {\bf C} when $n$ is odd. Using the second
definition, we can introduce the number $N(r)$ of resonances in the disc
$D(0,r)$ when $n$ is odd. 

In one dimension and for $h=1$, M.~Zworski \cite{Zw87} showed that if
$[a,b]$ is the convex hull of the support of $V$, then
\ekv{intro.2}{N(r)=\frac{2(b-a)}{\pi }r+o(r),\ r\to \infty ,} which is
2 times the asymptotic number of eigenvalues $\le r^2$ of the
Dirichlet realization of $-\Delta +V$ on $[a,b]$, the factor 2 being
explained by the fact that the resonances are symmetric around the
imaginary axis. He also showed that most of these concentrate to
narrow sectors around the real axis. This extended an earlier result
of T.~Regge \cite{Re58}. Subsequently, B.~Simon \cite{Si00} gave a
different proof, inspired by the work of R.~Froese \cite{Fr97}, who got
similar results for potentials that do not necessarily have compact
support but are very small near infinity. See also the recent works
\cite{DaPu10, DaExLi10, ExLi11} about Weyl and non-Weyl asymptotics
for graphs.

In higher odd dimensions, M.~Zworski \cite{Zw89} considered the case
of radial potentials of the form $V(x)=f(|x|)$ with support in
$\overline{B(0,a)}$ where $f\in C^2([0,a])$, $a>0$, $f(a)\ne 0$ and
obtained a Weyl type asympotics (still with $h=1$), \ekv{intro.3} {
  N(r)=K_na^nr^n+o(r^n),\ r\to +\infty , } where $ K_n>0$.  Recall
also that Zworski \cite{Zw89b} gave an upper bound in the non-radial
case with the correct power of $r$ and using his analysis, P.~Stefanov
\cite{St06}, gave an explicit formula for the constant $K_na^n$ in the
radial case and showed that the right hand side of (\ref{intro.3}) is
up to $o(r^n)$ the sum of 2 times the number of eigenvalues $\le r^2$
for the interior Dirichlet problem in the ball $B(0,a)$ and the number
of scattering poles for the exterior Dirichlet Laplacian in ${\bf
  R}^n\setminus B(0,a)$. (See also G.~Vodev \cite{Vo94}.) He also
showed (as a corollary of a more general result for operators with
black box) that if we drop the radiality assumption and only assume
that $V\in L^\infty ({\bf R}^n;{\bf R})$ has its support in
$\overline{B(0,a)}$, then we have the upper bound
\ekv{intro.4}{N(r)\le K_na^nr^n+o(r^n),\ r\to +\infty .}

\par T.~Christiansen \cite{Chr10} introduced the set $\mathfrak{M}_a$
of $L^\infty $ potentials $V$ with support in $\overline{B(0,a)}$ for
which we have (\ref{intro.3}) and gave the leading asymptotics, of the
form $Cr^n$, for the number of resonances in sectors in the lower
half-plane intersected with the disc $D(0,r)$. These formulas were
implicit in \cite{Zw89, St06} in the case of the radial potentials
considered there. In particular, when considering smaller and smaller
sectors adjacent to ${\bf R}_+$ or ${\bf R}_-$ we can see, using Lemma
3.3 of \cite{Chr10} and some wellknown formulas for the $\Gamma $
function and the volume of the unit ball, that the constant $C$
converges to the one we get in the leading Weyl asymptotics for the
number of Dirichlet eigenvalues for the Laplacian in $B(0,a)$. In the
theorems 1.2, 1.3 of the same paper the author gives interesting
extensions ``for most values of $z$'' to the case of potentials
$V(x,z)$ depending holomorphically on a parameter $z$ with
$\mathrm{supp\,}V(\cdot ,z)\subset \overline{B(0,a)}$ such that
$V(\cdot ,z_0)$ belongs to $\mathfrak{M}_a$ for at least one value of
$z_0$. Such results remain significant also after restriction to
real-valued potentials. (See also earlier results of the same author,
cited in \cite{Chr10}.) In the recent work \cite{DiVu12} (which
appeared after the submission of the present work), T.-C.~Dinh and
D.-V.~Vu obtain sharper results, namely that for holomorphic families
of potentials, if one element is in a sharpened version of the class
$\mathfrak{M}_a$, then so do all elements away from a pluri-polar set.

The main result of this paper has some relations to the above
mentioned ones. We work in the semi-classical limit ($h\to 0$) and the
ball $B(0,a)$ is replaced by a more general strictly convex set. Our
is result does not make use of any class of the type $\mathfrak{M}_a$
and the conclusion concerns the number of resonances in a thin
rectangle. Nevertheless it is very interesting to note the
similarities of the results, and there are also similarities in the
proofs at least on some ideological level.

We next proceed with a rough description of our result and leave the
precise statements to the next section. Let ${\cal O}\Subset {\bf R}^n$
be open strictly convex with smooth boundary and let $V_0\in C^\infty
(\overline{{\cal O}};{\bf R})$ vanish to the order $v_0>0$ on the
boundary. By $V_0$ we also denote the extension to all of ${\bf R}^n$
which vanishes outside ${\cal O}$ and we consider the potential
$$V(x)=V_0(x)+\delta \widetilde{q}_\omega (x)$$ where $\delta >0$ is a small
parameter $>0$ and $\widetilde{q}_\omega $ a random perturbation whose
properties will be specified in the next section. A possible choice of
$\delta $ is a high power of $h$. Our main result, Theorem \ref{re1}
then states that if $0<a<b<\infty $ and if $C>0$ is large enough so
that the exterior Dirichlet problem for $-h^2\Delta $ has no
resonances in the rectangle $[a,b]+ih^{2/3}[-C^{-1},0]$, then with
probability very close to 1, the number of resonances of $P=-h^2\Delta
+V$ in the rectangle $[a,b]+ih^{2/3}[-C^{-1},0]$ is equal to the
number $N_0([a,b])$ of eigenvalues in $[a,b]$ of the Dirichlet
realization of $h^2\Delta +V_0$ in ${\cal O}$ plus two ``errors''. The
first error is a term that can be bounded by a positive power of $h$
times $h^{-n}$. The second error is bounded by a constant times
$N_0([a-\rho ,a+\rho ])+N_0([b-\rho ,b+\rho ])$ where $\rho
=h^{\frac{2}{3}-\delta }$ for any fixed $\delta >0$. As will be stated
more explicitly in the theorems \ref{re0} and \ref{re4}, we can choose
our random perturbations to be concentrated to a ball of radius $h^N$
in the Sobolev space $H^s$ for arbitrarily large $N$ and $s$.

In the case of a deterministic potential with a potential well in an
island, one can count resonances in rectangles closer to the real
axis. Such results can be found in the appendix of \cite{NaStZw03} and
in Section 9 of \cite{HeSj86}. The phenomen is now a little different
however, due to the potential barrier, and the reference asymptotics
of eigenvalues now depends on the behaviour of the operator
near the potential well.

The motivation for this work was to apply recent results and
techniques for proving Weyl asymptotics for non-self-adjoint
differential operators with small random perturbations either in the
semi-classical limit or in the limit of large eigenvalues \cite{Sj08a,
  Sj08b, BoSj10}, to the problem of resonances. Indeed, using some
version of complex scaling or its microlocal versions, this can be
viewed as an eigenvalue problem for a non-self-adjoint operator. The
new difficulty here is however that if we want to keep a realistic
problem we should apply the random perturbation first and use complex
scaling only outside the support of the perturbation. If we let
$p(x,\xi )$ denote the leading semi-classical symbol of the scaled
operator, and we let $z$ vary in a complex domain like a thin
recatngle along the real axis, then as soon as $z$ is not real, the
set $p^{-1}(z)$ must belong to the part of phase space which
corresponds to the scaled region (since the original unscaled symbol
is real valued) and hence the support of the random perturbation is
away from the $x$-space projection of this set. This leads to a
difficulty since the method in \cite{Sj08a, Sj08b} is based on the
study of the random matrix $(\widetilde{q}_\omega e_j|\overline{e}_k)
$, where $e_1,...,e_N$ is an orthonormal family of eigenfunctions of
$(P-z)^*(P-z)$ corresponding to the small eigenvalues and where we let
$P$ denote the scaled operator. Now, the $e_j$ will be concentrated to
the projection of $p^{-1}(z)$ which sits outside the obstacle, hence
away from the support of the random perturbation. Our
random matrix will therefore tend to be small which is a serious
problem in the approach of \cite{Sj08a, Sj08b}. In order to make the
distance smaller, one could try to make the distorsion very important
already very close to the support of the perturbation, but that leads
to the use of very exotic symbols and after some attempts in that
direction we decided to follow a different less intuitive approach. In the
next section we formulate the result and in Section \ref{outl} we give
an outline of the proof.

It would be interesting to have related statements about almost sure
Weyl asymptotics of large resonances in certain parabolic
neighborhoods of the real axis in the non-semi-classical case
($h=1$). It is quite possible that such a result can be obtained from
the present paper along the same lines as the corresponding result for
large eigenvalues by W.~Bordeaux Montrieux and the author.
\cite{BoSj10}.

\paragraph{Acknowledgements} We thank J.M.~Bouclet for having pointed
out the reference \cite{Ca02} where the idea of differentiating
several times to reach trace class operators is clearly present (cf
Section \ref{gpd}). We also thank T.~Christiansen for helpful comments
about \cite{Chr10}, A.~Voros for indicating references about the
complex WKB-method and V.~Ivrii and L.~Zielinski for references and
information about Weyl laws for the eigenvalues of semi-classical
Schr\"odinger operators with potentials of limited regularity.
Discussions with M.~Zworski and M.~Hitrik around other joint works and
projects have been helpful when preparing the sections \ref{cds},
\ref{aed}. Comments by V.~Petkov and M.~Zworski led to
the correction of some errors.

We also thank the referee for his many pertinent remarks
and even for sending us some numerical calculations in dimension 1
which would deserve to be available to a larger audience.

\section{The result}\label{re}
\setcounter{equation}{0}

We start with a concrete case of our main result (Theorem
\ref{re0}). After that we give the full formulation (Theorem \ref{re1})
which inlcudes a description of the probability measures that are
involved. After that we give a simplified and partially generalized
version of the main result (Theorem \ref{re4}) which combined with a
result of V.~Ivrii \cite{Iv03} gives Theorem \ref{re0}.

Let ${\cal O}\Subset {\bf R}^n$ be open, strictly convex with smooth
boundary. Let $\kappa >0$ be the geometric constant in (\ref{re.10.7})
below and let $\zeta _1>0$ be the smallest zero of the Airy function
$\mathrm{Ai}(-t)$. The concrete version of the main result is then
\begin{theo}\label{re0}
Let $N=\min (]\frac{n-1}{2},+\infty [\cap {\bf Z})$,
$\widetilde{s}>\max(\frac{n}{2}+3,2N+\frac{n}{2})$, $s>\frac{n}{2}$
and let $\beta >0$. 
Then there exists a probability measure $\mu $ on $H^s(\overline{{\cal
  O}})$ with support in the ball $\{W\in
H^s(\overline{{\cal O}});\ \Vert W\Vert_{H^s}\le h^\beta  \}$ such
that the following holds:

Let $0<c_1<c_2<2(1/2)^{2/3}\kappa \zeta _1$. There
exists a constant $C>0$ such that if $\frac{1}{2}\le a<b\le 2$,
$c_1\le c\le c_2$, $\widetilde{\epsilon }\ge Ch(\ln 1/h)^2$ and
$V_0\in H^{\widetilde{s}}(\overline{{\cal O}})$, then for
$P=-h^2\Delta +V_0+W$, $W\in H^s(\overline{{\cal O}})$, we have with
probability (with respect to the random term $W$) \ekv{re.16concrete} { \ge
  1-{\cal O}(1)\frac{h(\ln
    1/h)^2}{h^{N_7}}e^{-\frac{\widetilde{\epsilon }} {Ch(\ln 1/h)^2}},
} that for the set $\sigma (P)$ of resonances of $P$,
counted with their algebraic multiplicity,
 \ekv{re.17concrete}
{\begin{split} |\# (\sigma (P)\cap
    ([a,b]+ih^{\frac{2}{3}}c[-1,0]))-\frac{1}{(2\pi
      h)^n}\iint_{a\le \xi ^2+V_0(x)\le b}dxd\xi |\\
    \le {\cal O}(1) h^{-\frac{2}{3}-n}\widetilde{\epsilon
    }.\end{split} }
Here we also assume that $n\ge 3$ or that neither $a$ nor $b$ is a critical value
of $V_0$.

$N_7$ is independent of the other parameters, while the constants
${\cal O}(1)$ in (\ref{re.16concrete}), (\ref{re.17concrete}) depend
on $c_1,c_2,\beta ,\widetilde{s},s$ and on an upper bound on $\Vert
V_0\Vert_{H^{\widetilde{s}}(\overline{{\cal
      O}})}$. 
\end{theo}

We now start to formulate the more complete result. Our unperturbed
operator will be \ekv{re.1} { P_0=-h^2\Delta +V_0:\ L^2({\bf R}^n)\to
  L^2({\bf R}^n), } where $V_0\in C^\infty (\overline{{\cal O}})$ and
we identify $V_0$ with its zero extension. We also assume: \ekv{re.2}
{\hbox{On }\partial {\cal O}\hbox{ we have } V_0(x)=0 \hbox{ and
  } \partial _{\nu }V_0\le 0, } where $\nu $ denotes the exterior unit
normal.

\par The result concerns the distribution of resonances of \ekv{re.3} {
  P=P_\delta  =P_0+\delta \Theta (x)q_\omega (x), } where $\Theta (x)\in C^\infty (\overline{{\cal O}})$ satisfies \ekv{re.4} { 0<\Theta (x)\asymp
  \mathrm{dist\,}(x,\partial {\cal O})^{v_0},\ x\in {\cal O}\setminus \partial
  {\cal O},\ v_0\in ]\frac{n-1}{2},+\infty [\cap {\bf N}. }
As in (\ref{re.1}) $\Theta $ also denotes the $0$-extension to all of
${\bf R}^n$. It belongs to $C_0^k ({\bf R}^n)$
if $v_0>k$. It would be interesting to be able to work with a profile in
$C_0^\infty $.

\par As in \cite{Sj08a, Sj08b}, we choose the random function
$q_\omega $ of the form 
\ekv{re.5}
{q_\omega (x)=\sum_{0<\mu _k\le L}\alpha _k(\omega )\epsilon
_k(x),\ |\alpha |_{{\bf R}^D}\le R,}
where $\epsilon _k$ is an orthonormal basis of real eigenfunctions of
$h^2\widetilde{R}$, where $\widetilde{R}$ is an $h$-independent real
positive elliptic 2nd order operator on $X$ with smooth 
coefficients. Here $X$ is a smooth compact manifold of dimension $n$
containing $\overline{{\cal O}}$ (in the sense that we have some
diffeomorphism from a neighborhood of $\overline{{\cal O}}$ onto an
open set in $X$ and we identify $\overline{{\cal O}}$ with its image).
For instance, we can let $X$ be an $n$-dimensional torus and choose
$-\widetilde{R}$ to be the Laplacian.  
Moreover, $h^2\widetilde{R}\epsilon _k=\mu
_k^2 \epsilon _k$, $\mu _k>0$. 
We choose $L=L(h)$, $R=R(h)$ in the following intervals where
$s\in ]\frac{n}{2},v_0+\frac{1}{2} [$, $\epsilon \in ]0,s-\frac{n}{2}[$, 
 $\theta
\in ]0,1/2[ $ are fixed:
\ekv{re.6}
{\begin{split}&h^{-M_\mathrm{min}}\ll L\le Ch^{-M},\quad
M\ge M_\mathrm{min}:=\frac{v_0+\frac{\frac{1}{3}+n}{1-2\theta
  }}{s-\frac{n}{2}-\epsilon },\\
&h^{-\widetilde{M}_\mathrm{min}}\le R\le
 h^{-\widetilde{M}},\ \widetilde{M}\ge
 \widetilde{M}_\mathrm{min}:=(\frac{n}{2}+\epsilon
 )M_\mathrm{min}+1+\frac{3n}{2}+v_0 ,\end{split}}
and we shall denote by $L_\mathrm{min}$ and $R_\mathrm{min}$ the lower
bounds for $L$ and $R$ in these estimates.
By Weyl's law for the large eigenvalues of elliptic
self-adjoint operators, the dimension $D$ is of the order of magnitude
$(L/h)^n$. We introduce the  small parameter  
\ekv{re.8}
{\begin{split}
&\delta =\tau _0h^\alpha /C,\quad \tau _0\in ]0,h^{\frac{5}{3}}],\\
&\alpha \ge \alpha (n,v_0,s,\epsilon ,\theta ,M,\widetilde{M} ), 
\end{split}
} 
where an explicit (and not very nice) expression for $\alpha
(n,v_0,s,\epsilon ,\theta ,M,\widetilde{M})$ can be deduced from the proof.

\par The random variables $\alpha _j(\omega )$ will have a
joint probability distribution \ekv{int.6.5}{P(d\alpha )=C(h)e^{\Phi
(\alpha ;h)}L(d\alpha ),} where for some $N_4>0$,
\ekv{re.9}{ |\nabla _\alpha \Phi |={\cal
O}(h^{-N_4}),} and $L(d\alpha )$ is the
Lebesgue measure on ${\bf R}^D$. ($C(h)$ is the norming constant.) 

\par We need the parameter \ekv{re.11}{\epsilon _0(h)=h((\ln
  \frac{1}{h})^2+\ln \frac{1}{\tau _0})} and assume that $\tau _0=\tau
_0(h)$ is not too small, so that $\epsilon _0(h)$ is small.

\par It was shown by T.~Harg\'e and G.~Lebeau \cite{HaLe94}, see also
\cite{SjZw3}, that  the exterior Dirichlet problem for
$-h^2\Delta $ on ${\bf R}^n\setminus {\cal O}$ has no resonances in
the set
\ekv{re.10.5}
{
\Im z\ge -2(h\Re z)^{\frac{2}{3}}\kappa \zeta _1+Ch,\ \frac{1}{2}\le
\Re z\le 2,
} 
if $C$ is large enough, where 
\ekv{re.10.7}{\kappa =2^{-\frac{1}{3}}\cos \frac{\pi}{6}\min _{S\partial {\cal
    O}}Q^{\frac{2}{3}},}
$Q$ is the second fundamental form on $\partial {\cal O}$ and $\zeta
_1>0$ is the smallest zero of $\mathrm{Ai}(-t)$ with $\mathrm{Ai}$
denoting the Airy function which spans the space of solutions to
$(-\partial _t^2+t)u=0$ that are exponentially subdominant on the
positive real axis. 

\par For technical reasons, we shall restrict the attention to
rectangles of the form $R=[a,b]+ih^{2/3}c[-1,0]$, $\frac{1}{2}\le
a<b\le 2$, $c>0$ with $c$ small enough so that $R$ is contained in the
domain (\ref{re.10.5}). Thus we will assume that $c<2(1/2)^{2/3}\kappa
\zeta _1$. (We could replace the bounds $1/2$ and $2$ by any other
positive bounds $0<b_1<b_2$.)

\par Let $P^0_{\mathrm{in}}$ denote the Dirichlet realization of $P_0$
in ${\cal O}$ and let $N_0(\lambda )$ denote the number of eigenvalues
of $P^0_\mathrm{in}$ in the interval $]-\infty ,\lambda ]$, counted
with their multiplicity. Similarly,
if $I\subset {\bf R}$ we let $N_0(I)$ denote the number of such eigenvalues
 in $I$.
The main result of this work is:
\begin{theo}\label{re1} 
Let $\sigma (P_\delta )$ denote the set of resonances of $P_\delta
$. Let $0<c_1<c_2<2(1/2)^{2/3}\kappa \zeta _1$, $\rho =h^{-\delta
  _0+2/3}$, where $\delta _0>0$ is arbitrarily small but fixed. Then
there exists a constant $C>0$ such that for $\frac{1}{2}\le a<b\le 2$,
$c_1\le c\le c_2$ and $\widetilde{\epsilon }\ge C\epsilon _0(h)$, we
have with probability
\ekv{re.13}
{
\ge 1-{\cal O}(1)\frac{\epsilon
  _0(h)}{h^{n+N_6+\frac{2}{3}}}e^{-\frac{\widetilde{\epsilon }}
{C\epsilon _0(h)}},
}
where the constant ${\cal O}(1)$ is independent of
$a,b,c,\widetilde{\epsilon },h$, that
\ekv{re.14}
{\begin{split}
|\# (\sigma (P_\delta )\cap
([a,b]+ih^{\frac{2}{3}}c[-1,0]))-N_0([a,b])|\\
\le {\cal
  O}(1)
(\sum_{w=a,b}N_0([w-\rho ,w+\rho ]))+h^{-\frac{2}{3}-n}\widetilde{\epsilon }).\end{split}
}
Here $N_6=\max (N_3,N_5)$, where $N_3=n(M+1)$,
$N_5=N_4+\widetilde{M}$.
\end{theo}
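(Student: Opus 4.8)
The plan is to reduce the counting of resonances to a counting of zeros of a determinant, and then to apply the known Weyl-type results for small random perturbations of non-self-adjoint operators, in the spirit of \cite{Sj08a, Sj08b, BoSj10}. First I would set up complex scaling (or a microlocal/exterior complex deformation) outside the support of $V$, so that the resonances of $P_\delta$ in the rectangle $R=[a,b]+ih^{2/3}c[-1,0]$ become the eigenvalues of a non-self-adjoint operator $P_\delta^\theta$ obtained by deforming $-h^2\Delta$ in ${\bf R}^n\setminus{\cal O}$ while leaving $P_0+\delta\Theta q_\omega$ untouched on ${\cal O}$. The condition that the exterior Dirichlet problem has no resonances in (\ref{re.10.5}), together with the choice $c<2(1/2)^{2/3}\kappa\zeta_1$, guarantees that the exterior deformed operator is invertible with controlled resolvent in a neighborhood of $R$, so that by a Schur-type (Grushin/Feshbach) reduction the eigenvalues of $P_\delta^\theta$ in $R$ are exactly the zeros of an effective determinant $D(z)=\det E_{-+}(z)$ associated with a well-posed Grushin problem built from the interior Dirichlet problem; the number $N_0([a,b])$ is the corresponding count for $\delta=0$ (the unperturbed interior eigenvalue count).

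Next I would estimate $D(z)$ from above and below on a contour surrounding $R$. The upper bound comes from the trace-class / exponential-weight machinery: here one uses that the exterior deformation makes the relevant resolvent trace class after differentiating enough times (the point credited to \cite{Ca02}), yielding $\log|D(z)|\le {\cal O}(h^{-n})$ on the outer contour with the expected constant. The lower bound on $\log|D(z)|$ at suitably many points — which is what controls the number of zeros via Jensen / the argument principle — is exactly where the random perturbation enters: one shows that with probability at least (\ref{re.13}), for $z$ away from the interior Dirichlet spectrum by at least $\sim\rho$, the smallest singular values of the Grushin problem for $P_\delta-z$ are bounded below, so that $|D(z)|$ is not too small. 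This is the heart of the matter and reuses the probabilistic lower bounds on determinants from \cite{Sj08a, Sj08b}: the key quantities are the matrix elements $(\Theta q_\omega e_j|\overline{e}_k)$ where $e_j$ are the low eigenfunctions of $(P_0-z)^*(P_0-z)$ on ${\cal O}$; the weight $\Theta\asymp\mathrm{dist}(x,\partial{\cal O})^{v_0}$ and the range of $v_0$ in (\ref{re.4}) are chosen precisely so that $\Theta q_\omega$ has the right Sobolev regularity ($s<v_0+\tfrac12$) while still interacting nontrivially with eigenfunctions that, unlike in the genuinely complex-scaled case discussed in the introduction, are here concentrated in ${\cal O}$ rather than on $p^{-1}(z)$.

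Having the two-sided bounds on $D(z)$, I would apply a standard counting lemma for zeros of holomorphic functions of exponential type (a Jensen-type estimate): the number of zeros of $D$ in $R$ equals $\frac{1}{2\pi i}\oint \frac{D'}{D}$, and comparing with the same integral for the $\delta=0$ determinant (whose zeros are the interior Dirichlet eigenvalues) gives $\#(\sigma(P_\delta)\cap R)$ equal to $N_0([a,b])$ up to (i) a boundary error from eigenvalues within $\rho$ of $a$ or $b$ — this produces the $\sum_{w=a,b}N_0([w-\rho,w+\rho])$ term — and (ii) an error governed by $\widetilde\epsilon$ and the discrepancy between the two $\log|D|$ bounds, which is the $h^{-2/3-n}\widetilde\epsilon$ term. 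Summing the bad-event probabilities over an $\epsilon_0(h)$-net of the contour (of cardinality $\sim h^{-n-N_6-2/3}$) yields the stated probability (\ref{re.13}).

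\textbf{Main obstacle.} The hard part will be the probabilistic lower bound on $|D(z)|$. Because the resonance problem forces the exterior region to be complex-deformed, the interior eigenfunctions $e_j$ do not match the support of $\delta\Theta q_\omega$ in the naive way, and one must control the random matrix $(\Theta q_\omega e_j|\overline{e}_k)$ together with a careful choice of the deformation so that the interior model stays self-adjoint-like and the coupling through $\partial{\cal O}$ (encoded in $\Theta$, with its sharp vanishing rate $v_0$) is neither too weak for the random perturbation to help nor too strong for the Sobolev regularity $s<v_0+\tfrac12$ and the perturbation size $\delta=\tau_0h^\alpha/C$ to be admissible. Threading this — balancing $L$, $R$, $\delta$ through the constraints (\ref{re.6}), (\ref{re.8}) and the WKB/Airy description near $\partial{\cal O}$ behind (\ref{re.10.5}) — is where essentially all the work lies; everything else is an adaptation of the now-standard determinant-counting argument.
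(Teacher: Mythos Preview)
Your high-level architecture—complex scaling, Grushin reduction to an effective determinant, two-sided bounds on $\log|D(z)|$, then zero counting—matches the paper. But two load-bearing steps are missing, and without them the stated error terms are not attainable.

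\medskip
\textbf{The upper bound.} You write $\log|D(z)|\le{\cal O}(h^{-n})$. The paper needs and proves the sharper bound ${\cal O}(h^{1-n})$, obtained by the factorization $\det{\cal P}_{\mathrm{out}}=\det{\cal P}_{\mathrm{in}}\cdot\det({\cal N}_{\mathrm{in}}-{\cal N}_{\mathrm{ext}})$ and a reduction to the difference of interior and exterior Dirichlet-to-Neumann maps on the $(n-1)$-dimensional manifold $\partial{\cal O}$. That gain of one power of $h$ is what puts the factor $h$ into $\epsilon_0(h)$ in (\ref{re.11}) and hence makes the remainder $h^{-2/3-n}\widetilde\epsilon$ in (\ref{re.14}) genuinely small; with only $h^{-n}$ the conclusion becomes vacuous. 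Establishing that DN bound is a large fraction of the paper (parametrices for the exterior and interior problems near glancing, simple turning-point analysis, a second-microlocal calculus), and your proposal does not contain it.

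\medskip
\textbf{The lower bound.} You correctly point to the random matrix $M_{jk}=\int\Theta q_\omega e_j e_k$, but you have not explained why it is not negligibly small. The relevant $e_j$ are eigenfunctions of $P_{\mathrm{out}}(z)^*P_{\mathrm{out}}(z)$ with the \emph{outgoing} boundary condition $Bu=0$ (not the interior Dirichlet condition you invoke), and the perturbation carries the weight $\Theta\asymp\mathrm{dist}(\cdot,\partial{\cal O})^{v_0}$, which vanishes at the boundary. Saying the $e_j$ are ``concentrated in ${\cal O}$'' is true but insufficient: nothing in your outline prevents them from living where $\Theta$ is tiny. The paper's decisive estimate is that every normalized $u\in E(\mu)$, $\mu\ll h^{2/3}$, satisfies $\|u\|_{L^2({\cal O}_h\setminus{\cal O}_{2h})}\ge h^{2/3}/C$; this is obtained by squeezing $u$ between the interior Dirichlet resolvent and the exterior resolvent (both ${\cal O}(h^{-2/3})$ when $|\Im z|\asymp h^{2/3}$), and on that strip $\Theta\asymp h^{v_0}$ is a fixed power of $h$. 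Only with this in hand does the singular-value argument of \cite{Sj08a} apply. The paper's introduction explicitly warns that the direct route via $(\widetilde q_\omega e_j|\overline e_k)$ on the scaled operator fails; your ``main obstacle'' paragraph locates the difficulty correctly but the resolution you sketch (balancing parameters) is not what actually works—the strip estimate is the missing idea.

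\medskip
A smaller point: the final counting step is not a bare Jensen inequality but Theorem 1.2 of \cite{Sj09}, applied with a subharmonic majorant $\Phi_r$ built from the interior Dirichlet counting function; the number of contour points is $N\asymp h^{-2/3}$, not $h^{-n-N_6-2/3}$ as you wrote (the latter exponent appears in the probability bound, coming from the union over $N$ points together with the $h^{-n-N_6}$ already present in the single-point estimate).
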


\begin{remark}\label{re2}
{\rm As in \cite{Sj08a, Sj08b} and in an earlier work with M.~Hager cited
there, with probability
\ekv{re.15}
{
\ge 1-{\cal O}(1)\frac{\epsilon
  _0(h)}{h^{n+N_6+\frac{4}{3}}}e^{-\frac{\widetilde{\epsilon }}{C\epsilon _0(h)}},
} we have (\ref{re.14}) simultaneously for $\frac{1}{2}\le a<b\le 2$,
$c_1\le c\le c_2$.}
\end{remark}
As we point out in Remark \ref{sv1}, for a general perturbation
$W=\delta \Theta q_\omega  $ as in Theorem \ref{re1}, we have
$$
\Vert W\Vert_{H_h^{\widetilde{s}}({\bf R}^{n})}\le {\cal O}(\delta )L^{\widetilde{s}}R,
$$
provided that $\frac{n}{2}<\widetilde{s}<v_0+\frac{1}{2}$. Here
$H_h^{\widetilde{s}}$ is the standard Sobolev space equipped with its
natural semi-classical norm (see Section \ref{al}). By playing with
the parameters, the perturbations in Theorem \ref{re1} can be chosen
to be bounded by arbitrarily high powers of $h$ in Sobolev spaces with
arbitrarily high regularity exponents.

\par We also have:
\begin{prop}\label{re3}
  The conclusion in Theorem \ref{re1} remains valid if we change
  $V_0$ by adding an $h$-independent potential  $W_0\in L^\infty
  ({\cal O})$ such that $W_0={\cal
    O}(\mathrm{dist\,}(x,\partial {\cal O})^3)$, $\partial ^\alpha
  W_0\in L^\infty $ for $|\alpha | \le 2N$ and $W_0\in H^s(\overline{{\cal O}})$. Here $N$ is the smallest
  integer in $](n-1)/2,+\infty [$ and $s>n/2$ is the
  parameter in Theorem \ref{re1}.
\end{prop}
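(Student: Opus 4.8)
The plan is to show that adding the $h$-independent, low-regularity perturbation $W_0$ to $V_0$ changes none of the structural ingredients that enter the proof of Theorem \ref{re1}, and that $W_0$ can in fact be absorbed into the class of admissible random perturbations at scale $\delta$. First I would record the two places where the hypotheses on $V_0$ were actually used: (i) the boundary conditions \eqref{re.2} ($V_0=0$ and $\partial_\nu V_0\le 0$ on $\partial{\cal O}$), which enter the construction near $\partial{\cal O}$ (the complex WKB / Airy-type analysis giving the resonance-free region and the reduction to an interior Grushin problem), and (ii) the interior Dirichlet operator $P^0_{\mathrm{in}}$ whose counting function $N_0$ appears on the right of \eqref{re.14}. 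Since $W_0={\cal O}(\mathrm{dist}(x,\partial{\cal O})^3)$, it vanishes together with its normal derivative on $\partial{\cal O}$, so $V_0+W_0$ still satisfies \eqref{re.2}; moreover the vanishing to order $3\ge v_0$-type behaviour at the boundary is exactly what is needed for the weight $\Theta$ in \eqref{re.4} to keep dominating the perturbation, and for the boundary-layer analysis to be unaffected at leading (and subleading) order. The constant $\kappa$ in \eqref{re.10.7}, hence the admissible rectangles, depends only on $\partial{\cal O}$ and not on the potential, so the resonance-free region \eqref{re.10.5} and the restriction $c<2(1/2)^{2/3}\kappa\zeta_1$ are untouched.

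Second, I would check the regularity bookkeeping. The proof of Theorem \ref{re1} needs $V_0\in C^\infty$ only near $\partial{\cal O}$ (for the WKB constructions) and elsewhere needs just enough Sobolev regularity to run the trace-class / Weyl-law arguments in the interior and the perturbation-theoretic estimates on the random matrix $(\widetilde q_\omega e_j\mid \overline e_k)$; these use $H^s$ control with $s>n/2$ and finitely many ($\le 2N$, $N$ the least integer in $](n-1)/2,\infty[$) classical derivatives, which is precisely what is hypothesized for $W_0$. Since $W_0$ is $h$-independent, multiplication by $W_0$ is ${\cal O}(1)$ on all the relevant spaces uniformly in $h$, so it does not degrade any of the $h$-power estimates; in particular the a priori bounds leading to the probability \eqref{re.13} and to the error terms in \eqref{re.14} go through verbatim with $V_0$ replaced by $V_0+W_0$, and $N_0$ now denotes the counting function of the Dirichlet realization of $-h^2\Delta+V_0+W_0$ in ${\cal O}$. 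The point is that $W_0$ is smooth enough where smoothness is essential (near the boundary it is $C^{2N}$ and vanishes to third order, which suffices for the boundary constructions that only ever use finitely many Taylor coefficients of the potential at $\partial{\cal O}$) and merely $H^s$ in the interior, which is all the interior arguments require.

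Third — and this is the cleanest route — I would simply note that a perturbation of the form $W_0$ with $\|W_0\|_{H^s}={\cal O}(1)$ can be viewed as an allowed deterministic shift inside the ball of random perturbations: since the random perturbations $\delta\Theta q_\omega$ are, by the discussion after Remark \ref{re2}, bounded in $H_h^{\widetilde s}$ by ${\cal O}(\delta)L^{\widetilde s}R$ with $\delta=\tau_0 h^\alpha/C$ very small, whereas $W_0$ is ${\cal O}(1)$, one instead incorporates $W_0$ into the unperturbed operator and only the \emph{random} part stays small; the hypotheses on $W_0$ (vanishing to order $3$, $2N$ bounded derivatives, $H^s$) are exactly tailored so that $P_0+W_0$ still belongs to the class of unperturbed operators for which the entire machinery — the Grushin setup, the resonance-free region of \cite{HaLe94}, the counting via the argument principle, and the probabilistic lower bound — applies. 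The main obstacle, and the one point requiring genuine care rather than bookkeeping, is the boundary analysis: one must verify that the complex WKB / Airy model near $\partial{\cal O}$ — which in the $V_0$ case uses $V_0=0,\ \partial_\nu V_0\le 0$ — is stable under the addition of a term that is only $C^{2N}$ (not $C^\infty$) near the boundary. Here the saving grace is that this model construction only ever uses the Taylor expansion of the potential at $\partial{\cal O}$ up to a fixed finite order determined by $n$ (equivalently by $N$), together with the sign of the first nonvanishing normal derivative; since $W_0$ vanishes to order $3$ and $3\ge$ that finite order is arranged through the choice of $N$, the expansion coefficients of $V_0+W_0$ at $\partial{\cal O}$ agree with those of $V_0$ in the range that matters, and the boundary-layer operator, the constant $\kappa$, and the resonance-free strip are literally unchanged. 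Thus the proof of Theorem \ref{re1} applies word for word to $P_0+W_0+\delta\Theta q_\omega$. $\hfill\Box$

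\medskip
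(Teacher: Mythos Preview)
Your overall strategy---walk through the proof of Theorem~\ref{re1} and verify that each step survives the replacement $V_0\mapsto V_0+W_0$---is exactly what the paper does. But the mechanism you invoke at the crucial point is wrong, and the roles you assign to the hypotheses on $W_0$ are misplaced.

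The main obstacle, as you correctly identify, is the interior DN map estimate (Proposition~\ref{idn1}). You claim this survives because ``the complex WKB / Airy model near $\partial{\cal O}$ \ldots\ only ever uses the Taylor expansion of the potential at $\partial{\cal O}$ up to a fixed finite order determined by $n$''. That is not how the paper proceeds, and it is not clear it would work: the parametrix construction in Section~\ref{idn} genuinely uses $V_0\in C^\infty(\overline{{\cal O}})$ (see \eqref{idn.3.7} and the symbol estimates throughout). The paper does \emph{not} redo the WKB construction for the merely $C^{2N}$ potential $V_0+W_0$. Instead it keeps the parametrix for the smooth $V_0$ and controls the effect of $W_0$ by a perturbative argument, Proposition~\ref{idn5}: an exponentially weighted estimate gives $e^{\epsilon\phi/h}K_{\mathrm{in}}^{V_0}={\cal O}(h^{1/6})$ with $\phi\asymp\mathrm{dist}(\cdot,\partial{\cal O})$ and $\epsilon\asymp h^{2/3}$, so that $W_0K_{\mathrm{in}}^{V_0}=W_0e^{-\epsilon\phi/h}\cdot e^{\epsilon\phi/h}K_{\mathrm{in}}^{V_0}$ is small because $\mathrm{dist}^3e^{-\mathrm{dist}/(Ch^{1/3})}={\cal O}(h)$. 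This is why the exponent~$3$ appears---it is exactly what makes the resulting correction to ${\cal N}_{\mathrm{in}}$ bounded (indeed ${\cal O}(h^{1/2})$)---and it has nothing to do with $v_0$ or with a finite Taylor order used in WKB. Your sentence ``$3\ge v_0$-type behaviour'' is also wrong as written: $v_0>\frac{n-1}{2}$ can exceed~$3$.

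Separately, you misattribute the condition $\partial^\alpha W_0\in L^\infty$ for $|\alpha|\le 2N$. It is not used in the boundary-layer construction at all; it enters in Section~\ref{ub}, specifically in \eqref{ub'.20} (the commutator estimate in Lemma~\ref{ub'3}) and \eqref{ub'.32} (so that the regularizing factor $(S(\widetilde P-z)^{-1})^N$ in \eqref{ub'.31} lands $\mathrm{IV}(z)$ in trace class). The hypothesis $W_0\in H^s(\overline{{\cal O}})$ is for \eqref{lob.17} in Section~\ref{out}. The paper's proof of Proposition~\ref{re3} consists precisely of tracking these insertions: invoke Proposition~\ref{idn5} in place of Proposition~\ref{idn4} for the DN map, adjust $\widetilde P=P+Ci1_{\cal O}$ with a larger constant, replace \eqref{ub'.15} by $\widetilde{{\cal N}}=\widetilde{{\cal N}}_0+{\cal O}(\|W\|_{L^\infty})+{\cal O}(h^2)$, and observe that $W_0$ satisfies \eqref{ub'.20} and \eqref{ub'.32}. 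Your third paragraph's detour through viewing $W_0$ as a random perturbation is, as you yourself note, a dead end since $W_0$ is ${\cal O}(1)$ while the random part is ${\cal O}(\delta)$.
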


Recall that $H^s(\overline{{\cal O}})=\{v\in H^s({\bf R}^n);\,
\mathrm{supp\,}v\subset \overline{{\cal O}} \}$.
Combining the remark and Theorem \ref{re1}, we get the following less
detailed but perhaps more transparent version of our main
result, where our unperturbed potential is $V_0=W_0$.
\begin{theo}\label{re4}
Let $N=\min (]\frac{n-1}{2},+\infty [\cap {\bf Z})$,
$\widetilde{s}>\max(\frac{n}{2}+3,2N+\frac{n}{2})$, $s>\frac{n}{2}$
and let $\beta >0$. 
Then there exists a probability measure $\mu $ on $H^s(\overline{{\cal
  O}})$ with support in the ball $\{W\in
H^s(\overline{{\cal O}});\ \Vert W\Vert_{H^s}\le h^\beta  \}$ such
that the following holds:

Let $0<c_1<c_2<2(1/2)^{2/3}\kappa \zeta _1$, $\rho =h^{-\delta
  _0+2/3}$, where $\delta _0>0$ is arbitrarily small but fixed. There
exists a constant $C>0$ such that if $\frac{1}{2}\le a<b\le 2$,
$c_1\le c\le c_2$, $\widetilde{\epsilon }\ge Ch(\ln 1/h)^2$ and
$V_0\in H^{\widetilde{s}}(\overline{{\cal O}})$, then for
$P=-h^2\Delta +V_0+W$, $W\in H^s(\overline{{\cal O}})$, we have with
probability (with respect to the random term $W$) \ekv{re.16} { \ge
  1-{\cal O}(1)\frac{h(\ln
    1/h)^2}{h^{N_7}}e^{-\frac{\widetilde{\epsilon }} {Ch(\ln 1/h)^2}},
} that for the set $\sigma (P)$ of resonances of $P$, \ekv{re.17}
{\begin{split} |\# (\sigma (P)\cap
    ([a,b]+ih^{\frac{2}{3}}c[-1,0]))-N_0([a,b])|\\
    \le {\cal O}(1) (\sum_{w=a,b}N_0([w-\rho ,w+\rho
    ]))+h^{-\frac{2}{3}-n}\widetilde{\epsilon }).\end{split} } Here
$N_7$ (equal to $n+N_6+2/3$ as in Theorem \ref{re1}, with
$M=M_\mathrm{min}$, $\widetilde{M}=\widetilde{M}_\mathrm{min}$) is
independent of the other parameters, while the constants ${\cal O}(1)$
in (\ref{re.16}), (\ref{re.17}) depend on $c_1,c_2,\beta
,\widetilde{s},s$ and on an upper bound on $\Vert
V_0\Vert_{H^{\widetilde{s}}(\overline{{\cal O}})}$.
\end{theo}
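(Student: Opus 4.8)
\textbf{Proof proposal for Theorem \ref{re4}.}

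The plan is to derive Theorem \ref{re4} from Theorem \ref{re1} together with Proposition \ref{re3} and the Sobolev bound stated in the remark preceding Proposition \ref{re3}. First I would absorb $V_1$ into the unperturbed potential: applying Proposition \ref{re3} with $W_0=V_1$ (noting that $V_1\in H^{\widetilde s}(\overline{\cal O})$ with $\widetilde s>\max(\tfrac n2+3,2N+\tfrac n2)$ guarantees $V_1={\cal O}(\mathrm{dist}(x,\partial{\cal O})^3)$, $\partial^\alpha V_1\in L^\infty$ for $|\alpha|\le 2N$, and $V_1\in H^s$ after shrinking $s$), we may run the machinery of Theorem \ref{re1} with unperturbed operator $-h^2\Delta+V_1$ and the associated $N_0$. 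Here one should check that $N_0([a,b])$ in Theorem \ref{re1} is the same quantity appearing in \no{re.17}: it is the eigenvalue counting function of the interior Dirichlet realization of $-h^2\Delta+V_1$, and since $V_1$ satisfies \no{re.2} (vanishing on $\partial{\cal O}$ with $\partial_\nu V_1\le 0$, which follows from the cubic vanishing), the hypotheses of Theorem \ref{re1} are met.

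Next I would construct the measure $\mu$. Start from the random perturbation $W=\delta\Theta q_\omega$ of \no{re.3}, with the parameters $L,R,\delta$ chosen as in \no{re.6}, \no{re.8}: take $M=M_\mathrm{min}$, $\widetilde M=\widetilde M_\mathrm{min}$, fix $s,\theta,\epsilon$ as required, and push the density $P(d\alpha)$ of \no{int.6.5} forward under the linear map $\alpha\mapsto \delta\Theta\sum\alpha_k\epsilon_k$ to obtain a probability measure $\mu$ on $H^s(\overline{\cal O})$. By the remark, $\Vert W\Vert_{H_h^{\widetilde s}}\le {\cal O}(\delta)L^{\widetilde s}R$; since the $\epsilon_k$ are eigenfunctions of $h^2\widetilde R$ and the frequencies are bounded by $L/h$, the non-semi-classical $H^s$ norm is controlled by ${\cal O}(\delta)(L/h)^{s}L^{\widetilde s}\cdot({\rm stuff})$ — in any case a fixed negative power of $h$ times the product of powers of $L$ and $R$. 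Because $\delta=\tau_0h^\alpha/C$ with $\tau_0\le h^{5/3}$ and $\alpha$ can be taken as large as we wish (larger $\alpha$ only strengthens the hypotheses of Theorem \ref{re1}), the product $\delta L^{\widetilde s}R$ — a fixed power of $h$ determined by $n,v_0,s,\epsilon,\theta,\widetilde s$ — is $\le h^\beta$ once $\alpha$ is chosen large enough relative to $\beta$. This pins down the support condition $\Vert W\Vert_{H^s}\le h^\beta$.

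Finally I would translate the probabilistic conclusion. In Theorem \ref{re1} the probability bound \no{re.13} involves $\epsilon_0(h)=h((\ln\tfrac1h)^2+\ln\tfrac1{\tau_0})$; choosing $\tau_0$ a fixed (small) power of $h$ — say $\tau_0=h^{5/3}$ — makes $\ln\tfrac1{\tau_0}={\cal O}(\ln\tfrac1h)$, so $\epsilon_0(h)\asymp h(\ln\tfrac1h)^2$, and the threshold $\widetilde\epsilon\ge C\epsilon_0(h)$ becomes $\widetilde\epsilon\ge Ch(\ln 1/h)^2$ as in the statement. The exponent $n+N_6+\tfrac23$ with $M=M_\mathrm{min}$, $\widetilde M=\widetilde M_\mathrm{min}$ is the constant $N_7$, independent of $a,b,c,\widetilde\epsilon$; the ${\cal O}(1)$ prefactors inherit dependence on $c_1,c_2$, on $\beta$ (through $\alpha$ hence the parameter choices), on $\widetilde s$ and $s$, and on an upper bound for $\Vert V_1\Vert_{H^{\widetilde s}}$ (through Proposition \ref{re3}, where the constants depend on the $L^\infty$ and $H^s$ sizes of the added potential). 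The conclusion \no{re.17} is then verbatim \no{re.14} with the unperturbed potential $V_1$, with the same $\rho=h^{-\delta_0+2/3}$. I expect the only genuinely delicate point to be bookkeeping: verifying that the ``explicit and not very nice'' exponent $\alpha(n,v_0,s,\epsilon,\theta,M,\widetilde M)$ from Theorem \ref{re1}, combined with the Sobolev estimate $\Vert W\Vert_{H^s}\le {\cal O}(\delta)L^{\widetilde s}R$ rewritten in non-semi-classical norm, really does allow $\Vert W\Vert_{H^s}\le h^\beta$ for an arbitrary preassigned $\beta>0$ — i.e. that the powers of $L$ and $R$ (which grow with $M_\mathrm{min},\widetilde M_\mathrm{min}$, themselves functions of the fixed data) are dominated by raising $h^\alpha$ to a sufficiently high but finite power. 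Everything else is a direct specialization of the already-proven Theorem \ref{re1} and Proposition \ref{re3}.
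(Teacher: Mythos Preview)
Your proposal is correct and follows essentially the same route as the paper: apply Proposition~\ref{re3} with $V_0=0$, $W_0=V_1$, take $W=\delta\Theta q_\omega$ with $M=M_{\min}$, $\widetilde M=\widetilde M_{\min}$, $\tau_0=h^{5/3}$ (so that $\epsilon_0(h)\asymp h(\ln 1/h)^2$), push forward $P(d\alpha)$ to get $\mu$, and take $\alpha$ large to force $\Vert W\Vert_{H^s}\le h^\beta$. The only parameter the paper adjusts that you do not mention explicitly is $v_0$: since the constraint $s\in\,]n/2,v_0+1/2[$ in \no{re.6} and the Sobolev bound of Remark~\ref{sv1} both require $s<v_0+1/2$, one must also take $v_0$ large enough (which is free, as $\Theta$ is independent of $V_0$) to accommodate the given $s>n/2$.
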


Indeed, it suffices to apply Proposition \ref{re3} with 
$V_0=W_0$ and to observe:
\begin{itemize}
\item $V_0$ is of class $C^3$ with support in $\partial
  \overline{{\cal O}}$ and therefore $V_0={\cal
    O}(\mathrm{dist\,}(x,\partial {\cal O})^3)$,
\item It suffices to choose the perturbation $W=\delta \Theta q_\omega
  $ as in (\ref{re.3})--(\ref{re.8}) with $M=M_\mathrm{min}$,
  $\widetilde{M}=\widetilde{M}_\mathrm{min}$, $\tau _0=h^{5/3}$ and the parameters $v_0$ and
  $\alpha$ sufficiently large.
\item We can choose the probability $\mu $ to be ``$P$'' in (\ref{int.6.5}), with
  $\Phi =0$ (so that $N_4=0$), but any other choice as in
  (\ref{int.6.5}), (\ref{re.9}) is OK.
\end{itemize}
We end the section by explaining how Theorem \ref{re0} follows from
Theorem \ref{re4}. It suffices to apply the following result of
V.~Ivrii \cite{Iv03}, Theorem 2.1. (See also related results by
L.~Zieli\'nski \cite{Zi04} in the case without boundary.)

Consider the semi-classical Schr\"odinger operator $P=-h^2\Delta
+V(x)$ on the open set $X\Subset {\bf R}^n$ with smooth ($C^\infty $)
boundary. We assume that $\nabla V$ is continuous with modulus of
continuity $\nu (t)={\cal O}(|\ln t|^{-1})$. We
equip $P$ with Dirichlet boundary conditions. When $n=1,2$ we assume
the micro-hyperbolicity property that $|\nabla V|\ne 0$ when $V=E$,
uniformly for $E$ in some compact interval $J$ in $]0,+\infty
[$. Then, uniformly for $E$ in $J$, $0<h\le 1$, the number of eigenvalues in
$]-\infty ,E]$ is equal to the standard Weyl term $(2\pi
h)^{-1}\mathrm{vol\,}(\{(x,\xi )\in T^*X;\, \xi ^2+V(x)\le E \}) $ plus a remainder
which is ${\cal O}(h^{1-n})$ for $n\ge 2$ and ${\cal O}(\ln 1/h)$ for
$n=1$ 

\section{Some elements of the proof}\label{outl}
\setcounter{equation}{0} We will introduce a distorsion $\Gamma
\subset {\bf C}^n$ of ${\bf R}^n$ which concides with ${\bf R}^n$
along ${\cal O}$ and with an exterior dilation of ${\bf R}^n$ outside
${\cal O}$ as in \cite{SjZw2,SjZw3,SjZw4} and
\cite{HaLe94}. Let $P=P_\Gamma $ be the corresponding dilation of
$-h^2\Delta +V$, $V=V_0+\delta \Theta (x)q_\omega (x)$. Then (see for
instance \cite{SjZw1}) $P=P_\Gamma $ has discrete spectrum in an angle
$-\theta _0<\mathrm{arg\,}z\le 0$ and the eigenvalues there coincide
with the resonances.

Let $P_\mathrm{ext}$ be the Dirichlet realization of $P$ on $\Gamma
\setminus {\cal O}$, so that the spectrum of $P_\mathrm{ext}$ in the
above angle coincides with the set of resonances for the exterior
Dirichlet problem for $-h^2\Delta $ (recalling that
$\mathrm{supp\,}V\subset \overline{{\cal O}}$). As we recalled in
Section \ref{re}, there are no such resonances in
$[\frac{1}{2},2]+ih^{2/3}c_0[-1,0]$ if we fix 
\ekv{outl.0}
{
0<c_0<2(\frac{1}{2})^{\frac{2}{3}}\kappa \zeta _1 .
}
Restricting $z$ to the
domain \ekv{outl.1} { \frac{1}{2}<\Re z<2,\quad \Im
  z>-c_0h^{\frac{2}{3}}, } we can therefore introduce the Green
operator $G_\mathrm{ext}(z):H^0(\Gamma \setminus {\cal O})\to H^2(\Gamma \setminus {\cal
  O})$ and the Poisson operator  $K_\mathrm{ext}:\, H^{3/2}(\partial {\cal O})\to
H^2(\Gamma \setminus {\cal O})$ so that the exterior Dirichlet operator
\ekv{outl.2} { {\cal
    P}_\mathrm{ext}(z)=\begin{pmatrix}P-z\\h^{\frac{1}{2}}\gamma \end{pmatrix}=H^2(\Gamma
  \setminus {\cal O})\to H^0(\Gamma )\times H^{\frac{3}{2}}(\Gamma
  \setminus \partial
  {\cal O}) } has the bounded inverse \ekv{outl.3} { {\cal
    E}_\mathrm{ext}(z)=\begin{pmatrix}G_\mathrm{ext}
    &h^{-\frac{1}{2}}K_\mathrm{ext}(z)\end{pmatrix}: H^0(\Gamma
  \setminus {\cal O})\times
  H^{\frac{3}{2}}(\partial {\cal O})\to H^2(\Gamma \setminus {\cal O}).  } Here $\gamma
$ is the operator of restriction to $\partial {\cal O}$. Let ${\cal
  N}_\mathrm{ext}=\gamma hD_\nu K_\mathrm{ext}$ denote the
exterior Dirichlet to Neumann operator, where $D_\nu =\frac{1}{i}\frac{\partial
}{\partial \nu }$ and $\nu $ denotes the exterior unit
normal. Introduce \ekv{outl.4} { B=\gamma hD_\nu -{\cal
    N}_\mathrm{ext}\gamma :\, H^2({\cal O})\to
  H^{\frac{1}{2}}(\partial {\cal O}); } \ekv{outl.5} {
  {\cal P}_\mathrm{out}(z)=\begin{pmatrix}P-z\\
    h^{\frac{1}{2}}B\end{pmatrix}:\, H^2({\cal O})\to H^0({\cal
    O})\times H^{\frac{1}{2}}(\partial {\cal O}).  } For $z$ in the
domain (\ref{outl.1}) we shall see, by considering the continuity
conditions at $\partial {\cal O}$, that $z$ is a resonance
(i.e. belongs to the spectrum of $P_\Gamma $) iff ${\cal
  P}_\mathrm{out}(z)$ is non-bijective, or equivalently if $0\in \sigma
(P_\mathrm{out}(z))$ where $P_\mathrm{out}(z)=P-z:\, H^0({\cal O})\to
H^0(0)$ is the closed unbounded operator whose domain is the
``outgoing'' space: ${\cal D}(P_\mathrm{out}(z))=\{ u\in H^2({\cal
  O});\, B(z)u=0\}$.

\par
Let 
\ekv{outl.6}
{
{\cal P}_\mathrm{in}(z)=\begin{pmatrix}P-z\\
  h^{\frac{1}{2}}\gamma \end{pmatrix}:\, H^2({\cal O})\to H^0({\cal
  O})\times H^{\frac{3}{2}}(\partial {\cal O}),
} 
which is bijective precisely when $z$ is not a (real) eigenvalue of
the Dirichlet realization of $P$ in ${\cal O}$. Away from the
Dirichlet spectrum we introduce the inverse 
$$
{\cal E}_\mathrm{in}(z)=(G_\mathrm{in}(z),h^{-\frac{1}{2}}K_\mathrm{in}(z)):\,
H^0({\cal O})\times H^{\frac{3}{2}}(\partial {\cal O})\to H^2({\cal O})
$$
and notice (cf. (\ref{ub.3}), (\ref{red.6})) that
\ekv{outl.7}
{
{\cal
    P}_\mathrm{out}(z)=\begin{pmatrix} 1 &0\\
    h^{\frac{1}{2}}BG_\mathrm{in} &{\cal N}_\mathrm{in}-{\cal
      N}_\mathrm{ext}\end{pmatrix}{\cal P}_\mathrm{in}(z).
}
Here ${\cal N}_\mathrm{in}=\gamma hD_\nu K_\mathrm{in}$ is the
interior Dirichlet to Neumann map. Thus for $z$ away from the
Dirichlet spectrum, $z$ is a resonance precisely when $0$ belongs to
the spectrum of ${\cal N}_\mathrm{in}-{\cal
  N}_\mathrm{ext}:H^{3/2}(\partial {\cal O})\to H^{1/2}(\partial {\cal
  O})$. 

In Section \ref{gpd} we show how to define -- up to some non-vanishing
factor -- $\det A(z)$ for certain holomorphic or meromorphic families of
operators that are not necessarily Schatten class perturbations of the
identity. With this extended notion of the determinant we get from
(\ref{outl.7}) that
\ekv{outl.8}
{
\det {\cal P}_\mathrm{out}(z)=\det {\cal P}_\mathrm{in}\det ({\cal N}_\mathrm{in}-{\cal N}_\mathrm{ext}).
}

\par A rather substantial part of the paper is devoted to the study of
${\cal N}_\mathrm{in}$, ${\cal N}_\mathrm{ext}$, in the regions $|\Im
z|\ge h^{2/3}/\widetilde{C}$ and $\Im z\ge -c_0h^{2/3}$ respectively,
where $\widetilde{C}$ is an arbitrarily large constant. Many such
studies have already been done (see for instance \cite{SjZw4}), but as
is often the case, we found it necessary to make a new one for the
needs of this paper. From this study we get somewhat roughly,
\ekv{outl.9}{\ln |\det ({\cal N}_\mathrm{in}-{\cal
    N}_\mathrm{ext})|\le {\cal O}(h^{1-n}).}  for \ekv{outl.10} { \Re
  z\in ]\frac{1}{2},2[,\ |\Im z|\asymp h^{2/3},\ \Im z\ge
  -h^{\frac{2}{3}}c_0.  } The exponent in (\ref{outl.9}) reflects the
fact that we have made a reduction to the $n-1$ dimensional manifold
$\partial {\cal O}$.

\par In view of (\ref{outl.8}) this gives a precise upper bound on
$\ln|\det {\cal P}_\mathrm{out}(z)|$ for $z$ in the region
(\ref{outl.10}). Combined with a rough polynomial upper bound on $\ln |\det
{\cal P}_\mathrm{out}(z)|$ in the full region 
$|\Im z|\le h^{\frac{2}{3}}/C$ and the maximum principle, we get the
upper bound
\ekv{outl.11}
{
\ln |\det {\cal P}_\mathrm{out}(z)|\le \Phi _\mathrm{in}(z)+{\cal O}(h^{1-n})
}
in the rectangle (\ref{outl.10}), where $\Phi _\mathrm{in}(z)$
coincides with $\ln |\det {\cal P}_\mathrm{in}(z)|$ for $|\Im z|\ge
h^{2/3}/\widetilde{C}$ and is extended (suitably) as a harmonic
function inside $|\Im z|<h^{2/3}/\widetilde{C}$.

A last and quite substantial part of the paper is to show (in the
spirit of \cite{Sj08a, Sj08b}) that for every $z$ with $
h^{2/3}/\widetilde{C}\le |\Im z|\le c_0h^{2/3}$, $1/2 < \Re z<2$, we
also have a lower bound on $\ln |\det ({\cal N}_\mathrm{in}-{\cal
  N}_\mathrm{ext})|$ almost as sharp as the
upper bound (\ref{outl.9}) with probability very close to 1. 

With these upper and lower bounds at our disposal, the main result
follows by applying Theorem 1.2 of \cite{Sj09} to the
holomorphic function $\det {\cal P}_\mathrm{out}(z)$, whose zeros are
the resonances.

\section{Grushin problems and determinants}\label{gpd}
The results in the  first three subsections below are not new, see
\cite{BoBrRa11, GoLe09}, but we thought that a short and
self-contained presentation can be useful. 
\setcounter{equation}{0}
\subsection{Gaussian elimination}\label{si}
We review some standard material, see for instance \cite{SjZw07}. Let
${\cal H}_j$, ${\cal G}_j$, $j=1,2$ be complex Hilbert spaces\footnote{All Hilbert spaces in this
  work are assumed to be separable.}. 
Consider a bounded linear operator
\ekv{si.1}{{\cal P}=\begin{pmatrix}P_{11} &P_{12}\\ P_{21}
    &P_{22}\end{pmatrix}:{\cal H}_1\times {\cal H}_2\to {\cal
    G}_1\times {\cal G}_2.}
When ${\cal P}$ is bijective (with bounded inverse) we denote the
inverse by
\ekv{si.1.5}
{
{\cal P}^{-1}={\cal E}=\begin{pmatrix}E_{11} &E_{12}\\ E_{21} &E_{22}\end{pmatrix}.
}
\begin{prop}\label{si1}
\par\noindent 
1) Assume that $P_{11}$ is bijective. Then
by Gaussian elimination we have the standard factorization into lower and upper triangular matrices:
\ekv{si.2}
{
{\cal P}=\begin{pmatrix}P_{11} &0\\ P_{21} &1\end{pmatrix}
\begin{pmatrix}1 &P_{11}^{-1}P_{12}\\ 0
  &P_{22}-P_{21}P_{11}^{-1}P_{12}\end{pmatrix} .
}
The first factor is bijective since $P_{11}$ is, so the bijectivity of ${\cal P}$ is equivalent to that of the second factor, which in turn is equivalent to that of
$P_{22}-P_{21}P_{11}^{-1}P_{12}$. When ${\cal P}$ is bijective,  we have the formula,
\ekv{si.3}
{
{\cal P}^{-1}=\begin{pmatrix}1 &a\\ 0
  &(P_{22}-P_{21}P_{11}^{-1}P_{12})^{-1}\end{pmatrix} \begin{pmatrix}P_{11}^{-1}
  &0\\ b
  &1\end{pmatrix}=:\begin{pmatrix}E_{11}&E_{12}\\E_{21}&E_{22}\end{pmatrix}=:{\cal
  E}, } where
$a=-P_{11}^{-1}P_{12}(P_{22}-P_{21}P_{11}^{-1}P_{12})^{-1}$,
$b=-P_{21}P_{11}^{-1}$ and in particular, 
\ekv{si.4} { E_{22}=(P_{22}-P_{21}P_{11}^{-1}P_{12})^{-1}.  }

\medskip\par\noindent 2) Now assume that ${\cal P}$ is
bijective. Then $P_{11}$ is bijective precisely
when $E_{22}$ is, and when that bijectivity holds we have 
\ekv{si.4.5}{\begin{split}E_{22}^{-1}&=P_{22}-P_{21}P_{11}^{-1}P_{12}\\
P_{11}^{-1}&=E_{11}-E_{12}E_{22}^{-1}E_{21}
\end{split}}
\end{prop}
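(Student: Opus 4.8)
The plan is to read everything off the factorization already established in Part 1, reading it now in the opposite direction. Assume ${\cal P}$ is bijective with inverse ${\cal E}$. First I would establish the equivalence: $P_{11}$ bijective $\iff$ $E_{22}$ bijective. One direction is Part 1, since if $P_{11}$ is bijective then (\ref{si.4}) exhibits $E_{22}$ as the inverse of $P_{22}-P_{21}P_{11}^{-1}P_{12}$, hence bijective. For the converse, apply Part 1 with the roles of ${\cal P}$ and ${\cal E}$ interchanged: ${\cal E}$ is bijective (it is the inverse of ${\cal P}$), and if its lower-right block $E_{22}$ is bijective, then Part 1 applied to ${\cal E}$ gives the factorization
\ekv{si.x}{{\cal E}=\begin{pmatrix}1 &a'\\ 0 &(E_{22}-E_{21}E_{11}^{-1}E_{12})^{-1}\end{pmatrix}\begin{pmatrix}E_{11}^{-1} &0\\ b' &1\end{pmatrix}}
provided $E_{11}$ is bijective; more directly, applying the ``$E_{22}^{-1}=P_{22}-P_{21}P_{11}^{-1}P_{12}$'' part of the statement with ${\cal P}$ and ${\cal E}$ swapped shows that bijectivity of $E_{22}$ forces bijectivity of the corresponding Schur complement of ${\cal E}$, which is $P_{11}^{-1}$'s analogue; the cleanest route is simply to note that the first factor in (\ref{si.2}) is bijective iff $P_{11}$ is, the whole product ${\cal P}$ is bijective by hypothesis, hence the second (upper-triangular) factor is bijective iff $P_{11}$ is, and that second factor is bijective iff its lower-right entry $P_{22}-P_{21}P_{11}^{-1}P_{12}$ is — but we must get at this entry without presupposing $P_{11}^{-1}$ exists. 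The honest symmetric argument is: ${\cal P}$ maps $\{0\}\times {\cal H}_2$ injectively with closed range complementary to ${\cal G}_1\times\{0\}$ modulo $P_{11}$, and chasing this gives that $E_{22}$ is Fredholm of index zero with kernel/cokernel in bijection with those of $P_{11}$; I would instead just invoke the standard fact (Schur complement duality) and present it via the two factorizations of ${\cal P}$ and of ${\cal E}={\cal P}^{-1}$.

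Second, for the formulas: the identity $E_{22}^{-1}=P_{22}-P_{21}P_{11}^{-1}P_{12}$ is exactly (\ref{si.4}) inverted, valid once both sides make sense. The second identity $P_{11}^{-1}=E_{11}-E_{12}E_{22}^{-1}E_{21}$ is obtained by running the entire argument of Part 1 with ${\cal P}$ replaced by ${\cal E}$ and noting that $({\cal E})^{-1}={\cal P}$: Part 1 applied to ${\cal E}$ (whose lower-right block $E_{22}$ we now know is bijective) yields, by (\ref{si.4}) read for ${\cal E}$, that the lower-right block of ${\cal E}^{-1}={\cal P}$, namely $P_{22}$, equals $(E_{22}-E_{21}E_{11}^{-1}E_{12})^{-1}$ — not quite what we want. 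The correct pairing is to apply Part 1 to ${\cal E}$ using bijectivity of $E_{22}$ (the ``$(1,1)$ after reshuffling'' role): permute the block structure so that $E_{22}$ plays the role of $P_{11}$; then the Schur complement $E_{11}-E_{12}E_{22}^{-1}E_{21}$ is, by the analogue of (\ref{si.4}), the inverse of the $(1,1)$ block of ${\cal E}^{-1}={\cal P}$, i.e. equals $P_{11}^{-1}$. This is the whole content.

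The main obstacle — really the only subtlety — is the converse direction of the equivalence, namely deducing bijectivity of $P_{11}$ from bijectivity of $E_{22}$ without circularity, since the expression $P_{22}-P_{21}P_{11}^{-1}P_{12}$ cannot be written down before one knows $P_{11}^{-1}$ exists. I expect to handle this by the symmetry trick: the relation between ${\cal P}$ and ${\cal E}$ is involutive, so \emph{any} implication proved for the pair $({\cal P},{\cal E})$ automatically holds for $({\cal E},{\cal P})$; applying the already-proved implication ``$P_{11}$ bijective $\Rightarrow$ $E_{22}$ bijective'' to ${\cal E}$ in place of ${\cal P}$ (after a harmless permutation of the two factors in the block decomposition, which swaps the two diagonal blocks) gives precisely ``$E_{22}$ bijective $\Rightarrow$ $P_{11}$ bijective.'' Everything else is bookkeeping with the formulas of Part 1.
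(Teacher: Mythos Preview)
Your approach is correct; the paper itself gives no proof beyond the remark that ``the second statement is more standard and also quite simple to verify,'' so you have supplied exactly the standard verification. The clean version of your argument is the one you arrive at in the end: conjugate ${\cal E}$ by the block permutation so that $E_{22}$ sits in the $(1,1)$ slot, apply Part~1 to this permuted ${\cal E}$ (which is bijective since ${\cal E}$ is), and read off from (\ref{si.4}) that the $(2,2)$ block of its inverse --- namely $P_{11}$ --- equals $(E_{11}-E_{12}E_{22}^{-1}E_{21})^{-1}$, giving both the bijectivity of $P_{11}$ and the second formula in (\ref{si.4.5}) at once. The detours through (\ref{si.x}) with $E_{11}^{-1}$ and through the factorization (\ref{si.2}) for ${\cal P}$ itself are, as you noticed, either unjustified or circular and should simply be deleted from a final write-up.
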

   
The first statement is clear. The second statement is
more standard and also quite simple to verify.

\subsection{Generalized determinants for holomorphic Fredholm families}\label{mgd}
Let $\Omega \subset {\bf C}$ be open connected, let ${\cal H}_1$, ${\cal H}_2$
be two complex Hilbert spaces and let 
$$
\Omega \ni z\mapsto P(z)\in {\cal L}({\cal H}_1, {\cal H}_2)
$$
be a holomorphic family of Fredholm operators of index $0$, such that
$P(z)$ is bijective for at least one $z\in \Omega $. Then by analytic
Fredholm theory (see for instance the appendix in \cite{HeSj86}) we
know that the set $\sigma (P)\subset \Omega $ where $P(z)$ is not
bijective, is discrete. Let $z_0\in \sigma (P)$. Then we can find $N\in
{\bf N}$ and operators $R_+:{\cal H}_1\to {\bf C}^N$, $R_-:{\bf
  C}^N\to {\cal H}_2$ such that \ekv{mgd.1} { {\cal
    P}(z):=\begin{pmatrix}P(z) &R_-\\ R_+ &0\end{pmatrix} : {\cal
    H}_1\times {\bf C}^N\to {\cal H}_2\times {\bf C}^N } is bijective
for $z\in \mathrm{neigh\,}(z_0,\Omega )$ (i.e. for $z$ in some
neighborhood of $z_0$ in $\Omega $). Let \ekv{mgd.2} { {\cal
    E}(z)=\begin{pmatrix}E(z) &E_+(z)\\ E_-(z)
    &E_{-+}(z)\end{pmatrix}: {\cal H}_2\times {\bf C}^N\to {\cal
    H}_1\times {\bf C}^N } denote the inverse, depending
holomorphically on $z$.

Working in a small neighborhood of $z_0$ disjoint from $\sigma (P)\setminus \{z_0 \}$, we apply the following standard computations and arguments (\cite{MeSj, SjZw07}) where the first formula is already in (\ref{si.4.5}):
$$
P(z)^{-1}=E(z)-E_+(z)E_{-+}(z)^{-1}E_-(z),
$$
$$
P^{-1}\partial _zP=E(z) \partial _zP-E_+(z)E_{-+}(z)^{-1}E_-(z) \partial _zP,
$$
writing $\partial =\partial _z=\partial /\partial z$.
Here the first term to the right is holomorphic and the second term is
of finite rank with a finite pole at $z=z_0$. Let $\gamma$ be the
oriented boundary of the open disc
$D(z_0,\epsilon )$ with center $z_0$ and with radius $\epsilon >0$ small enough. Integrating along $\gamma $, we get
$$
\frac{1}{2\pi i}\int_\gamma P^{-1}\partial _zP dz=-\frac{1}{2\pi i}\int_\gamma E_+E_{-+}^{-1}E_-\partial_z P dz.
$$
The integrand to the right is of trace class, so the  left hand side is of trace class and we get
\ekv{mgd.3}
{
\mathrm{tr\,}\frac{1}{2\pi i}\int_\gamma P^{-1}\partial Pdz=-\frac{1}{2\pi i}\int_\gamma \mathrm{tr\,}E_+E_{-+}^{-1}E_-\partial P dz.
}
The relation ${\cal E}{\cal P}=1$ implies 
\ekv{mgd.4}
{
E_-P+E_{-+}R_+=0,\ E_-R_-=1,
}
and differentiating the relation ${\cal P}{\cal E}=1$ gives
\ekv{mgd.5}
{
(\partial P)E_++P\partial E_++R_-\partial E_{-+}=0. 
}
Combining this with the cyclicity of the trace, we have
\begin{equation*}\begin{split}
    -\mathrm{tr\,}E_+E_{-+}^{-1}E_-\partial P&=-\mathrm{tr\,}E_{-+}^{-1}E_-(\partial P)E_+\\
    &=\mathrm{tr\,}E_{-+}^{-1}E_-P\partial E_+ +\mathrm{tr\,}E_{-+}^{-1}E_-R_-\partial E_{-+}\\
    &=-\mathrm{tr\,}E_{-+}^{-1}E_{-+}R_+\partial E_+ +\mathrm{tr\,}E_{-+}^{-1}\partial E_{-+}\\
    &=-\mathrm{tr\,}R_+\partial E_++\mathrm{tr\,}E^{-1}_{-+}\partial
    E_{-+}.
\end{split}\end{equation*}

The first term in the last expression vanishes since $R_+\partial
E_+=\partial (R_+E_+)=\partial (1)=0$, so (\ref{mgd.3}) becomes
\ekv{mgd.6} {\begin{split}
    &\mathrm{tr\,}\frac{1}{2\pi i}\int_\gamma P(z)^{-1}\partial
    P(z)dz=\frac{1}{2\pi i}\int_\gamma
    \mathrm{tr\,}E_{-+}^{-1}\partial E_{-+}dz\\
    & = \frac{1}{2\pi }\mathrm{var\, arg}_\gamma (\ln\det E_{-+})=m(z_0,\det E_{-+}),
\end{split}
} 
where $m(z_0,\det E_{-+})$ denotes the multiplicity of $z_0$ as a zero of $\det E_{-+}(z)$.
\begin{remark}\label{mgd0}
  {\rm From the cyclicity of the trace in the beginning of the
  calculations we see that $\int_\gamma (\partial _zP)P^{-1}dz$ is of
  trace class and has the same trace as $\int_\gamma P^{-1}\partial
  _zPdz$.}
\end{remark}

A more elegant presentation of the above discussion could be based on
(\ref{si.2}):
$$
{\cal P}=\begin{pmatrix}P(z) &0\\ * &1\end{pmatrix}
\begin{pmatrix}1 &*\\0 &E_{-+}^{-1}\end{pmatrix}=:{\cal A}{\cal B},
$$
which at least formally leads to
\begin{equation}\begin{split}
0=\mathrm{tr\,}\int_\gamma {\cal P}^{-1}\partial {\cal P}dz&=
\mathrm{tr\,}\int_\gamma {\cal A}^{-1}\partial {\cal A}dz+
\mathrm{tr\,}\int_\gamma {\cal B}^{-1}\partial {\cal B}dz\\
&=\mathrm{tr\,}\int_\gamma P^{-1}\partial Pdz-\mathrm{tr\,}\int_\gamma
E_{-+}^{-1}\partial E_{-+}dz.
\end{split}\end{equation}
\begin{dref}\label{mgd1}
By $\det P=\det _\Omega P$ we denote any holomorphic function $f$ on $\Omega $ with $f^{-1}(0)=\sigma (P)$ for which 
\ekv{mgd.7}
{
m(z_0,f)=\mathrm{tr\,}\frac{1}{2\pi i}\int_{\partial D(z_0,r)}P(z)^{-1}\partial P(z)dz,\hbox{ for all }z_0\in \sigma (P).
}
Here $r>0$ is small enough so that $\sigma (P)\cap D(z_0,r)=\{ z_0\}$.
\end{dref}
By Mittag-Leffler's theorem such a holomorphic function exists and it is unique up to a non-vanishing holomorphic factor.
\begin{prop}\label{mgd2}
Let $\Omega \ni z\mapsto Q(z)\in {\cal L}({\cal H}_2,{\cal H}_3)$ have
the same general properties as $P(z)$. Then the determinants of $P$,
$Q$, $QP$ can be defined as above so that
\ekv{mgd.8}
{
\det (Q(z)P(z))=(\det Q(z)) (\det P(z)).
}
\end{prop}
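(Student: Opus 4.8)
The plan is to verify the multiplicity condition \eqref{mgd.7} for the product $Q(z)P(z)$ by reducing it, one point of $\sigma(QP)$ at a time, to the already-established identity \eqref{mgd.6} for $P$ and for $Q$ separately. First I would note that $\sigma(QP)=\sigma(Q)\cup\sigma(P)$: indeed, $Q(z)P(z)$ is a holomorphic Fredholm family of index $0$ from ${\cal H}_1$ to ${\cal H}_3$ (the composition of two index-$0$ Fredholm operators is index-$0$ Fredholm), it is bijective wherever both $P(z)$ and $Q(z)$ are, and conversely if $Q(z)P(z)$ is bijective then $P(z)$ is injective and $Q(z)$ is surjective, which for index-$0$ Fredholm operators forces both to be bijective. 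Hence the right-hand side $(\det Q)(\det P)$ is a holomorphic function whose zero set is exactly $\sigma(QP)$, so it only remains to check that it has the correct multiplicity at each $z_0\in\sigma(QP)$.

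Fix such a $z_0$ and choose $r>0$ small enough that $D(z_0,r)$ meets $\sigma(Q)\cup\sigma(P)$ only at $z_0$, with $\gamma=\partial D(z_0,r)$. The multiplicity of $z_0$ as a zero of $(\det Q)(\det P)$ is $m(z_0,\det Q)+m(z_0,\det P)$, which by Definition \ref{mgd1} equals
\[
\operatorname{tr}\frac{1}{2\pi i}\int_\gamma Q^{-1}\partial Q\,dz
+\operatorname{tr}\frac{1}{2\pi i}\int_\gamma P^{-1}\partial P\,dz.
\]
On the other hand, on $\gamma$ (where $P$ and $Q$ are invertible) the Leibniz rule gives $(QP)^{-1}\partial(QP)=P^{-1}\partial P+P^{-1}(Q^{-1}\partial Q)P$. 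Integrating over $\gamma$ and taking traces, the desired identity
\[
\operatorname{tr}\frac{1}{2\pi i}\int_\gamma (QP)^{-1}\partial(QP)\,dz
=\operatorname{tr}\frac{1}{2\pi i}\int_\gamma Q^{-1}\partial Q\,dz
+\operatorname{tr}\frac{1}{2\pi i}\int_\gamma P^{-1}\partial P\,dz
\]
follows provided we can justify that each integral is trace class and that
$\operatorname{tr}\int_\gamma P^{-1}(Q^{-1}\partial Q)P\,dz=\operatorname{tr}\int_\gamma Q^{-1}\partial Q\,dz$, i.e. a cyclicity-of-the-trace argument.

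The main obstacle is precisely this last point: $Q^{-1}\partial Q$ need not be trace class as an operator for each individual $z$, so one cannot naively move $P$ past it under the trace pointwise. The fix is to argue exactly as in the derivation of \eqref{mgd.6}: introduce a finite-rank Grushin perturbation making each of $P$, $Q$, $QP$ invertible near $z_0$, so that each resolvent contour integral $\int_\gamma P^{-1}\partial P\,dz$ (resp. for $Q$, for $QP$) is represented as a finite-rank holomorphic-plus-finite-pole expression whose contour integral \emph{is} trace class, and then apply cyclicity to the genuinely trace-class integrands. Alternatively, and more cleanly, I would invoke the factorization \eqref{si.2} at the level of the combined Grushin matrix for $QP$, splitting it into an ${\cal A}{\cal B}$ product as in the ``more elegant presentation'' remark, which packages the cyclicity bookkeeping automatically; one reads off that $m(z_0,\det E^{QP}_{-+})=m(z_0,\det E^{Q}_{-+})+m(z_0,\det E^{P}_{-+})$. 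Summing over all $z_0$ and using uniqueness of $\det$ up to a non-vanishing holomorphic factor (Mittag-Leffler), we conclude that $(\det Q)(\det P)$ is a legitimate choice of $\det(QP)$, which is \eqref{mgd.8}.
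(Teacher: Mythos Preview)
Your overall strategy matches the paper's: reduce to the multiplicity identity at each $z_0$, expand $(QP)^{-1}\partial(QP)=P^{-1}\partial P+P^{-1}(Q^{-1}\partial Q)P$, and then justify that the second term has the same trace after contour integration as $Q^{-1}\partial Q$. You correctly flag the cyclicity step as the real issue, but your two proposed resolutions are left as gestures rather than arguments; in particular, ``apply cyclicity to the genuinely trace-class integrands'' does not yet say which integrands those are, and the ``combined Grushin'' route would require you to relate $E_{-+}^{QP}$ to $E_{-+}^P$ and $E_{-+}^Q$, which you do not do.

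The paper's resolution of this step is concrete and worth knowing. Set $A=P^{-1}$ and $B=Q^{-1}(\partial Q)P$, and split each as $A=A_{\mathrm{hol}}+A_{\mathrm{sing}}$, $B=B_{\mathrm{hol}}+B_{\mathrm{sing}}$, where the ``hol'' parts are holomorphic near $z_0$ and the ``sing'' parts are finite rank with a pole (this decomposition comes directly from the Grushin representation $P^{-1}=E-E_+E_{-+}^{-1}E_-$, and similarly for $Q^{-1}\partial Q$). Expanding the commutator,
\[
AB-BA=(A_{\mathrm{hol}}B_{\mathrm{hol}}-B_{\mathrm{hol}}A_{\mathrm{hol}})+(\text{three terms each involving at least one ``sing'' factor}).
\]
The first bracket is holomorphic near $z_0$, so its contour integral vanishes; each of the remaining three brackets is trace class (one factor is finite rank) and has trace zero by ordinary cyclicity. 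Hence $\int_\gamma(AB-BA)\,dz$ is trace class with trace $0$, which is exactly the statement
\[
\operatorname{tr}\frac{1}{2\pi i}\int_\gamma P^{-1}(Q^{-1}\partial Q)P\,dz=\operatorname{tr}\frac{1}{2\pi i}\int_\gamma Q^{-1}\partial Q\,dz,
\]
and the multiplicity identity follows. This is the missing content behind your phrase ``apply cyclicity to the genuinely trace-class integrands.''
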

\begin{proof}
We clearly have 
$$
\sigma (QP)=\sigma (Q)\cup \sigma (P)
$$
as sets, and we have to prove that 
\ekv{mgd.9}
{
m(z_0,\det (QP))=m(z_0,\det P)+m(z_0,\det Q),
}
for every $z_0\in \Omega $, where $m(z_0,\det P)$ is defined to be zero when $z\not\in \sigma (P)$ and otherwise as in (\ref{mgd.7}).

Let $z_0\in \sigma (P)\cup \sigma (Q)$ and let $z_0\ne z\in
\mathrm{neigh\,}(z_0)$. We have at $z$, \ekv{mgd.10} {
  (QP)^{-1}\partial (QP)=P^{-1}Q^{-1}(\partial Q)P+P^{-1}\partial P.
} Here the first term to the right needs to be transformed. For each of
the operators $A=P^{-1}$, $B=Q^{-1}(\partial Q)P$ we make a
decomposition $A=A_\mathrm{hol}+A_\mathrm{sing}$ where
$A_\mathrm{hol}$ is holomorphic in a full neighborhood of $z_0$ and
$A_\mathrm{sing}$ has a pole at $z_0$ but is of finite rank and hence
of trace class. Now write \ekv{mgd.11}
{\begin{split}AB-BA=&(A_\mathrm{hol}B_\mathrm{hol}-B_\mathrm{hol}A_\mathrm{hol})
    +(A_\mathrm{hol}B_\mathrm{sing}-B_\mathrm{sing}A_\mathrm{hol})\\
    &+(A_\mathrm{sing}B_\mathrm{hol}-B_\mathrm{hol}A_\mathrm{sing})
    +(A_\mathrm{sing}B_\mathrm{sing}-B_\mathrm{sing}A_\mathrm{sing}).\end{split}}
The first term to the right is holomorphic near $z_0$, while the other
three are of trace class with vanishing trace. Thus if $\gamma
=\partial D(z_0,r)$ with $0<r$ small enough, $\int_\gamma (AB-BA)dz$
is of trace class and with trace $0$.

\par Applying this to the first term to the right in (\ref{mgd.10}),
we see that 
$$
\int_\gamma (P^{-1}Q^{-1}(\partial Q)P-Q^{-1}\partial Q)dz
$$
is of trace class and has trace $0$. It follows that $
(2\pi i)^{-1}\int_\gamma P^{-1}Q^{-1}(\partial Q)Pdz
$ is of trace class and has the same trace as $(2\pi
i)^{-1}\int_\gamma Q^{-1}\partial Q dz$ and we get
$$
\mathrm{tr\,}\frac{1}{2\pi i}\int_\gamma (QP)^{-1}\partial (QP)dz=
\mathrm{tr\,}\frac{1}{2\pi i}\int_\gamma Q^{-1}\partial Qdz+
\mathrm{tr\,}\frac{1}{2\pi i}\int_\gamma P^{-1}\partial Pdz,
$$
which amounts to (\ref{mgd.9}).
\end{proof}

\subsection{Extension to meromorphic families}\label{mf}
In this section we essentially follow \cite{GoLe09}, see also \cite{BoBrRa11}.
Let $\Omega $ be open and connected. Let $\Omega \ni z\mapsto P(z)\in
{\cal L}({\cal H}_1,{\cal H}_2)$ be meromorphic with the poles
$z_1,z_2,...$. Here ${\cal H}_j$ are complex Hilbert spaces.
\begin{dref}\label{mf1}
We say that $P(z)$ is a meromorphic Fredholm function (or Fredholm family) if the following hold:
\begin{itemize}
\item $P(z)$ is Fredholm of index $0$ on $\Omega \setminus \{
  z_1,z_2,..\}$ and bijective for at least one $z$ in that set.
\item Let $z_0$ be any pole and write the Laurent series at $z_0$ as
$$
P(z)=\sum_1^{N_0} (z-z_0)^{-j}P_j +B(z),\ z\in \mathrm{neigh\,}(z_0),
$$
with $B(z)$ holomorphic. Then $P_j$ are of finite rank (implying that
$B(z)$ is Fredholm of index zero for $z\ne z_0$). Moreover, $B(z_0)$
is a Fredholm operator of index 0.
\end{itemize}
\end{dref}
The motivation for introducing this class is that if $Q(z)$ is a
holomorphic family of Fredholm operators on $\Omega $, bijective for
at least one $z\in \Omega $, then $P(z)=Q(z)^{-1}$ is a meromorphic
Fredholm function. 

If $P^j(z)$, $j=1,2$ are meromorphic Fredholm families on $\Omega $,
then $P^1(z)P^2(z)$ is also such a family. In fact, the first property
in the definition is easy to verify and if $z_0$ is a pole for one or
both factors, we write
$$
P^j(z)=\sum_1^{N_j}(z-z_0)^{-k}P_k^j+B^j(z)
$$
and check that 
$$
P^1(z)P^2(z)=\sum_1^{N_1+N_2}(z-z_0)^{-k}P_k+B(z)
$$
where $P_k$ are of finite rank and $B(z_0)=B^1(z_0)B^2(z_0)+K$, where
$K$ is of finite rank. 

We shall show that the class of meromorphic Fredholm functions on
$\Omega $ is closed under inversion and introduce the notion of
meromorphic determinant for such families. The key will be a well
chosen Grushin problem. 

We pause to recollect the condition for the well-posedness of a
Grushin problem
\ekv{mf.1}
{
\begin{cases}Pu+R_-u_-=v,\\
R_+u=v_+,\end{cases}
}
when $P:{\cal H}_1\to {\cal H}_2$ is a fixed Fredholm operator of
index 0 and $R_+:{\cal H}_1\to {\bf C}^N$ and $R_-:{\bf C}^N\to {\cal
  H}_2$ are of rank $N$. Since (\ref{mf.1}) defines an operator 
$$
{\cal P}=\begin{pmatrix}P &R_-\\ R_+ &0\end{pmatrix}:\, {\cal
  H}_1\times {\bf C}^N\to {\cal H}_2\times {\bf C}^N
$$
of index $0$, it is bijective precisely when it is injective, so it
suffices to review when (\ref{mf.1}) is injective. The necessary and
sufficent condition for that is 
\ekv{mf.2}
{
\begin{cases}u\in {\cal N}(R_+)\\ Pu\in {\cal
    R}(R_-)\end{cases}\Rightarrow u=0,
}
where ${\cal N}$ indicates the null space and ${\cal R}$ the range.
Now let $P(z)$ be a meromorphic Fredholm function with a pole at
$z_0$. We look for $R_\pm$ as above (independent of $z$) such that the
problem
\ekv{mf.3}
{\begin{cases}
(\sum_1^{N_0}(z-z_0)^{-j}P_j +B(z))u+R_-u_-=v\\
R_+u=v_+\end{cases}
}
is well-posed for all $z$ in a pointed neighborhood of $z_0$. 

Since the $P_j$ are finitely many operators of finite rank, we can
choose $R_+$ with $N$ large enough, so that 
$$
{{P_j}_\vert}_{{\cal N}(R_+)}=0,\ {\cal N}(R_+)\subset {\cal
  N}(B(z_0))^\perp .
$$
Then $B(z_0)({\cal N}(R_+))$ is a closed subspace of ${\cal H}_2$ of
codimension $N$, and we choose $R_-$ of rank $N$ such that $B(z_0)({\cal
  N}(R_+))\cap {\cal R}(R_-)=0$ i.e. 
\ekv{mf.3.5}{
{\cal H}_2=B(z_0)({\cal N}(R_+))\oplus {\cal R}(R_-).}

Then the problem
$$
\begin{cases}B(z_0)u+R_-u_-=v\\ R_+u=v_+\end{cases}
$$
is well-posed and we check that (\ref{mf.3}) has the same
property. Indeed, $P(z)=B(z)$ on ${\cal N}(R_+)$ and hence this
restriction is injective for $z$ close to $z_0$, and $P(z)({\cal
  N}(R_+))\oplus {\cal R}(R_-)={\cal H}_2$.

\par Let us also analyze the structure of the solution operator to the
problem (\ref{mf.3}). Let $\widetilde{E}_+$ be a right inverse of
$R_+$ so that a general $u\in {\cal H}_1$ has the direct sum
decomposition 
\ekv{mf.4}
{
u=u'+\widetilde{E}_+\widetilde{v}_+,\ u'\in {\cal N}(R_+),\
\widetilde{v}_+\in {\bf C}^N.
}
Then the second equation of (\ref{mf.3}) holds precisely when
$\widetilde{v}_+=v_+$. Let $\Pi '$, $\Pi ''$ be the projections on
the first and second summands in the direct sum decomposition (\ref{mf.3.5})
and write ${\cal H}_2\ni v=\Pi 'v+\Pi ''v=v'+v''$. 

\par Since $P_ju'=0$, the first equation in (\ref{mf.3}) becomes
$$
B(z)u'+R_-u_-=v-\sum_1^{N_0}(z-z_0)^{-j}P_j\widetilde{E}_+v_+ -B(z)\widetilde{E}_+v_+
$$
and we determine $u'$ and $u_-$ by applying $\Pi '$ and $\Pi ''$
respectively, using that ${{\Pi 'B(z)}_\vert}_{{\cal N}(R_+)}={{\Pi
    'B(z_0)}_\vert}_{{\cal N}(R_+)}+{\cal O}(z-z_0)$ is bijective:
${\cal N}(R_+)\to B(z_0)({\cal N}(R_+))$, that $\Pi ''R_-=R_-$ and that
$R_-:{\bf C}^N\to R_-({\bf C}^N)$ is bijective. If $\widetilde{E}_-$
is a left inverse of $R_-$, we get
$$
\Pi 'B(z)u'=v'-\sum_1^{N_0}(z-z_0)^{-j}\Pi 'P_j\widetilde{E}_+v_+
-\Pi 'B(z)\widetilde{E}_+v_+
,$$
$$u'=({{\Pi 'B(z)}_\vert}_{{\cal
    N}(R_+)})^{-1}\left( v'-\sum_1^{N_0}(z-z_0)^{-j}\Pi
'P_j\widetilde{E}_+v_+-\Pi 'B(z)\widetilde{E}_+v_+\right) ,$$
and
\begin{equation*}
\begin{split}
u_-=&\widetilde{E}_-\Pi
''(v-\sum_1^{N_0}(z-z_0)^{-j}P_j\widetilde{E}_+v_+-B(z)\widetilde{E}_+v_+)\\
&-\widetilde{E}_-\Pi ''(B(z)-B(z_0))u'
.
\end{split}
\end{equation*}

\par As usual, we write the solution of (\ref{mf.3}) in the form
\ekv{mf.5}{
  \begin{cases}u=Ev+E_+v_+,\\ u_-=E_-v+E_{-+}v_+,\end{cases} } where
``explicit'' expressions for $E$, $E_\cdot $ can be obtained from the
above computations. We see that \ekv{mf.6} { E(z)=({{\Pi
      'B(z)}_\vert}_{{\cal N}(R_+)})^{-1}\Pi ' } is a holomorphic
family of Fredholm operators of index $0$, while $E_+(z)$, $E_-(z)$,
$E_{-+}(z)$ are meromorphic operator valued functions with singular
terms of finite rank. In particular, $E_{-+}(z)$ is a meromorphic
function with values in the $N\times N$ matrices which is invertible
for $z\ne z_0$, so that $\det E_{-+}$ is meromorphic with a possible
pole at $z_0$, non-vanishing and holomorphic in a pointed
neighborhood of that point. Thus $E_{-+}^{-1}$ is also meromorphic and
we conclude that
$$
P(z)^{-1}=E(z)-E_+(z)E_{-+}(z)^{-1}E_-(z)
$$
is a meromorphic family of Fredholm operators near $z_0$. Thus we get
\begin{prop}\label{mf2}
If $P(z)$ is a meromorphic Fredholm function, then $P(z)^{-1}$ has the
same property.
\end{prop}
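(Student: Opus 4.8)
The plan is to reduce the statement to a local assertion near each exceptional point $z_0$ — a pole of $P$, or a point of $\sigma (P)$ that is not a pole — and then to patch the local descriptions together. Away from the (discrete) set of exceptional points, $P(z)^{-1}$ is holomorphic and Fredholm of index $0$ by analytic Fredholm theory, so there is nothing to check there. Near a fixed $z_0$ I would start from the Laurent expansion $P(z)=\sum_1^{N_0}(z-z_0)^{-j}P_j+B(z)$ with $B$ holomorphic, the $P_j$ of finite rank and $B(z_0)$ Fredholm of index $0$, the case $N_0=0$ covering a point of $\sigma (P)$ that is not a pole.

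First I would set up the Grushin problem ${\cal P}(z)=\begin{pmatrix}P(z) &R_-\\ R_+ &0\end{pmatrix}$ with $R_\pm $ of finite rank $N$, chosen exactly as in the discussion preceding the proposition: take $N$ large enough that ${{P_j}_\vert}_{{\cal N}(R_+)}=0$ for all $j$ and ${\cal N}(R_+)\subset {\cal N}(B(z_0))^\perp $, then observe that $B(z_0)({\cal N}(R_+))$ is closed of codimension $N$ (here the index-$0$ Fredholm property of $B(z_0)$ enters) and choose $R_-$ of rank $N$ with ${\cal H}_2=B(z_0)({\cal N}(R_+))\oplus {\cal R}(R_-)$. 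Since on ${\cal N}(R_+)$ one has $P(z)=B(z)$, with ${{\Pi 'B(z)}_\vert}_{{\cal N}(R_+)}={{\Pi 'B(z_0)}_\vert}_{{\cal N}(R_+)}+{\cal O}(z-z_0)$ bijective onto $B(z_0)({\cal N}(R_+))$, the problem is well posed in a pointed neighbourhood of $z_0$ (and at $z_0$ itself), and solving it explicitly as in (\ref{mf.4})--(\ref{mf.5}) exhibits the inverse ${\cal E}(z)=\begin{pmatrix}E &E_+\\ E_- &E_{-+}\end{pmatrix}$ with $E(z)=({{\Pi 'B(z)}_\vert}_{{\cal N}(R_+)})^{-1}\Pi '$ a holomorphic family of index-$0$ Fredholm operators and $E_+,E_-,E_{-+}$ meromorphic with singular parts of finite rank.

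The endgame is then the Schur-complement identity $P(z)^{-1}=E(z)-E_+(z)E_{-+}(z)^{-1}E_-(z)$, valid for $z\ne z_0$ near $z_0$. Here $E_{-+}(z)$ is an $N\times N$ matrix-valued meromorphic function which is invertible for $z\ne z_0$, hence $\det E_{-+}$ is meromorphic, non-vanishing and holomorphic in a pointed neighbourhood of $z_0$; therefore $E_{-+}^{-1}$ is again meromorphic with $N$-dimensional, in particular finite-rank, Laurent coefficients. It follows that $P(z)^{-1}$ is meromorphic near $z_0$; its singular Laurent coefficients come from the finite-rank term $E_+E_{-+}^{-1}E_-$ and are of finite rank, while its holomorphic part at $z_0$ equals $E(z_0)$ modulo a finite-rank operator and is thus Fredholm of index $0$. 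This verifies both requirements of Definition \ref{mf1} at $z_0$, and as $z_0$ was an arbitrary exceptional point we conclude that $P(z)^{-1}$ is a meromorphic Fredholm function on $\Omega $.

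The only delicate point, and the expected main obstacle, is the simultaneous choice of $R_\pm $: one must at once annihilate all of the finitely many finite-rank coefficients $P_j$ on ${\cal N}(R_+)$, arrange ${\cal N}(R_+)\subset {\cal N}(B(z_0))^\perp $ so that $B(z_0)$ is injective there, and exploit the index-$0$ Fredholm property of $B(z_0)$ to split ${\cal H}_2$ using ${\cal R}(R_-)$. Once this finite-dimensional bookkeeping is carried out, the well-posedness in a pointed neighbourhood and all the meromorphy and finite-rank assertions follow from the Gaussian-elimination formulas of Section \ref{si} together with the elementary fact that a square matrix with not-identically-zero determinant has a meromorphic inverse.
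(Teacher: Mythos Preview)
Your proposal is correct and follows essentially the same route as the paper: the paper's proof is precisely the discussion leading up to the proposition, setting up the Grushin problem (\ref{mf.3}) with $R_\pm$ chosen so that ${{P_j}_\vert}_{{\cal N}(R_+)}=0$ and ${\cal H}_2=B(z_0)({\cal N}(R_+))\oplus {\cal R}(R_-)$, solving explicitly to obtain $E(z)$ holomorphic Fredholm of index $0$ and $E_\pm,E_{-+}$ meromorphic with finite-rank singular parts, and then reading off the structure of $P(z)^{-1}=E(z)-E_+(z)E_{-+}(z)^{-1}E_-(z)$. Your write-up reproduces these steps faithfully, including the observation that $E_{-+}^{-1}$ is meromorphic since $\det E_{-+}$ is meromorphic and not identically zero.
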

We shall next extend the discussion of determinants in Subsection
\ref{mgd}. When $R_\pm$ are independent of $z$ and 
${\cal P}=\begin{pmatrix}P(z) &R_-\\R_+ &0\end{pmatrix}={\cal
  H}_1\times {\bf C}^N\to {\cal H}_2\times {\bf C}^N$ is bijective
with inverse ${\cal E}=\begin{pmatrix}E &E_+\\ E_- &E_{-+}\end{pmatrix}$,
  we notice that 
$$
{\cal P}^{-1}\partial {\cal P}=\begin{pmatrix}E\partial P &0\\
  E_-\partial P &0\end{pmatrix}
$$
In the case of our special problem (\ref{mf.3}), $E(z)$ is given in
(\ref{mf.6}) and the non-holomorphic part of $E\partial P$ is 
$$
({{\Pi 'B(z)}_\vert}_{{\cal N}(R_+)})^{-1}\Pi '\partial _z(\sum_1^{N_0}(z-z_0)^{-j}P_j)
$$
which is of finite rank and with the same trace as 
$$
\Pi '\partial _z(\sum_1^{N_0}(z-z_0)^{-j}P_j)({{\Pi
    'B(z)}_\vert}_{{\cal N}(R_+)})^{-1} .
$$
This operator vanishes, since ${{P_j}_\vert}_{{\cal
    N}(R_+)}=0$. Thus $\int_\gamma {\cal P}^{-1}\partial {\cal P}dz$
and $\int_\gamma E\partial P dz$ are of trace class and have the trace
$0$ if $\gamma =D(z_0,r)$ for $0<r\ll 1$.
 \par As in and around (\ref{mgd.3}) we now get 
\begin{equation*}\begin{split}
\mathrm{tr\,}\frac{1}{2\pi i}\int_\gamma P^{-1}\partial P dz
&= -\mathrm{tr\,}\frac{1}{2\pi i}\int_\gamma
E_+E^{-1}_{-+}E_-\partial P dz\\
&=\mathrm{tr\,}\frac{1}{2\pi i}\int_\gamma E_{-+}^{-1}\partial E_{-+}dz,
\end{split}
\end{equation*}
leading to
\ekv{mf.7}
{
\mathrm{tr\,}\frac{1}{2\pi i}\int_\gamma P^{-1}\partial P dz=
m(z_0,\det E_{-+}),
}
where the integer $m(z_0,\det E_{-+})$ is the order of $z_0$ as a zero of $\det
E_{-+}$ when the latter function is holomorphic near $z_0$ and when
$\det E_{-+}$ has a pole at $z_0$, then
$-m(z_0,\det E_{-+})$ is the order of that pole. 

\par Note for future reference that 
\ekv{mf.8}
{
P^{-1}\partial P=a+b,
}
where $a$ is holomorphic near $z_0$ and $b$ is of finite rank and
\ekv{mf.9}
{
\mathrm{tr\,}b=\mathrm{tr\,}(E_{-+}^{-1}\partial E_{-+}).}

We emphasize that in view of (\ref{mf.7}), $(2\pi
i)^{-1}\mathrm{tr\,}\int_\gamma P^{-1}\partial P dz$ is an integer,
and we can then give the following extension to meromorphic families
of the notion of determinant:
\begin{dref}\label{mf3}
  Let $\Omega \ni z\mapsto P(z)\in {\cal L}({\cal H}_1,{\cal H}_2)$ be
  a meromorphic Fredholm function with the poles $z_1,z_2,...$. By
  $\det P=\det _\Omega P$ we denote any meromorphic function $f(z)$
  whose restriction to $\Omega \setminus \{ z_1,z_2,...\}$ is a
  determinant for $P$ in the sense of Definition \ref{mgd1}, and
  such that for every pole $z_j$ of $P$, we have
$$
\mathrm{tr\,}\frac{1}{2\pi i}\int_{\partial D(z_j,r)}P(z)^{-1}\partial P(z)dz=m(z_j,f)
$$
when $r>0$ is small enough.
\end{dref}
Observe that Proposition \ref{mgd2} and its proof extend to the
case of meromorphic Fredholm functions.

\subsection{Determinants via traces}\label{gd}
If ${\cal H}$ is a complex Hilbert space and
$P=P(z)\in {\cal L}({\cal H},{\cal H})$ is a trace class perturbation
of the identity, depending holomorphically on the complex parameter
$z$, we can define $D(z)=\ln \det P(z)$ and we have \ekv{gd.1} {
  \frac{d}{dz}D(z)=\mathrm{tr\,}P(z)^{-1}\frac{dP}{dz}, } at the
points where $P$ is bijective. Now even when $P$ is not a trace class
perturbation of the identity, it may happen that $P^{-1}\frac{dP}{dz}$
is of trace class, and we can now consider the case when $P(z)\in
{\cal L}({\cal H}_1, {\cal H}_2)$ for different complex Hilbert
spaces ${\cal H}_1$, ${\cal H}_2$. By integration of (\ref{gd.1}), we
may then say that $D(z)$ is well-defined up to a constant as a
possibly multivalued function on every connected component of the open
set where $P(z)$ is invertible. If $P^{-1}\frac{dP}{dz}$ is not of
trace class we may differentiate further and hope to reach an
expression which is of trace class. Then we would be able to define
$D(z)$ up to a polynomial. In this section we carry out such a scheme.
The idea of reaching trace class operators by means of differentiation
in connection with determinants has been used by G.~Carron \cite{Ca02}.

\par Let $\Omega \subset {\bf C}$ be open and connected, let ${\cal
  H}_j$, $j=1,2,3$ be complex Hilbert spaces. Let $\Sigma =\Sigma
(P)\subset \Omega $ be discrete and let $\Omega \setminus \Sigma \ni
z\mapsto P(z)\in {\cal L}( {\cal H}_1,{\cal H}_2)$ be holomorphic and
pointwise bijective. Let $C_p=C_p({\cal H}_1,{\cal H}_2)$ denote the
Schatten class of index $p\in [1,+\infty ]$ (see for instance
\cite{GoKr69}). Assume that for some $p\in [1,+\infty [$, \ekv{gd.2} {
\partial _z^kP(z)\in C_{\max (1,p/k)},\ 1\le k\in {\bf N},
}
locally uniformly on $\Omega $. By the Cauchy inequalities, it
suffices to check this for $k\le N$, where $N=N(p)$ is the smallest
integer $\ge p$. 

\par Recall that $C_p$ increases with $p$ and that if $C\in C_p({\cal
  H}_1,{\cal H}_2)$ and $D\in C_q({\cal H}_2,{\cal H}_3)$, then $DC\in
C_r({\cal H}_1,{\cal H}_3)$ with $\frac{1}{r}=\min
(1,\frac{1}{p}+\frac{1}{q})$. (See \cite{GoKr69}, Proposition 7.2.) In the following, we shall think of
bounded operators as being of order $= 0$ and of elements in $C_p$
as being of order $= -1/p$. In all cases we  restrict here the order to
the interval $[-1,0]$ and then orders are additive under composition:
$\mathrm{ord\,}(DC)= \max (-1,\mathrm{ord\,}(D)+\mathrm{ord\,}(C))
$. (We adopt the convention that the order is not unique; if $C$ is of
order $\alpha $ and $\alpha \le \beta \le 0$, then $C$ is also of
order $\beta $.)

\par We also notice that $P(z)^{-1}$ satisfies (\ref{gd.2}).

\par On the set $\Omega \setminus \Sigma (P)$, we check that 
\ekv{gd.3}{
\partial _z^{j-1}(P(z)^{-1}\partial _zP(z))\in
C_{\max (1,\frac{p}{j})},\ j\ge 1,
}
i.e. of order $= \max (-1,-\frac{j}{p})$. Thus, for $p\le j\in {\bf
  N}$, we can define
\ekv{gd.3.5}
{
D_{P,j}(z)=\mathrm{tr\,}(\partial _z^{j-1}(P(z)^{-1}\partial
_zP(z))),\ z\in \Omega \setminus \Sigma (P).
}
Clearly,
$$
\partial _zD_{P,j}(z)=D_{P,j+1}(z).
$$
We can now define the determinant of $P(z)$. At the end of the section
we show that this new notion coincides with the one for meromorphic
families of Fredholm operators of the preceding subsection.
\begin{dref}\label{gd0}
Let $N=N(p)$ be the smallest integer $\ge p$. We define
$D_P(z)=\ln\det P(z)$ to be any multivalued holomorphic function on $\Omega
\setminus \Sigma (P)$ which solves the equation
\ekv{gd.4}
{
\partial _z^ND_P(z)=\mathrm{tr\,}(\partial _z^{N-1}(P(z)^{-1}\partial _zP(z))).
}
Thus $D_P(z)$ is well defined (on the universal covering space of
$\Omega \setminus \Sigma (P)$) up to a polynomial of degree $N-1$.
\end{dref}

Let $\Omega \ni z\mapsto Q(z)\in {\cal L}({\cal H}_2,{\cal H}_3)$ be a
second family with the same general properties as $P(z)$ and for
simplicity with the same $p$ in (the analogue of) (\ref{gd.2}). Then
$Q(z)P(z)$ fulfills the same assumptions and
we next check the additivity property
\ekv{gd.4}
{
\ln \det PQ=\ln\det P +\ln\det Q,\hbox{ on }\Omega \setminus (\Sigma
(P)+\Sigma (Q)),
}
i.e.
\ekv{gd.5}{\left(\frac{d}{dz}\right)^N\ln \det PQ=
\left(\frac{d}{dz}\right)^N\ln\det P +
\left(\frac{d}{dz}\right)^N\ln\det Q,}
when $N$ is the smallest integer $\ge p$.

When $p=1=N$, this is straight forward:
\ekv{gd.6}
{
\begin{split}
\frac{d}{dz}\ln\det PQ&=\mathrm{tr\,}(PQ)^{-1}\frac{d}{dz}(PQ)\\
&=\mathrm{tr\,}Q^{-1}P^{-1}\frac{dP}{dz}Q+\mathrm{tr\,}Q^{-1}P^{-1}P\frac{dQ}{dz}\\&=\mathrm{tr\,}Q^{-1}P^{-1}\frac{dP}{dz}Q+\mathrm{tr\,}Q^{-1}\frac{dQ}{dz}.
\end{split}
}
Here we use the cyclicity of the trace to see that the first term in
the last expression is
equal to $\mathrm{tr\,}P^{-1}\frac{dP}{dz}$ and we thus get
(\ref{gd.5}) when $N=1$.

Recall that the cyclicity of the trace says that
$\mathrm{tr\,}(P_1P_2-P_2P_1)=0$, when $P_1\in C_{p_1}({\cal H}_1,{\cal
  H}_2)$, $P_2\in C_{p_2}({\cal H}_2,{\cal H}_1)$ and $1=1/p_1+1/p_2$.
\begin{lemma}\label{gd1} Let $P_1(z)\in {\cal L}({\cal H}_1,{\cal
  H}_2)$ and $P_2(z)\in {\cal L}({\cal H}_2,{\cal H}_1)$ depend
holomorphically on $z\in \Omega $. Then
$\frac{d}{dz}(P_1P_2-P_2P_1)$ is a sum of terms of the form
$Q_1Q_2-Q_2Q_1$.
More precisely, 
$$
(P_1P_2-P_2P_1)'
=[P_1'P_2-P_2P_1']+[P_1P_2'-P_2'P_1],
$$
where we indicate derivatives
with a prime.
\end{lemma}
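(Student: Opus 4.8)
The plan is to apply the Leibniz rule to each of the two operator products and then regroup the four resulting terms into two commutator-type expressions. Since $P_1,P_2$ are holomorphic — hence differentiable — families of bounded operators and composition ${\cal L}({\cal H}_2,{\cal H}_3)\times {\cal L}({\cal H}_1,{\cal H}_2)\to {\cal L}({\cal H}_1,{\cal H}_3)$ is bilinear and continuous, the product rule applies term by term:
$$
(P_1P_2)' = P_1'P_2 + P_1P_2',\qquad (P_2P_1)' = P_2'P_1 + P_2P_1'.
$$
Subtracting the second identity from the first gives
$$
(P_1P_2-P_2P_1)' = P_1'P_2 + P_1P_2' - P_2'P_1 - P_2P_1'.
$$

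Now I would simply reorganize the right-hand side by pairing the first term with the last and the two middle terms with each other:
$$
(P_1P_2-P_2P_1)' = (P_1'P_2 - P_2P_1') + (P_1P_2' - P_2'P_1),
$$
which is the asserted identity. To see that each bracket is of the claimed form $Q_1Q_2-Q_2Q_1$, take $Q_1=P_1'$, $Q_2=P_2$ in the first bracket and $Q_1=P_1$, $Q_2=P_2'$ in the second; in both cases $Q_1\in {\cal L}({\cal H}_1,{\cal H}_2)$ and $Q_2\in {\cal L}({\cal H}_2,{\cal H}_1)$, so these expressions are of exactly the same type as $P_1P_2-P_2P_1$.

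There is no genuine obstacle here: the content is just the product rule for Banach-space-valued holomorphic functions followed by a rearrangement of four terms. The only point deserving a word is that differentiation of the operator products is term-by-term, which is immediate. The role of the lemma is to propagate, under differentiation, the vanishing-trace property of commutators used around (\ref{mgd.11}): a term $Q_1Q_2-Q_2Q_1$ has trace $0$ as soon as the Schatten exponents of $Q_1$ and $Q_2$ are conjugate, and by the identity above its derivative is again a sum of such commutators (with exponents improved by one order, by the Cauchy inequalities), so contour integrals of such expressions stay of trace class with trace $0$ after differentiating.
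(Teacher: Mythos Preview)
Your proof is correct: the identity is just the product rule for holomorphic operator-valued functions followed by a regrouping of the four terms, exactly as you wrote. The paper itself gives no proof of this lemma, treating it as immediate from the displayed formula, so your argument is precisely the obvious verification the paper leaves to the reader.
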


Iterating the lemma we see that
$\left(\frac{d}{dz}\right)^N(P_1P_2-P_2P_1)$ is a linear combination
of terms of the form $Q_1Q_2-Q_2Q_1$, with $Q_j=\partial _z^{N_j}P_j$, $N_1+N_2=N$.

\par Now return to (\ref{gd.6}), or rather the last two equations
there that are valid without traces, and write
$$
Q^{-1}P^{-1}\frac{dP}{dz}Q=P^{-1}\frac{dP}{dz}+(P_1P_2-P_2P_1),
$$
with $P_1=Q^{-1}P^{-1}\frac{dP}{dz}$, $P_2=Q$. The lemma shows that 
\begin{multline*}
\left(\frac{d}{dz}\right)^{N-1}\left( Q^{-1}P^{-1}\frac{dP}{dz}Q\right)=
\left(\frac{d}{dz}\right)^{N-1}\left( P^{-1}\frac{dP}{dz}\right) +\\
\hbox{a linear combination of terms of the form }Q_1Q_2-Q_2Q_1\\
\hbox{with }
\mathrm{ord\,}(Q_j)\le \max \left( -1,-\frac{N_j}{p}\right) ,\ N_1+N_2=N.
\end{multline*}
The cyclicity of the trace then implies that 
$$
\mathrm{tr\,}\left(\frac{d}{dz}\right)^{N-1}\left(
  Q^{-1}P^{-1}\frac{dP}{dz}Q\right) =
\mathrm{tr\,}\left(\frac{d}{dz}\right)^{N-1}\left( P^{-1}\frac{dP}{dz}\right)
$$
and we obtain (\ref{gd.5}) for a general $N$.

\par As in the case of meromorphic families of Fredholm operators, if
$z_0\in \Sigma (P)$ and $\gamma =\partial D(z_0,r)$ with $r>0$ small
enough, $\int_\gamma P^{-1}\partial P dz$ is of trace class:
\begin{prop}\label{gd2}
With $P$, $p$, $N=N(p)$ as in Definition \ref{gd0}, let $z_0\in \Sigma
(P)$, $\gamma =\partial D(z_0,r)$ with $r>0$ small enough, so that
$D(z_0,r)\cap \Sigma (P)=\{ z_0\}$. Then $\int_\gamma P^{-1}\partial P
dz$ is of trace class and we have
\ekv{gd.7}{\begin{split}
\mathrm{tr\,}\frac{1}{2\pi i}\int_\gamma P^{-1}\partial P dz&=\mathrm{tr\,}\frac{1}{2\pi i}\int_\gamma 
\frac{(-z)^{N-1}}{(N-1)!}\partial ^{N-1}(P^{-1}\partial P)dz\\&=\frac{1}{2\pi i}\int_\gamma 
\frac{(-z)^{N-1}}{(N-1)!}D_{P,N}(z)dz,
\end{split}}
where $z^{N-1}/(N-1)!$ can be replaced by any other polynomial $p(z)$
such that $\partial ^{N-1}p(z)=1$
\end{prop}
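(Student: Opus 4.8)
The plan is to reduce the proposition to an integration by parts on the closed contour $\gamma$, exploiting that $f:=P^{-1}\partial P$ is holomorphic on $\Omega\setminus\Sigma(P)$ while its $(N-1)$-st derivative is already of trace class by (\ref{gd.3}).

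First I would pick $r>0$ small enough that $\overline{D(z_0,r)}\cap\Sigma(P)=\{z_0\}$, which is possible since $\Sigma(P)$ is discrete and closed in $\Omega$. Then $f$ and all its complex derivatives $\partial^kf$ are holomorphic, hence single-valued, on an open annular neighbourhood of $\gamma$, and the Bochner integrals below all make sense. By (\ref{gd.3}) with $j=N$, and since $N\ge p$ forces $\max(1,p/N)=1$, we have $\partial^{N-1}f(z)\in C_1({\cal H}_1,{\cal H}_1)$ locally uniformly on $\Omega\setminus\Sigma(P)$, so $z\mapsto\partial^{N-1}f(z)$ is a continuous $C_1$-valued function on $\gamma$. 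Hence, for any scalar polynomial $q$, the integral $\int_\gamma q(z)\partial^{N-1}f(z)\,dz$ is a well-defined trace class operator; since the trace is a continuous linear form on $C_1$ and $\mathrm{tr\,}\partial^{N-1}f(z)=D_{P,N}(z)$ by (\ref{gd.3.5}), this already gives
$$\mathrm{tr\,}\frac{1}{2\pi i}\int_\gamma q(z)\,\partial^{N-1}f(z)\,dz=\frac{1}{2\pi i}\int_\gamma q(z)\,D_{P,N}(z)\,dz,$$
which is the second equality in (\ref{gd.7}) upon taking $q(z)=(-z)^{N-1}/(N-1)!$.

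The main step is the operator identity $\int_\gamma f\,dz=\int_\gamma\frac{(-z)^{N-1}}{(N-1)!}\partial^{N-1}f\,dz$, which I would obtain by iterated integration by parts on $\gamma$. For a scalar $u$ and an operator-valued $v$ both holomorphic on a neighbourhood of $\gamma$, the product $uv$ is single-valued holomorphic there, so $\int_\gamma\partial(uv)\,dz=0$ and therefore $\int_\gamma u\,(\partial v)\,dz=-\int_\gamma(\partial u)\,v\,dz$ (as an identity of bounded operators, e.g. by testing against continuous linear functionals). Applying this $N-1$ times, starting from $u=(-z)^{N-1}/(N-1)!$ and $v=\partial^{N-1}f$, each step transfers one derivative from $f$ onto the polynomial and produces a sign, so after $N-1$ steps one reaches $(-1)^{N-1}\int_\gamma\big(\partial^{N-1}\frac{(-z)^{N-1}}{(N-1)!}\big)f\,dz=(-1)^{N-1}(-1)^{N-1}\int_\gamma f\,dz=\int_\gamma f\,dz$. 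In particular $\int_\gamma P^{-1}\partial P\,dz$ equals the trace class operator of the previous paragraph, hence is itself of trace class, and taking traces yields the first equality in (\ref{gd.7}). Finally, adding to $(-z)^{N-1}/(N-1)!$ any polynomial of degree $\le N-2$ changes neither side, since such a correction has vanishing $(N-1)$-st derivative and hence no contribution after the integration by parts; this gives the last assertion.

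I do not expect a real obstacle here. The two points needing a little care are the legitimacy of integration by parts for operator-valued holomorphic functions around a closed loop (handled by the single-valuedness remark and the fundamental theorem of calculus) and the bookkeeping of signs and of the auxiliary polynomial; both are routine. The only genuinely analytic input, namely $\partial^{N-1}(P^{-1}\partial P)\in C_1$, is exactly (\ref{gd.3}), which is available.
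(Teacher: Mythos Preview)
Your proof is correct and follows essentially the same approach as the paper's own proof: establish the operator identity $\int_\gamma P^{-1}\partial P\,dz=\int_\gamma \frac{(-z)^{N-1}}{(N-1)!}\partial^{N-1}(P^{-1}\partial P)\,dz$ by $N-1$ integrations by parts on the closed contour (valid as an identity of bounded operators since all intermediate $\partial^k f$ are holomorphic on a neighbourhood of $\gamma$), then conclude that the left side is trace class because the right side is, and finally move the trace inside the integral using continuity of $\mathrm{tr}$ on $C_1$ to get the second equality. Your sign bookkeeping and the remark on replacing the polynomial are also correct.
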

\begin{proof}
  The second equality follows by moving the trace inside the integral
  and recalling the definition of $D_{P,N}$. The first equality and
  the fact that $\int_\gamma P^{-1}\partial P dz$ is of trace class,
  follows from the corresponding stronger equality without ``tr'' in
  front which can be obtained by integration by parts.
\end{proof}

Now, assume in addition that $\Omega $ is simply connected and that
$P$ is a meromorphic Fredholm function on $\Omega$ in the sense of
Definition \ref{mf1}. Then we know that 
\ekv{gd.8}
{
\mathrm{tr\,}\frac{1}{2\pi i}\int_\gamma P^{-1}\partial
Pdz=m(z_0,f)\in {\bf Z},
}
where $f$ denotes the meromorphic Fredholm determinant of Definition
\ref{mf3}. On the other hand, we can do integrations by
parts in the last expression in (\ref{gd.7}) and obtain
\ekv{gd.9}
{
\mathrm{tr\,}\frac{1}{2\pi i}\int_\gamma P^{-1}\partial
Pdz=\frac{1}{2\pi i}\int_\gamma \partial _zD_P(z)dz,
}
which, combined with (\ref{gd.8}), says that 
\ekv{gd.10}
{
\mathrm{var}_\gamma D_P=2\pi im(z_0,f)\in 2\pi i{\bf Z}
}
and hence $e^{D_P}$ and its logarithmic derivative $\partial D_P$ are single-valued holomorphic
functions on $\Omega \setminus \Sigma $. 

So far, this only shows that 
$$D_P=\sum_1^{\infty }(z-z_0)^{-j}a_j+m(z_0,f)\ln (z-z_0)+g(z),$$
where $g$ is holomorphic, so $e^{D_P}=e^{g+\sum
a_j(z-z_0)^{-j}}(z-z_0)^{m(z_0,f)}$ may have a bad singularity at
$z_0$. We therefore return to the Grushin problem in
Subsection \ref{mf}. The remark (\ref{mf.8}), (\ref{mf.9}) shows that 
$$
\mathrm{tr\,}\partial ^{N-1}P^{-1}\partial P=\mathrm{tr\,}(\partial
^{N-1}a)+\partial ^{N-1}\mathrm{tr\,}(E_{-+}^{-1}\partial E_{-+}),
$$
where $E_{-+}$ is a meromorphic finite matrix and
$\mathrm{tr\,}(\partial ^{N-1}a)$ is holomorphic in a full
neighborhood of $z_0$. Consequently,
$$
\partial D_P=\mathrm{tr\,}(E_{-+}^{-1}\partial
E_{-+})+\hbox{holomorphic}=\partial (\ln E_{-+})+\hbox{holomorphic},
$$ 
which rules out the bad singularity and we see that
$e^{D_P}=e^{g}(z-z_0)^{m(z_0,f)}$ near $z_0$. Globally
$e^{D_P(z)}$ is indeed a determinant in the sense of Definition \ref{mf3}.

\begin{prop}\label{gd3}
  Let $P(z)$ be a holomorphic family on $\Omega \setminus \Sigma $ as
  in the beginning of this section and assume in addition that $\Omega
  $ is simply connected and that $P$ is a meromorphic Fredholm
  function on $\Omega $. Then the determinants $\det P(z)$ in the sense
  of Definition \ref{gd0} and in the sense of Definition \ref{mf3}
  coincide up to a non-vanishing holomorphic factor.
\end{prop}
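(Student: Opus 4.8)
The two notions of determinant are each characterized by local data: by Definition \ref{gd0} the function $\det_\Omega P$ in the Schatten sense is determined up to a polynomial factor of degree $N-1$ by the differential equation $\partial_z^N D_P = \mathrm{tr\,}\partial_z^{N-1}(P^{-1}\partial_z P)$; by Definition \ref{mf3} the meromorphic Fredholm determinant is determined up to a non-vanishing holomorphic factor by the property that its logarithmic multiplicity at each point $z_0$ equals $\mathrm{tr\,}\tfrac{1}{2\pi i}\int_{\partial D(z_0,r)}P^{-1}\partial_z P\,dz$. So the plan is to show that a choice of $D_P$ in the Schatten sense exponentiates to a function $e^{D_P}$ which is a legitimate meromorphic Fredholm determinant; then uniqueness (up to a non-vanishing holomorphic factor) of the latter gives the claim.

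First I would invoke Proposition \ref{gd2}: for $z_0 \in \Sigma(P)$ and $\gamma = \partial D(z_0,r)$ with $r$ small, $\int_\gamma P^{-1}\partial_z P\,dz$ is trace class and
\[
\mathrm{tr\,}\frac{1}{2\pi i}\int_\gamma P^{-1}\partial_z P\,dz = \frac{1}{2\pi i}\int_\gamma \partial_z D_P(z)\,dz,
\]
the latter equality obtained by integration by parts from the last line of (\ref{gd.7}) together with Definition \ref{gd0}. Since $P$ is assumed to be a meromorphic Fredholm function on the simply connected $\Omega$, the left-hand side equals $m(z_0,f) \in \mathbf Z$, where $f$ is any meromorphic Fredholm determinant of Definition \ref{mf3}. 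This is exactly (\ref{gd.10}): $\mathrm{var}_\gamma D_P \in 2\pi i\mathbf Z$, hence $e^{D_P}$ and $\partial_z D_P$ are single-valued on $\Omega \setminus \Sigma$, and the winding number of $e^{D_P}$ around $z_0$ matches that of $f$.

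Single-valuedness alone is not enough, since a priori $D_P$ could have an essential singularity (a genuine pole in its Laurent expansion) at $z_0$, which would make $e^{D_P}$ badly behaved. To exclude this I would return to the Grushin problem of Subsection \ref{mf}: by (\ref{mf.8})--(\ref{mf.9}) we can write, near $z_0$, $P^{-1}\partial_z P = a + b$ with $a$ holomorphic and $b$ of finite rank satisfying $\mathrm{tr\,}b = \mathrm{tr\,}(E_{-+}^{-1}\partial_z E_{-+})$, with $E_{-+}$ a meromorphic finite matrix-valued function invertible off $z_0$. Taking $N-1$ derivatives and the trace, $\mathrm{tr\,}\partial_z^{N-1}(P^{-1}\partial_z P) = \partial_z^{N-1}(\mathrm{tr\,}E_{-+}^{-1}\partial_z E_{-+}) + (\text{holomorphic near }z_0)$, so after $N$-fold integration and using that $\det E_{-+}$ is meromorphic with $\partial_z\log\det E_{-+} = \mathrm{tr\,}E_{-+}^{-1}\partial_z E_{-+}$, one finds $\partial_z D_P = \partial_z \log\det E_{-+} + (\text{holomorphic})$. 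Hence $D_P - \log\det E_{-+}$ is holomorphic near $z_0$, so $e^{D_P} = (\det E_{-+})\,e^{(\text{holomorphic})}$ locally, which by (\ref{mf.7}) has precisely the zero/pole order $m(z_0,\det E_{-+}) = m(z_0,f)$ dictated by Definition \ref{mf3}; in particular $e^{D_P}$ extends meromorphically across $z_0$ with no essential singularity. Since $z_0 \in \Sigma(P)$ was arbitrary, $e^{D_P}$ is globally a meromorphic Fredholm determinant of $P$ in the sense of Definition \ref{mf3}. By the uniqueness clause of that definition, $e^{D_P}$ and any $\det_\Omega P$ in the Schatten sense of Definition \ref{gd0} differ by a non-vanishing holomorphic factor, proving the proposition.

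\textbf{Main obstacle.} The one genuinely delicate point is ruling out the essential singularity of $e^{D_P}$ at points of $\Sigma(P)$ — i.e. passing from the integer-winding statement (\ref{gd.10}) to actual meromorphy. This is precisely where one cannot argue with traces of contour integrals alone and must feed in the finite-rank structure of the singular part of $P^{-1}\partial_z P$ coming from the Grushin reduction (\ref{mf.8})--(\ref{mf.9}); everything else is bookkeeping with orders of Schatten classes and integration by parts.
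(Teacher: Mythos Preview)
Your proposal is correct and follows essentially the same route as the paper's own argument: first use Proposition~\ref{gd2} and integration by parts to get $\mathrm{var}_\gamma D_P\in 2\pi i\mathbf Z$, then invoke the finite-rank structure (\ref{mf.8})--(\ref{mf.9}) from the Grushin reduction to show $\partial_z D_P=\mathrm{tr}(E_{-+}^{-1}\partial_z E_{-+})+\text{holomorphic}$ near each $z_0$, ruling out an essential singularity of $e^{D_P}$. You have also correctly identified the one non-formal step as precisely the point the paper singles out.
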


The following complement will be used in Section \ref{ub}.

\paragraph{Addendum.} Consider a Schatten class
perturbation of the identity, $Q(z)=1-K(z)$, where $K(z)\in C_p$ is
holomorphic in some domain in ${\bf C}$ and as in (\ref{gd.2}):
\ekv{ub.2new}
{\partial _z^kK(z)\in C_{\max (1,p/k)},\ 1\le k\in {\bf N}.}
This assumption remains valid if we replace $p$ by $N=[p]$, the
smallest integer $\ge p$ and then (in view of the mean value property
for holomorphic functions) takes the simpler form
\ekv{ub.3new}
{\partial _z^kK(z)\in C_{N/k},\ 1\le k\le N,}
\ekv{ub.3new.5}{K(z)\in C_N.}
Considering the Taylor expansions (and mimicking the definition of
modified determinants for Schatten class perturbations of the
identity), we get 
\ekv{ub.3New}{\begin{split}
&Q(z)=A(z)B(z),\\ & A(z)=\exp F(z),\ F(z)=K(z)+...+\frac{K(z)^{N-1}}{N-1},\\
&B(z)=(1+R_N(K)K^N),\end{split}}
where $\| R_N(K)\|\le C(\| K\| )$. Thus 
$$
\Vert R_N(K)K^N\Vert_{C_1}\le C(\| K\| )\Vert K\Vert^N_{C_N},
$$
so $\det B(z)$ can be defined as in Subsection \ref{gd}. The
definition coincides with that of determinants of trace class
perturbations of the identity and we get 
\ekv{ub.8}{
|\det B(z)|\le \exp (C(\Vert K\Vert)\Vert K\Vert^N_{C_N}). 
}

\par As for $A(z)=\exp F(z)$, we see that $F(z)$ satisfies
(\ref{ub.3new}), (\ref{ub.3new.5}). Moreover from applying $\partial
_z$ to the differential
equation $\partial _t\exp (tF(z))=F(z)\exp (tF(z))$, we have
$$
\partial _z(e^F)=\int _0^1 e^{(1-t)F(z)}(\partial _zF(z))e^{tF(z)}dt\in C_N
$$ and from similar expressions for $\partial _z^k(e^F)$ we see that
$A=e^F$ satisfies (\ref{ub.3new}), (\ref{ub.3new.5}). Now,
\[e^{-F}\partial _ze^F=\int_0^1e^{-tF}(\partial _zF)e^{tF}dt=\partial
_zF+\int_0^1[e^{-tF},(\partial _zF)e^{tF}]dt, \]
so 
$$
\mathrm{tr\,}\partial _z^{N-1}(e^{-F}\partial
_ze^F)=\mathrm{tr\,}\partial _z^NF,
$$
which is bounded in modulus by
\ekv{ub.9}{\begin{split}&{\cal O}(1)\sum_{N_1+..+N_q=N\atop N_q\ge 0,\, q\le N-1}\Vert \partial
^{N_1}K...\partial ^{N_q}K\Vert_{C_1}\le\\ &{\cal O}(1)\sum
_{N_1+..+N_q=N\atop N_q\ge 0,\, q\le N-1} \Vert \partial ^{N_1}K\Vert_{C_{N/N_1}}...\Vert \partial ^{N_q}K\Vert_{C_{N/N_q}}.\end{split}} 

Combining this with (\ref{ub.3New}), (\ref{ub.8}), we get:
\begin{prop}\label{ub1}
Under the above assumptions,
$$
\det Q(z)=\mathrm{I}(z)\mathrm{II}(z),\quad \mathrm{I}(z)=\det A(z),\ \mathrm{II}(z)=\det B(z), 
$$ where $|\mathrm{II}(z)|$ is bounded by the right hand side of
(\ref{ub.8}) and $|\partial _z^N \ln \mathrm{I}(z)|$ is bounded by the
expression (\ref{ub.9}). 
\end{prop}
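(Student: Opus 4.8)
The plan is to assemble the pieces already prepared in the addendum above. First I would invoke the factorization $Q(z)=A(z)B(z)$ from (\ref{ub.3New}) together with the multiplicativity of the determinant established in Subsection \ref{gd} (equation (\ref{gd.4})). This applies because both factors satisfy the hypothesis (\ref{gd.2}): for $A=e^F$ this was checked in the addendum (it fulfills (\ref{ub.3new}), (\ref{ub.3new.5})), and for $B=1+R_N(K)K^N$ it is immediate since $R_N(K)K^N$ and its $z$-derivatives lie in $C_1\subset C_N$ locally uniformly, by the product rule, the Cauchy inequalities and the Schatten-H\"older inequality. Hence $\det Q(z)=\det A(z)\,\det B(z)=\mathrm{I}(z)\mathrm{II}(z)$.

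For $\mathrm{II}(z)=\det B(z)$: since $R_N(K)K^N\in C_1$ with $\Vert R_N(K)K^N\Vert_{C_1}\le C(\Vert K\Vert)\Vert K\Vert_{C_N}^N$, the determinant of $B$ coincides with the classical Fredholm determinant of a trace class perturbation of the identity, and the standard estimate $|\det(1+T)|\le e^{\Vert T\Vert_{C_1}}$ gives $|\mathrm{II}(z)|\le \exp\big(C(\Vert K\Vert)\Vert K\Vert_{C_N}^N\big)$, i.e. the right-hand side of (\ref{ub.8}). For $\mathrm{I}(z)=\det A(z)=\det e^{F(z)}$: by Definition \ref{gd0}, $\ln\mathrm{I}(z)$ solves $\partial_z^N\ln\mathrm{I}(z)=\mathrm{tr}\big(\partial_z^{N-1}(e^{-F}\partial_z e^F)\big)$. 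Using $\partial_z e^F=\int_0^1 e^{(1-t)F}(\partial_z F)e^{tF}\,dt$ I would rewrite $e^{-F}\partial_z e^F=\partial_z F+\int_0^1 [\,e^{-tF},(\partial_z F)e^{tF}\,]\,dt$; iterating Lemma \ref{gd1}, $\partial_z^{N-1}$ of the commutator integrand is a linear combination of commutators $Q_1Q_2-Q_2Q_1$ whose factors pair to trace class, hence of zero trace by cyclicity, leaving $\mathrm{tr}\,\partial_z^{N-1}(e^{-F}\partial_z e^F)=\mathrm{tr}\,\partial_z^N F$. Applying the Leibniz rule to $F=\sum_{1}^{N-1}K^j/j$ expresses $\partial_z^N F$ as a linear combination of products $\partial^{N_1}K\cdots\partial^{N_q}K$ with $N_1+\dots+N_q=N$ and $q\le N-1$, and the Schatten–H\"older inequality (with $1=\sum N_i/N$) bounds each such product in $C_1$ by $\prod_i\Vert\partial^{N_i}K\Vert_{C_{N/N_i}}$, which is precisely the expression (\ref{ub.9}). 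Combining the three displays yields the proposition.

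The only step requiring genuine care is the trace-cyclicity argument that kills the commutator terms after the $N-1$ differentiations: one must verify that every factor appearing in these commutators lies in a Schatten class whose exponent, together with that of its partner, is reciprocal-summable to $1$, so that $\mathrm{tr}(P_1P_2-P_2P_1)=0$ is justified. This is exactly where the ``orders are additive under composition'' convention of Subsection \ref{gd} and the estimates (\ref{ub.3new}), (\ref{ub.3new.5}) for $F$ and its derivatives (which transfer to $e^{tF}$ as noted) are used; the remainder of the argument is routine bookkeeping with the Leibniz rule and Schatten norms.
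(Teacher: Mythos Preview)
Your proposal is correct and follows essentially the same approach as the paper: the proposition is really just a summary of the preceding addendum, and you have faithfully reconstructed each step (the factorization (\ref{ub.3New}), the trace-class bound giving (\ref{ub.8}), the Duhamel formula for $\partial_z e^F$, the commutator identity reducing $\mathrm{tr}\,\partial_z^{N-1}(e^{-F}\partial_z e^F)$ to $\mathrm{tr}\,\partial_z^N F$, and the Leibniz/Schatten--H\"older bound (\ref{ub.9})). If anything, you have made explicit the use of Lemma~\ref{gd1} and the cyclicity argument that the paper leaves implicit in its one-line passage from the commutator integral to the trace identity.
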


\section{Complex dilations}\label{cds}
\setcounter{equation}{0}

\subsection{Complex dilations and symmetry}

We start by reviewing some easy facts for complex distortions (see
\cite{SjZw1,SjZw2,SjZw3,SjZw4,SjZw5}) and we shall pay a special
attention to symmetry with respect to the natural {\it bilinear}
form. Let $\Gamma \subset {\bf C}^n$ be a maximally totally real
(m.t.r.) simply connected smooth sub-manifold and let $P=\sum_{|\alpha
  |\le m}a_\alpha D^\alpha $, where $a_\alpha \in C^\infty (\Gamma
)$. If $u\in C^\infty (\Gamma )$, we put
$Pu={{(\widetilde{P}\widetilde{u})}_\vert}_{\Gamma }$, where
$\widetilde{P}=\sum \widetilde{a}_\alpha D^\alpha $ and
$\widetilde{a}_\alpha $, $\widetilde{u}$ are almost holomorphic
extensions of $a_\alpha $, $u$ to a neighborhood of $\Gamma $.

If $P^\mathrm{t}=\sum (-D)^\alpha \circ a_\alpha $ is the formal
transpose of $P$, we can define $P^\mathrm{t}u\in C^\infty (\Gamma )$
for $u\in C^\infty (\Gamma )$ as above and if we define 
\ekv{cds.1}{\langle u|v\rangle_\Gamma =\int_\Gamma u(x)v(x)dx_1\wedge
  ...\wedge dx_n=\int_\Gamma u(x)v(x)dx,\ u,v\in C_0^\infty (\Gamma ),}
we get from Stokes' formula that 
$$
\langle Pu|v\rangle_\Gamma =\langle u|P^\mathrm{t}v\rangle_\Gamma .
$$

Now, let $\widehat{\Gamma }\subset {\bf C}^n$ be a second maximally
totally real smooth manifold and let $\gamma :\widehat{\Gamma }\to
\Gamma $ be a smooth diffeomorphism. (For instance, $\widehat{\Gamma
}$ can be an open subset of ${\bf R}^n$ and $\gamma $ a
``parametrization'' of $\Gamma $.) We can then define
\ekv{cds.2}
{
\frac{\partial \gamma }{\partial y}=\left(\frac{\partial \widetilde{\gamma
  }_j}{\partial y_k}\right),
} 
where $\widetilde{\gamma }(y)=(\widetilde{\gamma
}_1(y),...,\widetilde{\gamma }_n(y))$ is an almost holomorphic
extension of $\gamma =(\gamma _1,...,\gamma _n)$. Let $f\in C^\infty
(\widehat{\Gamma })$ and define $U:\, C^\infty (\Gamma )\to C^\infty
(\widehat{\Gamma })$ by
\ekv{cds.3}
{
Uu(y)=f(y)u(\gamma (y)),\quad u\in C_0^\infty (\Gamma ).
}
If $u,v\in C_0^\infty (\Gamma )$, we get 
$$
\langle Uu|Uv\rangle_{\widehat{\Gamma }}=\int_{\widehat{\Gamma }}u(\gamma
(y))v(\gamma (y))f(y)^2dy,
$$
$$
\langle u|v\rangle_\Gamma =\int_\Gamma u(x)v(x)dx=
\int_{\widehat{\Gamma }}u(\gamma
(y))v(\gamma (y))\det \left( \frac{\partial \gamma }{\partial y}\right)dy.
$$

\par Choose  $f=(\det \frac{\partial \gamma }{\partial y})^{1/2}$ for some
fixed continuous branch of the square root (assuming for simplicity
that $\widehat{\Gamma }$ is simply connected). Then
\ekv{cds.4}
{
\langle Uu|Uv\rangle_{\widehat{\Gamma }}=\langle u|v\rangle_{\Gamma },
} 
so $U$ is orthogonal,
\ekv{cds.5}
{
U^\mathrm{t}=U^{-1}.
}

As usual, this imples that the operations of conjugation with $U$ and
transposition commute: If $P$ is as above and we define the pull-back 
$\widehat{P}=U\circ P\circ U^{-1}=U\circ P\circ U^\mathrm{t}$, then 
\ekv{cds.6}
{
\widehat{P}^\mathrm{t}=UP^\mathrm{t}U^\mathrm{t}.
}

Let now $\widehat{\Gamma }\subset {\bf R}^n$.
We can use $U$ to define an $L^2$-inner product on $C_0^\infty
(\Gamma )$ by putting
\ekv{cds.7}
{
(u|v)=(u|v)_\Gamma =(Uu|Uv)_{L^2(\widehat{\Gamma })}, 
}
which is the inner product that makes $U$ formally unitary. More
explicitly,
\ekv{cds.8}
{
(u|v)=\int_{\widehat{\Gamma }}u(\gamma (y))\overline{v(\gamma
  (y))}|\det \frac{\partial \gamma }{\partial y}|dy=\int_\Gamma
u(x)\overline{v(x)}\theta (x)dx,
}
where
$$
\theta (x)=\frac{|\det \frac{\partial \gamma }{\partial y}|}{\det
  \frac{\partial \gamma }{\partial y}},\quad x=\gamma (y),
$$
is the unique unimodular factor for which $\theta (x)dx$ is a positive
density on $\Gamma $ (and in particular independent of the
parametrization $\gamma $).

\par We have
\ekv{cds.9}
{
(u|v)=\langle u|Cv\rangle_\Gamma ,\quad u,v\in C_0^\infty (\Gamma ),
}
where $C$ is the antilinear involution defined by $Cv=\theta
\overline{v}$. The formal adjoint of $P$ for our scalar product on
$\Gamma $ is given by 
\ekv{cds.10}{P^*=C^{-1}P^\mathrm{t}C=CP^\mathrm{t}C.}

\subsection{Dilations and convex sets}\label{dcs}

Let \ekv{dcs.1}{P=-h^2\Delta +V(x),\quad V\in L^\infty
  _\mathrm{comp}({\bf R}^n;{\bf R}).}  Let first $f:{\bf R}^n\to {\bf
  R}$ be smooth, $=0$ near $\mathrm{supp\,}V$ and equal to $(\tan
\theta )\frac{d_0(x)^2}{2}$ for large $x$, where $d_0(x)=|x|$ and
$0<\theta <\pi /2$. Then we
consider the m.t.r. manifold $\Gamma =\Gamma _f$ of ${\bf C}^n$, given
by \ekv{dcs.2}{x=y+if'(y),\ y\in {\bf R}^n.}
(See \cite{Sj01} for a quick review in the semi-classical case.)
The bijectivity of the complex Jacobian map $\frac{\partial
  x}{\partial y}=1+if''(y)$ implies indeed that $\Gamma _f$ is
maximally totally real. $P_\Gamma $ can be computed in the
parametrization (\ref{dcs.2}) using the formal chain rule:
$$
\frac{\partial }{\partial y}=(1+if''(y))\left(\frac{\partial }{\partial
  x}\right),\ \frac{\partial }{\partial x}=(1+if''(y))^{-1}\left(\frac{\partial }{\partial y}\right),
$$
and hence away from the support of $V$ we get 
\ekv{dcs.3}
{
P_\Gamma =-h^2\det (1+if''(y))^{-1}\left(\frac{\partial }{\partial
    y}\right)^\mathrm{t}\det (1+if''(y))(1+if''(y))^{-2}\left(\frac{\partial
  }{\partial y}\right)
}
which has the semi-classical principal symbol 
\ekv{dcs.4}{((1+if''(y))^{-1}\eta )^2=\langle (1+if''(y))^{-2}\eta
  ,\eta \rangle .}
Here $\langle ,\rangle$ denotes the bilinear scalar product on ${\bf
  R}^n$ and also its bilinear extension to ${\bf C}^n$. Since $\eta $
is real in (\ref{dcs.4}), we can write this symbol as
$$
( (1+if''(y))^{-2}\eta
  |\eta ),
$$
where $(\cdot |\cdot \cdot )$ is the usual sesquilinear scalar product
on ${\bf C}^n$.

For large $y$, we have $f''(y)=(\tan \theta )1$ and here it is
convenient to use the equivalent parametrization $x=e^{i\theta
}\widetilde{y}$, where $\widetilde{y},y\in {\bf R}^n$ are related by
$y=(\cos \theta )\widetilde{y}$, and get
\ekv{dcs.5}{P_\Gamma =e^{-2i\theta }(-h^2\Delta _{\widetilde{y}}).}

\par In general we assume
\ekv{dcs.6}
{
f''(y)\ge 0,
}
and we shall study the inverse of $(1+if''(y))^2=1-f''(y)^2+2if''(y).$
If $C$ is a complex $n\times n$ matrix, define as usual 
$$\Re C=\frac{1}{2}(C+C^*),\ \Im C=\frac{1}{2i}(C-C^*).$$
\begin{prop}\label{cds1}
If $C=(1+if''(y))^{2}$ for some fixed $y\in {\bf R}^n$, then under
the assumption (\ref{dcs.6}), we have 
\begin{itemize}
\item[1)] $\Im C^{-1}\le 0$.
\item[2)] We have $\Im C^{-1}< 0$ (i.e. $C^{-1}$ is negative definite) iff $f''(y)>0$. 
\item[3)] The symbol $(C^{-1}\eta |\eta )$, $\eta \in {\bf R}^n$ is
  elliptic: $|(C^{-1}\eta |\eta )|\asymp |\eta |^2$ and takes its
  values in a sector $-\pi +\epsilon \le \mathrm{arg\,}(C\eta |\eta
  )\le 0$ for some $\epsilon >0$.
\item[4)] When $f''(y)>0$ it take its values in a sector $-\pi +\epsilon \le \mathrm{arg\,}(C\eta |\eta
  )\le -\epsilon $.
\end{itemize}
\end{prop}
\begin{proof}
We already know that $C:{\bf C}^n\to {\bf C}^n$ is bijective and a
direct calculation shows that 
\ekv{dcs.7}{\Im C^{-1}=-{C^*}^{-1}(\Im C)C^{-1}=-2{C^*}^{-1}f''(y)C^{-1},}
\ekv{dcs.8}{\Re C^{-1}={C^*}^{-1}(\Re
  C)C^{-1}={C^*}^{-1}(1-f''(y)^2)C^{-1}.}
1) and 2) follow from (\ref{dcs.7}).

\par Now look at 
\ekv{dcs.9}{
(C^{-1}\eta |\eta )=((\Re C)C^{-1}\eta |C^{-1}\eta )-i((\Im C)C^{-1}\eta |C^{-1}\eta ).}
If the imaginary part of this expression (i.e. the last term) is zero,
then since $\Im C\ge 0$, we conclude that $(\Im C)(C^{-1}\eta )=0$,
i.e. $f''(y)C^{-1}\eta =0$. For such an $\eta $ the real part of
(\ref{dcs.9}) becomes $((\Re C)C^{-1}\eta |C^{-1}\eta
)=((1-f''(y)^2)C^{-1}\eta |C^{-1}\eta )=\Vert C^{-1}\eta \Vert^2$. 3)
and 4) follow. \end{proof}

\par The proposition shows that $P_\Gamma $ is elliptic in the
classical sense. Defining the Sobolev spaces $H^s(\Gamma )$ in the
usual way and equipping $P_\Gamma $ with the domain $H^2(\Gamma )$, we
see that the essential spectrum of $P_\Gamma $ is the
half-line $e^{-2i\theta }[0,+\infty [$. As explained for instance in
\cite{SjZw1,SjZw2,SjZw3,SjZw4,SjZw5}, $P_\Gamma $ has no spectrum in the open upper half-plane
and the eigenvalues in the sector $e^{-i[0,\theta [}]0,+\infty [$ are
precisely the resonances of $P$ there. 
(For a more complete discussion and
further references, see \cite{SjZw1,SjZw2,SjZw3,SjZw4,SjZw5}.)

Let ${\cal O}\Subset {\bf R}^n$ be open with smooth boundary and
strictly convex. Then $d(x):=\mathrm{dist\,}(x,{\cal O})$ is smooth on
${\bf R}^n\setminus {\cal O}$ and we have 
\ekv{dcs.10}{\partial ^\alpha (d-d_0)={\cal O}(\langle x\rangle^{-|\alpha
    |}).}
Now assume that 
\ekv{dcs.11}{\mathrm{supp\,}V\subset \overline{{\cal O}}.}

\par Outside ${\cal O}$ we look for $f$ of the form
\ekv{dcs.12}
{
f(x)=g(d(x)),
}
where $g\in C^\infty ({\bf R};{\bf R})$ vanishes on the negative
half-axis. Then
\ekv{dcs.13}
{
f'(x)=g'(d(x))d'(x),\ f''(x)=g'(d(x))d''(x)+g''(d(x))d'(x)\otimes d'(x).
}
Here $d'(x)$ can be identified with the exterior normal
$\nu (\pi (x))$ at the projection $\pi (x)\in \partial {\cal O}$ of
$x$. When $x\notin \partial {\cal O}$ we also have $d'(x)=(x-\pi
(x))/|x-\pi (x)|$. It is further wellknown that $d''(x)$ is positive
semi-definite with null-space ${\bf R}d'(x)$. Thus we see from
(\ref{dcs.13}) that $f''(x)\ge 0$ when $g',\,g''\ge 0$ and we have
$f''(x)> 0$ when $g',\,g''> 0$. 

\par Introduce geodesic coordinates: 
Let $\Omega \ni z'\mapsto x'(z')\in \partial {\cal O}$ be a
local parametrization of the boundary, where $\Omega $ is some open
set in ${\bf R}^{n-1}$. Then we have local (geodesic) coordinates $(z',z_n)\in
\Omega \times ]-\epsilon ,+\infty [$ on ${\bf R}^n$, given by  
\ekv{dcs.14}
{
x=x(z')+z_n\nu (x(z')).
}
In these coordinates, if $f$ is as in (\ref{dcs.12}), then $\Gamma
=\Gamma _f$ is obtained by letting $z_n$ become complex:
\ekv{dcs.15}
{
z'=y',\ z_n=\gamma (y_n),\quad \gamma (y_n):=y_n+ig'(y_n).
}

We have (see \cite{SjZw3}, Section 2, also \cite{SjZw4}, Section 3 and
\cite{SjZw2}):
\ekv{dcs.16}
{
P=D_{z_n}^2+R(z,D_{z'})+a(z)\partial _{z_n},
} 
where \ekv{dcs.17}
{
R(z,D_{z'})=R(z',0,D_{z'})-z_nQ(z,D_{z'}),
}
and $R$, $Q$ are elliptic second order differential operators with
positive principal symbols:
\ekv{dcs.18}
{
r(z,\zeta '),\, q(z,\zeta ')\, >0.
}
The coefficients are analytic in $z_n$ and smooth in $z$. In the
parametrization (\ref{dcs.15}) for $\Gamma $, we get 
\ekv{dcs.19}
{\begin{split}
P_\Gamma =& (\frac{1}{\gamma '(y_n)}D_{y_n})^2+R(y',0;D_{y'})\\ &-\gamma
(y_n)Q(y',\gamma (y_n);D_{y'})+a(y',\gamma (y'))\frac{1}{\gamma
  '(y_n)}\partial _{y_n}.\end{split}
}
This formula remains valid if we make a real change of variables in
$y_n$ in order to normalize $\gamma '(y_n)$.

If we choose $g$ so that $g(d)=(\tan \theta )d^2$ for large $d\ge
r_0>0$, then as we have seen, $f''>0$ in the  corresponding
region. Let $\chi \in C_0^\infty ({\bf R}^n;[0,1])$ be equal to one in
a neighborhood of $0$ and put $\widetilde{d}=\widetilde{d}_R=\chi
(x/R)d(x)+(1-\chi (x/R))d_0(x)$. Then we still have (\ref{dcs.10}) if
we replace $d$ or $d_0$ with $\widetilde{d}$ and from this it follows
that $\widetilde{f}:=(\tan \theta )\widetilde{d}^2$ satisfies $
\widetilde{f}''(x)>0$ for $d(x)\ge r_0$, provided that $R\gg
0$. Summing up we have
\begin{prop}\label{dcs2}
Let $f(x)=g(d)$ with $g$ as above and assume that $g'(d)>0$,
$g''(d)>0$ for $d>r_0/2$ where $r_0>0$. Then we can find $f=f(x)$
smooth and real-valued such that 
\begin{itemize}
\item $f(x)=g(d)$ for $d\le r_0/2>0$,
\item $f(x)=(\tan \theta )d_0(x)^2/2$ near infinity,
\item $f''(x)>0$ for $d(x)\ge r_0/2$.
\end{itemize}
\end{prop}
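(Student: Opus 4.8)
The plan is to obtain $f$ by composing $g$ with the regularised distance $\widetilde d=\widetilde d_R$ introduced just before the statement, and then to verify the three bullets; the only nonroutine point will be the positivity of $f''$ in the transition layer of the cut-off. First I would record the general mechanism: for a function $f=g(d)$ we have, by \no{dcs.13}, $f''=g'(d)\,d''+g''(d)\,d'\otimes d'$, and since $d''\ge0$ with null space exactly ${\bf R}d'$ while $d'\otimes d'\ge0$ is positive on ${\bf R}d'$ (recall $|d'|=1$), the hypothesis $g'(d),g''(d)>0$ forces $f''>0$ pointwise. I would also fix $g$ so that $g(d)=(\tan\theta)d^2/2$ for $d\ge r_0$, as in the discussion above, and note that $d(x),d_0(x)\ge|x|-C$ with $C=\sup_{\overline{{\cal O}}}|\cdot|$, so $\widetilde d_R\ge|x|-C$ for every $R$.

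Next, set $f:=g(\widetilde d_R)$. Since $\{d(x)\le r_0/2\}$ is bounded and $\chi$ equals $1$ near $0$, for $R$ large enough we have $\chi(x/R)=1$, hence $\widetilde d_R=d$, on that set, so $f=g(d)$ there: the first bullet. Near infinity $\chi(x/R)=0$, so $\widetilde d_R=d_0\ge r_0$ and $f=g(d_0)=(\tan\theta)d_0^2/2$: the second bullet. The function $f$ is smooth, because $g$ is infinitely flat at $0$ (so $g\circ d$ is smooth even across $\partial{\cal O}$) and $\widetilde d_R$ is smooth away from $\partial{\cal O}$.

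For the third bullet I would fix $x$ with $d(x)\ge r_0/2$ and distinguish two cases. If $\chi(x/R)=1$, then $\widetilde d_R=d$ and $f=g(d)$ with $d>r_0/2$, so $f''>0$ by the mechanism above. If $\chi(x/R)<1$, then $x/R$ lies outside the neighbourhood of $0$ where $\chi\equiv1$, so $|x|\ge c_1R$ for a fixed $c_1>0$; hence $\widetilde d_R\ge|x|-C\ge r_0$ for $R$ large, and there $f=(\tan\theta)\widetilde d_R^2/2$, giving
$$
f''=(\tan\theta)\bigl(\widetilde d_R\,\widetilde d_R''+\widetilde d_R'\otimes\widetilde d_R'\bigr).
$$
Now \no{dcs.10} persists with $d_0$ replaced by $\widetilde d_R$, i.e. $\partial^\alpha(\widetilde d_R-d_0)={\cal O}(\langle x\rangle^{-|\alpha|})$; combining this with $d_0'=x/|x|=:\widehat x$ and $d_0''=|x|^{-1}(1-\widehat x\otimes\widehat x)$ yields $\widetilde d_R\,\widetilde d_R''=(1-\widehat x\otimes\widehat x)+{\cal O}(\langle x\rangle^{-1})$ and $\widetilde d_R'\otimes\widetilde d_R'=\widehat x\otimes\widehat x+{\cal O}(\langle x\rangle^{-1})$, so $f''=(\tan\theta)\,(1+{\cal O}(\langle x\rangle^{-1}))$. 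Since $\langle x\rangle\ge c_1R$ on this set, the error is ${\cal O}(1/R)$ uniformly, and for $R$ large enough $f''\ge\frac12(\tan\theta)\,1>0$. Together with the first case, this gives $f''>0$ on $\{d(x)\ge r_0/2\}$ and completes the construction.

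The main obstacle is exactly this last estimate, in the annulus where $\chi(x/R)$ interpolates: one must check that smearing $d$ into $d_0$ there does not destroy convexity. The key is that each derivative falling on $\chi(\cdot/R)$ costs a factor $1/R$, while the quantities $(d-d_0)$, $\partial(d-d_0)$, $\partial^2(d-d_0)$ that it multiplies are ${\cal O}(1)$, ${\cal O}(1/R)$, ${\cal O}(1/R^2)$ on that annulus (by \no{dcs.10} and $\langle x\rangle\asymp R$); hence the cut-off perturbs the Hessian only by ${\cal O}(1/R)$ around the positive-definite leading term $(\tan\theta)\,1$ coming from $(\tan\theta)|x|^2/2$, and enlarging $R$ absorbs the perturbation.
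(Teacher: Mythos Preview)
Your proposal is correct and follows essentially the same route as the paper: the paragraph immediately preceding the proposition already introduces $\widetilde d_R$, notes that \no{dcs.10} persists with $d$ or $d_0$ replaced by $\widetilde d_R$, and concludes that $(\tan\theta)\widetilde d_R^2/2$ has positive Hessian for $R\gg0$; your construction $f=g(\widetilde d_R)$ is just a clean single-formula packaging of that gluing, and your Hessian computation in the transition annulus is exactly the fleshed-out version of the paper's one-line claim. The only cosmetic difference is that the paper phrases the far-field piece as $\widetilde f=(\tan\theta)\widetilde d^2/2$ directly rather than as $g(\widetilde d_R)$, but since the cut-off is active only where $\widetilde d_R\ge r_0$ and $g(t)=(\tan\theta)t^2/2$ there, the two are identical.
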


\par To study the resonances for the exterior Dirichlet problem in
${\bf R}^n\setminus {\cal O}$ one may use complex scaling with
a contour
\ekv{dcs.20}{\Gamma _{\mathrm{ext},f}:\ x=y+if'(y),\ y\in {\bf
    R}^n\setminus {\cal O},}
where $f\in C^\infty ({\bf R}^n\setminus {\cal O})$ vanishes
on $\partial {\cal O}$, $f''>0$ away from $\partial {\cal O}$,
$f(x)=(\tan \theta )d_0(x)^2/2$ near infinity. One then considers the
restriction $P_\mathrm{ext}$ of $-h^2\Delta $ to this contour  with
domain $H^2\cap H_0^1(\Gamma _\mathrm{ext})$ and the exterior
Dirichlet resonances in the sector $e^{-i[0,2\theta [}$ coincide with
the eigenvalues of this operator. (See \cite{SjZw2,SjZw3,SjZw4} and references
cited there.) A convenient choice of $f$ near $\partial {\cal O}$ is
$f(x)=(\tan \theta )d(x)^2/2$ and according to \cite{HaLe94} we know
that $\theta =\pi /3$ is in some sense the optimal choice. 

In our case it will be convenient to use a Lipschitz contour: 
\ekv{dcs.21}{f(x)=\begin{cases} 0\hbox{ in }{\cal O}\\
(\tan \theta )\frac{d(x)^2}{2} \hbox{near }\partial {\cal O} \hbox{ in
} {\bf R}^n\setminus {\cal O},
\end{cases}}
and as above further away from $\overline{{\cal O}}$.
Then $f$ is of class $C^{1,1}$ and smooth away from $\partial {\cal
  O}$. Consequently, $\Gamma =\Gamma _f$ is a Lipschitz manifold,
smooth away from $\partial {\cal O}$ and is naturally decomposed into
the interior part ${\cal O}$ and and exterior part; $\Gamma
_{f,\mathrm{ext}}$. Again, we can define $P_\Gamma $ as
${{P}_\vert}_{\Gamma }$ with the appropriate continuity conditions at
$\partial {\cal O}$:
\ekv{dcs.22}{\begin{split}
{\cal D}(P_\Gamma )=\{ u=u_{\cal O}+u_\mathrm{ext};\ u_{\cal O}\in
H^2({\cal O}),\ u_\mathrm{ext}\in H^2(\Gamma _{f,\mathrm{ext}}),\\
u_{\cal O}=u_\mathrm{ext} ,\ \partial _\nu u_{\cal O}=\partial _\nu
  u_\mathrm{ext}\hbox{ on }\partial {\cal O}\},
\end{split}}
where $\nu $ is the exterior unit normal to ${\cal O}$. (On the
exterior part we identify $\partial _\nu $ with $(\partial _\nu
)_{\Gamma _\mathrm{ext}}$.) It follows
from Stokes' formula that
$P_\Gamma $ is symmetric.

Near a point $x_0\in \partial {\cal O}$, the problem
\ekv{dcs.23}
{
\begin{cases}
(P-z)u_{\cal O}=v_{\cal O},\\
(P-z)u_{\mathrm ext}=v_{\mathrm ext},\\
\gamma u_{\cal O}-\gamma u_\mathrm{ext}=v_0,\\
\gamma \partial _\nu u_{\cal O}-\gamma \partial _ \nu u_\mathrm{ext}=v_1
\end{cases}
}
can be viewed as an elliptic boundary value problem for an operator with
matrix valued symbol (after a reflexion so that, near $x_0$, we consider $u_{\cal
  O}$ and $u_\mathrm{ext}$ to live on the same side of the
boundary). Here we take $v_\cdot $ to be in $L^2$ in a neighborhood of $x_0$
and make the same starting assumption about $u_{{\cal O}}$ and
$u_\mathrm{ext
}$. Then if $v_0\in H^{3/2}$, $v_1\in H^{1/2}$, the standard theory
tells us that the traces are well-defined and that $u_{\cal O}$ and
$u_\mathrm{ext}$ actually belong to the spaces $H^2({\cal O})$,
$H^2({\bf R}^n\setminus {\cal O})$ respectively. Away from the
boundary, the usual arguments of complex scaling apply,
and we see that $P-z:{\cal D}(P)\to L^2$ is a holomorphic family of
Fredholm operators of index $0$, when $z\in {\bf C}\setminus
e^{-2i\theta }[0,+\infty [$.
\begin{prop}\label{dcs3} Let $\Gamma $ be the singular contour above.
The spectrum of $P=P_\Gamma $ in the sector $e^{-i[0,2\theta
  [}]0,+\infty [$ coincides with the set of resonances for $P$ there.
\end{prop}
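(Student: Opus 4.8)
The plan is to compare the singular (Lipschitz) contour $\Gamma$ to a smooth distorted contour $\Gamma_{\mathrm{sm}}$ for which the identification of spectrum with resonances is already part of the standard complex-scaling theory, as recalled after Proposition \ref{cds1} (see \cite{SjZw1,SjZw2,SjZw3,SjZw4,SjZw5}). First I would recall that outside a fixed neighborhood $U$ of $\partial{\cal O}$ the contour $\Gamma$ agrees with a genuine smooth contour $\Gamma_f$ of the form (\ref{dcs.2}), so the only issue is local near $\partial{\cal O}$. Since $V$ is supported in $\overline{{\cal O}}$ and $\Gamma$ coincides with ${\bf R}^n$ on ${\cal O}$, the operator $P_\Gamma$ is, on $U$, just $-h^2\Delta$ restricted to the interior piece $\cal O$ and to the exterior distorted piece $\Gamma_{f,\mathrm{ext}}$, with the transmission (continuity) conditions (\ref{dcs.22}) at $\partial{\cal O}$.

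The key analytic step is the elliptic boundary-value-problem statement made in the paragraph around (\ref{dcs.23}): after a reflection putting $u_{\cal O}$ and $u_{\mathrm{ext}}$ on the same side, the system is an elliptic transmission problem with matrix-valued symbol, so for $z\in{\bf C}\setminus e^{-2i\theta}[0,+\infty[$ the map $P-z:{\cal D}(P_\Gamma)\to L^2$ is Fredholm of index $0$ and depends holomorphically on $z$; moreover elements of the domain enjoy the full $H^2$ regularity on each side up to the boundary. I would combine this with a deformation argument: join $\Gamma$ to $\Gamma_{\mathrm{sm}}$ through a one-parameter family of contours $\Gamma_t$, $t\in[0,1]$ (for instance by smoothing the corner of $f$ at scale $t$ while keeping $f\equiv 0$ on ${\cal O}$ and $f''\ge 0$ throughout, using Proposition \ref{dcs2}), all agreeing with ${\bf R}^n$ over $\overline{\cal O}$. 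Standard invariance-of-discrete-spectrum arguments for such holomorphic Fredholm families — the resolvent $(P_{\Gamma_t}-z)^{-1}$ is meromorphic in $z$ and the poles and their multiplicities are locally independent of $t$ as long as $z$ stays in the common domain $e^{-i[0,2\theta[}]0,+\infty[$ — then give $\sigma(P_\Gamma)=\sigma(P_{\Gamma_{\mathrm{sm}}})$ in that sector, with multiplicities. Since $\sigma(P_{\Gamma_{\mathrm{sm}}})$ in the sector is exactly the set of resonances of $P$ by the smooth theory, and resonances are defined intrinsically (independently of the scaling), the conclusion follows.

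An alternative, perhaps cleaner, route avoids the deformation: one shows directly that for $f$ real-valued, $u\in C_0^\infty(\Gamma)$ and $\Im z>0$ the scaled resolvent applied to compactly supported data agrees with the meromorphic continuation of $(P-z)^{-1}$ from the physical region, by an almost-analytic/Stokes argument as in \cite{SjZw1,SjZw2}, using that $P_\Gamma$ has no spectrum in the open upper half-plane (which follows from Proposition \ref{cds1}, items 3)–4), since the numerical range of the symbol lies in a sector strictly below ${\bf R}_+$). Then the poles of $(P_\Gamma-z)^{-1}$ in the sector are precisely the resonances.

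The main obstacle is the boundary regularity at $\partial{\cal O}$: because $f$ is only $C^{1,1}$ there, $\Gamma$ is a Lipschitz manifold and $P_\Gamma$ is not a classical elliptic operator across $\partial{\cal O}$, so one must justify carefully that (\ref{dcs.23}) is a well-posed elliptic transmission problem — in particular that the Lopatinski--Shapiro (covering) condition holds for the $2\times 2$ system obtained after reflection, that the traces $\gamma u$, $\gamma\partial_\nu u$ are well defined, and that the a priori estimate upgrades $L^2$ solutions to $H^2$ on each side. Once this local Fredholm-with-$H^2$-regularity fact is in hand (it is essentially what is asserted in the excerpt and can be proved by freezing coefficients and a partition of unity), the identification with resonances is routine via either of the two arguments above.
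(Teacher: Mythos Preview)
Your plan has a real gap, and it is not where you locate it. You put the difficulty in verifying that (\ref{dcs.23}) is an elliptic transmission problem; but the paper treats that as routine (a frozen-coefficient Lopatinski check, exactly as you say), and it only yields Fredholmness of $P_\Gamma-z$. The hard step is the \emph{comparison} of $\sigma(P_\Gamma)$ with the spectrum on a smooth contour, and neither of your two routes actually carries it out.

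For the deformation route: even between two \emph{smooth} contours $\Gamma_1,\Gamma_2$ agreeing near infinity, the equality $\sigma(P_{\Gamma_1})=\sigma(P_{\Gamma_2})$ is not a soft Fredholm-continuity fact; it is proved (see the remark just before the paper's proof, and \cite{SjZw1}) by showing that null solutions of $P_{\Gamma_1}-z$ extend holomorphically to a neighborhood of $\Gamma_2$ and restrict to null solutions there. Your family $\Gamma_t$ lives on varying Hilbert spaces and you give no mechanism for comparing resolvents across $t$; invoking ``invariance of discrete spectrum'' is essentially assuming the conclusion. For the Stokes/almost-analytic route, the references you cite treat smooth contours, and the point is precisely that the corner at $\partial\mathcal{O}$ blocks a direct application.

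What the paper does is supply the missing holomorphic-extension tool near $\partial\mathcal{O}$. It continues the free resolvent kernel $R_0(z)(x-y)$ analytically in $x$ over $\{w\in\mathbf{C}^n:\,w\cdot w\neq 0\}$ via rotation invariance and the identity (\ref{dcs.28}), checks that for the convex $C^{1,1}$ weight $f$ one has $(x_1-x_0)\cdot(x_1-x_0)\neq 0$ for $x_0\neq x_1$ on $\Gamma_f$, and so defines $R_{0,\Gamma}$ on the singular contour with $(P_{0,\Gamma}-z)R_{0,\Gamma}=1$ (equations (\ref{dcs.29})--(\ref{dcs.30})). Then for an eigenfunction $u$ of $P_\Gamma$, with a cutoff $\chi$ equal to $1$ where $\Gamma$ and a fixed smooth $\widetilde\Gamma$ differ, one writes
\[
\chi u = R_{0,\Gamma}(z)\bigl([P_\Gamma,\chi]u - Vu\bigr),
\]
and since the right-hand side is supported where $\Gamma=\widetilde\Gamma$, the holomorphy of $R_0(z)(x-y)$ furnishes the extension of $u$ to $\widetilde\Gamma$. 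The converse direction swaps the roles of $\Gamma$ and $\widetilde\Gamma$, invoking ellipticity of (\ref{dcs.23}) only to land back in $\mathcal{D}(P_\Gamma)$. This explicit parametrix construction on the singular contour is the substance your outline is missing.
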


We have already recalled that the proposition holds when $\Gamma $ is
a smooth contour, of the same form near infinity. We also recall from
\cite{SjZw1}, Section 3 (see also \cite{Sj01} for a semi-classical
version as well as \cite{SjZw2,SjZw3,SjZw4,SjZw5}), that one can show
directly, using a result on holomorphic extension of null solutions to
non-characteristic equations, that $P_{\Gamma _1}$ and $P_{\Gamma _2}$
have the same spectrum if $\Gamma _1$ and $\Gamma _2$ are two smooth
contours as above, which coincide near infinity.

\par The new part of the proof in the
case of singular contours will be to show how to extend null-solutions
holomorphically near the singular part of $\Gamma $, i.e. near
$\partial {\cal O}$ and in order to do so we need to study holomorphic
extensions of the resolvent kernel. Since we are not interested here
in how the estimates depend on $h$, we will take $h=1$ for
simplicity. The arguments below are related with the more abstract
method of exterior complex scaling of B.~Simon \cite{Si79}.

We first consider the free resolvent $R_0(z)=(-\Delta -z)^{-1}$ on
${\bf R}^n$ for $\Im z>0$. The distribution kernel is of the form
$R_0(z)(x,y)=R_0(z)(x-y)$, where
\ekv{dcs.24}
{
R_0(z)(x)=\frac{1}{(2\pi )^n}\int e^{ix\cdot \xi }\frac{1}{\xi
  ^2-z}d\xi .
}
As already mentioned, $R_0(z)$ extends holomorphically as an operator
$C_0^\infty ({\bf R}^n)\to C^\infty ({\bf R}^n)$
across $]0,+\infty [$ to the double and universal coverings
of ${\bf C}\setminus \{0\}$, when $n$ is odd and even
respectively. Moreover, for $x$ in any compact subset of ${\bf
  R}^n$ and for $z$ in any compact subset of the
covering space, there exists a constant $C>0$ such that
\ekv{dcs.25}{
|R_0(z)(x)|\le \begin{cases} C,\ &n=1\\ 
C (1+|\ln |x||),\ &n=2,\\
C |x|^{2-n}, &n\ge 3,
\end{cases}}
\ekv{dcs.26}{
|\nabla  _xR_0(z)(x)|\le \begin{cases} C,\ &n=1,\\
C|x|^{1-n},\ &n\ge 2.
\end{cases}}
More precise results are known of course, see for instance \cite{Vo94},
but we have a quick proof of (\ref{dcs.25}),
(\ref{dcs.26}) by noticing that we can make an $x$-dependent complex
deformation in the integral (\ref{dcs.24}) for large $x$ and obtain
\[
\begin{split}
R_0(z)(x)&={\cal O}(1)+\int_{|\xi |\ge 1}{\cal O}(1)e^{-|x||\xi |/C}|\xi
|^{-2}d\xi ,\\
\nabla R_0(z)(x)&={\cal O}(1)+\int_{|\xi |\ge 1}{\cal O}(1)e^{-|x||\xi |/C}|\xi
|^{-1}d\xi ,
\end{split}
\]
and treating the gradient estimate for $n=1$ separately.

\par Finally, $R_0(z)$ is rotation invariant; $R_0(z)(Ux)=R_0(z)(x)$ if
$U:{\bf R}^n\to {\bf R}^n$ is orthogonal. See Section 2 of \cite{Sj02}
as well as further references given there. As explained in that
reference, (\ref{dcs.24}) remains valid also for $z$ in the covering
space, we just have to make a complex deformation of the integration
contour in a region where $|\xi |$ is bounded, in order to avoid the
zeros $\xi ^2-z$ and this has no importance for the local properties of
$x\mapsto R_0(z)(x)$ while it does influence the exponential decay or
increase near infinity. 

\par We now want to extend (\ref{dcs.24}) holomorphically with respect
to $x$. The very first observation is that if $x_0\in {\bf R}^n\setminus
\{0\}$ then $R_0(z)(x)$ extends holomorphically in $x$ to small
neighborhood of $x_0$, by making the small complex deformation
of the integration contour in (\ref{dcs.24}) already alluded to. 

\par More generally, assume that $x\in {\bf C}^n$ and that $x\cdot
x\ne 0$. Write $x=(x\cdot x)^{1/2}f_1$ for some branch of the square
root. Then $f_1\cdot f_1=1$ and we
can find vectors $f_2,...,f_n\in {\bf C}^n$ such that $f_1,...,f_n$ is
an orthonormal basis for the bilinear symmetric product $x\cdot y$:
$f_j\cdot f_k=\delta _{j,k}$. Let $e_1,...,e_n$ be the canonical basis
in ${\bf R}^n$ and define the complex orthogonal map $U:{\bf C}^n\to
{\bf C}^n$ by 
\ekv{dcs.27}
{
Ue_j=f_j.
}
Let $\omega =((x\cdot x)/|x\cdot x|)^{1/2}$ with the same branch of
the square root as above. Then $x=\omega Uy$, where $y=|x\cdot
x|^{1/2}e_1\in {\bf R}^n$ and $y\cdot y=|x\cdot x|$. At least
formally, we have 
$$
R_0(z)(x)=:I(x,z)=\int e^{ix\cdot \xi }\frac{1}{\xi ^2-z}\frac{d\xi
}{(2\pi )^n}=\int e^{i\omega Uy\cdot \xi }\frac{1}{\xi ^2-z}\frac{d\xi
}{(2\pi )^n}.
$$
Choose the integration contour $\xi =\omega ^{-1}U\eta $, $\eta \in
{\bf R}^n$. Then $d\xi =\omega ^{-n}d\eta  $, $\xi ^2=\omega ^{-2}\eta
^2$ and we get 
$$
I(x,z)=\int e^{iy\cdot \eta }\frac{1}{\omega ^{-2}\eta
  ^2-z}\frac{d\eta }{\omega ^n(2\pi )^n}=\frac{1}{\omega ^{n-2}}\int
e^{iy\cdot \eta }\frac{1}{\eta ^2-\omega ^2z}\frac{d\eta }{(2\pi )^n},
$$
so at least formally, we have
\ekv{dcs.28}
{
I(x,z)=\omega ^{2-n}I(y,\omega ^2z),\ \omega =\left(\frac{x\cdot
    x}{|x\cdot x|}\right)^{\frac{1}{2}},\ y\in {\bf R}^n,\ x\cdot
x=\omega ^2y\cdot y.
}

We can use this formula together with the  initial remark
about holomorphic extentions to small neighborhoods of real points to
define the desired holomorphic extension of $I(x,z)$ from ${\bf
  R}^n_x\setminus \{0 \}$. Naturally this will give rise to a ramified
(multivalued) function and in order to get some more understanding, let $[0,1]\ni t\mapsto x_t\in {\bf C}^n$ be a continuous
map starting at a real point $x_0\in {\bf R}^n\setminus \{0 \}$ and
ending at some given point $x\in {\bf C}^n$ with $x\cdot x\ne 0$ such that $x_t\cdot x_t\ne 0$ for all $t$. Then
we can choose $U=U_t$ depending continuously on $t$ with $U_0=1$. If
we have choosen a branch of $I(y,z)$ for real $y$, then we get the
branch
$$
I(x,z)=\omega _1^{2-n}I(y,\omega _1^2z),
$$
obtained by following the curve $[0,1]\ni t\mapsto \omega _t^2z$ from
$z$ to $\omega _1^2z$. We conclude that $I(x,z)$ is a well-defined
multivalued holomorphic function of $x\in  \{w\in {\bf C}^n;\, w\cdot
w\ne 0 \}$ and $z$ in the double/universal covering space of ${\bf
  C}\setminus \{0 \}$. Moreover for $(x,z)$ in any fixed compact
subset of the above domain of definition, we still have
(\ref{dcs.25}), (\ref{dcs.26}).

Now we observe that the singular contour $\Gamma $ in Proposition
\ref{dcs3} is of the form $\Gamma =\Gamma _f$: $x=y+if'(y)$, where $f$
is real-valued of class $C^{1,1}({\bf R}^n)$ which is convex and
$f(y)=(\tan \theta )d_0(y)^2/2$ near infinity. If $x_j=y_j+if(y_j)$,
$j=0,1$ are two different points on $\Gamma _f$, then 
$$
f'(y_1)-f'(y_0)=A(y_0,y_1)(y_1-y_0),
$$
where 
$$
A(y_0,y_1)=\int_0^1 f''(ty_1+(1-t)y_0)dt\ge 0,
$$
and 
$$
(x_1-x_0)\cdot (x_1-x_0)=[(1-A(y_1,y_0)^2)+2iA(y_0,y_1)](y_1-y_0)\cdot (y_1-y_0).
$$
The same argument as for the ellipticity of $-\Delta _{\Gamma _f}$
shows that 
$$\Gamma _f\times \Gamma _f\ni (x_0,x_1)\mapsto (x_1-x_0)\cdot
(x_1-x_0)$$
takes its values in a sector $e^{i[0,\pi -\epsilon ]}[0,+\infty [$ and
that 
$$|(x_1-x_0)\cdot (x_1-x_0)|\asymp |x_1-x_0|^2,\ x_0,x_1\in \Gamma
_f.$$

Combining these facts with the deformation $[0,1]\ni t\mapsto \Gamma
_{tf}$ from ${\bf R}^n$ to $\Gamma _f$, we see that
$R_0(z)(x,y)=R_0(z)(x-y)$ is well-defined on $\Gamma _f\times \Gamma
_f$ away from the diagonal, and we can define
$$
R_{0,\Gamma }u(x)=\int_\Gamma R_0(z)(x,y)u(y)dy,\ x\in \Gamma _f,\ u\in
C_0(\Gamma ),\ \Gamma =\Gamma _f.
$$
This gives a continuous operator $C_0(\Gamma )\to C(\Gamma )$. Let
$P_0=-\Delta $. Using that
$$
(-\Delta_x-z)R_0(z)(x,y)= (-\Delta_y^{\mathrm{t}}-z)R_0(z)(x,y)=0,\
x\ne y,
$$
as well as the bound on the strength of the singularity at $x=y$ described in
(\ref{dcs.25}), (\ref{dcs.26}), we see that in the case when $f$ is
smooth, we have, 
\[\begin{split}(P_{0,\Gamma }-z)R_{0,\Gamma }(z)v(x)&=C(x,f)v(x)\\
R_{0,\Gamma }(z)(P_{0,\Gamma }-z)u(x)&=\widetilde{C}(x,f)u(x)
\end{split} \] for $x\in \Gamma $, $u,v\in C_0^\infty (\Gamma )$. It
is further clear that $C(x,f)$, $\widetilde{C}(x,f)$ only depend on
the restriction of $f$ to a small neighborhood of $\Re x$, so we can
replace $f$ be a new function $\widetilde{f}$ which is equal to $f$
near $\Re x$ with $\widetilde{f}''$ varying very little and being
constant near infinity. We can then determine the constants by letting
$v$, $u$ be suitable Gaussians and possibly after an additional
deformation argument, we get $C(x,f)=\widetilde{C}(x,f)=1$. Thus
\ekv{dcs.29} { (P_{0,\Gamma }-z)R_{0,\Gamma }(z)v=v, } \ekv{dcs.30} {
  R_{0,\Gamma }(z)(P_{0,\Gamma }-z)u=u, } when $u,v\in C_0^\infty
(\Gamma )$, $\Gamma =\Gamma _f$ and $f$ is smooth. To extend this to
the general case when $f$ is a convex $C^{1,1}$ function would require
first to define the operator $P_{0,\Gamma }$, and we prefer to avoid
that work and just consider the case of the special singular contour
in Proposition \ref{dcs3}. Then for $v\in C_0(\Gamma )$ (\ref{dcs.29})
still holds away from $\partial {\cal O}$.

\par We also remark that if $v\in C_0(\Gamma )$, then $u:=R_{0,\Gamma
}v$ is of class $C^1$ up to the boundary both on ${\cal O}$ and on
$\Gamma _\mathrm{ext}$ and we have 
\ekv{dcs.31}{
\gamma u_\Omega =\gamma u_\mathrm{ext},\ \gamma \partial _\nu u_\Omega =\gamma
\partial _\nu u_\mathrm{ext}.}
Using now that (\ref{dcs.23}) is an elliptic boundary value problem,
we see that $R_{0,\Gamma }v$ belongs locally to ${\cal D}(P_\Gamma )$ and
this holds more generally for $v\in L^2_\mathrm{comp}(\Gamma )$. 

\par If $u\in C_0(\Gamma )$ and $u_{\cal O}$ and $u_\mathrm{ext}$ are
$C^2$ up to the boundary and satisfy (\ref{dcs.31}), then we can make
integrations by parts in
\[
R_{0,\Gamma }(P_{0,\Gamma }-z)u(x)=\int R_{0}(z)(x,y)(-\Delta _\Gamma -z)u(y)dy
 \]
 after introducing a cutoff around the singularity and passing to the
 limit and get (\ref{dcs.30}) as in the case when $f$ is smooth. By
 density this extends to the case when $u\in {\cal D}(P_\Gamma )$ has
 compact support.

We can now complete the proof of Proposition \ref{dcs3}. Let $\Gamma
=\Gamma _f$ be the singular contour in that proposition and let
$\widetilde{f}$ be smooth, convex, $=0$ in ${\cal O}$ and $=f$ outside
a small neighborhood of $\overline{{\cal O}}$. Let $\widetilde{\Gamma
}=\Gamma _{\widetilde{f}}$ be the corresponding smooth contour, so
that the spectrum of $\widetilde{P}=P_{\widetilde{\Gamma }}$ in the
sector $e^{-i[0,2\theta [}]0,+\infty [$ coincides with the set of
resonances there. As in \cite{SjZw1}, it suffices to show the following two facts:
\begin{itemize}
\item[1)] If $u\in {\cal D}(P_\Gamma )$ and $(P_\Gamma -z)u=0$, then
  $u$ has a holomorphic extension to a domain containing
\ekv{dcs.32}{\{y+i(t\widetilde{f}'(y)+(1-t)f'(y));\ f(y)\ne
  \widetilde{f}(y),\ 0\le t\le 1 \} ,}
such that its restriction $\widetilde{u}$ to $\widetilde{\Gamma }$
belongs to ${\cal D}(P_{\widetilde{\Gamma }})$ and satisfies
$(P_{\widetilde{\Gamma }}-z)\widetilde{u}=0$.
\item[2)] If $\widetilde{u}\in {\cal D}(P_{\widetilde{\Gamma }} )$ and $(P_{\widetilde{\Gamma}} -z)\widetilde{u}=0$, then
  $\widetilde{u}$ has a holomorphic extension to a domain containing
  the set 
(\ref{dcs.32})
such that its restriction $u$ to $\Gamma $
belongs to ${\cal D}(P_{\Gamma })$ and satisfies
$(P_{{\Gamma }}-z){u}=0$.
\end{itemize}  

\par Let $\widehat{\chi }\in C_0^\infty ({\bf R}^n)$ be equal to one
near $\mathrm{supp\,}(f-\widetilde{f})$ and define the cutoffs $\chi $
and $\widetilde{\chi }$ on $\Gamma $ and on $\widetilde{\Gamma }$
respectively by
$$
\chi (y+if'(y))=\widetilde{\chi }(y+i\widetilde{f}'(y))=\widehat{\chi
}(y). 
$$
We first prove 1) and let $u$ be as in that statement. Then 
\ekv{dcs.33}
{
(P_\Gamma -z)\chi u=[P_\Gamma ,\chi ]u,
}
where the right hand side has its support in the region where $\Gamma
$ and $\widetilde{\Gamma }$ coincide. We can rewrite (\ref{dcs.33}) as 
\ekv{dcs.34}
{
(P_{0,\Gamma }-z)\chi u=[P_\Gamma ,\chi ]u-Vu
}
and $Vu$ also has its support where $\Gamma $ and $\widetilde{\Gamma
}$ coincide. Applying (\ref{dcs.30}) gives
\ekv{dcs.35}
{
\chi u=R_{0,\Gamma }(z)([P_\Gamma ,\chi ]u-Vu).
}
From the properties of $R_0(z)$, we see that $\chi u$ has a
holomorphic extension to a domain containing the set
(\ref{dcs.32}). Its restriction to $\widetilde{\Gamma }$ solves
$(P_{\widetilde{\Gamma }}-z)\widetilde{u}=0$ and $\widetilde{u}=u$ in
the regions where $\Gamma $ and $\widetilde{\Gamma }$ coincide. From
elliptic regularity we see that $\widetilde{u}$ is locally in $H^2$
and hence globally so $\widetilde{u}$ belongs to the domain of
$P_{\widetilde{\Gamma }}$. This proves 1). 

The proof of 2) works the same way with the small difference that
instead of invoking the ellipticity of $P_{\widetilde{\Gamma }}$ on
the smooth manifold $\widetilde{\Gamma }$, we invoke the ellipticity
of the boundary value problem (\ref{dcs.23}). \hfill{$\Box$}

\section{Semi-Classical Sobolev spaces}
\label{al}
\setcounter{equation}{0}
This section is a review of some easy facts about Sobolev spaces, see
Section 2 in
\cite{Sj08a, Sj08b} for more details about the first part.
We let $H_h^s({\bf R}^n)\subset {\cal S}'({\bf R}^n)$, $s\in {\bf R}$, 
denote the semi-classical Sobolev space of order
$s$ equipped with the norm $\Vert \langle hD\rangle^s u\Vert$ where
the norms are the ones in $L^2$, $\ell^2$ or the corresponding
operator norms if nothing else
is indicated. Here $\langle hD\rangle= (1+(hD)^2)^{1/2}$.  
\begin{prop}\label{al1}
Let $s>n/2$. Then there exists a constant $C=C(s)$ such that for all
$u,v\in H_h^s({\bf R}^n)$, we have $u\in L^\infty ({\bf R}^n) $, 
$uv\in H_h^s({\bf R}^n)$ and 
\ekv{al.1}
{
\Vert u\Vert_{L^\infty }\le Ch^{-n/2}\Vert u\Vert_{H_h^s},
}
\ekv{al.2}
{
\Vert uv\Vert_{H_h^s} \le Ch^{-n/2} \Vert u\Vert_{H_h^s} \Vert v\Vert_{H_h^s}.
}
\end{prop}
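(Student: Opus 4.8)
The plan is to deduce both inequalities from a single elementary observation: for $s>n/2$ the function $\xi\mapsto\langle h\xi\rangle^{-s}$ lies in $L^2(\mathbf{R}^n)$, with
\[
\Vert\langle h\xi\rangle^{-s}\Vert_{L^2}=C_s\,h^{-n/2},\qquad C_s^2=\int_{\mathbf{R}^n}\langle\eta\rangle^{-2s}\,d\eta<\infty ,
\]
the finiteness and the power $h^{-n/2}$ coming from the substitution $\eta=h\xi$ together with the condition $2s>n$. This is the only place where the hypothesis $s>n/2$ and the factor $h^{-n/2}$ enter.

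First I would prove (\ref{al.1}). Writing $u$ by Fourier inversion in a fixed normalization gives $\Vert u\Vert_{L^\infty}\le C\Vert\widehat u\Vert_{L^1}$; factoring $\widehat u=\langle h\xi\rangle^{-s}\bigl(\langle h\xi\rangle^{s}\widehat u\bigr)$ and applying Cauchy--Schwarz together with Plancherel (which gives $\Vert\langle h\xi\rangle^{s}\widehat u\Vert_{L^2}=C\Vert u\Vert_{H_h^s}$) yields
\[
\Vert\widehat u\Vert_{L^1}\le\Vert\langle h\xi\rangle^{-s}\Vert_{L^2}\,\Vert\langle h\xi\rangle^{s}\widehat u\Vert_{L^2}\le C\,h^{-n/2}\Vert u\Vert_{H_h^s},
\]
hence $u\in L^\infty$ and (\ref{al.1}) hold. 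The intermediate bound $\Vert\widehat u\Vert_{L^1}\le C h^{-n/2}\Vert u\Vert_{H_h^s}$ will be reused below.

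For (\ref{al.2}) I would pass to the Fourier side, where $\widehat{uv}$ is a constant times $\widehat u*\widehat v$, and use the Peetre-type inequality $\langle h\xi\rangle^{s}\le C_s\bigl(\langle h(\xi-\eta)\rangle^{s}+\langle h\eta\rangle^{s}\bigr)$, valid for $s\ge0$ and applied in the rescaled variables $h\xi,h\eta$ so that the $h$-dependence is tracked correctly. This bounds $\langle h\xi\rangle^{s}\bigl|\widehat{uv}(\xi)\bigr|$ by $C_s$ times
\[
\Bigl(\bigl(\langle h\cdot\rangle^{s}|\widehat u|\bigr)*|\widehat v|\Bigr)(\xi)+\Bigl(|\widehat u|*\bigl(\langle h\cdot\rangle^{s}|\widehat v|\bigr)\Bigr)(\xi).
\]
Taking $L^2$ norms in $\xi$ and invoking Young's inequality $\Vert f*g\Vert_{L^2}\le\Vert f\Vert_{L^2}\Vert g\Vert_{L^1}$ reduces the first term to $\Vert\langle h\cdot\rangle^{s}\widehat u\Vert_{L^2}\Vert\widehat v\Vert_{L^1}$ and the second, symmetrically, to $\Vert\widehat u\Vert_{L^1}\Vert\langle h\cdot\rangle^{s}\widehat v\Vert_{L^2}$; inserting the $L^1$ bound from the previous step and $\Vert\langle h\cdot\rangle^{s}\widehat w\Vert_{L^2}=C\Vert w\Vert_{H_h^s}$ gives $\Vert uv\Vert_{H_h^s}\le C h^{-n/2}\Vert u\Vert_{H_h^s}\Vert v\Vert_{H_h^s}$, and in particular $uv\in H_h^s$.

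There is no serious obstacle. The only points requiring care are the bookkeeping of the $2\pi$ and $h$ constants attached to the chosen normalization of the Fourier transform and to Plancherel's theorem, and making sure the Peetre inequality is applied to $h\xi,h\eta$ rather than $\xi,\eta$, so that all the $h$-powers collapse to the single factor $h^{-n/2}$ produced by $\Vert\langle h\xi\rangle^{-s}\Vert_{L^2}$.
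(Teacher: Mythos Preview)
Your proof is correct and is the standard argument: Cauchy--Schwarz on the Fourier side for (\ref{al.1}), then the subadditive Peetre-type bound $\langle h\xi\rangle^{s}\le C_s(\langle h(\xi-\eta)\rangle^{s}+\langle h\eta\rangle^{s})$ together with Young's inequality for (\ref{al.2}), with the $L^1$ control of $\widehat u$ feeding into the product estimate. The paper itself does not give a proof of this proposition; it states the result as a review of known facts and refers the reader to \cite{Sj08a,Sj08b}, so there is nothing to compare against beyond noting that your argument is exactly the expected one.
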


Let $X$ be a compact smooth manifold. We cover $X$ by
finitely many coordinate neighborhoods $X_1,...,X_p$ and for
each $X_j$, we let $x_1,...,x_n$ denote the corresponding local 
coordinates on $X_j$. Let $0\le \chi _j\in C_0^\infty (X_j)$ have the
property that $\sum_1^p\chi _j >0$ on $X$. Define $H_h^s(X)$ to be the
space of all $u\in {\cal D}'(X)$ such that 
\ekv{al.4}
{
\Vert u\Vert_{H_h^s}^2:=\sum_1^p \Vert \chi _j\langle hD\rangle^s \chi
_j u\Vert ^2 <\infty .
}
It is standard to show that this definition does not depend on the
choice of the coordinate neighborhoods or on $\chi _j$. With different
choices of these quantities we get norms in \no{al.4} which are
uniformly equivalent when $h\to 0$. In fact, this follows from the
$h$-pseudodifferential calculus on manifolds with symbols in the
H\"ormander space $S^m_{1,0}$ that we quickly reviewed in the
appendix in \cite{Sj08a}.
An equivalent definition of $H_h^s(X)$ is the following: Let 
\ekv{al.5}
{
h^2\widetilde{R}=\sum (hD_{x_j})^*r_{j,k}(x)hD_{x_k}
}
be a self-adjoint non-negative elliptic operator with smooth coefficients on $X$,
where the star indicates that we take the adjoint with respect to some
fixed positive smooth density on $X$. Then $h^2\widetilde{R}$ is
essentially self-adjoint with domain $H^2(X)$, so
$(1+h^2\widetilde{R})^{s/2}:L^2\to L^2$ is a closed densely defined
operator for $s\in {\bf R}$, which is bounded precisely when $s\le
0$. Standard methods allow to show that $(1+h^2\widetilde{R})^{s/2}$
is an $h$-pseudodifferential operator with symbol in $S^s_{1,0}$ and
semi-classical principal symbol given by $(1+r(x,\xi ))^{s/2}$, where
$r(x,\xi )=\sum_{j,k}r_{j,k}(x)\xi _j\xi _k$ is the semi-classical
principal symbol of $h^2\widetilde{R}$.  See the appendix in
\cite{Sj08a}.
The
$h$-pseudodifferential calculus gives for every $s\in {\bf R}$:
\begin{prop}\label{al2}
  $H_h^s(X)$ is the space of all $u\in {\cal D}'(X)$ such that 
$(1+h^2\widetilde{R})^{s/2}u\in L^2$ and the norm $\Vert u\Vert_{H_h^s}$ is
equivalent to $\Vert (1+h^2\widetilde{R})^{s/2}u\Vert$, uniformly when $h\to 0$.
\end{prop}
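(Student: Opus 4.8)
The statement is, given what has just been recalled, essentially a corollary of the uniform (i.e. $h$-dependent) pseudodifferential calculus on $X$. The one substantial input I would take for granted is the fact quoted just above — and proved in the appendix of \cite{Sj08a} (a Seeley-type construction of complex powers, or a Helffer--Sj\"ostrand functional calculus applied to resolvent bounds for $h^2\widetilde{R}$) — that $A_s:=(1+h^2\widetilde{R})^{s/2}$ is an $h$-pseudodifferential operator with symbol in $S^s_{1,0}$ whose semiclassical principal symbol $(1+r(x,\xi ))^{s/2}$ is elliptic in the strong sense $(1+r(x,\xi ))^{s/2}\asymp \langle \xi \rangle^s$, uniformly in $(x,\xi )$. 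The plan is to deduce both the norm equivalence and the characterization of $H_h^s(X)$ from a uniform elliptic parametrix for $A_s$.

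First I would construct, by the usual elliptic parametrix procedure in the $S^m_{1,0}$-calculus, an $h$-pseudodifferential operator $B_s$ of order $-s$ with $B_sA_s=1-R$ and $A_sB_s=1-R'$, where after finitely many iterations $R$ and $R'$ may be taken smoothing with all symbol seminorms, hence all operator bounds, ${\cal O}(h^\infty )$ and uniform as $h\to 0$. By the same calculus, $A_s\colon H_h^\sigma (X)\to H_h^{\sigma -s}(X)$ and $B_s\colon H_h^\sigma (X)\to H_h^{\sigma +s}(X)$ are bounded with ${\cal O}(1)$ norms for every $\sigma $, the spaces $H_h^\sigma (X)$ being those defined by (\ref{al.4}); that the calculus is adapted to those norms is again part of what is recalled above.

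Next I would establish the norm equivalence. Since $H_h^0(X)=L^2(X)$, boundedness of $A_s\colon H_h^s\to H_h^0$ gives $\Vert A_su\Vert \le C\Vert u\Vert_{H_h^s}$. For the reverse inequality I would write $u=B_sA_su+Ru$ and bound $\Vert B_sA_su\Vert_{H_h^s}\le C\Vert A_su\Vert$ by boundedness of $B_s\colon H_h^0\to H_h^s$, together with $\Vert Ru\Vert_{H_h^s}\le {\cal O}(h^\infty )\Vert u\Vert_{H_h^s}$. This yields $\Vert u\Vert_{H_h^s}\le C\Vert A_su\Vert +{\cal O}(h^\infty )\Vert u\Vert_{H_h^s}$, and for $h$ below a fixed $h_0$ the last term is absorbed into the left-hand side, giving the desired two-sided bound. (For $h_0\le h\le 1$ the two norms are equivalent with $h$-dependent but bounded constants, which is immaterial since only uniformity as $h\to 0$ is asserted.) For the identification of the space: reading $A_s$ as a continuous map ${\cal D}'(X)\to {\cal D}'(X)$, which agrees with the operator defined by the spectral theorem on its natural domain, one inclusion is immediate ($u\in H_h^s\Rightarrow A_su\in H_h^0=L^2$); conversely, if $u\in {\cal D}'(X)$ with $A_su\in L^2$, then $u=B_s(A_su)+Ru$ with $B_s(A_su)\in H_h^s$ since $B_s\colon L^2\to H_h^s$ is bounded, and $Ru\in C^\infty (X)\subset H_h^s(X)$ since $R$ is smoothing, so $u\in H_h^s(X)$.

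The step I expect to require the most care is not analytically deep, but is precisely the place where the semiclassical character matters: one must check that ellipticity, composition, the orders of the parametrix remainders, and all mapping bounds are \emph{uniform in $h$}, so that the ${\cal O}(h^\infty )$ (or merely ${\cal O}(h)$) error in the reverse norm estimate can genuinely be absorbed for small $h$. This uniformity is exactly what the $h$-version of the $S^m_{1,0}$-calculus supplies. Beyond this, and beyond the already granted membership of $(1+h^2\widetilde{R})^{s/2}$ in that calculus with the stated principal symbol, the argument is routine bookkeeping.
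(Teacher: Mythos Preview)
Your proposal is correct and follows exactly the approach the paper intends: the paper does not give a detailed proof but simply states that the result follows from the $h$-pseudodifferential calculus once one knows (from the appendix of \cite{Sj08a}) that $(1+h^2\widetilde{R})^{s/2}\in \mathrm{Op}_h S^s_{1,0}$ with elliptic principal symbol $(1+r(x,\xi))^{s/2}$. Your parametrix argument is the standard way to unpack that one-line claim, and your emphasis on the uniformity in $h$ of all the bounds is precisely the point.
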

\begin{remark}\label{al3}
{\rm From the first definition we see that Proposition \ref{al1} remains
valid if we replace ${\bf R}^n$ by a compact $n$-dimensional 
manifold $X$.}
\end{remark}
\begin{remark}\label{al3.5}
{\rm We will also consider the case when the manifold $X$ is the disjoint
union of a compact part and ${\bf R}^n\setminus B(0,R)$ for some
$R>0$. The definition and properties of $H_h^s(X)$ are quite clear.}
\end{remark}
\par Of course, $H_h^s(X)$ coincides with the standard Sobolev space
$H_1^s(X)$ and the norms are equivalent for each fixed value of $h$, but
not uniformly so with respect to $h$. We have the following variant
(\cite{Sj08b}, Section 2):
\begin{prop}\label{al4}
Let $s>n/2$. Then there exists a constant $C=C_s>0$ such that 
\ekv{al.6}
{
\Vert uv\Vert_{H_h^s}\le C\Vert u\Vert_{H_1^s}\Vert v\Vert_{H_h^s},\
\forall u\in H^s({\bf R}^n),\, v\in H_h^s({\bf R}^n).
}
The result remains valid if we replace ${\bf R}^n$ by $X$.
\end{prop}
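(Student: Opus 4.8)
The plan is to prove the inequality first on ${\bf R}^n$, by a semiclassical variant of Bony's paraproduct decomposition, and then to obtain the statement on the compact manifold $X$ by localizing to coordinate charts, just as Proposition \ref{al1} is extended in Remark \ref{al3}. Fix a dyadic partition of unity $1=\sum_{k\ge 0}\psi_k$ on ${\bf R}^n_\xi$ with $\mathrm{supp}\,\psi_0\subset\{|\xi|\le 2\}$, $\mathrm{supp}\,\psi_k\subset\{2^{k-1}\le|\xi|\le 2^{k+1}\}$ for $k\ge1$, and put $\Delta_k=\psi_k(D)$, $S_k=\sum_{l<k}\Delta_l$. Since $\langle h\xi\rangle\asymp\langle h2^k\rangle$ on $\mathrm{supp}\,\psi_k$, uniformly for $0<h\le1$, we have the Littlewood--Paley characterization
$$
\|w\|_{H_h^s}^2\asymp\sum_{k\ge0}\langle h2^k\rangle^{2s}\|\Delta_k w\|_{L^2}^2 .
$$
The essential point is the frequency threshold $2^k\sim 1/h$: for $2^k\lesssim 1/h$ one has $\langle h2^k\rangle^s\asymp1$, whereas for $2^k\gtrsim 1/h$ one has $\langle h2^k\rangle^s\asymp h^s2^{ks}$. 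Decompose $uv=T_uv+T_vu+R(u,v)$ in the usual way, $T_uv=\sum_k(S_{k-N}u)(\Delta_k v)$, $T_vu=\sum_k(S_{k-N}v)(\Delta_k u)$, $R(u,v)=\sum_{|j-k|\le N}(\Delta_ju)(\Delta_kv)$, with $N$ a fixed integer chosen so that the summands of the two paraproducts have spectrum in dyadic annuli $\{|\xi|\asymp2^k\}$.

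The paraproduct $T_uv$ and the remainder $R(u,v)$ cause no trouble. For $T_uv$, each summand is spectrally localized at $2^k$, and $\|(S_{k-N}u)(\Delta_kv)\|_{L^2}\lesssim\|u\|_{L^\infty}\|\Delta_kv\|_{L^2}$, so the displayed characterization gives $\|T_uv\|_{H_h^s}\lesssim\|u\|_{L^\infty}\|v\|_{H_h^s}\le C\|u\|_{H_1^s}\|v\|_{H_h^s}$, the last step being (\ref{al.1}) at $h=1$ (licit since $s>n/2$). For $R(u,v)=\sum_kr_k$ (with $r_k$ a bounded sum of products $(\Delta_ju)(\Delta_kv)$, $|j-k|\le N$), the spectrum of $r_k$ lies in a ball $\{|\xi|\le C2^k\}$ and Bernstein yields $\|r_k\|_{L^2}\lesssim 2^{kn/2}\|\Delta_ku\|_{L^2}\|\Delta_kv\|_{L^2}$ (up to harmless neighbouring indices). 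Writing $c_k=2^{ks}\|\Delta_ku\|_{L^2}$ and $d_k=\langle h2^k\rangle^s\|\Delta_kv\|_{L^2}$, so that $\|c\|_{\ell^2}\lesssim\|u\|_{H_1^s}$ and $\|d\|_{\ell^2}\lesssim\|v\|_{H_h^s}$, one sees that the semiclassical weight disappears:
$$
\langle h2^k\rangle^s\|r_k\|_{L^2}\lesssim 2^{-(s-n/2)k}\,c_kd_k .
$$
Re-localizing $\sum_kr_k$ dyadically and summing the geometric tails $\sum_{k\ge j-C}2^{-(s-n/2)k}$, convergent because $s>n/2$, gives $\|R(u,v)\|_{H_h^s}\lesssim\|c\|_{\ell^2}\|d\|_{\ell^2}\lesssim\|u\|_{H_1^s}\|v\|_{H_h^s}$.

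The hard part --- the one step I expect to require real care --- is the ``high--low'' paraproduct $T_vu=\sum_k(S_{k-N}v)(\Delta_ku)$. Here $v$ must be estimated in $L^\infty$, and the bound $\|v\|_{L^\infty}\le Ch^{-n/2}\|v\|_{H_h^s}$ of (\ref{al.1}) is sharp, so a direct estimate would lose the unwanted factor $h^{-n/2}$. The remedy is once more the threshold $2^k\sim1/h$: write $T_vu=A+B$ according to $2^k\le1/h$ and $2^k>1/h$. On $A$ the weight is bounded, and one replaces the $L^\infty$ bound on $v$ by an $L^\infty$ bound on $\Delta_ku$: $\|(S_{k-N}v)(\Delta_ku)\|_{L^2}\le\|v\|_{L^2}\|\Delta_ku\|_{L^\infty}\lesssim2^{kn/2}\|v\|_{L^2}\|\Delta_ku\|_{L^2}$, and $\sum_{2^k\le1/h}2^{kn}\|\Delta_ku\|_{L^2}^2\lesssim\sum_k2^{(n-2s)k}c_k^2\lesssim\|u\|_{H_1^s}^2$ since $s\ge n/2$; hence $\|A\|_{H_h^s}\lesssim\|v\|_{L^2}\|u\|_{H_1^s}\le\|v\|_{H_h^s}\|u\|_{H_1^s}$. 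On $B$ one does accept $\|S_{k-N}v\|_{L^\infty}\lesssim\|v\|_{L^\infty}\lesssim h^{-n/2}\|v\|_{H_h^s}$, but exploits that there $\langle h2^k\rangle^s\asymp h^s2^{ks}$, so that
$$
\|B\|_{H_h^s}^2\lesssim\sum_{2^k>1/h}(h^s2^{ks})^2h^{-n}\|v\|_{H_h^s}^2\|\Delta_ku\|_{L^2}^2\le h^{2s-n}\|v\|_{H_h^s}^2\sum_k2^{2ks}\|\Delta_ku\|_{L^2}^2\lesssim h^{2s-n}\|v\|_{H_h^s}^2\|u\|_{H_1^s}^2 ,
$$
and $h^{2s-n}\le1$ because $s\ge n/2$ and $0<h\le1$. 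Combining the estimates for $T_uv$, $T_vu$, $R(u,v)$ proves (\ref{al.6}) on ${\bf R}^n$.

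For $X$, write $u=\sum_\nu\phi_\nu u$ and $v=\sum_\nu\phi_\nu v$ with $(\phi_\nu)$ a partition of unity subordinate to a finite atlas, transport $\phi_\nu u$ and $\phi_\nu v$ to ${\bf R}^n$ --- where $\|\phi_\nu u\|_{H_1^s}\lesssim\|u\|_{H_1^s(X)}$ ($h$-independent, classical) and $\|\phi_\nu v\|_{H_h^s}\lesssim\|v\|_{H_h^s(X)}$ uniformly in $h$ by the $h$-pseudodifferential calculus recalled in the appendix of \cite{Sj08a} --- apply the ${\bf R}^n$ estimate to $(\phi_\nu u)(\phi_\nu v)$, and sum over $\nu$, following the same localization scheme as in the extension of Proposition \ref{al1} to manifolds.
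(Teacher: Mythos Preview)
Your argument is correct. The paper itself does not give a proof of this proposition; it simply cites \cite{Sj08b}, so there is nothing to compare against in the present text. Your paraproduct approach is a clean and self-contained way to establish the result, and the crucial step --- splitting the ``bad'' paraproduct $T_vu$ at the threshold $2^k\sim 1/h$ so that on the low part the semiclassical weight is harmless while on the high part the factor $h^{2s-n}\le 1$ absorbs the loss from $\|v\|_{L^\infty}\lesssim h^{-n/2}\|v\|_{H_h^s}$ --- is exactly the right idea.

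Two very minor points. First, the remainder estimate is a little terse; to be precise one should note that $\Delta_l r_k$ vanishes unless $k\ge l-C$, use $\langle h2^l\rangle^s\lesssim\langle h2^k\rangle^s$ there, and then apply Cauchy--Schwarz in $k$ with the weight $2^{-(s-n/2)k}$, observing that $\sum_{l\le k+C}2^{-(s-n/2)l}$ is bounded uniformly in $k$ since $s>n/2$. Second, in the passage to the manifold $X$ you write $uv=\sum_\nu(\phi_\nu u)(\phi_\nu v)$, which is not an identity. Use instead $uv=\sum_\nu\phi_\nu(uv)=\sum_\nu(\phi_\nu u)(\tilde\phi_\nu v)$ with a fattened cutoff $\tilde\phi_\nu\in C_0^\infty$ equal to $1$ on $\mathrm{supp}\,\phi_\nu$ and supported in the same coordinate chart; the rest of the localization argument then goes through as you indicate.
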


Let $\Omega \Subset {\bf R}^n$ be open with smooth boundary. Let
$H^s_h(\Omega )$ denote the Banach space of restrictions to $\Omega $
of elements in $H^s_h({\bf R}^n)$. It is a standard fact that if
$s>1/2$, then the restriction operator $\gamma :u\mapsto
{{u}_\vert}_{\partial \Omega }$ is bounded: $H_1^s(\Omega )\to
H_1^{s-\frac{1}{2}}(\partial \Omega )$.  $\gamma $ has a right inverse
$\gamma ^{-1}$ which is bounded $H_1^{\widetilde{s}-1/2}(\partial \Omega )\to
H_1^{\widetilde{s}}(\Omega )$ for all $\widetilde{s}\in {\bf R}$. More generally, if $s>3/2$,
then 
$$\begin{pmatrix}\gamma \\\gamma D_\nu \end{pmatrix}:\, H_1^s(\Omega
)\to H_1^{s-1/2}(\partial \Omega )\times H_1^{s-3/2}(\partial \Omega
)$$ has a right inverse which is ${\cal O}(1):\,
H_1^{\widetilde{s}-1/2}\times H_1^{\widetilde{s}-3/2}\to
H_1^{\widetilde{s}}$ for all $\widetilde{s}\in {\bf R}$. Here $\nu $
is the exterior unit normal and $D_\nu =i^{-1}\partial /\partial \nu
$.

In the semi-classical case, we obtain from the same (standard) proofs
that 
\ekv{al.10} { \gamma ={\cal
    O}_s(h^{-\frac{1}{2}}):H_h^s(\Omega )\to
  H_h^{s-\frac{1}{2}}(\partial \Omega ),\quad s>\frac{1}{2} }
has a right inverse
such that \ekv{al.11} { \gamma ^{-1} ={\cal
    O}_{\widetilde{s}}(h^{\frac{1}{2}}):H_h^{\widetilde{s}-\frac{1}{2}}(\partial \Omega )\to
  H_h^{\widetilde{s}}( \Omega ),\quad \widetilde{s}\in {\bf R}.  }

More generally, the operator
$$
\begin{pmatrix}\gamma \\ \gamma hD_\nu \end{pmatrix}:H_h^s(\Omega )\to
H_h^{s-\frac{1}{2}}(\Omega )\times H_h^{s-\frac{3}{2}}(\partial \Omega )
$$
has a right inverse which is ${\cal O}(h^{1/2}):\,H_h^{\widetilde{s}-1/2}\times
H_h^{\widetilde{s}-3/2}\to H_h^{\widetilde{s}}$ for all
$\widetilde{s}\in {\bf R}$. 

The following observation can be turned into a proof by reduction to
the standard non-semi-classical case: The change of variables
$x=h\widetilde{x}$ transforms $hD_x$ into $D_{\widetilde{x}}$ and if
$u(x)=\widetilde{u}(\widetilde{x})$, then 
$$
\Vert u\Vert_{H_h^s(\Omega )}=h^{\frac{n}{2}}\Vert
\widetilde{u}\Vert_{H_1^s(h^{-1}\Omega )}.
$$
Similarly for functions on $\partial \Omega $, we have 
$$
\Vert u\Vert_{H_h^s(\partial \Omega )}=h^{\frac{n-1}{2}}\Vert
\widetilde{u}\Vert_{H_1^s(h^{-1}\partial \Omega ) }.
$$

\section{Reductions to ${\cal O}$ and to $\partial {\cal O}$}\label{red}
\setcounter{equation}{0}

\par In this section, we let $P=-h^2\Delta +V$ and ${\cal O}$ be as in
Subsection \ref{dcs}. We choose the contour $\Gamma $ as there, either
singular or smooth. When $\Gamma $ is smooth, the
domain of $P_\Gamma $ is the space $H^2_h(\Gamma )$, and when $\Gamma
$ has a singularity along the boundary of ${\cal O} $, it is given by
(\ref{dcs.22}). (Later we shall also need to consider the case when
$\Gamma $ is constructed as in the preceeding section but with ${\cal
  O}$ replaced by a slightly larger set $\widetilde{{\cal O}}$ with
the same properties, containing an $h$-neighborhood of ${\cal O}$.) By
abuse of notation we sometimes write $H^2(\Gamma )$ also for ${\cal
  D}(P_\Gamma )$.  

The exterior Dirichlet problem is
\ekv{red.1} { (P-z)u=v\hbox{ on
  }\Gamma _\mathrm{ext}=\Gamma \setminus {\cal O},\ {{u}_\vert}_{\partial {\cal O}}=w,  }
for given $v\in L^2(\Gamma \setminus {\cal O})$, $w\in
H^{3/2}(\partial {\cal O})$ with the solution $u$ in $H^2(\Gamma
\setminus {\cal O})$. Here, $\gamma u= {{u}_\vert}_{\partial {\cal O}}$.
The corresponding closed operator $P_{\mathrm{ext}}$ has the domain
${\cal D}(P_\mathrm{ext})=\{ u\in H^2(\Gamma \setminus {\cal O});\,
\gamma u=0\}$.   The eigenvalues are the resonances for
the exterior Dirichlet problem. We restrict the attention to the case
when $1/2\le \Re z\le 2$, $\Im z\ge -ch^{2/3}$, where
$c<2(1/2)^{2/3}\kappa \zeta _1$. (Cf Theorem \ref{re1}.) When
$z\not\in \sigma (P_{\mathrm{ext}})$, we can express the solution of
(\ref{red.1}) as \ekv{red.2} {
  u=G_\mathrm{ext}(z)v+K_\mathrm{ext}(z)w.  } Put \ekv{red.3}{{\cal
    N}_\mathrm{ext}w=\gamma hD_\nu K_\mathrm{ext}w,} where $\gamma $
is the operator of restriction to $\partial {\cal O}$ and $\nu $ is
the exterior unit normal.
\begin{dref}\label{red1}
$P_\mathrm{out}(z)$ is the operator $-h^2\Delta +V-z$ on ${\cal O}$ with
domain 
\ekv{red.4}
{
{\cal D}(P_\mathrm{out}(z))=\{ u\in H^2({\cal O});\, (\gamma hD_\nu
-{\cal N}_\mathrm{ext }(z)\gamma )u=0\} .
}
\end{dref}

Notice that the domain varies with $z$ and this is why we avoid
writing ``$P_\mathrm{out}-z$''. In the first part of this section we
shall show that $z$ is a resonance of $P$ precisely when $0\in
\sigma (P_\mathrm{out}(z))$, but for technical reasons we will prefer to work
with the full problem,
\ekv{red.5}{  P_\mathrm{out}(z)u=v,\ h^{\frac{1}{2}}Bu=w ,}
where
\ekv{lb.3}{B=\gamma hD_{\nu }-{\cal
  N}_\mathrm{ext}\gamma :H^2({\cal O})\to H^{1/2}(\partial {\cal O})}
It is easy to check that this is an elliptic boundary value problem in
the classical sense. (The semi-classical structure of ${\cal
  N}_\mathrm{ext}$ and of (\ref{red.5}) will require more work below.)
The well-posedness of (\ref{red.5}) is of course equivalent to the
bijectivity of 
\ekv{red.5.5}
{
{\cal P}_\mathrm{out}(z)=\begin{pmatrix}
  P-z\\h^{\frac{1}{2}}B\end{pmatrix}:H^2({\cal O})\to H^0({\cal O})\times
H^{\frac{1}{2}}(\partial {\cal O}).
}
Here and below we sometimes write $H^s$ instead of $H^s_h$.

\par In the following we impose the condition
\ekv{lb.1}
{
|\Im z|\le h^{2/3}c_0,\ \frac{1}{2}\le \Re z\le 2
}
with $c_0$ as in (\ref{outl.0}),
so that the exterior Dirichlet problem is well-posed. (We could here
drop the upper bound on $\Im z$.)

Under the condition (\ref{lb.1}) we shall show that ${\cal P}_\mathrm{out}(z)$
and $P_\Gamma -z $ are ``equivalent'', and to do so we shall see that
${\cal P}_\mathrm{out}(z)$ appears as the effective Hamiltonian (up to
an invertible factor) in a well-posed
Grushin problem for $P_\Gamma -z$.

\par Let $\iota :L^2({\cal O})\to L^2(\Gamma )$ be the natural zero
extension map and let $\Pi :H^2(\Gamma )\to H^2({\cal O})$ be the
restriction map. Let $\widehat{K}={\cal O}(h^{1/2}):H^{1/2}(\partial
{\cal O})\to H^2({\cal O})$ be a right inverse of $B$ (cf the last
observation in Section \ref{al}).
Put
\ekv{lb.4}
{
{\cal P}(z)=\begin{pmatrix}P_\Gamma -z &\iota &0\\
\Pi &0 &\widehat{K}
\end{pmatrix}:H^2(\Gamma )\times L^2({\cal O})\times
H^{\frac{1}{2}}(\partial {\cal O})\to L^2(\Gamma )\times H^2({\cal
  O}).  } We will view ${\cal P}(z)$ as a $2\times 2$ block matrix
with the upper left block given by $P_\Gamma -z$. We claim that ${\cal
  P}(z)$ is bijective. This amounts to finding a unique solution
$(u,u_-,u_-')\in H^2(\Gamma )\times L^2({\cal O})\times
H^{\frac{1}{2}}(\partial {\cal O})$ of the problem
\ekv{lb.5}
{
\begin{cases}
(P_\Gamma -z)u+\iota u_-&=v,\\
\Pi u+\widehat{K}u_-'&=v_+
\end{cases}
} for every given $(v,v_+)\in L^2(\Gamma )\times H^2({\cal O})$.
The exterior part (i.e. the restriction to $\Gamma
_\mathrm{ext}=\Gamma \setminus {\cal O}$) of the first equation in (\ref{lb.5}) is (with the
natural notation)
$$(P_{\Gamma _\mathrm{ext}}-z)u_\mathrm{ext}=v_\mathrm{ext},$$
which has the general solution
$$
u_\mathrm{ext}=G_\mathrm{ext}(z)v_\mathrm{ext}+K_\mathrm{ext}(z)g,
$$
where $g\in H^{3/2}(\Gamma )$ is arbitrary to start with. Notice that 
$$
Bu_\mathrm{ext}=BG_\mathrm{ext}(z)v_\mathrm{ext},
$$
since $BK_{\mathrm{ext}}(z)=0$ by the definition of ${\cal N}_\mathrm{ext}(z)$. 
Here the continuity condition on $u$ given by (\ref{dcs.22}), can
be written
\ekv{lb.2}{\gamma u_\mathrm{int}=\gamma u_\mathrm{ext},\ B
  u_\mathrm{int}=B u_\mathrm{ext}.} 

\par The interior part of (\ref{lb.5}) is 
\ekv{lb.6}
{
\begin{cases}(P-z)u_\mathrm{int}+u_-=v_\mathrm{int}\\
u_\mathrm{int}+\widehat{K}u'_-=v_+
\end{cases}\hbox{ in } {\cal O},
}
giving
$$
\begin{cases}
u_\mathrm{int}=v_+-\widehat{K}u'_-\\
u_-=v_\mathrm{int}-(P-z)u_\mathrm{int}
\end{cases}.
$$
The second condition in (\ref{lb.2}) now gives
$$
Bv_+-u_-'=BG_\mathrm{ext}v_\mathrm{ext},
$$
i.e.
\ekv{lb.7}{u_-'=Bv_+-BG_\mathrm{ext}v_\mathrm{ext}.}
The first part of (\ref{lb.2}) boils down to
\ekv{lb.8}
{
\gamma v_+-\gamma \widehat{K}u_-'=g.
}

\par Thus the unique solution of (\ref{lb.5}) is given by
$u=u_\mathrm{int}+u_\mathrm{ext}$, $u_-$, $u_-'$, where
\begin{eqnarray*}
u_-'&=&B(v_+-G_\mathrm{ext}v_\mathrm{ext})\\
u_\mathrm{int}&=&(1-\widehat{K}B)v_++\widehat{K}BG_\mathrm{ext}v_\mathrm{ext}\\u_-&=&v_\mathrm{int}-(P-z)\widehat{K}BG_\mathrm{ext}v_\mathrm{ext}
-(P-z)(1-\widehat{K}B)v_+
\\
u_\mathrm{ext}&=&(1+K_\mathrm{ext}\gamma
\widehat{K}B)G_\mathrm{ext}v_\mathrm{ext}+K_\mathrm{ext}\gamma (1-\widehat{K}B)v_+.
\end{eqnarray*}
Using the characteristic functions $1_{\cal O}$ and $1_{\Gamma
  _\mathrm{ext}}$ to indicate the projection to the interior and
exterior parts of functions on $\Gamma $, we get in matrix form:
\ekv{lb.9}
{\begin{split}
&{\cal P}(z)^{-1}=\\
&\hskip -15mm \begin{pmatrix}
1_{{\cal O}}\widehat{K}BG_\mathrm{ext}1_{\Gamma
  _\mathrm{ext}}+1_{\Gamma _\mathrm{ext}}(1+K_\mathrm{ext}\gamma
\widehat{K}B)G_\mathrm{ext}1_{\Gamma _\mathrm{ext}}
&1_{{\cal O}} (1-\widehat{K}B)+1_{\Gamma
  _\mathrm{ext}}K_\mathrm{ext}\gamma (1-\widehat{K}B)\\
1_{\cal O}-(P-z)\widehat{K}BG_\mathrm{ext}1_{\Gamma _\mathrm{ext}}
&-(P-z)(1-\widehat{K}B)\\
-BG_\mathrm{ext}1_{\Gamma _\mathrm{ext}} &B
\end{pmatrix}
\end{split}
}

As already mentioned we can use block matrix notation and write
$$
{\cal P}(z)=\begin{pmatrix}P_{11} &P_{12}\\ P_{21} &P_{22}
\end{pmatrix},
$$
where
\begin{eqnarray*}
P_{11}=P_\Gamma -z,&&\ P_{12}=\begin{pmatrix}\iota &0\end{pmatrix},\\
P_{21}=\Pi ,&&\ P_{22}=\begin{pmatrix}0 &\widehat{K}\end{pmatrix}.
\end{eqnarray*}
Then 
$$
{\cal E}(z):={\cal P}(z)^{-1}=\begin{pmatrix}E_{11} &E_{12}\\ E_{21} &E_{22}
\end{pmatrix},
$$
where 
$$
E_{22}=\begin{pmatrix}-(P-z)(1-\widehat{K}B)\\ B \end{pmatrix}=
\begin{pmatrix}-1 &h^{-\frac{1}{2}}(P-z)\widehat{K}\\ 0 &h^{-\frac{1}{2}}\end{pmatrix} {\cal P}_\mathrm{out}(z),
$$
and ${\cal P}_\mathrm{out}(z)$ was defined in
(\ref{red.5.5}). The upper triangular matrix in the last
expression is invertible, so the invertibility of $E_{22}$ is
equivalent to that of ${\cal P}_\mathrm{out}$ and using also the
second part of Proposition \ref{si1}, we get
\begin{prop}\label{red2}
For $z$ in the region (\ref{lb.1})
we have that $z\in \sigma (P_\Gamma )$ if and only if 
$0\in \sigma ({\cal P}_\mathrm{out}(z))$.
\end{prop}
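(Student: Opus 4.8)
The plan is to read off the statement from the explicit Grushin problem $\mathcal{P}(z)$ in \no{lb.4} together with part 2) of Proposition \ref{si1}. We have already checked that $\mathcal{P}(z)$ is bijective for $z$ in the region \no{lb.1}, by solving \no{lb.5} and exhibiting the inverse $\mathcal{E}(z)$ in \no{lb.9}. I would view $\mathcal{P}(z)$ as a bounded operator between the product Hilbert spaces, with $H^2(\Gamma)$ (that is, $\mathcal{D}(P_\Gamma)$) carrying its graph norm; this places us exactly in the setting of Proposition \ref{si1} with $P_{11}=P_\Gamma-z$, $P_{12}=(\iota\ 0)$, $P_{21}=\Pi$, $P_{22}=(0\ \widehat{K})$ and with $E_{22}$ the lower right block of $\mathcal{E}(z)$.

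First I would apply the second part of Proposition \ref{si1}: since $\mathcal{P}(z)$ is bijective, the block $P_{11}=P_\Gamma-z$ is bijective if and only if $E_{22}$ is bijective; and by the closed graph theorem the bijectivity of $P_\Gamma-z:\mathcal{D}(P_\Gamma)\to L^2(\Gamma)$ is exactly the statement $z\notin\sigma(P_\Gamma)$. Next I would use the factorisation $E_{22}=\begin{pmatrix}-1 & h^{-1/2}(P-z)\widehat{K}\\ 0 & h^{-1/2}\end{pmatrix}\mathcal{P}_\mathrm{out}(z)$ recorded just before the statement; it is confirmed by expanding $-(P-z)(1-\widehat{K}B)=-(P-z)+(P-z)\widehat{K}B$ and recalling $\mathcal{P}_\mathrm{out}(z)=\binom{P-z}{h^{1/2}B}$ from \no{red.5.5}. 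The prefactor is upper triangular with the invertible diagonal entries $-1$ and $h^{-1/2}$, and its off-diagonal entry $(P-z)\widehat{K}={\cal O}(1):H^{1/2}(\partial{\cal O})\to L^2({\cal O})$ is bounded because $\widehat{K}={\cal O}(h^{1/2}):H^{1/2}(\partial{\cal O})\to H^2({\cal O})$; hence this prefactor is invertible with bounded inverse on $L^2({\cal O})\times H^{1/2}(\partial{\cal O})$. Therefore $E_{22}$ is bijective precisely when $\mathcal{P}_\mathrm{out}(z)$ is, i.e. precisely when $0\notin\sigma(\mathcal{P}_\mathrm{out}(z))$.

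Chaining the three equivalences gives $z\in\sigma(P_\Gamma)\iff P_\Gamma-z\text{ not bijective}\iff E_{22}\text{ not bijective}\iff \mathcal{P}_\mathrm{out}(z)\text{ not bijective}\iff 0\in\sigma(\mathcal{P}_\mathrm{out}(z))$, which is the claim. The only points requiring a little care are the identification of $\mathcal{D}(P_\Gamma)$ with $H^2(\Gamma)$ as topological vector spaces (so that the two meanings of ``$P_\Gamma-z$ bijective'' agree), which for the singular contour rests on the elliptic boundary value problem \no{dcs.23} discussed in Section \ref{cds}, and the elementary verification that the triangular prefactor is invertible on the stated spaces. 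There is no genuine obstacle here: the analytic substance — well-posedness of the exterior Dirichlet problem and the bijectivity of $\mathcal{P}(z)$ — has already been put in place.
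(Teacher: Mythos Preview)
Your proof is correct and follows essentially the same route as the paper: establish bijectivity of the Grushin operator $\mathcal{P}(z)$ in \no{lb.4}, factor $E_{22}$ through $\mathcal{P}_\mathrm{out}(z)$ via an invertible upper-triangular matrix, and invoke part 2) of Proposition \ref{si1}. The extra care you take with the graph norm on $\mathcal{D}(P_\Gamma)$ and the boundedness of the triangular prefactor is appropriate but does not depart from the paper's argument.
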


\par $P_\Gamma -z$, ${\cal P}_\mathrm{out}(z)$ are holomorphic
families of Fredholm operators of index 0 and combining (\ref{si.2})
with Proposition \ref{mgd2}, we see that $\det (P_\Gamma -z)$ and
$\det {\cal P}_\mathrm{out}(z)$ have zeros of the same multiplicity at the
points of $\sigma (P_\Gamma )$.

\par We next discuss a reduction to the boundary when $z$ is not a
Dirichlet eigenvalue. Let $P_\mathrm{in}$ denote the Dirichlet realization of $P$ in
${\cal O}$, so that ${\cal D}(P_\mathrm{in})=\{ u\in H^2({\cal O});\,
\gamma u=0\}$. Let
\ekv{ub.2}
{
{\cal P}_\mathrm{in}(z)=\begin{pmatrix}P-z\\ h^{\frac{1}{2}}\gamma \end{pmatrix}:
H^{2}({\cal O})\to H^0({\cal O})\times
  H^{\frac{3}{2}}(\partial {\cal O}),}
so that ${\cal P}_\mathrm{in}(z)$ is bijective precisely when $z$ is
not a Dirichlet eigenvalue; $z\notin \sigma (P_\mathrm{in})$. Let 
$$
{\cal E}_\mathrm{in}(z)=\begin{pmatrix}
G_\mathrm{in}(z) &h^{-\frac{1}{2}}K_\mathrm{in}(z) \end{pmatrix}
$$
be the inverse which is well defined for $z$ away from the spectrum of
$P_\mathrm{in}$. Then 
$$
{\cal P}_\mathrm{out}(z){\cal E}_\mathrm{in}(z)=
\begin{pmatrix}
(P-z)G_\mathrm{in} & (P-z)h^{-\frac{1}{2}}K_\mathrm{in}\\ h^{\frac{1}{2}}BG_\mathrm{in} &BK_\mathrm{in}
\end{pmatrix}.
$$
Here $(P-z)G_\mathrm{in}=1$, $(P-z)K_\mathrm{in}=0$ and \ekv{ub.2.5}{
  BK_\mathrm{in}=\gamma hD_\nu K_\mathrm{in}-{\cal
    N}_\mathrm{ext}={\cal N}_\mathrm{in}-{\cal N}_\mathrm{ext},} where
the last equility defines ${\cal
  N}_\mathrm{in}:H^{\frac{3}{2}}(\partial {\cal O})\to
H^{\frac{1}{2}}(\partial {\cal O}) $ so 
\ekv{ub.3} { {\cal
    P}_\mathrm{out}(z){\cal E}_\mathrm{in}(z)=\begin{pmatrix} 1 &0\\
    h^{\frac{1}{2}}BG_\mathrm{in} &{\cal N}_\mathrm{in}-{\cal
      N}_\mathrm{ext}\end{pmatrix} .}
Composing with ${\cal P}_\mathrm{in}$ to the right, we get
\ekv{red.6}
{
{\cal
    P}_\mathrm{out}(z)=\begin{pmatrix} 1 &0\\
    h^{\frac{1}{2}}BG_\mathrm{in} &{\cal N}_\mathrm{in}-{\cal
      N}_\mathrm{ext}\end{pmatrix}{\cal P}_\mathrm{in}(z).
}
Notice that this factorization makes sense only when $z\notin
\sigma (P_\mathrm{in}(z))$ since ${\cal N}_\mathrm{in}$ is defined only under
that assumption. The last factor in the right hand side is of course
bijective then, and the first lower triangular factor is bijective
precisely when ${\cal N}_\mathrm{in}(z)-{\cal
  N}_\mathrm{ext}(z):\,H^{3/2}\to H^{1/2}$ is bijective, or
equivalently when $0$ is not in the spectrum of this operator,
considered as an unbounded operator $H^{1/2}\to H^{1/2}$ with domain $H^{3/2}$.
\begin{prop}\label{red3}
For $z$ in the region (\ref{lb.1}) and not in $\sigma
(P_\mathrm{in})$, we have the equivalence:
$$
0\in \sigma ({\cal P}_\mathrm{out}(z)) \Leftrightarrow 0\in \sigma ({\cal
  N}_\mathrm{in}-{\cal N}_\mathrm{ext}).
$$
\end{prop}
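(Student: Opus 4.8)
The statement is an immediate consequence of the factorization (\ref{red.6}), so the plan is simply to exploit it with a little care about where each factor is defined. The crucial observation is that the hypothesis $z\notin\sigma(P_\mathrm{in})$ is exactly what makes ${\cal P}_\mathrm{in}(z)$ in (\ref{ub.2}) bijective, and hence what makes $G_\mathrm{in}$, $K_\mathrm{in}$, ${\cal N}_\mathrm{in}$, and thus the entire right-hand side of (\ref{red.6}), well defined. So first I would set $T(z)=\begin{pmatrix}1 &0\\ h^{1/2}BG_\mathrm{in} &{\cal N}_\mathrm{in}-{\cal N}_\mathrm{ext}\end{pmatrix}$, viewed as an operator $H^0({\cal O})\times H^{3/2}(\partial{\cal O})\to H^0({\cal O})\times H^{1/2}(\partial{\cal O})$, and note that since ${\cal P}_\mathrm{in}(z)$ is bijective, (\ref{red.6}) makes ${\cal P}_\mathrm{out}(z)$ bijective if and only if $T(z)$ is.

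Next I would show that $T(z)$ is bijective if and only if its lower-right block ${\cal N}_\mathrm{in}-{\cal N}_\mathrm{ext}:H^{3/2}(\partial{\cal O})\to H^{1/2}(\partial{\cal O})$ is bijective. This is Gaussian elimination, i.e. Proposition \ref{si1}, part 1, applied with $P_{11}=1$: $T(z)$ is block lower triangular with invertible $(1,1)$-entry, so its bijectivity is equivalent to that of the Schur complement $P_{22}-P_{21}P_{11}^{-1}P_{12}={\cal N}_\mathrm{in}-{\cal N}_\mathrm{ext}$. Equivalently, one can simply solve $T(z)(u,u_-)=(v,v_+)$ by hand: the first row forces $u=v$, and the second row becomes $({\cal N}_\mathrm{in}-{\cal N}_\mathrm{ext})u_-=v_+-h^{1/2}BG_\mathrm{in}v$, which is uniquely solvable for every right-hand side precisely when ${\cal N}_\mathrm{in}-{\cal N}_\mathrm{ext}$ is bijective.

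Combining the two steps, ${\cal P}_\mathrm{out}(z)$ is bijective if and only if ${\cal N}_\mathrm{in}-{\cal N}_\mathrm{ext}$ is; negating both sides gives the claimed equivalence, where ${\cal N}_\mathrm{in}-{\cal N}_\mathrm{ext}$ on the right is regarded (as in the discussion preceding the statement) as a closed unbounded operator on $H^{1/2}(\partial{\cal O})$ with domain $H^{3/2}(\partial{\cal O})$. No Fredholm-index bookkeeping is needed for this particular argument. There is really no obstacle here: the entire content sits in the factorization (\ref{red.6}) and in the standing hypothesis $z\notin\sigma(P_\mathrm{in})$. The only point I would take pains to state explicitly is that "$T(z)$ bijective $\Leftrightarrow$ lower-right block bijective" is a genuine two-way implication, which is why I would record either the Schur-complement identity or the explicit triangular solve above rather than just one direction.
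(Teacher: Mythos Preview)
Your argument is correct and is essentially identical to the paper's: both use the factorization (\ref{red.6}), observe that ${\cal P}_\mathrm{in}(z)$ is bijective because $z\notin\sigma(P_\mathrm{in})$, and then read off that the lower-triangular factor is bijective iff its $(2,2)$-block ${\cal N}_\mathrm{in}-{\cal N}_\mathrm{ext}$ is. The only difference is that you spell out the triangular-solve/Schur-complement step more explicitly than the paper does.
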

Again we have holomorphic families of Fredholm operators of index 0
and we have the analogue of the remark after Proposition \ref{red2}.

\par We end the section with a symmetry observation (cf (\ref{cds.1}).

\begin{prop}\label{red4}
$ P_\mathrm{out}(z)$, ${\cal N}_\mathrm{in}$ and ${\cal N}_\mathrm{ext}$
are symmetric.
\end{prop}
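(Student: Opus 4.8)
The plan is to read ``symmetric'' as symmetric for the bilinear pairing $\langle u\,|\,v\rangle=\int uv$ (over $\partial{\cal O}$ for ${\cal N}_{\mathrm{in}}$ and ${\cal N}_{\mathrm{ext}}$, over ${\cal O}$ for $P_{\mathrm{out}}(z)$), consistently with the symmetry of $P_\Gamma$ recorded after \eqref{dcs.22}; note these operators are genuinely non-self-adjoint, so it is the bilinear and not the Hermitian structure that is relevant. The one ingredient is Green's (Stokes') formula for $-h^2\Delta_\Gamma+V$, which is formally self-transpose ($-h^2\Delta_\Gamma$ by Section~\ref{cds}, $V$ being a real multiplication operator): for a piece $\Omega\subset\Gamma$ with boundary along $\partial{\cal O}$,
$$\langle(-h^2\Delta_\Gamma+V)u\,|\,v\rangle_\Omega-\langle u\,|\,(-h^2\Delta_\Gamma+V)v\rangle_\Omega=\varepsilon_\Omega\,ih\bigl(\langle\gamma hD_\nu u\,|\,\gamma v\rangle_{\partial{\cal O}}-\langle\gamma u\,|\,\gamma hD_\nu v\rangle_{\partial{\cal O}}\bigr)+(\text{term at }\infty),$$
with $\varepsilon_\Omega=-1$ for the interior side and $+1$ for the exterior side of $\partial{\cal O}$; the $ih$ comes from $h^2\partial_\nu=ih\cdot hD_\nu$ and the opposite signs from the opposite outer normals. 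The point I would verify first is that along $\partial{\cal O}$, where $f'=0$ and $\Gamma$ is tangent to ${\bf R}^n$, the conormal derivative attached to $-h^2\Delta_\Gamma$ is just $hD_\nu$, so that the boundary integral above is the flat one; this is exactly what makes the matching conditions in \eqref{dcs.22} yield symmetry of $P_\Gamma$.

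For ${\cal N}_{\mathrm{in}}$: given $w_1,w_2\in H^{3/2}(\partial{\cal O})$, set $u_j=K_{\mathrm{in}}(z)w_j\in H^2({\cal O})$, so that $(P-z)u_j=0$, $\gamma u_j=w_j$ and $\gamma hD_\nu u_j={\cal N}_{\mathrm{in}}w_j$ by \eqref{ub.2.5}. Running the identity on $\Omega={\cal O}$ (no boundary at infinity, and the $z$- and $V$-terms cancel trivially), the left side vanishes and we get $\langle{\cal N}_{\mathrm{in}}w_1\,|\,w_2\rangle_{\partial{\cal O}}=\langle w_1\,|\,{\cal N}_{\mathrm{in}}w_2\rangle_{\partial{\cal O}}$. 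The case of ${\cal N}_{\mathrm{ext}}$ is the same computation on $\Omega=\Gamma_{\mathrm{ext}}=\Gamma\setminus{\cal O}$, with $u_j=K_{\mathrm{ext}}(z)w_j\in H^2(\Gamma_{\mathrm{ext}})$ and $V=0$, except that now the ``term at $\infty$'' must be shown to vanish. I would argue this by the standard complex-scaling fact: an $H^2(\Gamma_{\mathrm{ext}})$ solution of $(P_{\Gamma_{\mathrm{ext}}}-z)u=0$ is, near infinity where $P_{\Gamma_{\mathrm{ext}}}=e^{-2i\theta}(-h^2\Delta)$ in dilated coordinates, real-analytic and exponentially decaying, so the flux of the bilinear current $\sum_j(\partial_{x_j}u\cdot v-u\,\partial_{x_j}v)$ over large spheres tends to $0$. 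This is the only step that genuinely uses the distortion, and I expect it to be the main obstacle; the rest is sign bookkeeping and reuse of the identity above.

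Finally, $P_{\mathrm{out}}(z)$: for $u,v\in{\cal D}(P_{\mathrm{out}}(z))$, the identity on $\Omega={\cal O}$ reads
$$\langle P_{\mathrm{out}}(z)u\,|\,v\rangle_{\cal O}-\langle u\,|\,P_{\mathrm{out}}(z)v\rangle_{\cal O}=-ih\bigl(\langle\gamma hD_\nu u\,|\,\gamma v\rangle_{\partial{\cal O}}-\langle\gamma u\,|\,\gamma hD_\nu v\rangle_{\partial{\cal O}}\bigr),$$
and substituting the boundary condition $\gamma hD_\nu u={\cal N}_{\mathrm{ext}}\gamma u$ from \eqref{red.4} (and likewise for $v$) turns the right side into $-ih(\langle{\cal N}_{\mathrm{ext}}\gamma u\,|\,\gamma v\rangle-\langle\gamma u\,|\,{\cal N}_{\mathrm{ext}}\gamma v\rangle)$, which is $0$ by the symmetry of ${\cal N}_{\mathrm{ext}}$ just obtained. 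One could alternatively read the symmetry of ${\cal N}_{\mathrm{in}}-{\cal N}_{\mathrm{ext}}$, hence of $P_{\mathrm{out}}(z)$, off the factorization \eqref{red.6}, but the direct argument is shorter and self-contained.
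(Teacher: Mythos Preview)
Your proposal is correct and follows essentially the same route as the paper: Green's formula applied to $K_{\mathrm{in}}w_1,K_{\mathrm{in}}w_2$ on ${\cal O}$ for ${\cal N}_{\mathrm{in}}$, the analogous computation on $\Gamma_{\mathrm{ext}}$ for ${\cal N}_{\mathrm{ext}}$, and then Green's formula on ${\cal O}$ combined with the boundary condition $\gamma hD_\nu u={\cal N}_{\mathrm{ext}}\gamma u$ and the symmetry of ${\cal N}_{\mathrm{ext}}$ for $P_{\mathrm{out}}(z)$. The only difference is that you are more explicit than the paper about the contribution at infinity in the exterior case (the paper simply says the argument ``follows in the same way by applying Green's formula on $\Gamma_{\mathrm{ext}}$''); your justification via decay of $H^2$ solutions on the scaled contour is fine, though membership in $H^2(\Gamma_{\mathrm{ext}})$ already suffices to make the boundary contribution at infinity vanish.
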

\begin{proof}
This follows from Green's formula. For $u,v\in H^{3/2}(\partial {\cal
  O})$, we have
\begin{equation*}\begin{split}
    &\langle {\cal N}_\mathrm{in}u|v\rangle_{\partial {\cal O}}-
    \langle u|{\cal N}_\mathrm{in}v\rangle_{\partial {\cal O}}\\
    &=\langle hD_\nu K_\mathrm{in}u|v\rangle_{\partial {\cal
        O}}-\langle
    u|hD_\nu K_\mathrm{in}v\rangle_{\partial {\cal O}}\\
    &=\frac{i}{h}(\langle -h^2\Delta
    K_\mathrm{in}u|K_\mathrm{in}v\rangle_{\cal O}-\langle
    K_\mathrm{in}u|-h^2\Delta K_\mathrm{in}v\rangle_{\cal O})\\
    &=\frac{i}{h}(\langle
    (P-z)K_\mathrm{in}u|K_\mathrm{in}v\rangle_{\cal O}-\langle
    K_\mathrm{in}u|(P-z)K_\mathrm{in}v\rangle) \\
&=0.
\end{split}\end{equation*}

\par The symmetry of ${\cal N}_\mathrm{ext}$ follows in the same way
by applying Green's formula on $\Gamma _\mathrm{ext}$.

\par
Let $u,v\in {\cal D}(P_\mathrm{out}(z))$, so that $\gamma hD_\nu
u={\cal N}_\mathrm{ext}\gamma u$ and similarly for $v$. Using again
Green's formula, we get

\begin{equation*}\begin{split}
&\langle P_\mathrm{out}(z)u|v\rangle_{\cal O}-\langle
u|P_\mathrm{out}(z)v\rangle_{\cal O}\\
&=-h^2(\langle \Delta u|v\rangle_{\cal O}-\langle
u|\Delta v\rangle_{\cal O})\\
&=\frac{h}{i}(\langle hD_\nu u|v\rangle_{\partial {\cal O}}-
\langle u|hD_\nu v\rangle_{\partial {\cal O}})\\
 &=\frac{h}{i}(\langle {\cal N}_\mathrm{ext} u|v\rangle_{\partial {\cal O}}-
\langle u|{\cal N}_\mathrm{ext} v\rangle_{\partial {\cal O}}
)=0,
\end{split}\end{equation*}
where the last equality follows from the symmetry of ${\cal N}_\mathrm{ext}$.
\end{proof}

\section{Some ODE preparations}\label{prep}
In this section we make some preparations for the study of the
interior and exterior Dirichlet to Neumann maps and some related
estimates for the exterior resolvent.
\setcounter{equation}{0}
\subsection{Nullsolutions and factorizations of 2nd order ODEs}\label{nf}
It will be convenient to factorize our equations and we make some
extremely elementary and certainly well-known remarks.
Let \ekv{nf.1} { P=\partial _t^2+a(t)\partial _t+b(t) } be a
differential operator with smooth coefficients on an interval or with
holomorphic coefficients on a simply connected open set in ${\bf
  C}$. Let $e^{-\alpha (t)}$ belong to the kernel of $P$, \ekv{nf.2} {
  P(e^{-\alpha })=0.  } This means that $P$ takes the form
$P=(\partial _t+\alpha ')^2+f(t)(\partial _t+\alpha ')+g(t)$, where
$g\equiv 0$ and we get \ekv{nf.5} {
  P=(\partial _t-\beta ')(\partial _t+\alpha '), } where $\beta
'=\alpha '-a$, \ekv{nf.6} { \beta =\alpha -\int^t ads.  }

\par Notice that $P^\mathrm{t}=(\partial _t-\alpha ')(\partial
_t+\beta ')$, so $e^\beta $ belongs to the kernel of
$P^\mathrm{t}$. When $P$ is symmetric,
$P^\mathrm{t}=P$, we have $a=0$, $\beta =\alpha $.

\subsection{Simple turning point analysis}\label{sta}
We recall some elements of the complex WKB method and refer
to \cite{Vor81, Fe87} for more extensive expositions.
 Let $V=V(x)$ be holomorphic in some simply connected open set
 $\subset {\bf C}$. We consider the equation 
\ekv{sta.1}
{
((hD_x)^2+V(x))u=0,
}
with $u$ holomorphic. The zeros of $V$ are the turning points by
definition. Away from those points we can construct formal local
solutions of the form 
\ekv{sta.2}{u(x)=a(x;h)e^{i\phi (x)/h},\ a(x;h)\sim
  a_0(x)+ha_1(x)+...,}
where $\phi (x)$ is a solution of the eiconal equation
\ekv{sta.3}{(\phi '(x))^2+V(x)=0,} and $a_0$, $a_1$, ... solve a
sequence of transport equations obtained from:
$$
((\phi '(x)+hD_x)^2+V(x))a=0,
$$ 
equivalent to 
$$
(\phi '(x)hD+hD\circ \phi '+(hD)^2)a=0:
$$
\begin{equation*}
2\phi '(x)\partial a_0+\phi ''a_0=0,\eqno{(T_0)}
\end{equation*}
and for $j\ge 1$:
\begin{equation*}
2\phi '(x)\partial a_j+\phi ''a_j=i\partial ^2a_{j-1}.\eqno{(T_j)}
\end{equation*}
We can prescribe $a_0(x_0)$, $a_1(x_0)$, ... (if $x_0$ is not a
turning point) and then the formal symbol becomes uniquely determined
in a neighborhood of $x_0$. The so called exact WKB method (see also
the appendix) tells us that if $\gamma :[0,1]\to \Omega $ is a $C^1$
curve with $\gamma (0)=x_0$, avoiding the turning points and with the
property that $-\Im \phi (\gamma (t))$ has positive
derivative\footnote{so that $e^{i\phi (x)/h}$ is exponentially growing
  with increasing $t$}, then there exists an exact holomorphic
solution of (\ref{sta.1}) of the form (\ref{sta.2}) in a neighborhood
of $\gamma (]0,1])$ where $a_0(x_0)$, $a_1(x_0)$, ... can be
arbitrarily prescribed (in the sense that $a(x;h)$ is holomorphic in
$x$ with the asymptotic expansion of (\ref{sta.2}) in the space of
holomorphic functions in a neighborhood of the range of $\gamma
$). Moreover, the solution is unique up to a term ${\cal O}(h^\infty
)e^{-\Im \phi /h}$.

Actually the formal expansion can be improved by using the Ansatz
$(\Phi ')^{-1/2}e^{i\Phi /h}$, and then determining $\Phi (x;h)\sim \phi
(x)+h^2\phi _2(x)+h^4\phi _4(x)+...$ from a Riccati type
equation. Notice that the solution of ($T_0$) is of the form 
$a_0(x)=C(\phi ')^{-1/2}=\widetilde{C}V(x)^{-1/4}$.

\par We can consider multivalued solutions of (\ref{sta.3}) away from
the turning points. A $C^1$ curve in $\Omega $ is called a Stokes line
if $\Im \phi $ is constant on $\gamma $ and it is called an
anti-Stokes line if $\Re \phi $ is constant. (Sometimes the
terminology is reversed.) Locally away from the turning points the
Stokes and anti-Stokes lines intersect each other perpendicularly. The
curve $\gamma $ in the above exact WKB remark necessarily intersects
the Stokes lines transversally.

\par A turning point $x_0\in \Omega $ is called a simple turning point
if it is a simple zero of $V$, so that 
\ekv{sta.4} { V'(x_0)\ne 0.  }
We next consider the singularity of the solution of the eiconal
equation near a simple turning point that we assume to be $x_0=0$ for
simplicity. If the Taylor expansion of $-V$ at $x=0$ is
$-V(x)=a^2x+{\cal O}(x^2)$, then $\phi '(x)$ is a double-valued
holomorphic function of the form
$$
\phi '(x)=ax^{\frac{1}{2}}(1+{\cal O}(x)),
$$ 
where the last factor is holomorphic in a full neighborhood of
$x=0$. By integration it is clear that $\phi $ is also double-valued
and of the form
$$
\phi (x)=\frac{2}{3}ax^{\frac{3}{2}}(1+{\cal O}(x)),
$$
where again the last factor is holomorphic near $0$.

\par The union of the Stokes and anti-Stokes curves reaching the
turning point $x=0$ is contained in
\ekv{sta.5}{\{x\in \mathrm{neigh\,}(0);\, \Im \phi =0\hbox{ or }\Re \phi =0\}=
\{x\in \mathrm{neigh\,}(0);\, \Im (\phi ^2) =0\} ,}
which is also the set of points $x$ solving
$$
a^2x^3(1+{\cal O}(x))=t^3,\ t\in \mathrm{neigh\,}(0,{\bf R}),
$$
i.e.
$$
a^{\frac{2}{3}}x(1+{\cal O}(x))=t,
$$
or equivalently
$$
x=f(a^{-\frac{2}{3}}t),
$$
where $f$ is analytic and $f(0)=0$, $f'(0)=1$. Since there are three
branches of the cubic root of $a$ we see that the set (\ref{sta.5}) is
the union of three smooth curves, $\gamma _j$, $j=0,1,2$ that pass
through $0$ and intersect there at angles $2\pi /3$.

With a suitable orientation, each $\gamma _j$ is first a Stokes line
$\gamma _j^-$ until it hits $0$ and then becomes an anti-Stokes line
$\gamma _j^+$ on the other side. It will be convenient to let $\gamma
_j^-$ be open in the sense that $0\notin \gamma _j^-$, $0\in \gamma
_j^+$. The three Stokes lines divide a pointed neighborhood into three
``Stokes sectors'' $\Sigma _j$, $j=0,1,2$, as indicated on the figure:

\centerline{
\includegraphics[height=200pt]{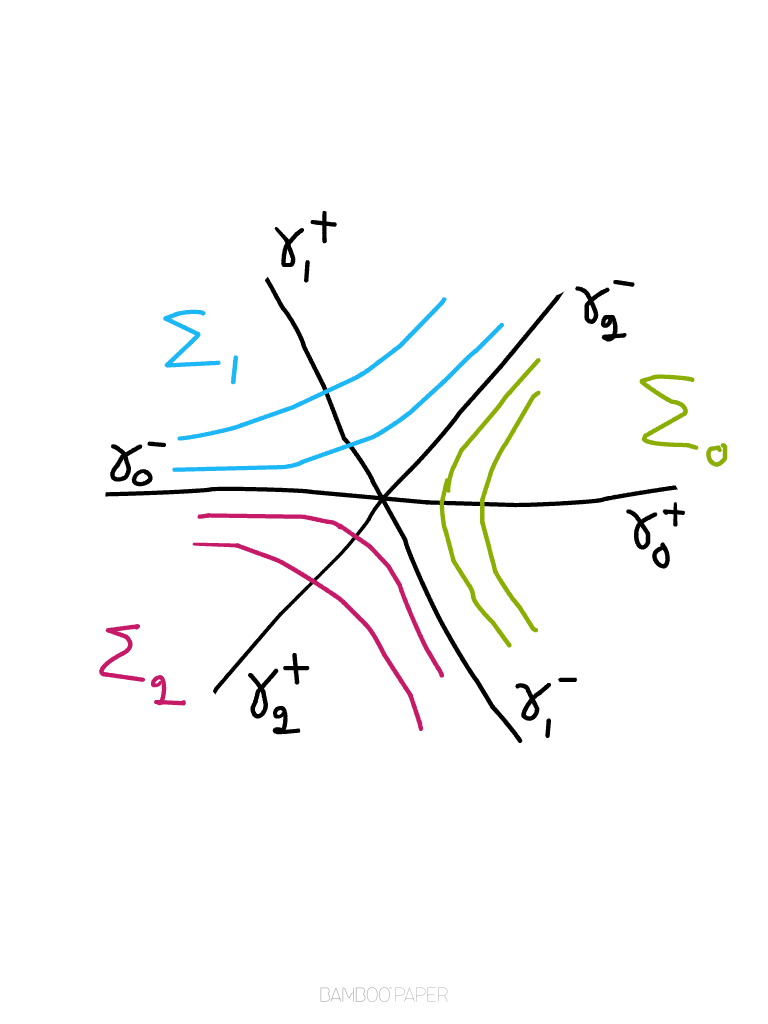}}

\par\noindent Each Stokes sector is the union of Stokes lines in
addition to the two Stokes lines that make up the boundary. In the
figure we draw two such additional lines in each sector.

For each $j\in {\bf Z}/3{\bf Z}$, we choose the branch $\phi =\phi ^j$
of the solution of the eiconal equation tending to $0$ when $x\to 0$
which has positive imaginary part on the interior of $\Sigma _j$ and
we can extend $\phi $ holomorphically to $\Omega \setminus
\overline{\gamma _j^-}$, so that $\phi ^j=-\phi ^{j\pm 1}$ in $\Sigma
_{j\pm 1}$. Here $\Omega $ is a fixed small open disc centered at
$0$. The exact WKB method tells us that (\ref{sta.1}) has a solution
$u=u_j$ in $\Omega $ of the following asymptotic form in $\Omega
\setminus \Gamma _j^-$, where $\Gamma _j^-$ is any fixed neighborhood
of $\overline{\gamma _j^-}$: \ekv{sta.6} { u_j=a^j(x;h)e^{i\phi
    _j(x)/h},\quad a^j\sim a_0^j+ha_1^j+...,\ a_0(x)\ne 0.  }

\par The Wronskian $W(u_j,u_k):=(hDu_j)u_k-u_jhDu_k$ is constant, and
can be computed asymptotically for $j\ne k$ at any point on $\gamma _\ell^-$ where
$\ell$ is the index different both from $j$ and $k$. Since $\phi
_j=-\phi _k$ there, we get 
\ekv{sta.7}
{
W(u_j,u_k)=2\phi _j'a_0^ja_0^k+{\cal O}(h).
}
Also recall that $W(u,u)=0$. 

\par This can be used to study $u_j$ near $\gamma _j^-$. Since the
space of solutions of (\ref{sta.1}) is of dimension 2, we have
\ekv{sta.8}
{
u_j=\sum_{k;\, k\ne j}c_{j,k}u_k,\quad c_{j,k}=c_{j,k}(h)\in {\bf C},
}
and if $k\ne j$, we let $\ell =\ell (j,k)$ be the index different both
from $j$ and $k$ and get
$$
W(u_j,u_\ell)=c_{j,k}W(u_k,u_\ell ),
$$
\ekv{sta.9}
{
c_{j,k}=\frac{W(u_j,u_\ell)}{W(u_k,u_\ell)}\sim
c_{j,k}^0+hc_{j,k}^1+...,\quad c_{j,k}^0\ne 0.
}

We shall next show that the presentation (\ref{sta.6}) extends to
$\Omega \setminus (\Gamma _j^-\cup D(0,Ch^{2/3}))$ where now $\Gamma
_j^-$ is a conic neighborhood of $\gamma _j^-$ and $C\gg 1$, in the
sense that the asymptotic expansion for $a^j$ is in powers of
$h/x^{3/2}$. Letting $j$ be fixed for a while, we suppress ``$j$'' from
the notation. Recall that $a_0$, $a_1$ are determined by the sequence
of transport equations ($T_0$), ($T_1$), ... above. Using the eiconal
equation for $\phi $ we get \ekv{sta.10} {
\partial (V^\frac{1}{4}a_0)=0,\ \partial
(V^\frac{1}{4}a_k)=\frac{1}{2}V^{-\frac{1}{4}}\partial ^2a_{k-1}.
}
Starting with $a_0=\mathrm{Const.\,}V^{-\frac{1}{4}}={\cal
  O}(x^{-\frac{1}{4}})$ and using (\ref{sta.10}) and the Cauchy
inequalities, we get iteratively that
\ekv{sta.11}{a_k(x)={\cal O}(x^{-\frac{1}{4}-k\frac{3}{2}}),\ x\to 0. }
Thus, we can give a meaning to  
$$
\sum _0^\infty a_kh^k=\sum_0^\infty (x^{k\frac{3}{2}}a_k)\left(\frac{h}{x^{\frac{3}{2}}}\right)^k,
$$
in the region $|x|\gg h^{2/3}$ as an asymptotic sum in powers of the
small parameter $h/x^{3/2}$.

\par  In the appendix, we show that the holomorphic function $a$ has this
asymptotic expansion in the region $|x|\gg h^{2/3}$. 
\begin{prop}\label{sta0}
  Fix $j\in {\bf Z}/3{\bf Z}$ and let $u=u_j$ be a solution of
  (\ref{sta.1}), which has the structure (\ref{sta.6}) in a
  neighborhood of a point $x_0^+\in \gamma _j^+\setminus \{ 0\}$. Then
  for $r>0$ small enough, $u$ remains of the form (\ref{sta.6}) in
  $D(0,r)\setminus (\Gamma _j^-\cup D(0,Ch^{2/3}))$, $\Gamma _j$ is
  any neighborhood of $\gamma _j^-$ of the form $\cup_{x\in \gamma
    _j^-}D(x,\epsilon |x|)$ and where $C=C_\epsilon >0$ is large
  enough. The coefficients $a_k^j$ in (\ref{sta.6}) satisfy
  (\ref{sta.11}) and the precise meaning of the asymptotics in
  (\ref{sta.6}) is that
  \ekv{sta.14}{a^j-\sum_{k=0}^{N-1}a_k^jh^k={\cal
      O}\left(x^{-\frac{1}{4}}\left( \frac{h}{x^{\frac{3}{2}}}
      \right)^N\right).}
\end{prop}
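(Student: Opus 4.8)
The plan is to derive Proposition~\ref{sta0} from the exact WKB method recalled above, whose analytic core is carried out in the appendix, together with a propagation argument along well chosen curves. Fix $j\in{\bf Z}/3{\bf Z}$ and set $\Omega_j=D(0,r)\setminus(\Gamma_j^-\cup D(0,Ch^{2/3}))$, where $\Gamma_j^-=\cup_{x\in\gamma_j^-}D(x,\epsilon|x|)$ is the prescribed conic neighbourhood of the Stokes line $\gamma_j^-$. First I would record the elementary geometry of $\Omega_j$: it is the annulus $D(0,r)\setminus D(0,Ch^{2/3})$ cut along the slit $\Gamma_j^-$ joining its two boundary circles, hence open, connected and simply connected. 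Consequently the branch $\phi_j$ of the eiconal solution~(\ref{sta.3}) singled out in the text (positive on the interior of $\Sigma_j$, holomorphically extended to $\Omega\setminus\overline{\gamma_j^-}$) restricts to a single-valued holomorphic function on $\Omega_j$ with $\phi_j(x)=\tfrac23 a x^{3/2}(1+{\cal O}(x))$ and $\phi_j'(x)\asymp x^{1/2}$, and the transport coefficients $a_k^j$ solving~(\ref{sta.10}) obey the bounds~(\ref{sta.11}) on all of $\Omega_j$; thus $\sum_k a_k^j h^k$ is there an asymptotic series in the genuinely small parameter $h/x^{3/2}$.

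Next I would invoke the exact WKB statement recalled in the text: given $x_1\in\Omega_j$ and any $C^1$ curve $\gamma\subset\Omega_j$ from the anchor point $x_0^+$ to $x_1$ along which $t\mapsto-\Im\phi_j(\gamma(t))$ is piecewise increasing, equation~(\ref{sta.1}) has, in a neighbourhood of $\gamma$, a solution of the form~(\ref{sta.6}) with prescribed coefficients $a_0^j(x_0^+),a_1^j(x_0^+),\dots$ at the anchor and with remainder controlled as in~(\ref{sta.14}), unique up to a relative error ${\cal O}(h^\infty)e^{-\Im\phi_j/h}$. Its proof, carried out in the appendix, runs through the Volterra integral equation satisfied by $(\phi_j')^{1/2}e^{-i\phi_j/h}u$: the kernel carries an exponential factor of modulus $\le 1$ along $\gamma$ (this is exactly what the monotonicity of $-\Im\phi_j$ buys) times a factor rational in $\phi_j'$ which, after the rescaling suggested by~(\ref{sta.11}), integrates to ${\cal O}(h/|x|^{3/2})$, so the Neumann series converges with remainder ${\cal O}(h/|x|^{3/2})$ and its partial sums are $\sum_{k<N}a_k^j h^k$. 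Since $u=u_j$ has, by its very definition~(\ref{sta.6}), this form near $x_0^+$, uniqueness forces $u_j$ to coincide with the propagated solution near $x_1$; hence $u_j$ satisfies~(\ref{sta.6})--(\ref{sta.14}) near $x_1$, and, a finite subcover of $\Omega_j$ by such neighbourhoods being available, everywhere on $\Omega_j$.

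The remaining — and principal — point is the geometric one: every $x_1\in\Omega_j$ must be joined to $x_0^+$ inside $\Omega_j$ by a curve along which $-\Im\phi_j$ is piecewise monotone with total variation ${\cal O}(1)$. Here I would use that, by~(\ref{sta.5}) and the normal form $\phi_j\sim\tfrac23 a x^{3/2}$, the Stokes lines $\{\Im\phi_j=\mathrm{const}\}$ and anti-Stokes lines $\{\Re\phi_j=\mathrm{const}\}$ of $\phi_j$ on $\Omega_j$ are the images of those of $x\mapsto x^{3/2}$ under a near-identity biholomorphism; one can then steer from $x_0^+$ essentially along Stokes lines (on which $\Im\phi_j$ is constant, so only $\Re\phi_j$ moves) punctuated by short monotone arcs that adjust the value of $\Im\phi_j$, reaching $x_1$ while staying at distance $\gtrsim\epsilon|x|$ from $\gamma_j^-$ and $\gtrsim h^{2/3}$ from $0$. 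I expect the main difficulty to be making this routing uniform — and this is precisely why $C=C_\epsilon$ must be taken large: one has to keep the connecting curve inside $\Omega_j$ (in particular run it around the excised cone $\Gamma_j^-$ without entering $D(0,Ch^{2/3})$, which needs room at scale $h^{2/3}$ commensurate with $1/\epsilon$), preserve the piecewise monotonicity that makes the Volterra kernel favourable, and keep the total variation bounded, so that the constant in the remainder~(\ref{sta.14}) stays uniform up to the inner circle $|x|\asymp Ch^{2/3}$ and up to $\partial\Gamma_j^-$. Once this bookkeeping is in place, gluing the local expansions yields~(\ref{sta.14}) and the proposition.
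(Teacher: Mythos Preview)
Your plan matches the paper's: the proof is carried out in the appendix, which establishes exactly this uniform remainder estimate along curves with $-\Im\phi_j$ non-decreasing (equivalently $\Re V^{1/2}\ge 0$, see (\ref{a.8})), using the forward fundamental solution of the associated $2\times 2$ first-order system to bound the remainder $r_N=a-\sum_{k\le N}a_k h^k$ directly from the ODE it satisfies (see (\ref{a.24})--(\ref{a.25})), rather than by iterating a Neumann series and identifying its partial sums with the transport coefficients $a_k^j$ as you suggest. One correction: in your third paragraph the connecting curve must have $-\Im\phi_j$ \emph{non-decreasing} (as you correctly state in paragraph two), not merely ``piecewise monotone with bounded total variation'' --- any arc on which $\Im\phi_j$ increases inserts a factor $e^{c/h}$ in the integral kernel and destroys the estimate. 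With that fix the routing you sketch (inward along $\gamma_j^+$, then into $\Sigma_j$ or across a bounding Stokes line into $\Sigma_{j\pm1}$, never crossing $\gamma_j^-$) is routine and is precisely the geometric content of the appendix's hypotheses (\ref{a.8}), (\ref{a.10}); it is not where the principal analytical effort lies.
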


We shall next estimate the region where $u=u_0$ may have its zeros
and take $j=0$ in order to fix the ideas. From Proposition
\ref{sta0} it is clear that the zeros have to be close to $\overline{\gamma
_0^-}$ and in particular we need to study what happens in an $h^{2/3}$
neighborhood of $0$, where we have no asymptotics. If $[a,b]\ni t\to
\gamma (t)\in {\bf C}$ is a smooth curve and $v$, $w$ are holomorphic
functions defined near $\gamma $, then
$$
\int_\gamma vwdx=\int_a^bv_\gamma w_\gamma dt,
$$
where we define 
$$u_\gamma (t)=\dot{\gamma }^{\frac{1}{2}}u(\gamma (t)).$$ This means
that the passage $u\mapsto u_\gamma $ conserves symmetry of
differential operators, and more precisely, we check that 
$$
(Du)_\gamma =\dot{\gamma }^{-\frac{1}{2}}D_t\dot{\gamma
}^{-\frac{1}{2}}u_\gamma ,
$$
and the equation (\ref{sta.1}) restricted to $\gamma $ reads
\ekv{sta.17}
{
[(\dot{\gamma }^{-\frac{1}{2}}hD_t\dot{\gamma
}^{-\frac{1}{2}})^2+V(\gamma (t))]u_\gamma =0
}
Here we can rework the first term and put the two $D_t$ together in
the center. We get
\ekv{sta.20}
{
(-(h\partial _t)^2+\dot{\gamma }^2\widetilde{V})\dot{\gamma
}^{-1}u_\gamma =0,\ \dot{\gamma }^{-1}u_\gamma =\dot{\gamma
}^{-\frac{1}{2}}u\circ \gamma ,
}
where \ekv{sta.19}
{
\widetilde{V}=V(\gamma (t))+(\frac{h}{\dot{\gamma }})^2[\frac{1}{4}(\frac{\ddot{\gamma }}{\dot{\gamma
  }})^2-\frac{1}{2}\partial _t(\frac{\ddot{\gamma
  }}{\dot{\gamma }})]=V\circ \gamma +{\cal O}(h^2).
}
\begin{prop}\label{sta1}
If $\gamma $ is a Stokes curve or an anti-Stokes curve, we have $\Im
(\dot{\gamma }^2V\circ \gamma )=0$. More precisely, $\dot{\gamma
}^2V\circ \gamma $ is $<0$ in the first case and $>0$ in the second case.
\end{prop}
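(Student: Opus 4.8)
The plan is to reduce the statement to the eiconal equation together with the very definition of the two kinds of curves. First I would recall that on a simply connected neighborhood of the relevant piece of $\gamma $ containing no turning points, the eiconal equation $(\phi ')^2=-V$ determines holomorphic branches of $\phi $ with $\phi '\ne 0$, and that by definition a Stokes curve is a regular curve along which $\Im \phi $ is constant, while along an anti-Stokes curve $\Re \phi $ is constant. Since $\gamma $ is a regular $C^1$ curve, $\dot\gamma \ne 0$.

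Next I would simply differentiate $\phi $ along $\gamma $. Writing $\Phi (t):=\phi (\gamma (t))$, the chain rule gives $\dot\Phi (t)=\phi '(\gamma (t))\,\dot\gamma (t)$; squaring and using the eiconal equation in the form $\phi '(\gamma (t))^2=-V(\gamma (t))$ yields
\[
\dot\Phi (t)^2=\phi '(\gamma (t))^2\,\dot\gamma (t)^2=-\,V(\gamma (t))\,\dot\gamma (t)^2,
\]
that is, $\dot\gamma (t)^2\,V(\gamma (t))=-\dot\Phi (t)^2$. This identity is essentially the whole content of the proposition once the constancy condition is inserted.

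Then I would split into the two cases. If $\gamma $ is a Stokes curve, $\Im \Phi $ is constant, so $\dot\Phi (t)\in {\bf R}$; moreover $\dot\Phi (t)\ne 0$ since $\phi '(\gamma (t))\ne 0$ and $\dot\gamma (t)\ne 0$, whence $\dot\gamma ^2\,V\circ \gamma =-\dot\Phi ^2<0$. If $\gamma $ is an anti-Stokes curve, $\Re \Phi $ is constant, so $\dot\Phi (t)\in i{\bf R}\setminus \{0\}$, hence $\dot\Phi (t)^2<0$ and $\dot\gamma ^2\,V\circ \gamma =-\dot\Phi ^2>0$. In either case the quantity is real, which gives the first assertion as well.

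I do not expect a genuine obstacle here; the only points worth a word of care are the well-definedness of the branch of $\phi $ along $\gamma $ (handled by staying in a simply connected turning-point-free region, or by working locally along the curve) and the non-vanishing of $\dot\Phi $, which is what upgrades the inequalities from $\le 0$ and $\ge 0$ to the strict statements.
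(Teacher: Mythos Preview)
Your proof is correct and is essentially the same as the paper's: your $\dot\Phi=\dot\gamma\,\phi'\circ\gamma$ is exactly the quantity the paper works with, and both arguments reduce to squaring it, using the eiconal equation $(\phi')^2=-V$, and reading off the sign from whether $\dot\Phi$ is real (Stokes) or purely imaginary (anti-Stokes). Your extra remark on the non-vanishing of $\dot\Phi$ to secure strict inequalities is a welcome clarification that the paper leaves implicit.
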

\begin{proof}
  Stokes and anti-Stokes curves are characterized by the property that
  $\Im \dot{\gamma }\phi '=0$ and $\Re \dot{\gamma }\phi '=0$
  respectively, where $\phi $ solves the eiconal equation
  (\ref{sta.3}). For both types of curves, we have $\Im (\dot{\gamma
  }\phi ')^2=0$ which means that $\Im
(\dot{\gamma }^2V\circ \gamma )=0$. On a Stokes curve we have
$(\dot{\gamma }\phi ')^2>0$, so $\dot{\gamma }^2V\circ \gamma <0$ and
on an anti-Stokes curve we have
$(\dot{\gamma }\phi ')^2<0$, so $\dot{\gamma }^2V\circ \gamma >0$.
\end{proof}

\par Now complete $\gamma _0$ into a smooth family of curves $\gamma _s$,
$s\in \mathrm{neigh\,}(0,{\bf R})$, so that $x=\gamma _s(t)$ defines
local coordinates $s,t$ and
 the smooth function
$$f(s,t)=\Im [(\partial _t\gamma _s)^2V(\gamma _s(t))]$$
vanishes for $s=0$. Assuming, as we may, that $\gamma _0(0)=0$, $\gamma
_0(t)=\gamma _0^\pm(t)$, for $\pm t>0$, we get for $s=0$:
$$
(\partial _sf)(0,0)=\Im (\dot{\gamma }_0^2V'(0)\partial _s\gamma _s(0)).
$$
This is $\ne 0$ since $V'(0)\ne 0$ and $\partial _s\gamma _s(0)_{s=0}$ is
not colinear with $\dot{\gamma }_0$. It follows that
$\pm f(s,t)\asymp s$ and we may assume that the plus sign is valid;
\ekv{sta.22}
{
\Im [(\partial _t\gamma _s(t))^2V(\gamma _s(t))]\asymp s,\ (s,t)\in \mathrm{neigh\,}(0).
}

Now let $u=u_0$ be a solution of (\ref{sta.1}) as in (\ref{sta.10}) which is exponentially
decaying in the Stokes sector $\Sigma _0 $ containing the anti-Stokes
line $\gamma _0^+$. 
\begin{prop}\label{sta2}
The zeros of $u_0$ are within a distance ${\cal O}(h^2)$ from $\gamma
_0^-$ and away from a disc $D(0,h^{2/3}/C)$ if $C>0$ is large enough.
\end{prop}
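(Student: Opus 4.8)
The plan is to localise the zeros — first away from the turning point, then away from $\gamma_0^-$ — and finally to pin down their position transverse to $\gamma_0^-$.

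\emph{Localisation.} By Proposition \ref{sta0}, in $D(0,r)\setminus(\Gamma_0^-\cup D(0,Ch^{2/3}))$ one has $u_0=a^0(x;h)e^{i\phi_0(x)/h}$ with, by \eqref{sta.14} ($N=1$), $a^0=a^0_0(1+{\cal O}(h/x^{3/2}))=a^0_0(1+{\cal O}(C^{-3/2}))$ and $a^0_0=\mathrm{Const}\cdot V^{-1/4}$ nowhere vanishing where $V\ne0$; so $u_0$ has no zeros there once $C$ is large, and every zero lies in $\Gamma_0^-\cup D(0,Ch^{2/3})$. In the turning-point disc I would use the Langer reduction: a holomorphic change of variable $x\mapsto\eta$, $\eta(0)=0$, $\eta'(0)\ne0$, carrying the relevant branch of $\gamma_0^-$ onto a ray of the real $\eta$-axis, after which $w:=(\eta')^{1/2}u_0$ solves a perturbed Airy equation $(-(hD_\eta)^2\pm\eta+h^2\rho(\eta;h))w=0$ with $\rho$ holomorphic and bounded near $0$; since $u_0$ is subdominant in $\Sigma_0$, uniform Airy asymptotics give $w=\mathrm{Ai}(\mp h^{-2/3}\eta)$ times a nonvanishing factor plus controlled error. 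As $\mathrm{Ai}(\mp h^{-2/3}\eta)$ vanishes only at $\eta=\pm\zeta_jh^{2/3}$, $\mathrm{Ai}'(-\zeta_j)\ne0$, with smallest such $\eta$ at distance $\ge\tfrac12\zeta_1h^{2/3}$ from $0$, there is no zero of $u_0$ in $D(0,h^{2/3}/C)$ once $C$ is large. This reduced equation moreover has exactly the structure of the next step — a real leading potential plus an ${\cal O}(h^2)$ complex term on a Stokes ray — so the argument below applies to it verbatim (with Airy functions replacing the oscillatory WKB pair) and places the zeros in the disc within ${\cal O}(h^2)$ of $\gamma_0^-$.

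\emph{Transverse position (the crux).} Complete $\gamma_0=\gamma_0^-$ to a family $\gamma_s$ as in \eqref{sta.22}; then $x=\gamma_s(t)$ gives coordinates on a tube about $\gamma_0^-$, and, $u_0$ being holomorphic and $(s,t)\mapsto x$ a diffeomorphism, a zero of $u_0$ in the tube is a real pair $(s,t)$ with $v_s(t):=\dot\gamma_s(t)^{-1/2}u_0(\gamma_s(t))=0$, where by \eqref{sta.20}--\eqref{sta.19}
\[
-(h\partial_t)^2v_s+W_s(t;h)\,v_s=0,\qquad W_s=\dot\gamma_s^2\,V\!\circ\!\gamma_s+{\cal O}(h^2).
\]
Here $\Re W_s<-\delta_0<0$ for $|s|$ small (Proposition \ref{sta1} and continuity), while $\Im W_s(t;h)=c(t)\,s+{\cal O}(s^2)+{\cal O}(h^2)$ with $c(t)\asymp1$ of fixed sign (Proposition \ref{sta1} and \eqref{sta.22}). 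Since the leading potential is real on $\gamma_0^-$, the recessive solution $\rho_s$ of $-(h\partial_t)^2\rho+(\Re W_s)\rho=0$ is real up to a constant and has ${\cal O}(h)$-spaced simple real zeros $\tau_k(s)$, with oscillatory partner $\tilde\rho_s$; writing $v_s=\mathrm{const}\cdot(\rho_s+i\epsilon_s+\cdots)$ with $\epsilon_s$ the real solution produced by the source $-(\Im W_s)\rho_s$ and recessive at the left endpoint $t_-$, variation of parameters gives at $\tau_k(s)$ (where $\rho_s$ vanishes)
\[
\epsilon_s(\tau_k(s))=h^{-1}\,\tilde\rho_s(\tau_k(s))\!\int_{t_-}^{\tau_k(s)}\!\!\rho_s(t')^2\,\Im W_s(t')\,dt'\;\big(1+{\cal O}(h)\big),
\]
a \emph{non-oscillatory} integral. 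A zero of $u_0$ at $t=\tau_k(s)$ forces $v_s(\tau_k(s))=0$, hence $\epsilon_s(\tau_k(s))=0$ up to the ${\cal O}(h^2)$-consistent higher-order corrections; since $\tilde\rho_s(\tau_k(s))\ne0$ and $\int_{t_-}^{\tau_k(s)}\rho_s^2\,\Im W_s=s\int_{t_-}^{\tau_k(s)}c\,\rho_s^2+{\cal O}(s^2+h^2)$ with $\int c\,\rho_s^2\asymp\int\rho_s^2$ of fixed sign, this forces $|s|={\cal O}(h^2)$. As $(s,t)\mapsto\gamma_s(t)$ is a diffeomorphism with Jacobian bounded above and below on $\{|x|\ge Ch^{2/3}\}$ taking $\{s=0\}$ to $\gamma_0^-$, $|s|={\cal O}(h^2)$ is exactly the asserted ${\cal O}(h^2)$ distance to $\gamma_0^-$; the Airy version of this argument handles $|x|\asymp h^{2/3}$ and meshes with the Langer analysis above.

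\emph{Main obstacle.} The hard point is the gain from the naive ${\cal O}(h)$ — the generic width of the strip where the two WKB exponentials are comparable, i.e. where $|\Im\phi_0|\lesssim h$ — down to ${\cal O}(h^2)$. It rests on two structural facts supplied by the paper: on a Stokes curve $\dot\gamma^2V\!\circ\!\gamma$ is real (Proposition \ref{sta1}), so the only obstruction to $v_s$ being real up to a constant at $s=0$ is the genuinely ${\cal O}(h^2)$ quantum term in \eqref{sta.19}; and, once the oscillatory WKB factors are extracted, the relevant correction $\epsilon_s(\tau_k(s))$ is governed by the \emph{non-oscillatory} integral $\int\rho_s^2\,\Im W_s$, of honest size $\asymp|s|+{\cal O}(h^2)$, providing a clean linear lever in $s$ with ${\cal O}(1)$ coefficient — this is exactly where \eqref{sta.22} is used. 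Making these estimates uniform down to $|x|\asymp h^{2/3}$ and splicing cleanly with the turning-point analysis is the remaining bookkeeping.
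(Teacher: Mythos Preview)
Your route is plausible but substantially more elaborate than the paper's, and it has one genuine loose end.

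The paper bypasses WKB pairs, variation of parameters, and Langer analysis entirely in favour of a single energy identity. Along $\gamma=\gamma_{s_0}$ (parametrised with $\gamma_0(0)=0$, $t>0$ on the anti-Stokes half $\gamma_0^+\subset\Sigma_0$), multiply \eqref{sta.20} by $\overline{\dot\gamma^{-1/2}u\circ\gamma}$ and integrate from the zero $t_0$ to $t=1$. With $v:=\dot\gamma^{-1/2}u\circ\gamma$, the Dirichlet condition at $t_0$ and the exponential decay of $u_0$ in $\Sigma_0$ kill both boundary terms, leaving
\[
\int_{t_0}^{1}\bigl(|h\partial_t v|^2+\dot\gamma^2\widetilde V\,|v|^2\bigr)\,dt={\cal O}(e^{-1/Ch}).
\]
Now simply take the imaginary part: $\Im(\dot\gamma^2\widetilde V)=\Im(\dot\gamma^2V\circ\gamma)+{\cal O}(h^2)\asymp s_0+{\cal O}(h^2)$ by \eqref{sta.19} and \eqref{sta.22}, so $(|s_0|-{\cal O}(h^2))\int|v|^2\le{\cal O}(e^{-1/Ch})$ and $s_0={\cal O}(h^2)$. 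The gain from ${\cal O}(h)$ to ${\cal O}(h^2)$ that you work hard for falls out for free, because the weight $|v|^2$ in the quadratic form is already real and nonnegative---no oscillatory cancellation to undo. For the turning-point exclusion the paper takes the \emph{real} part of the same identity: using $\Re(\dot\gamma^2\widetilde V)\ge(t-t_0)/C-C(|t_0|+h^2)$ and the Airy Dirichlet spectral gap $\int(|h\partial_t v|^2+(t-t_0)|v|^2/C)\ge h^{2/3}\|v\|^2/C$, one gets $h^{2/3}/C\le C(|t_0|+h^2)$, hence $|t_0|\ge h^{2/3}/\widetilde C$; a second look at the real part rules out $t_0>0$.

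The loose end in your argument is the phrase ``recessive at the left endpoint $t_-$''. On a Stokes line the reduced potential $\Re W_s\approx\dot\gamma^2V\circ\gamma$ is strictly negative (Proposition \ref{sta1}), so both real solutions are oscillatory and neither is recessive anywhere on $\gamma_0^-$. The selection of $\rho_s$---and hence the identification $v_s=\mathrm{const}\cdot(\rho_s+i\epsilon_s+\cdots)$ with a nonzero constant---must come from the $\Sigma_0$ end, where $u_0$ is genuinely subdominant. That is exactly the input the paper uses (exponential decay at $t=1$) to kill the boundary term, after which it needs nothing further; your construction would have to carry this matching back across the turning point before the perturbation expansion in $\Im W_s$ makes sense, and you would also need to check that the expansion converges (the effective small parameter is $|s|/h$ along an interval of length $\asymp1$, not $|s|$).

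In short: your ingredients are sound, but the paper replaces the whole machinery by one integration by parts and the observation that the imaginary and real parts of the resulting quadratic form read off $s_0={\cal O}(h^2)$ and $|t_0|\gtrsim h^{2/3}$ directly.
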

\begin{proof}
We first prove that the zeros are within a distance ${\cal O}(h^2)$
from $\gamma _0$. From the WKB structure we already know that they have
to be inside a small neighborhood of $\{0\}\cup \gamma _0^-$. Let
$x_0$ be a zero of $u$ and let $s=s_0$ be determined by the property
that $x_0$ belongs to $\gamma _{s_0}$, so that $x_0=\gamma
_{s_0}(t_0)$ for $-1/{\cal O}(1)\le t_0\le o(1)$. Take $\gamma
=\gamma _{s_0}$ in (\ref{sta.20}): Multiplying by
$\overline{\dot{\gamma }^{-1/2}u\circ \gamma }$, we get
$$
\int_{t_0}^1[((-h\partial _t)^2+\dot{\gamma
}^2\widetilde{V})\dot{\gamma }^{-\frac{1}{2}}u\circ \gamma )
]\overline{\dot{\gamma }^{-1/2}u\circ \gamma }dt=0.
$$
Here $u\circ \gamma $ is exponentially decaying for $t\ge 1/{\cal
  O}(1)$ and vanishes at $t_0$ so we can integrate by parts and get
\ekv{sta.23}
{
\int_{t_0}^1 [|h\partial _t(\dot{\gamma }^{-\frac{1}{2}}u\circ \gamma
)|^2
+\dot{\gamma }^2\widetilde{V}|\dot{\gamma }^{-\frac{1}{2}}u\circ
\gamma|^2] dt={\cal O}(e^{-\frac{1}{Ch}}).
}
Now $\Im \dot{\gamma}^2\widetilde{V}=\Im (\dot{\gamma }^2V\circ
\gamma )+{\cal O}(h^2) $ and $\Im (\dot{\gamma }^2V\circ \gamma
)\asymp s_0$, so taking the imaginary part of (\ref{sta.23}), we get 
$$
(|s_0|-{\cal O}(h^2))\int_{t_0}^1|\dot{\gamma }^{-\frac{1}{2}}u\circ
\gamma |^2dt \le {\cal O}(e^{-\frac{1}{Ch}}).
$$
Consequently, $s_0={\cal O}(h^2)$ so the zero is at a distance $\le
{\cal O}(h^2)$ from $\gamma _0$.

\par It remains to prove that the zeros stay away from
$D(0,h^{2/3}/C)$ and belong to a $h^2$-neighborhood of $\gamma
_0^-$. Let $x_0=\gamma _{s_0}(t_0)$ be a zero so that $s_0={\cal
  O}(h^2)$. Then, with $\gamma =\gamma _{s_0}$ we have $\Re
\dot{\gamma }^2{V}\asymp t$. Let $v=\dot{\gamma }^{-1/2}u\circ \gamma
$ and take the real part of (\ref{sta.23}):
\ekv{sta.24}
{
\int_{t_0}^1 (|h\partial _tv|^2+\Re (\dot{\gamma
}^2\widetilde{V})|v|^2)dt = {\cal O}(e^{-\frac{1}{Ch}}).
}
Now, 
$$\Re (\dot{\gamma }^2\widetilde{V})\ge \frac{t-t_0}{C}-C(|t_0|+h^2)$$
and we get 
$$
\int_{t_0}^1(|h\partial _tv|^2+\frac{t-t_0}{C}|v|^2)dt\le {\cal
  O}(e^{-\frac{1}{Ch}})+C(|t_0|+h^2)\Vert v\Vert^2,
$$
where the norm is the one in $L^2([t_0,1])$. Here, we can drop the
first term to the right since $\|v\|$ is bounded from below by a power
of $h$. On the other hand, we know (either by using well-known facts
about the Dirichlet problem for the Airy operator or by more direct
arguments) that the left hand side is bounded from below by
$C^{-1}h^{2/3}\Vert v\Vert^2$ (using also that $v(1)$ is exponentially
small). Hence, 
$$
\frac{h^{\frac{2}{3}}}{C}\le C(|t_0|+h^2),
$$
leading to 
$$
|t_0|\ge \frac{h^{\frac{2}{3}}}{\widetilde{C}}.
$$
Now a second look at (\ref{sta.24}) shows that we cannot have $t_0\ge
h^{2/3}/\widetilde{C}$, and the proof is complete.
\end{proof}
\begin{remark}\label{sta3}
{\rm By pushing the argument slightly further we see that every zero of
$u_0$ in any fixed disc $D(0,Ch^{2/3})$ is of the form 
\ekv{sta.25}
{
-h^{\frac{2}{3}}V'(0)^{-\frac{1}{3}}\zeta _j+{\cal O}(h^{\frac{4}{3}}),
}
for some $j$, where $0<\zeta _1<\zeta _2<...$ are the zeros of
$\mathrm{Ai}(-t)$.}
\end{remark}

\par In fact, let $x_1$ be such a zero and consider the equation
(\ref{sta.20}) along the curve $\gamma =\gamma _s$ that contains
$x_1$. Assume that the parametrization is chosen with $\gamma (0)=x_1$ and
such that $\gamma $ is oriented in the direction of $\Sigma _0$ for
increasing $t$. Choose a similar parametrization of $\gamma _0$ so
that $\gamma (t)-\gamma _0(t)={\cal O}(h^2)$. Pulling $\dot{\gamma
}^{-\frac{1}{2}}u\circ \gamma $ to $\gamma _0$ by means of $\gamma
\circ \gamma _0^{-1}$, we get a quasi-mode $\widetilde{u}(t)$
satisfying
\ekv{sta.26}
{
(-(h\partial _t)^2+\dot{\gamma }_0^2V(\gamma
_0(t)))\widetilde{u}(t)={\cal O}(h^2)\Vert \widetilde{u}\Vert\hbox{ in
}L^2([0,\frac{1}{C_0}]),
}
which is exponentially decaying for $t\gg h^{2/3}$ and satisfies the
Dirichlet condition $\widetilde{u}(0)=0$. This means that the
self-adjoint Dirichlet realization on $[0,1/C_0]$ of the operator to
the left in (\ref{sta.26}) has an eigenvalue $={\cal O}(h^2)$. Now it
is a routine exercise in self-adjoint semi-classical analysis to see
that the eigenvalues of this operator in any interval $]-\infty
,Ch^{2/3}]$ are of the form
\ekv{sta.27}
{
U(0)+h^{\frac{2}{3}}U'(0)^{\frac{2}{3}}\zeta _j+{\cal O}(h^{\frac{4}{3}}),
}
where $U(t)=\dot{\gamma }_0^2V(\gamma _0(t))$ is the potential in
(\ref{sta.26}). Thus for some $j$,
$$
\dot{\gamma }_0(0)^2V(\gamma _0(0))+h^{\frac{2}{3}}(\dot{\gamma
}_0(0)^3 V'(\gamma _0(0)))^{\frac{2}{3}}\zeta _j={\cal O}(h^{\frac{4}{3}}),
$$
which simplifies to
$$
V(x_1)+h^{\frac{2}{3}}V'(0)^\frac{2}{3}\zeta _j+{\cal O}(h^{\frac{4}{3}})=0,
$$
leading to (\ref{sta.25}). 

\par The remark \ref{sta3} allows us to control the exterior Dirichlet
problem for $\Im z \ge -c_0h^{2/3}$ for $c_0$ as in (\ref{outl.0}).

\subsection{The exterior ODE}\label{cw}
We are concerned with the operator
\ekv{cw.1}
{
P=-(h\partial _x)^2-xQ(x)+ha(x)h\partial _x,
}
where $Q$, $a$ are holomorphic on $\mathrm{neigh\,}(0,{\bf C})$ and
$Q>0$ on the real domain. 

Let $\gamma _\delta $ be the contour $x=\gamma _\delta (s)$, $0\le
s\le s_0$, $0<s_0\ll 1$,
\ekv{cw.4}{\begin{split}\gamma _\delta (s)&=s,\hbox{ for } 0\le s\le
    \delta ,\\ \gamma _\delta
    (s)&=\delta +e^{\frac{i\pi }{3}}(s-\delta )\hbox{ for }\delta \le
    s\le s_0,\end{split}}
and let $b=\gamma _\delta (s_0)$ be the second end point. Here $\delta \ge 0$ is a small parameter that eventually will take the
values $0$ and $Ch$.

Consider the Dirichlet problem
\ekv{cw.2}{(P-z )u=v\hbox{ on }\gamma _\delta ,\quad u(0)=0,\ u(b)=0,}
where 
\ekv{cw.3}
{
z =\lambda +h^{2/3}w,\ \lambda \in {\bf R},\ |w|\le \frac{1}{{\cal
  O}(1)}.
}

\par We start by discussing the case $\delta =0$ and later we indicate
the additional arguments in order to treat the case $\delta >0$. When
$\delta =0$, the operator reduces to the rotated Airy operator with a
perturbation, \ekv{cw.5} { e^{-\frac{2\pi i}{3}}(-(h\partial
  _s)^2+sQ(e^{\frac{\pi i}{3}}s))+e^{-\frac{\pi i}{3}}ha(e^{\frac{\pi
      i}{3}}s)h\partial _s, } which as in \cite{HaLe94,
  SjZw2,SjZw3,SjZw4} can be treated by ressorting to the spectral
theory for the Dirichlet problem for the Airy operator. When $\delta
>0$ this appeared as more difficult and in order to cover that case
also we chose to use the complex WKB method. The last term
$ha(x)h\partial _x$ will have no real importance and can be eliminated
by writing
$$
P=-(h\partial _x-\frac{h}{2}a(x))^2-xQ(x)+{\cal
  O}(h^2)=e^{\frac{A}{2}}
[-(h\partial _x)^2-xQ(x)+{\cal O}(h^2)]e^{-\frac{A}{2}},
$$ 
where $A={\cal O}(1)$ is a primitive of $a$. Since the perturbation
${\cal O}(h^2)$ can be absorbed in the estimates below, we will assume
from now on that $a=0$. We will also concentrate on the most
interesting case when $|\lambda |\le 1/C$ and indicate later how to
treat the easier cases when $\lambda $ is positive and bounded from
above as well as the case when $\lambda $ is negative and arbitrarily
large. 

\par Assuming that $|\lambda |\le 1/C$, we see that the equation
(\ref{cw.2}) has a turning point $x_0(z )$, given by \ekv{cw.6} {
  x_0Q(x_0)+z =0.  } If $x_1\in {\bf R}$ is the real turning
point, given by $x_1Q(x_1)+\lambda =0$, then
\ekv{cw.6.5}{x_0=x_1-\frac{1}{\partial V(x_1)}h^{\frac{2}{3}}w+{\cal
  O}(h^{\frac{4}{3}}),\hbox{ where }V(x)=xQ(x).}  We have the following
picture:\\ 
\centerline{
\includegraphics[height=180pt]{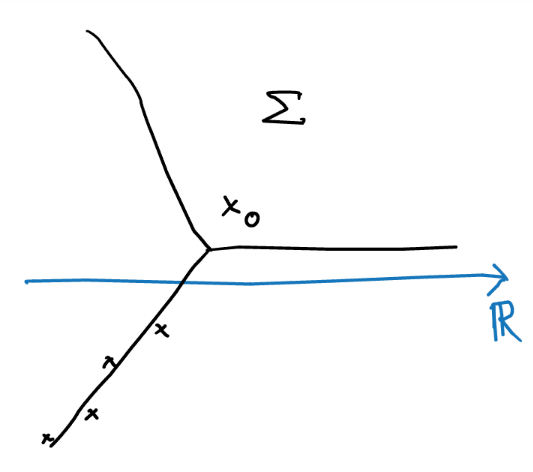}}\\
 where we
draw the three Stokes lines through $x_0$, the Stokes sector $\Sigma$,
and notice that the zeros of the corresponding subdominant solution
are very close to the Stokes line $\gamma _0^-$ opposite to $\Sigma $
and separated from the turning point by a distance $\ge h^{2/3}/C$. A
direct calculation from (\ref{cw.6}), (\ref{sta.25}) shows that the
imaginary parts of these zeros are $\le -h^{2/3}/{\cal O}(1)$ when
$|\lambda |\ll 1$ and $\Im w\ge -Q(0)^{2/3}\zeta _1\cos \frac{\pi
}{6}+1/{\cal O}(1)$.

\par From Proposition \ref{sta0}, we see that the equation $(P-z
)u=0$ has a solution which is subdominant in $\Sigma $, of the form
\ekv{cw.7}
{
e^{-\phi (x;h)/h}
}
in $(\mathrm{neigh\,}(x_0,{\bf C})\setminus V_0^-)\cup
D(x_0,h^{2/3}/C)$ where $V_0^-$ is a any small ``conic'' neighborhood of
$\gamma _0^-$ as in Proposition \ref{sta0}, such that 
\ekv{cw.8}{
\phi '(x;h)=\phi _0'(x)+\frac{{\cal O}(h)}{x-x_0}} 
and $\phi _0$ solves the eiconal equation, $ (\phi _0')^2=xQ(x)+z
$. (Compared to Proposition \ref{sta0}, we have found it convenient to
drop the prefactor ``$i$''.) Notice that the first term in the right
hand side of (\ref{cw.8}) dominates when $|x-x_0|\gg h^{2/3}$.  

\par Moreover, in any set of the form $D(x_0,h^{3/2}/C)\cup (D(x_0,Ch^{3/2})\setminus V_0^-)$,
we have
\ekv{cw.9}
{
\phi '={\cal O}(h^{1/3}).
}
In fact, writing $x-x_0=h^{2/3}y$ leads to the equation $-(\partial _y^2+W(y))u=0$ in a fixed
$h$-independent domain where $W$ is holomorphic and bounded. Rewriting
this as a first order system, we see that $|u(y)|+|\partial _yu(y)|$
is of constant order of magnitude, say $\asymp 1$ and the equation
tells us that $\partial _y^2u={\cal O}(1)$. We also know that $u$ is
non-vanishing and after shrinking the domain by a fixed rate arbitrarily
close to 1, we conclude that $|u(y)|\ge 1/{\cal O}(1)$. Indeed, if
$|u(y_0)|=\epsilon \ll 1$, then $|u'(y_0)|\asymp 1$ and from the Taylor
expansion, $u(y)=u(y_0)+u'(y_0)(y-y_0)+{\cal O}((y-y_0)^2)$, we see
that $u$ must have a zero in the disc $D(y_0,r)$ if $\epsilon \ll r\ll
1$. Thus $|u(y)|\asymp 1$, $u'(y)={\cal O}(1)$ and hence $\partial
_y\ln u={\cal O}(1)$. Hence $h^{2/3}\partial _x\ln u={\cal O}(1)$ and
$\partial _x\phi =h\partial _x\ln u={\cal O}(h^{1/3})$ as claimed.

\par As in Section \ref{nf} we factor $P-z $ as 
\ekv{cw.10}
{
P-z =(\phi '-h\partial _x)(\phi '+h\partial _x)
}
and we shall use this to find a solution $u$ of the equation $(P-z
)u=v$.
 First invert $\phi '-h\partial _x$
by integration from $b$ to get 
\ekv{cw.11}
{
(\phi '+h\partial _x)u=-\frac{1}{h}\int_{b}^x e^{(\phi (x)-\phi (y))/h}v(y)dy=:Kv(x).
}

\par In order to estimate the ${\cal L}(L^2)$-norm of this integral
operator and of similar ones, we collect some useful properties.
\begin{lemma}\label{cw0}
Assume that $0\le \delta \le Ch$ and orient $\gamma _\delta $ from $0$
to $b$. Write $y\prec x$ for $y,x\in \gamma _\delta $ if  $y$ precedes
$x$. For $x,y,w\in \gamma _\delta $ with $0\prec y\prec w\prec x\prec
b$ we have with a new constant $C>0$:
\ekv{cw.a}
{
\frac{1}{C}\int_y^x|\phi '(z)||dz|-Ch\le \Re \phi (x)-\Re \phi (y)\le
\int_y^x|\phi '(z)||dz|,
}
\ekv{cw.b}
{
\frac{1}{C}|\phi '(w)||x-y|-Ch\le \int _y^x|\phi '(z)||dz|\le C(|\phi '(x)||x-y|+h),
}

\ekv{cw.c}
{
\frac{1}{C_\epsilon }e^{-\frac{\epsilon }{h}\int_y^x|\phi '(z)||dz|}
\le \frac{h^{\frac{1}{3}}+|\phi '(x)|}{h^{\frac{1}{3}}+|\phi '(y)|}\le 
C_\epsilon e^{\frac{\epsilon }{h}\int_y^x|\phi '(z)||dz|},
}
for every $\epsilon >0$. Here $C_\epsilon >0$ is independent of $h$.
\end{lemma}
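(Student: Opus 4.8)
The plan is to work directly with the WKB eiconal phase $\phi$ and exploit the two regimes separated by the scale $h^{2/3}$ around the turning point $x_0$: the ``far'' regime $|x-x_0|\gg h^{2/3}$, where $\phi'(x)=\phi_0'(x)+{\cal O}(h)/(x-x_0)$ with $\phi_0'(x)^2=xQ(x)+z$, so that $|\phi'(x)|\asymp |x-x_0|^{1/2}$, and the ``near'' regime $|x-x_0|\lesssim h^{2/3}$, where by the estimate \eqref{cw.9} one has $|\phi'(x)|={\cal O}(h^{1/3})$. Along the contour $\gamma_\delta$, since $0\le\delta\le Ch$ the contour is a genuine concatenation of a short segment of length ${\cal O}(h)$ and a ray leaving at angle $\pi/3$, and the turning point $x_0=x_1+{\cal O}(h^{2/3})$ with $x_1$ real, so the arclength parameter $s$ is comparable to $|x-x_0|$ up to an additive ${\cal O}(h)$. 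The key quantitative input, to be recorded first, is that on $\gamma_\delta$ we have $|\phi'(z)|\asymp (h^{1/3}+|s(z)-s(x_0)|^{1/2})$ and hence $\int_y^x|\phi'(z)|\,|dz|\asymp \int$ of that model integrand, which can be computed explicitly; this is what makes all three displayed inequalities routine once set up.

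First I would prove \eqref{cw.a}. The upper bound is immediate: $\Re\phi(x)-\Re\phi(y)=\Re\int_y^x\phi'(z)\,dz\le\int_y^x|\phi'(z)|\,|dz|$. For the lower bound one needs that along $\gamma_\delta$ the phase is genuinely ``increasing'', i.e. $\Re(\dot\gamma\,\phi')\ge |\phi'|/C$ away from the near-zone; this is precisely the statement that $\gamma_\delta$ crosses the relevant Stokes/anti-Stokes structure transversally and lies (on its ray part) inside the Stokes sector $\Sigma$ where $e^{-\phi/h}$ is subdominant, which is exactly the geometric picture drawn after \eqref{cw.6.5}. On the near-zone $|s-s(x_0)|\lesssim h^{2/3}$ one simply absorbs the discrepancy into the $-Ch$ term, since there both $\int_y^x|\phi'|\,|dz|$ and $|\Re\phi(x)-\Re\phi(y)|$ are ${\cal O}(h^{1/3}\cdot h^{2/3})={\cal O}(h)$. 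Next, \eqref{cw.b}: the right inequality follows by splitting $[y,x]$ at the point nearest $x_0$ and using $|\phi'(z)|\le C(|\phi'(x)|+h^{1/3})$ for $z$ between that point and $x$ (monotonicity of $|s-s(x_0)|^{1/2}$ on each side of $x_0$), plus $h^{1/3}|x-y|\lesssim h$ only when $|x-y|\lesssim h^{2/3}$ — in general one gets $h^{1/3}|x-y|\le |\phi'(x)||x-y|+ Ch$ by considering whether $|x-x_0|$ or $|y-x_0|$ exceeds $h^{2/3}$. The left inequality of \eqref{cw.b} uses the model lower bound $\int_y^x|\phi'|\,|dz|\ge \tfrac1C |\phi'(w)||x-y|-Ch$, proved again by the two-regime comparison with $\int (h^{1/3}+|s-s(x_0)|^{1/2})$, the $-Ch$ handling the case where $[y,x]$ is entirely inside the $h^{2/3}$-disc.

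Finally, \eqref{cw.c} is a Gronwall-type statement. Write $R(z)=h^{1/3}+|\phi'(z)|$; one checks from \eqref{cw.8}, \eqref{cw.9} that $|\partial_s \log R|\le \tfrac{C}{h^{1/3}+|s-s(x_0)|^{1/2}}\cdot\frac{1}{h^{1/3}+|s-s(x_0)|^{1/2}}$... more to the point, one shows $\big|\frac{d}{ds}R(\gamma_\delta(s))\big|\le \frac{C}{h}|\phi'(\gamma_\delta(s))|\,R(\gamma_\delta(s))$ — in the near-zone both sides are ${\cal O}(h^{-2/3})$ as $R\asymp h^{1/3}$ and $R'={\cal O}(h^{-1/3})$ (from $\phi''={\cal O}(h^{-1/3})$ in the $h^{2/3}y$ variable), and in the far-zone $R\asymp |\phi'|\asymp|s-s(x_0)|^{1/2}$, $R'\asymp |s-s(x_0)|^{-1/2}$, while $\tfrac{1}{h}|\phi'|R\asymp \tfrac1h|s-s(x_0)|\gg |s-s(x_0)|^{-1/2}$ since $|s-s(x_0)|\gg h^{2/3}$. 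Integrating this differential inequality between $y$ and $x$ and exponentiating gives $R(x)/R(y)\le \exp\big(\tfrac{C}{h}\int_y^x|\phi'|\,|dz|\big)$, and since $\tfrac{C}{h}\int\le \tfrac{1}{h}\cdot(\text{anything})$ can be made $\le\tfrac{\epsilon}{h}\int$ only after noting $\int_y^x|\phi'|\ge h^{1/3}|x-y|\ge$ (const) whenever it is not already tiny — more cleanly, one proves the sharp form with constant $C$ and then observes that on any sub-arc where $\int_y^x|\phi'|\le \eta h$ the ratio $R(x)/R(y)$ is within $e^{C\eta}$ of $1$, so the $\epsilon$-version follows by a covering/chaining argument, yielding $C_\epsilon$ independent of $h$; the reverse inequality is symmetric.

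The main obstacle I anticipate is the lower bound in \eqref{cw.a} (equivalently, the transversality/subdominance input), because it is the one place where the global geometry of the contour $\gamma_\delta$ relative to the Stokes lines at $x_0$ really enters, and it must be uniform down to $\delta=0$ where the kink of $\gamma_\delta$ sits essentially at the turning point; everything else is a two-scale bookkeeping exercise on the explicit model integrand $h^{1/3}+|s-s(x_0)|^{1/2}$.
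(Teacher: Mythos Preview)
Your plan for \eqref{cw.a} and \eqref{cw.b} is essentially the paper's: trivial upper bound, lower bound via Stokes-transversality on the far part of $\gamma_\delta$ with the near-zone contribution absorbed into the $-Ch$. For \eqref{cw.b} the paper's argument is slightly cleaner, using that $|\phi'|$ is essentially monotone along $\gamma_\delta$ once past the $h^{2/3}$-disc around $x_0$, so that $|\phi'(x)|\ge |\phi'(w)|/C$ for $w\prec x$ gives both inequalities at once.

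For \eqref{cw.c} you have the right ingredients but the assembly has a gap. Proving the differential inequality $|R'|\le \frac{C}{h}|\phi'|\,R$ with a fixed constant $C$ and then hoping to improve to an arbitrary $\epsilon>0$ ``by a covering/chaining argument'' does not work: chaining factors $e^{C\eta}$ over sub-arcs just reproduces $e^{(C/h)\int|\phi'|}$ with the same $C$. What the paper does---and what your own far-zone computation already contains---is to compare the logarithmic derivative $\mathrm I:=|R'|/R$ of $R=h^{1/3}+|\phi'|$ directly with $\mathrm{II}:=|\phi'|/h$ and observe that $\mathrm I\le\epsilon\,\mathrm{II}$ holds pointwise \emph{outside} the disc $|x-x_0|\le (h/\epsilon)^{2/3}$, since there $\mathrm I={\cal O}(1)/|x-x_0|$ while $\mathrm{II}\asymp|x-x_0|^{1/2}/h$. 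Inside that disc one cannot have the $\epsilon$-inequality pointwise, but both $\int\mathrm I$ and $\int\mathrm{II}$ over it are ${\cal O}_\epsilon(1)$, which suffices: integrating gives $\log\bigl(R(x)/R(y)\bigr)\le {\cal O}_\epsilon(1)+\frac{\epsilon}{h}\int_y^x|\phi'|\,|dz|$, hence \eqref{cw.c}. So the exceptional set must be taken $\epsilon$-dependently, of size $(h/\epsilon)^{2/3}$ rather than $h^{2/3}$; this is the step your plan misses.
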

\begin{proof}
  The second inequality in (\ref{cw.a}) is obvious. By additivity it
  suffices to show the first inequality in each of the following three
  cases (where the second case may be void):
\begin{itemize}
\item[1)] $x,y$ belong to the horizontal segment $[0,\delta ]$, 
\item[2)] $x,y$ belong to $\gamma _\delta \cap D(x_0,Ch^{2/3})$,
\item[3)] $x,y$ are both beyond the cases 1 and 2.
\end{itemize}
In case 1) both $\int_y^x|\phi '(z)||dz|$ and $\Re (\phi (x)-\phi
(y))$ are ${\cal O}(h)$ since $\delta ={\cal O}(h)$. In the second case
this remains true since $|x-y|={\cal O}(h^{2/3})$ and $\phi '(z)={\cal
  O}(h^{1/3})$ for $y\prec z\prec x$. In the third case the first
inequality in (\ref{cw.a}) follows from the fact that $\gamma _\delta
$ is here transversal to the Stokes lines and more precisely that 
$$
\frac{d}{dt}\Re \phi (\gamma _\delta (t))\asymp |\phi '(\gamma _\delta
(t))|, \hbox{ for }y\prec \gamma _\delta (t)\prec x.
$$

Now consider (\ref{cw.b}). If $x$ is as in case 1 or 2 then
$\int_y^x|\phi '(z)||dz|$ and $|\phi '(w)||x-y|$ are ${\cal
  O}(h)$. If $x$ is as in case 3, then $|\phi '(x)|\ge
\frac{1}{C}|\phi '(w)|$ for $w\prec x$ and we get the desired inequalities.

We finally show (\ref{cw.c}).
Let $\mathrm{I}$ denote the modulus of the logarithmic derivative of
$h^{1/3}+|\phi '(x)|$ along $\gamma _\delta $. Then  $$\mathrm{I}\le \frac{|\phi
  ''|}{h^{\frac{1}{3}}+|\phi '(x)|}$$ which is ${\cal O}(h^{-2/3})$ on
$\gamma _\delta \cap D(x_0,Ch^{2/3})$ for every $C>0$, and on $\gamma
_\delta \setminus D(x_0,Ch^{2/3})$:
$$
\mathrm{I}={\cal
  O}(1)\frac{|x-x_0|^{-\frac{1}{2}}}{h^{\frac{1}{3}}+|x-x_0|^{\frac{1}{2}}}=\frac{{\cal
  O}(1)}{|x-x_0|}.
$$
Summing up the estimates in both regions, we have
$$
\mathrm{I}=\frac{{\cal O}(1)}{h^{\frac{2}{3}}+|x-x_0|}.
$$

\par
The modulus $\mathrm{II}$ of the logarithmic derivative with respect
to $x$ of $e^{\int_y^x|\phi ' (z)||dz|/h}$ is bounded by $|\phi
'(x)|/h$ which is ${\cal O}(h^{-2/3})$ in the first region and
$\asymp |x-x_0|^{1/2}/h$ in the second region,
provided that $C$ is large enough.

\par It follows that $\mathrm{I}\le \epsilon \mathrm{II}$, except in
the intersection of $\gamma _\delta $ with the disc $|x-x_0|\le
(h/\epsilon )^{2/3}$. The integrals of both $\mathrm{I}$ and
$\mathrm{II}$ over this exceptional region are ${\cal O}_\epsilon (1)$
and (\ref{cw.c}) follows.
\end{proof}

\begin{lemma}\label{cw0.5}
The ${\cal L}(L^2)$-norms of $(h^\frac{1}{3}+|\phi '|)\circ K$ and of
$(h^\frac{1}{3}+|\phi '|)^2\circ K\circ (h^\frac{1}{3}+|\phi '|)^{-1}$
are ${\cal O}(1)$. 
\end{lemma}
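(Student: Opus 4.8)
The plan is to derive both bounds from Schur's test, using only the estimates (\ref{cw.a})--(\ref{cw.c}) of Lemma~\ref{cw0}. Write $G=h^{1/3}+|\phi'|$ and $F(x,y)=\int_x^y|\phi'(z)|\,|dz|$. With $\gamma_\delta$ oriented from $0$ to $b$, formula (\ref{cw.11}) exhibits $K$ as the integral operator whose kernel is supported on $\{x\prec y\}$ and, by the first inequality of (\ref{cw.a}), satisfies $|K(x,y)|\le \frac{C}{h}e^{-F(x,y)/(Ch)}$ there. Using (\ref{cw.c}) to bound a ratio $G(x)/G(y)$ by $C_\epsilon e^{\epsilon F(x,y)/h}$ — at the price of an arbitrarily small loss $\epsilon$ in the exponential rate — one then gets, for $x\prec y$,
\[
|T_1(x,y)|\le \frac{C}{h}\,G(x)\,e^{-F(x,y)/(C_1h)},\qquad
|T_2(x,y)|\le \frac{C}{h}\,G(x)\,e^{-F(x,y)/(C_1h)},
\]
for $T_1=G\circ K$ and $T_2=G^2\circ K\circ G^{-1}$ (for $T_2$ one writes $G(x)^2G(y)^{-1}=G(x)\cdot G(x)/G(y)$ before applying (\ref{cw.c})).

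Next I would apply Schur's test with the single weight $p=G^{1/2}$. For both $T_i$, using (\ref{cw.c}) once more to absorb the surplus half-power of $G$ into the exponential and shrinking $\epsilon$ so the rate stays positive, the row integral $\int|T_i(x,y)|\,p(y)\,|dy|$ is $\le \frac{C}{h}G(x)^{3/2}\int_{\{y\succ x\}}e^{-F(x,y)/(C_2h)}\,|dy|$ and the column integral $\int|T_i(x,y)|\,p(x)\,|dx|$ is $\le \frac{C}{h}G(y)^{3/2}\int_{\{x\prec y\}}e^{-F(x,y)/(C_2h)}\,|dx|$. Hence everything reduces to the single one-dimensional estimate
\[
\frac{G(x)}{h}\int_{\{y\in\gamma_\delta:\ y\succ x\}} e^{-F(x,y)/(C'h)}\,|dy|=\mathcal{O}(1)
\]
and its mirror image with $x$ and $y$ (and $\prec$) interchanged; granting these two, Schur's test gives $\|T_1\|_{\mathcal{L}(L^2)}=\|T_2\|_{\mathcal{L}(L^2)}=\mathcal{O}(1)$, which is the assertion.

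To prove the one-dimensional estimate, split $\gamma_\delta\cap\{y\succ x\}$ into $\mathcal{R}_2=\{G\ge 2h^{1/3}\}$ and $\mathcal{R}_1=\{G<2h^{1/3}\}$. On $\mathcal{R}_2$ one has $|\phi'(y)|\ge\frac12 G(y)\ge\frac{1}{2C_\epsilon}G(x)e^{-\epsilon F(x,y)/h}$ by (\ref{cw.c}), so the substitution $dF=|\phi'(y)|\,|dy|$ gives $\int_{\mathcal{R}_2}e^{-F/(C'h)}\,|dy|\le \frac{C_\epsilon}{G(x)}\int_0^\infty e^{-(1/C'-\epsilon)F/h}\,dF=\mathcal{O}(h/G(x))$, so the $\mathcal{R}_2$-part contributes $\mathcal{O}(1)$. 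By (\ref{cw.8}), $G(z)<2h^{1/3}$ forces $|z-x_0|=\mathcal{O}(h^{2/3})$, so $\mathcal{R}_1\subset\gamma_\delta\cap D(x_0,Ch^{2/3})$ has length $\mathcal{O}(h^{2/3})$, on which $e^{-F/(C'h)}\le e^{-F_\ast/(C'h)}$ with $F_\ast:=\inf_{\mathcal{R}_1}F(x,\cdot)$. If $x\in D(x_0,Ch^{2/3})$ then $G(x)=\mathcal{O}(h^{1/3})$ and $\frac{G(x)}{h}\,\mathcal{O}(h^{2/3})=\mathcal{O}(1)$; if $|x-x_0|\gg h^{2/3}$ and $x$ precedes $\mathcal{R}_1$, then $G(x)\asymp|x-x_0|^{1/2}$ while (\ref{cw.b}) gives $F_\ast\gtrsim|\phi'(x)|\,|x-x_0|\gtrsim|x-x_0|^{3/2}$, so with $|x-x_0|=h^{2/3}\sigma$, $\sigma\ge1$, the $\mathcal{R}_1$-term is $\lesssim\sigma^{1/2}e^{-\sigma^{3/2}/C''}=\mathcal{O}(1)$; and if $x$ lies beyond the arc of $\gamma_\delta$ near $x_0$, then $\mathcal{R}_1=\emptyset$. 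The mirror estimate is proved the same way, with the intermediate point in (\ref{cw.b}) taken near $y$.

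The main obstacle is precisely this last step, near the turning point $x_0$: there $|\phi'|$ vanishes and the substitution $F=\int|\phi'|$ that handles everything away from $x_0$ is unavailable, so one must combine the crude length bound $\mathcal{O}(h^{2/3})$ for $\gamma_\delta\cap D(x_0,Ch^{2/3})$ with the exponential gain $e^{-\mathrm{const}\,|x-x_0|^{3/2}/h}$ furnished by (\ref{cw.b}) when the source point $x$ is far from $x_0$. The rest is bookkeeping with Lemma~\ref{cw0}; the one point of care is to keep each loss $\epsilon$ small compared to the constant $C$ of (\ref{cw.a}), so that the exponential rate stays strictly positive after the $G$-factors have been moved across it.
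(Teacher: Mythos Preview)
Your argument is correct, and the overall architecture (Schur's test fed by the inequalities of Lemma~\ref{cw0}) is the same as the paper's. The difference lies in how the resulting one-variable integrals are dispatched. The paper begins with the observation that in the lower bound of (\ref{cw.b}) one may replace $|\phi'(w)|$ by $h^{1/3}+|\phi'(w)|$; taking $w$ equal to the earlier endpoint then gives directly
\[
\Re(\phi(x)-\phi(y))\ \le\ C-\tfrac{1}{Ch}\,G(x)\,|x-y|,\qquad x\prec y,
\]
so that $\frac{G(x)}{h}\int_{y\succ x}e^{-F(x,y)/(Ch)}\,|dy|\le \frac{G(x)}{h}\int e^{-G(x)|x-y|/(C'h)}\,|dy|={\cal O}(1)$ by a trivial exponential integral; the column integral is handled by one application of (\ref{cw.c}) to swap $G(x)$ for $G(y)$ and then the same step with $w$ equal to the later endpoint. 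This bypasses entirely your near/far-from-$x_0$ case split. The paper also runs Schur's test with weight~$1$; your choice $p=G^{1/2}$ is harmless but buys nothing, since (\ref{cw.c}) lets you move any power of $G$ across the exponential kernel at the cost of an arbitrarily small loss in the rate. Your route has the merit of making explicit where the turning point enters, but the paper's upgraded form of (\ref{cw.b}) is the shortcut worth noting.
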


\begin{proof}
We first notice that we can replace $|\phi '(w)|$ to the left in
(\ref{cw.b}) by $|\phi '(w)|+h^{\frac{1}{3}}$. 

\par By Schur's lemma, the ${\cal L}(L^2)$-norm of $(h^\frac{1}{3}+|\phi
'|)\circ K$ is bounded by the geometric mean of the following two
quantities:
$$
\mathrm{I}=\frac{1}{h}\sup_{x\in \gamma _\delta }\int_b^x
(h^{\frac{1}{3}}+|\phi '(x)|)e^{\frac{1}{h}\Re (\phi (x)-\phi (y))}|dy|
$$ and
$$
\mathrm{II}=\frac{1}{h}\sup_{y\in \gamma _\delta }\int_0^y
(h^{\frac{1}{3}}+|\phi '(x)|)e^{\frac{1}{h}\Re (\phi (x)-\phi (y))}|dx|.
$$ 
Combining (\ref{cw.a}) and (\ref{cw.b}) with $|\phi '(w)|$ replaced by
$h^{\frac{1}{3}}+|\phi '(w)|$, we see that for $x\prec y$,
$$
(h^{\frac{1}{3}}+|\phi '(x)|)e^{\frac{1}{h}\Re (\phi (x)-\phi (y))}\le
(h^{\frac{1}{3}}+|\phi '(x)|)e^{C-\frac{1}{Ch}(h^{\frac{1}{3}}+|\phi '(x)|)|x-y|},
$$
implying that $\mathrm{I}={\cal O}(1)$.

\par In order to estimate $\mathrm{II}$, we also use (\ref{cw.c}) to
get
\begin{multline*}\frac{1}{h}(h^{\frac{1}{3}}+|\phi
  '(x)|)e^{\frac{1}{h}\Re (\phi (x)-\phi (y))}\\
\le \frac{1}{h}(h^{\frac{1}{3}}+|\phi
  '(x)|)e^{-\frac{1}{Ch}\int_y^x|\phi '(z)||dz|}\\
\le \frac{\widehat{C}}{h}(h^{\frac{1}{3}}+|\phi
  '(y)|)e^{-\frac{1}{2Ch}\int_y^x|\phi '(z)||dz|}\\
\le \frac{1}{h}(h^{\frac{1}{3}}+|\phi
  '(y)|)e^{\widetilde{C}-\frac{1}{\widetilde{C}h}(h^{\frac{1}{3}}+|\phi
    '(y)|)|x-y|},
\end{multline*}
and it follows that $\mathrm{II}$ is ${\cal O}(1)$. Thus the ${\cal
  L}(L^2)$-norm of $(h^{\frac{1}{3}}+|\phi '|)\circ K$ is ${\cal
  O}(1)$ as claimed.

\par The estimate of the norm of $(h^{\frac{1}{3}}+|\phi '|)^2\circ
K\circ (h^{\frac{1}{3}}+|\phi '|)^{-1}$ is just a slight variation of
the above arguments, using (\ref{cw.c}) from the start.
\end{proof}

From the definition of $K$ in (\ref{cw.11}) we get
\ekv{cw.13}
{
-h\partial _xKv=v-\phi '\circ Kv,
}
and we conclude that 
\ekv{cw.14}
{
h\partial _x\circ K,\ (h^{\frac{1}{3}}+|\phi '|)h\partial _x\circ
K\circ (h^{\frac{1}{3}}+|\phi '|)^{-1}\hbox{ are }{\cal O}(1)\hbox{ in
}{\cal L}(L^2).
}

\par Now, recall that we can get $u$ from $(\phi '+h\partial _x)u=:w$ by
integration outwards from $x=0$:
\ekv{cw.14.5}
{
u(x)=\frac{1}{h}\int_0^x e^{-(\phi (x)-\phi (y))/h}w(y)dy=:Lw.
}
The same estimates apply to $L$ and for the solution $u=LKv$ of the
equation $(P-z )u=v$, we get
\ekv{cw.15}
{\Vert (h^{\frac{1}{3}}+|\phi '|)^2u\Vert+\Vert (h^{\frac{1}{3}}+|\phi
  '|)h\partial _xu\Vert\le {\cal O}(1)\Vert v\Vert .
}
Recalling that 
$$
(P-z )=(\phi '-h\partial )(\phi '+h\partial )=(\phi ')^2-h\phi
''-(h\partial )^2,
$$
and that $\phi ''={\cal O}(h^{-\frac{1}{3}})$, we also get $\Vert
(h\partial )^2u\Vert\le {\cal O}(1)\Vert v\Vert$ and thus for $u=LKv$:
\ekv{cw.16}
{|\hskip -1pt |\hskip -1pt |u|\hskip -1pt |\hskip -1pt |:=\Vert (h^{\frac{1}{3}}+|\phi '|)^2u\Vert+\Vert (h^{\frac{1}{3}}+|\phi
  '|)h\partial _xu\Vert +\Vert (h\partial _x )^2u\Vert\le {\cal O}(1)\Vert v\Vert .
} 

\par By construction, $u(0)=0$, but the Dirichlet condition at $x=b$
is not necessarily fulfilled. Now, for instance by using a different
factorization $(P-z )=(\widetilde{\phi }'+h\partial
)(\widetilde{\phi }'-h\partial )$ and some easy iterations, we see
that the problem
\ekv{cw.17}
{(P-z )e_b=0,\ e_b(0)=0,\ e_b(b)=1 }
has a solution on $\gamma _\delta $ which decays exponentially away
from $b$ and satisfies $|\hskip -1pt |\hskip -1pt |e_b|\hskip -1pt
|\hskip -1pt |={\cal O}(h^{\frac{1}{2}})$. 

\par Moreover, we have
$u(b)={\cal O}(h^{-1/2})\Vert v\Vert$. In fact, (\ref{cw.16}) shows
that $\Vert u\Vert_{H_h^2}\le {\cal O}(1)\Vert v\Vert$, if we take the
$H^2_h$ norm over $\{ x\in \gamma _\delta ;\, a\prec x\prec b \}$, where
$a\in \gamma _\delta $ is close to $b$, and as in (\ref{al.10}), we
have $|u(b)|\le {\cal O}(h^{-1/2})\Vert u\Vert_{H_h^2}$. Thus the
function $\widetilde{u}=u-u(b)e_b$ solves $(P-z )\widetilde{u}=v$,
$\widetilde{u}(0)=\widetilde{u}(b)=0$ and (\ref{cw.16}) remains valid
with $u$ replaced by $\widetilde{u}$. Since our Dirichlet problem is
Fredholm of index zero, we also know that $\widetilde{u}$ is the unique
solution. Dropping the tildes we get:

\begin{prop}\label{cw1}
Consider the problem (\ref{cw.2}) for $z $ as in (\ref{cw.3}) with
$\lambda =1/{\cal O}(1)$ and let $u$ be the unique solution
constructed above. Then,
\ekv{cw.22}
{
\Vert (h^{\frac{1}{3}}+|\phi '|)^2u\Vert + \Vert (h\partial
_x)^2u\Vert
+\Vert (h^{\frac{1}{3}}+|\phi '|)h\partial _xu\Vert \le {\cal O}(1)
\Vert v\Vert ,
}
where the $L^2$ norms are taken over $\gamma _\delta $.
\end{prop}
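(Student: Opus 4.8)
The plan is to assemble the particular solution $u=LKv$ built above, estimate it with the bounds of Lemmas \ref{cw0} and \ref{cw0.5}, and then correct the boundary value at the endpoint $b$.

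\emph{Step 1 (particular solution).} Using the factorization (\ref{cw.10}), set $w=Kv$ with $K$ the right inverse of $\phi'-h\partial_x$ obtained by integrating inward from $b$, as in (\ref{cw.11}), and then $u_0=Lw$ with $L$ the right inverse of $\phi'+h\partial_x$ obtained by integrating outward from $0$, as in (\ref{cw.14.5}). By construction $(P-z)u_0=v$ and $u_0(0)=0$.

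\emph{Step 2 (a priori estimate for $u_0$).} By Lemma \ref{cw0.5}, the operators $(h^{1/3}+|\phi'|)\circ K$ and $(h^{1/3}+|\phi'|)^2\circ K\circ(h^{1/3}+|\phi'|)^{-1}$ are ${\cal O}(1)$ in ${\cal L}(L^2)$, and by (\ref{cw.13})--(\ref{cw.14}) the same holds with $h\partial_x\circ K$ and $(h^{1/3}+|\phi'|)h\partial_x\circ K\circ(h^{1/3}+|\phi'|)^{-1}$. Since the kernel of $L$ obeys exactly the pointwise bounds (\ref{cw.a})--(\ref{cw.c}) after reversing the orientation of $\gamma_\delta$, the identical arguments give the same estimates for $L$. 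Composing $L$ and $K$, inserting $(h^{1/3}+|\phi'|)^{\mp1}$ between the factors, one gets (\ref{cw.15}) for $u_0$. Finally, rewriting $P-z=(\phi')^2-h\phi''-(h\partial_x)^2$ and using $\phi''={\cal O}(h^{-1/3})$ — which follows from (\ref{cw.8}) away from $D(x_0,h^{2/3}/C)$ and from (\ref{cw.9}) inside it — one also obtains $\Vert(h\partial_x)^2u_0\Vert\le{\cal O}(1)\Vert v\Vert$, so that (\ref{cw.16}) holds for $u_0$.

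\emph{Step 3 (boundary correction at $b$ and uniqueness).} The function $u_0$ need not vanish at $b$. Using a second factorization $P-z=(\widetilde\phi'+h\partial_x)(\widetilde\phi'-h\partial_x)$ adapted to the endpoint $b$ together with a short iteration, one produces a solution $e_b$ of (\ref{cw.17}) decaying exponentially away from $b$ with $|||e_b|||={\cal O}(h^{1/2})$ (in the notation of (\ref{cw.16})). From (\ref{cw.16}) for $u_0$ one has $\Vert u_0\Vert_{H_h^2}={\cal O}(1)\Vert v\Vert$ on a piece of $\gamma_\delta$ abutting $b$, so the semiclassical trace estimate as in (\ref{al.10}) yields $|u_0(b)|\le{\cal O}(h^{-1/2})\Vert v\Vert$. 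Then $u:=u_0-u_0(b)e_b$ solves the Dirichlet problem (\ref{cw.2}) and satisfies (\ref{cw.22}), since $|u_0(b)|\,|||e_b|||={\cal O}(\Vert v\Vert)$. As the Dirichlet problem is Fredholm of index zero, this $u$ is the unique solution, which is the assertion. The genuinely delicate part — the uniform Schur-test bounds for $K$ and $L$, including the absorption of the region $|x-x_0|\lesssim h^{2/3}$ where no WKB asymptotics are available — has already been handled in Lemmas \ref{cw0}--\ref{cw0.5}; given those, what remains here is bookkeeping, the only new ingredient being the endpoint correction together with the trace bound at $b$.
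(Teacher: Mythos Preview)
Your proof is correct and follows the paper's argument essentially step for step: the same factorization $(P-z)=(\phi'-h\partial_x)(\phi'+h\partial_x)$, the same integral operators $K$ and $L$ with the Schur bounds of Lemmas \ref{cw0}--\ref{cw0.5}, the same passage to $(h\partial_x)^2u$ via $\phi''={\cal O}(h^{-1/3})$, and the same endpoint correction $u_0-u_0(b)e_b$ using the trace estimate and Fredholm index zero. The only cosmetic remark is that your appeal to (\ref{cw.9}) for $\phi''={\cal O}(h^{-1/3})$ inside $D(x_0,Ch^{2/3})$ implicitly uses a Cauchy inequality (since (\ref{cw.9}) bounds $\phi'$, not $\phi''$), but this is immediate in the analytic setting assumed here.
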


\par We make a few remarks about extensions and variants. The first is
that we can replace $\phi $ in (\ref{cw.22}) with $\phi _0$, the
solution of the eiconal equation, $(\phi _0')^2=xQ(x)+z $. Indeed,
when $|x-x_0|\le {\cal O}(h^{2/3})$ we have $\phi ',\, \phi _0'={\cal
  O}(h^{1/3})$ and when $|x-x_0|\ge Ch^{2/3}$, then $|\phi '|\asymp
|\phi _0'|$.

\par The second observation is that along $\gamma _\delta $, if we let $x_1$
denote the real turning point (given by $x_1Q(x_1)+\lambda =0$,
$x_1\asymp -\lambda $, then
\begin{multline*}
h^{\frac{1}{3}}+|\phi _0'|\asymp
h^{\frac{1}{3}}+|x-x_0|^{\frac{1}{2}}\\
\asymp (|x-x_0|+h^{\frac{2}{3}})^{\frac{1}{2}}\asymp
(|x-x_1|+h^{\frac{2}{3}})^\frac{1}{2}\\
\asymp (s+|\lambda |+h^{\frac{2}{3}})^{\frac{1}{2}},
\end{multline*}
where we write $x=\gamma _\delta (s)$. Thus (\ref{cw.22}) can be written
\ekv{cw.23}
{
\Vert (h^{\frac{2}{3}}+|\lambda |+s)u\Vert+\Vert (h\partial
_x)^2u\Vert +\Vert (h^{\frac{2}{3}}+|\lambda
|+s)^{\frac{1}{2}}h\partial _xu\Vert \le {\cal O}(1)\Vert v\Vert .
}

\section{Parametrix for the exterior Dirichlet problem}\label{aed}
\setcounter{equation}{0}

Choose geodesic coordinates $(x',x_n)$ with $x'$ being local
coordinates on $\partial {\cal O}$, so that the exterior of ${\cal O}$
is locally given by $x_n>0$ and $P=-h^2\Delta $ in ${\bf R}^n\setminus
{\cal O}$ becomes (locally near a boundary point):
\ekv{aed.1}
{
P=(hD_{x_n})^2+R(x',hD_{x'})-x_nQ(x,hD_{x'})+ha(x)hD_{x_n}.
}
(Cf. (\ref{dcs.16}), (\ref{dcs.17}), (\ref{dcs.18}).)
Here $R$ is an elliptic second order differential operator with
principal symbol $r(x',\xi ')=|\xi '|^2$. Similarly, $Q$ is
elliptic in the $x'$ variables with principal symbol $q(x,\xi ')\asymp
|\xi '|^2$. For $z =\lambda +h^{2/3}w$ with $\lambda \in {\bf R}$,
$\lambda \sim 1$, $|w|\le 1/{\cal O}(1)$, we consider
\ekv{aed.2}
{\begin{split}
&P(x',\xi ')-z =P(x',x_n,\xi ',hD_{x_n})-z \\
&=
(hD_{x_n})^2+R(x',\xi ')-x_nQ(x,\xi ')+ha(x)hD_{x_n}-z 
\end{split}
}
as an ODO-valued symbol. We let $x_n$ vary in $\gamma _\delta $, $0\le
\delta \le Ch$. 

\par We investigate 3 different regions in $T^*\partial
{\cal O}$.

\par 1) $(x',\xi ')$ belongs to a small neighborhood of the glancing
hypersurface ${\cal G}$: $r(x',\xi ')=\lambda $. Then the estimates in Subsection
\ref{cw} apply with $\lambda $ there replaced by $\lambda -r(x',\xi
')$ and from (\ref{cw.23}) we get 
\label{aed.3}
\begin{multline}\label{aed.3}
\Vert (h^{\frac{2}{3}}+|\lambda -r(x',\xi ')|+s)u\Vert +\Vert (h\partial_{x_n})^2u \Vert
+\\ \Vert (h^{\frac{2}{3}}+|\lambda -r(x',\xi
')|+s)^{\frac{1}{2}}h\partial_{x_n}u\Vert\le {\cal O}(1)\Vert v\Vert ,
\end{multline}
when $(P(x',\xi ')-z )u=v$ along $\gamma _\delta $, $u(0)=u(b)=0$.

\par 2) $(x',\xi ')$ belongs to the hyperbolic region $r(x',\xi ')\le
\lambda -1/{\cal O}(1)$. Then the turning point $x_0$ is away from $0$
and hence also from $\gamma _\delta $ and the estimates of Section
\ref{cw} still apply and give (\ref{aed.3}), where we notice that
$h^{\frac{2}{3}}+|\lambda -r(x',\xi ')|+s\asymp 1$:
\ekv{aed.4}
{
\Vert u\Vert + \Vert h\partial_{x_n}u\Vert + \Vert (h\partial_{x_n})^2u\Vert
\le {\cal O}(1)\Vert v\Vert .
}
Notice that $q$ may be very small in this region but the estimates now
work without any reference to a turning point.

\par 3) $(x',\xi ')$ belongs to the elliptic region $r(x',\xi ')\ge
\lambda +1/{\cal O}(1)$. When in addition $r(x',\xi ')\le {\cal O}(1)$
we get (\ref{aed.4}) again. When $r(x',\xi ')\gg 1$ we multiply with
$|\xi '|^{-2}$ and get
$$
|\xi '|^{-2}(P(x',\xi ')-z
)=(\widetilde{h}D_{x_n})^2+\widetilde{R}-x_n\widetilde{Q}+\widetilde{h}a(x_n)\widetilde{h}D_{x_n}-\widetilde{z
}=\widetilde{P}-\widetilde{z },
$$
where $\widetilde{R}=|\xi '|^{-2}R(x',\xi ')\asymp 1$,
$\widetilde{Q}=|\xi '|^{-2}Q\asymp 1$, $\widetilde{h}=h/|\xi '|\ll 1$,
$\widetilde{z }=z /|\xi '|^2$, $|\widetilde{z }|\ll
1$. For the rescaled problem the turning point is well off to the
right and $\gamma _\delta $ intersects the Stokes lines
transversally. We still get (\ref{aed.4}), now for
$(\widetilde{P}-\widetilde{z })u=v$ and $h$ replaced by
$\widetilde{h}$ and after scaling back, we get
\ekv{aed.5} { \langle \xi '\rangle ^2\Vert u\Vert+ \langle \xi
  '\rangle \Vert h\partial_{x_n}u\Vert +\Vert (h\partial_{x_n})^2u\Vert \le
  {\cal O}(1)\Vert v\Vert }
for solutions of (\ref{cw.2}). 

\par For  a fixed $\delta \in \{ 0,Ch\}$, let ${\cal B}(x',\xi ')$ be
the space of functions on $\gamma _\delta $ vanishing at both end points and
equipped with the norm given be the left hand side of (\ref{aed.3}),
(\ref{aed.4}), (\ref{aed.5}) respectively when $(x',\xi ')$ is as in
the three cases. 

\par Then $P(x',\xi ')-z ={\cal O}(1):{\cal B}(x',\xi ')\to
L^2(\gamma _\delta )$
and has an inverse $E(x',\xi ')$ which is ${\cal
  O}(1):L^2(\gamma _\delta )\to {\cal B}(x',\xi ')$. 

\par Outside a fixed neighborhood of the glancing hypersurface, we
have the nice symbol properties
\ekv{aed.6}
{
\partial _{x'}^\alpha \partial _{\xi '}^\beta P={\cal O}_{\alpha
  ,\beta }(\langle \xi '\rangle^{-|\beta |}):\, {\cal B}(x',\xi ')\to
L^2(\gamma _{\delta }).
}
Near the glancing hypersurface we have a poblem when derivatives fall
on $R$ and we get the weaker estimate
\ekv{aed.7}
{
\partial _{x'}^\alpha \partial _{\xi '}^\beta P={\cal O}_{\alpha
  ,\beta }(1)(h^{\frac{2}{3}}+|\lambda -r(x',\xi ')|)^{-(|\alpha
  |+|\beta |)}.
}
This is the reason why traditionally (as in \cite{SjZw4,SjZw5} and
other works cited there) one uses some form of second
microlocalization. If $(x_0,\xi _0)$ is a point on the glancing
hypersurface, we conjugate $P(x,hD)$ with a microlocally defined
elliptic Fourier integral operator acting in the tangential variables
and get a new operator of the form (\ref{aed.1}) where now $R$, $Q$
are tangential classical $h$-pseudodifferential operators and $a$ is
replaced by $a(x,hD_{x'};h)$, a classical pseudodifferential operator
of order 0 in $h$, and where 
\ekv{aed.8}
{
R(x',\xi ')=\xi _1.
}
(See Sections 4 and 5 in \cite{SjZw4} and \cite{SjZw5} respectively.)
Then the problem appears only when we differentiate with respect to
$\xi _1$:
\ekv{aed.9}
{
\partial _{x'}^\alpha \partial _{\xi '}^\beta P={\cal O}_{\alpha
  ,\beta }(1)(h^{\frac{2}{3}}+|\lambda -r(x',\xi ')|)^{-\beta _1}.
}
Differentiating the identity $(P-z )E=1$, we get with $\partial
^\alpha =\partial _{x',\xi '}^\alpha $:
$$
(P-z )\partial ^{\alpha }E=\sum_{\alpha '+\alpha ''=\alpha \atop
  \alpha '\ne 0} c_{\alpha ',\alpha ''}(\partial
^{\alpha '}P) (\partial ^{\alpha ''}E),
$$
and after applying $E$ to the right and using that $E(P-z )=1$,
$$
\partial ^\alpha E=\sum_{\alpha '+\alpha ''=\alpha \atop
  \alpha '\ne 0} c_{\alpha ',\alpha ''}E(\partial
^{\alpha '}P)(\partial ^{\alpha ''}E) .
$$
By induction we then get
\ekv{aed.10}
{
\partial _{x'}^\alpha \partial _{\xi '}^\beta E={\cal O}_{\alpha
  ,\beta }(\langle \xi '\rangle^{-|\beta |}):\, 
L^2(\gamma _{\delta })\to {\cal B}(x',\xi '),
}
outside any fixed neighborhood of the glancing hypersurface ${\cal G}$. 
Near any fixed point of ${\cal G}$, we get
\ekv{aed.11}
{
\partial _{x'}^\alpha \partial _{\xi '}^\beta E={\cal O}_{\alpha
  ,\beta }(1)(h^{\frac{2}{3}}+|\lambda -r(x',\xi ')|)^{-\beta _1},
}
after conjugation  with an elliptic tangential Fourier integral operator,
that reduces $R$ to $\xi _1$.

We now turn to the $n$-dimensional situation and recall the definition
of the singular contour $\Gamma _f$ in (\ref{dcs.2}) and its exterior
part $\Gamma _{\mathrm{ext},f}$, where $f$ satisfies
(\ref{dcs.21}). We take $\theta =\pi /3$ there and put $\Gamma
_0=\Gamma _f$. For $\delta >0$, let ${\cal O} _{-\delta }={\cal
  O}+B(0,\delta )$. Then $\mathrm{dist\,}(x,{\cal O}_{-\delta })=\max
(d(x)-\delta ,0)$. Let $f_\delta $ be as in (\ref{dcs.21}) with $d(x)$
replaced by $\mathrm{dist\,}(\cdot ,{\cal O}_{-\delta })$, still with
$\theta =\pi /3$. Put $\Gamma _\delta =\Gamma _{f_\delta }$. In this
section we only work on the exterior parts $\Gamma
_{\mathrm{ext},\delta }$ and for simplicity we drop the subscript
``ext''. Using geodesic coordinates we have
\ekv{aed.12}
{
\Gamma _{\delta ,b}:=\{x;\, x'\in \partial {\cal O},\ x_n\in \gamma
_\delta  \}\subset \Gamma _\delta .
}
(Later on we will also include ${\cal O}$ into the contour $\Gamma
_\delta $ and the $\Gamma _\delta $ above will then be renamed
$\Gamma _{\delta ,\mathrm{ext}}$.) 

Let $ {\cal B}_b$ be the space of functions $u=u(x',x_n)$ on $\Gamma
_{\delta ,b}$ with $u(x',0)=u(x',b)=0$ for which the norm 
\ekv{aed.14}{
\Vert u\Vert_{{\cal B}}=h^{\frac{2}{3}}\Vert u\Vert +\Vert
(R(x',hD_{x'})-\lambda )u\Vert+\Vert su\Vert +\Vert (h\partial
_{x_n})^2u\Vert 
}
is finite. 

Continuing to treat $P$ as a pseudodifferential operator on $\partial
{\cal O}$ with operator valued symbol, we obtain a right parametrix of
$P-z $ in the following way (cf \cite{SjZw4,SjZw5}):

Let $\chi _1,..,\chi _N\in C_0^\infty (T^*\partial {\cal O})$ have
their supports in small neighborhoods of the points $\rho _1,...,\rho
_N\in {\cal G}$ that we assume are ``evenly distributed'' on ${\cal G}$
with $N$ sufficiently large and so that $\sum_1^N\chi _j=1$ near
${\cal G}$. Put $\chi _0=1-\sum_1^N\chi _j$. Define corresponding
tangential pseudodifferential operators $\chi _j(x',hD_{x'})$ on
$\partial {\cal O}$ in the standard way, so that $\sum_1^N\chi
_j(x',hD_{x'})=1$ microlocally near ${\cal G}$. With suitable choices
of the above quantities, there exist semi-classical elliptic Fourier
integral operators of order $0$, defined microlocally near $\rho _j$,
such that $R(x',hD_{x'})=U_jhD_{x_1}U_j^{-1}$ microlocally near $\mathrm{supp\,}\chi _j$
where $U_j^{-1}$ denotes a microlocal inverse of $U_j$. Then our
parametrix of $P-z $ is an operator $E={\cal O}(1):L^2(\Gamma
_{\delta ,b})\to {\cal B}_b$ of the form
\ekv{aed.15}
{
E=E_0\chi _0(x',hD_{x'})+\sum_1^NU_jE_j(x',hD_{x'})U_j^{-1}\chi _j(x',hD_{x'}).
}
Here the symbol $E_0(x',\xi ')$ belongs to the space $S^0(T^*\partial
{\cal O};{\cal L}(L^2,{\cal B}_b))$ of symbols that satisfy
(\ref{aed.10}) and has an asymptotic expansion,
\ekv{aed.16}{E_0\sim E_{0,0}+hE_{0,1}+h^2E_{0,2}+...,}
with $E_{0,k}\in S^{-k}$, the space of symbols $F$ satisfying 
$$
\partial _{x'}^\alpha \partial _{\xi '}^\beta F={\cal O}_{\alpha
  ,\beta }(\langle \xi '\rangle ^{-k-|\beta |}):\, L^2(\gamma _\delta
)\to {\cal B}(x',\xi ').
$$ 
Moreover, $E_{0,0}=(P(x',\xi ')-z )^{-1}$.

\par For $j=1,...,N$, $E_j$ has the property (\ref{aed.11}) with
$r=\xi _1$ and we have an asymptotic expansion
\ekv{aed.17}
{
E_j\sim E_{j,0}+h^{\frac{1}{3}}E_{j,1}+...,
}
with $E_{j,k}$ satisfying (\ref{aed.11}) and with $E_{j,0}=(P(x',\xi
')-z )^{-1}$ where it is understood that $P(x',\xi ')$ is now
simplified with the conjugation by $U_j$ so that $R(x',hD_{x'})$ has
become $hD_{x_1}$. The main property of $E$ is that 
\ekv{aed.18}
{
(P(x,hD)-z )E=1+{\cal O}(h^\infty )\hbox{ in }{\cal L}(L^2,L^2).
}
We can also construct a left parametrix $\widetilde{E}$  with an
expression similar to (\ref{aed.15}) but with the cutoff operators to
the left, and by a standard argument we see that
$\widetilde{E}=E+{\cal O}(h^\infty )$ in ${\cal L}(L^2,{\cal B}_b)$.

\par Summing up the discssion so far, we have
\begin{prop}\label{aed1}
we can construct an operator $E={\cal O}(1):L^2(\Gamma _{\delta
  ,b})\to {\cal B}_b$ as above, so that 
\ekv{aed.19}{
\begin{cases}(P(x,hD)-z )E=1+{\cal O}(h^\infty )\hbox{ in }{\cal
    L}(L^2,L^2),\\
E(P(x,hD)-z )=1+{\cal O}(h^\infty )\hbox{ in }{\cal
    L}({\cal B}_b,{\cal B}_b).
\end{cases}
}
\end{prop}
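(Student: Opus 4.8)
The plan is to take the formula (\ref{aed.15}) as the definition of $E$ and to verify the two statements in (\ref{aed.19}) by the symbol calculus for tangential $h$-pseudodifferential operators with operator-valued symbols, building the symbols $E_0$ and $E_1,\dots,E_N$ by the usual symbolic iteration. First I would settle the \emph{pointwise} (fibered) situation: for each $(x',\xi')\in T^*\partial{\cal O}$, the operator $P(x',\xi')-z$ acting on functions on $\gamma _\delta $ that vanish at the two endpoints is an elliptic two-point boundary value problem for an ODE, hence Fredholm of index $0$, and Proposition \ref{cw1} together with the a priori bounds (\ref{aed.3})--(\ref{aed.5}) shows it is bijective with inverse $E(x',\xi')={\cal O}(1):L^2(\gamma _\delta )\to{\cal B}(x',\xi')$, uniformly for $\delta\in\{0,Ch\}$. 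This produces the principal symbols $E_{0,0}=(P(x',\xi')-z)^{-1}$ and $E_{j,0}=(P(x',\xi')-z)^{-1}$, and the symbol estimates (\ref{aed.6}), (\ref{aed.10}) away from the glancing hypersurface ${\cal G}$.

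Away from ${\cal G}$, i.e.\ on $\mathrm{supp\,}\chi _0$, I would then run the standard symbolic construction: the composition formula for the tangential quantization in $x'$ gives $(P(x,hD)-z)\circ E_0(x',hD_{x'})=r_0(x',hD_{x'})$ with $r_0\sim 1+hr_{0,1}+h^2r_{0,2}+\dots$ and $r_{0,k}\in S^{-k}({\cal L}(L^2,L^2))$, and one chooses $E_{0,1},E_{0,2},\dots$ recursively so that the remaining symbol equals $1$ modulo $S^{-\infty}$, Borel-summing to get the expansion (\ref{aed.16}). This is legitimate because on $\mathrm{supp\,}\chi _0$ the symbol of $P-z$ obeys the uniform bounds (\ref{aed.6}) and the inverse obeys (\ref{aed.10}), so the calculus is the ordinary $S^0_{1,0}$-one, with values in ${\cal L}(L^2,{\cal B}(x',\xi'))$; quantizing a symbol in that class gives an operator ${\cal O}(1):L^2(\Gamma _{\delta ,b})\to{\cal B}_b$, the ${\cal B}_b$-norm (\ref{aed.14}) being matched termwise to the fiber norms (the $\|(R-\lambda)u\|$ term corresponding to the $|\lambda-r(x',\xi')|$-weight in (\ref{aed.3})).

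Near each glancing point $\rho _j$ the direct calculus fails because of the degeneracy (\ref{aed.7}); here I would conjugate by the elliptic tangential Fourier integral operator $U_j$ that turns $R$ into $hD_{x_1}$, after which the symbol bounds become the milder second-microlocal ones (\ref{aed.9}): only $\partial _{\xi _1}$ is ``bad'' and it is controlled by negative powers of the weight $w_\lambda(x',\xi')=h^{2/3}+|\lambda-r(x',\xi')|$ adapted to the Airy scaling. In this two-parameter symbol class, treating $h^{2/3}$ as a second small quantity, the composition calculus still closes (as in \cite{SjZw4,SjZw5} and the works cited there), so one constructs $E_j\sim E_{j,0}+h^{1/3}E_{j,1}+\dots$ as in (\ref{aed.17}), with $E_{j,k}$ obeying (\ref{aed.11}), so that on $\mathrm{supp\,}\chi _j$ one has $(P(x,hD)-z)U_jE_j(x',hD_{x'})U_j^{-1}=U_j(1+{\cal O}(h^\infty))U_j^{-1}$ in ${\cal L}(L^2,L^2)$. \textbf{This degenerate second-microlocal calculus near $\cal G$ is the main obstacle}: one must check that the weight $w_\lambda$ and the effective $h^{1/3}$-expansion are stable under composition, that conjugation by $U_j$ (acting only in the $x'$-variables) preserves the operator-valued structure in $x_n$ and the Dirichlet conditions at $x_n=0,b$, and that the various fiber spaces ${\cal B}(x',\xi')$ and ${\cal B}_b$ are compatible with quantization — all of which is careful but routine bookkeeping within the classes already introduced.

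Finally, with $E$ defined by (\ref{aed.15}), $\sum _0^N\chi _j=1$ and $\sum _1^N\chi _j(x',hD_{x'})=1$ microlocally near ${\cal G}$, the local identities above patch to $(P(x,hD)-z)E=1+{\cal O}(h^\infty)$ in ${\cal L}(L^2,L^2)$, since the only region where $P-z$ fails to commute with the tangential cutoffs modulo ${\cal O}(h^\infty)$ is a neighborhood of ${\cal G}$, where the $\chi _j$ already form a microlocal partition of unity. Running the same construction with the cutoffs placed to the left yields a left parametrix $\widetilde E={\cal O}(1):L^2(\Gamma _{\delta ,b})\to{\cal B}_b$; then computing $\widetilde E(P(x,hD)-z)E$ two ways, $\widetilde E(1+{\cal O}(h^\infty))=(1+{\cal O}(h^\infty))E$ in ${\cal L}(L^2,{\cal B}_b)$, forces $\widetilde E=E+{\cal O}(h^\infty)$, whence $E(P(x,hD)-z)=1+{\cal O}(h^\infty)$ in ${\cal L}({\cal B}_b,{\cal B}_b)$ as well, which is (\ref{aed.19}).
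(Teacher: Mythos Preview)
Your proposal is correct and follows essentially the same approach as the paper: the proof of Proposition \ref{aed1} is precisely the discussion in Section \ref{aed} leading up to it, namely the fiberwise inversion via Proposition \ref{cw1} and (\ref{aed.3})--(\ref{aed.5}), the standard symbolic iteration in the $S^0_{1,0}$-class away from ${\cal G}$, the conjugation by $U_j$ and second-microlocal calculus with the weight $h^{2/3}+|r-\lambda|$ near ${\cal G}$ giving the $h^{1/3}$-expansion (\ref{aed.17}), and the left-parametrix/standard-argument step at the end. You have identified the same main obstacle (the degenerate calculus near the glancing hypersurface) and resolved it the same way.
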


\par We now consider $P=-h^2\Delta $ on all of $\Gamma _\delta $ and
notice that $P-z $ is semi-classically elliptic away from any
fixed neighborhood of $\partial {\cal O}$, so we have a
pseudodifferential parametrix $Q(x,hD;h)$ in that region with symbol
$Q(x,\xi ;h)$ satisfying $\partial _x^\alpha \partial _{\xi }^\beta
Q={\cal O}(\langle \xi \rangle^{-2-|\beta |})$ such that if $\chi \in
C^\infty (\Gamma _\delta )$ is a standard cutoff to a small
neighborhood of $\partial {\cal O}$, then 
\[\begin{split}(P-z )Q(1-\chi )&=(1-\chi )+K_1\\
(1-\chi )Q(P-z )&=(1-\chi )+K_2,
\end{split}
\]
where $K_1$, $K_2$ are negligible operators ${\cal O}(h^\infty
):H^{-s}_h\to H_h^s$ for every $s\ge 0$. Further, we may arrange so
that the distribution kernel $K_Q(x,y)$ of $Q$ vanishes when
$|x-y|>\epsilon $, for any fixed given $\epsilon >0$.

\par Assuming that $\mathrm{supp\,}\chi \subset \Gamma _{\delta ,b}$,
we choose $\epsilon >0$ small enough and put
\ekv{aed.20}
{
F=\chi E\chi +Q(1-\chi )-Q[P,\chi ]E\chi .
}
Then, $F={\cal O}(1):L^2(\Gamma _\delta )\to {\cal B}(\Gamma _\delta
)$ and
$$
(P-z )F=1+K_3,
$$
where $K_3={\cal O}(h^\infty ):L^2\to L^2$. Here ${\cal B}(\Gamma
_\delta )$ denotes the space of distributions $u$ such that $\chi u\in
{\cal B}(\Gamma _{\delta ,b})$, $(1-\chi )u\in H_h^2(\Gamma _\delta
)$. The construction of a left parametrix is similar, and by a standard
argument we see that $F$ is also a left parametrix. Summing up, we get
\begin{prop}\label{aed2}
  The operator $F$ in (\ref{aed.20}) is ${\cal O}(1):L^2(\Gamma
  _\delta )\to {\cal B}(\Gamma _\delta )$ and satisfies \ekv{aed.21} {
    (P-z )F=1+K_3,\ F(P-z )=1+K_4, } where $K_3={\cal
    O}(h^\infty ):\, L^2(\Gamma _\delta )\to L^2(\Gamma _\delta )$,
  $K_4={\cal O}(h^\infty ):\, {\cal B}(\Gamma _\delta )\to {\cal
    B}(\Gamma _\delta )$.
\end{prop}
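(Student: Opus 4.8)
The plan is to verify directly that the glued operator $F$ in (\ref{aed.20}) is a two-sided parametrix, by distributing $P-z$ through its three summands and using Proposition \ref{aed1} near $\partial{\cal O}$ together with the interior elliptic parametrix $Q$ away from it. First I would record the mapping properties. By Proposition \ref{aed1}, $E={\cal O}(1):L^2(\Gamma_{\delta,b})\to{\cal B}_b$, and since $\mathrm{supp\,}\chi\subset\Gamma_{\delta,b}$, multiplication by $\chi$ sends ${\cal B}_b$ continuously into the component of ${\cal B}(\Gamma_\delta)$ near the boundary, so $\chi E\chi={\cal O}(1):L^2(\Gamma_\delta)\to{\cal B}(\Gamma_\delta)$. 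The elliptic parametrix $Q$ is ${\cal O}(1):H_h^{-s}\to H_h^s$ with kernel supported within $\epsilon$ of the diagonal, so $Q(1-\chi)={\cal O}(1):L^2\to H_h^2(\Gamma_\delta)$, and $(1-\chi)$ restricts this to the region where ${\cal B}(\Gamma_\delta)$ reduces to the ordinary $H_h^2$ norm. Finally $[P,\chi]=[-h^2\Delta,\chi]$ is a first-order differential operator with coefficients supported in $\mathrm{supp\,}\nabla\chi$, which (choosing $\chi\equiv 1$ near $\partial{\cal O}$) lies in the region where $P-z$ is semiclassically elliptic; hence $[P,\chi]E\chi={\cal O}(1):L^2\to L^2$ with range supported in that elliptic region, and $Q[P,\chi]E\chi={\cal O}(1):L^2\to H_h^2$, again supported there. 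Adding the three contributions gives $F={\cal O}(1):L^2(\Gamma_\delta)\to{\cal B}(\Gamma_\delta)$.

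For the parametrix identities I would compute $(P-z)F$ term by term. Commuting $P-z$ past the cutoff in the first term, $(P-z)\chi E\chi=\chi(P-z)E\chi+[P,\chi]E\chi$, and by the first identity in (\ref{aed.19}) this equals $\chi^2+\chi K\chi+[P,\chi]E\chi$ with $K={\cal O}(h^\infty):L^2\to L^2$. (Here the standard minor abuse behind the single-$\chi$ notation is that the left cutoff in $\chi E\chi$ and in $[P,\chi]$ should be taken equal to $1$ on the support of the right one, so that the product of the two cutoffs is the right one; with nested cutoffs $\chi_1\prec\chi_2$ one gets $\chi_1+\chi_2 K\chi_1+[P,\chi_2]E\chi_1$.) The second term is $(P-z)Q(1-\chi)=(1-\chi)+K_1$ with $K_1$ negligible. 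For the third term, since $[P,\chi]E\chi$ has range supported (and microsupported) in the region where $P-z$ is elliptic and $Q$ is a genuine local parametrix there with kernel concentrated near the diagonal, $(P-z)Q[P,\chi]E\chi=[P,\chi]E\chi+K'E\chi$ with $K'={\cal O}(h^\infty)$. The stray commutator terms cancel between the first and third contributions, and one is left with $(P-z)F=1+K_3$, where $K_3=\chi K\chi+K_1-K'E\chi={\cal O}(h^\infty):L^2(\Gamma_\delta)\to L^2(\Gamma_\delta)$.

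Next I would build a left parametrix $\widetilde F$ by the mirror-image recipe (cutoffs placed to the left, and the correction term $Q[P,\chi]E$ replaced by the transposed one), which by the analogous computation using the second identity in (\ref{aed.19}) and $(1-\chi)Q(P-z)=(1-\chi)+K_2$ satisfies $\widetilde F(P-z)=1+K_4'$ with $K_4'={\cal O}(h^\infty):{\cal B}(\Gamma_\delta)\to{\cal B}(\Gamma_\delta)$. Then the usual argument $\widetilde F=\widetilde F(P-z)F-\widetilde F K_3=F+K_4'F-\widetilde F K_3$ shows $\widetilde F-F={\cal O}(h^\infty):L^2\to{\cal B}(\Gamma_\delta)$, whence $F(P-z)=\widetilde F(P-z)-(\widetilde F-F)(P-z)=1+K_4$ with $K_4={\cal O}(h^\infty):{\cal B}(\Gamma_\delta)\to{\cal B}(\Gamma_\delta)$, which together with the previous paragraph gives (\ref{aed.21}).

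The argument is essentially bookkeeping; there is no deep obstacle once Proposition \ref{aed1} and the interior elliptic parametrix are in hand. The points that need care are: that each remainder is genuinely negligible in the correct operator space — in particular $\chi K\chi$ and $K'E\chi$ must be seen to be ${\cal O}(h^\infty)$ from $L^2$ to $L^2$ (and $K_4',\widetilde F K_3$ from ${\cal B}$ to ${\cal B}$, respectively from $L^2$ to ${\cal B}$); that the supports are arranged with enough room (this is exactly the role of the small $\epsilon$ in the support of $K_Q$) so that $Q$'s near-diagonal kernel never feels $\partial{\cal O}$ when applied to things supported in $\mathrm{supp\,}\nabla\chi$, so the elliptic-parametrix identity applies there; and that the hybrid space ${\cal B}(\Gamma_\delta)$, which couples the turning-point norm (\ref{aed.14}) near $\partial{\cal O}$ to the ordinary $H_h^2$ norm away from it, is matched correctly by the three pieces of $F$ — which it is, by the support properties just described.
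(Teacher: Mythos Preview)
Your proposal is correct and follows the same approach as the paper, which itself gives essentially no proof beyond asserting that $(P-z)F=1+K_3$ and that ``the construction of a left parametrix is similar, and by a standard argument we see that $F$ is also a left parametrix.'' You carry out precisely this standard argument, and you are in fact more careful than the paper: you correctly flag the $\chi^2$ versus $\chi$ issue hidden in the single-cutoff notation of (\ref{aed.20}) and resolve it with nested cutoffs, and you spell out the comparison $\widetilde F - F = K_4'F - \widetilde F K_3$ that justifies passing from a separate left parametrix $\widetilde F$ to the left-inverse property for $F$ itself.
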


\section{Exterior Poisson operator and DN map}\label{pdn}
\setcounter{equation}{0}
 We need some more estimates in the one dimensional case. Recall that if
 $u\in C_0^\infty ([0,\infty [)$, then 
\ekv{pdn.1}
{
|u(0)|^2\le 2\Vert u\Vert \Vert \partial u\Vert.
}
If $u\in C^\infty ([0,\infty [)$, let $\chi \in C_0^\infty ([0,\infty
[)$, $\chi (0)=0$ and put $\chi _L(x)=\chi (x/L)$. Applying
(\ref{pdn.1}) to $\chi _Lu$ gives
\ekv{pdn.2}
{
|u(0)|^2\le C(\frac{1}{L}\Vert u\Vert^2_{[0,L]}+\Vert
u\Vert_{[0,L]}\Vert \partial u\Vert_{[0,L]}).
}
If $\Lambda >0$ is a continuous function on $[0,\infty [$ of increasing
order of magnitude ($\Lambda (x)\ge \frac{1}{C}\Lambda (y)$ when $x\ge
y$) we get
$$
|u(0)|^2\le C(\frac{1}{L\Lambda (0)^2}\Vert \Lambda
u\Vert^2_{[0,L]}+\frac{1}{h\Lambda (0)}\Vert \Lambda  u\Vert_{[0,L]}\Vert
h\partial u\Vert_{[0,L]}).
$$ Choose $L$ so that $L\Lambda (0)^2=h\Lambda (0)$, $L=h/\Lambda
(0)$. Then,
$$
|u(0)|^2\le \frac{C}{h\Lambda (0)}(\Vert \Lambda
u\Vert^2_{[0,\frac{h}{\Lambda (0)}]}+\Vert
h\partial u\Vert^2_{[0,\frac{h}{\Lambda (0)}]}),
$$
\ekv{pdn.3}
{
\sqrt{h\Lambda (0)}|u(0)|\le C(\Vert \Lambda
u\Vert_{[0,\frac{h}{\Lambda (0)}]}+\Vert
h\partial u\Vert_{[0,\frac{h}{\Lambda (0)}]}).
}

\par Recall that for $(x',\xi ')$ near a point on the glancing
hypersurface, $r=\lambda $,
\begin{equation}\label{pdn.3.5}
\Vert u\Vert_{{\cal B}(x',\xi ')}=\Vert \Lambda ^2u\Vert +\Vert
\Lambda h\partial_{x_n} u\Vert + \Vert (h\partial_{x_n} )^2u\Vert ,
\end{equation}
where $\Lambda ^2=(h^{2/3}+|r-\lambda |+s)$, $r=r(x',\xi ')$,
$x_n=\gamma _\delta (s)$, $0\prec x\prec b$. Since $\Lambda $ is
increasing, we can apply (\ref{pdn.3}) and estimate $|u(0)|$ with the
first two terms in the ${\cal B}$-norm and $|h\partial_{x_n}u(0)|$ using
the last two terms: \ekv{pdn.4} { h^{\frac{1}{2}}\Lambda
  (0)^{\frac{3}{2}}|u(0)|\le C\Vert u\Vert_{{\cal B}}, } \ekv{pdn.5} {
  h^{\frac{1}{2}}\Lambda (0)^{\frac{1}{2}}|h\partial_{x_n}u(0)|\le C\Vert
  u\Vert_{{\cal B}}, } or more explicitly, \ekv{pdn.6} {
  h^{\frac{1}{2}}(h^{\frac{2}{3}}+|r-\lambda |)^{\frac{3}{4}}|u(0)|\le
  C\Vert u\Vert_{{\cal B}}, } \ekv{pdn.7} {
  h^{\frac{1}{2}}(h^{\frac{2}{3}}+|r-\lambda
  |)^{\frac{1}{4}}|h\partial_{x_n}u(0)|\le C\Vert u\Vert_{{\cal B}}.}

\par We next estimate the ${\cal B}(x',\xi ')$-norm of the null-solution in (\ref{cw.7}),
$$u=e_{x',\xi '}=e^{-\frac{1}{h}\phi (x_n;h)},\ \phi (x_n;h)=\phi
_{x',\xi '}(x_n;h),\ \phi (0)=0,$$
of $(P(x',\xi ')-z )u=0$ along $\gamma _\delta $. We know that 
$$
(h^{\frac{1}{3}}+|\phi '|)^2\asymp h^{\frac{2}{3}}+|r-\lambda |+s,\
(x_n=\gamma _\delta (s)),
$$
and that
$$\Re \partial _s\phi \asymp |\phi '|\ge \frac{1}{C}(h^{\frac{2}{3}}+|r-\lambda
|+s)^{\frac{1}{2}}$$
when $s+|r-\lambda |\gg h^{\frac{2}{3}}$. Thus with $b=\gamma _\delta (s_0)$,
$$
\Vert e_{x',\xi '}\Vert^2=\int_0^{s_0} e^{-\frac{2}{h}\Re \phi
  (x_n(s))}ds\le \int_0^\infty
e^{-\frac{1}{Ch}(h^{\frac{2}{3}}+|r-\lambda |) ^{\frac{1}{2}} s}ds,
$$
which leads to 
$$
\Vert e_{x',\xi '}\Vert \le \frac{{\cal
    O}(1)h^{\frac{1}{2}}}{(h^{\frac{2}{3}}+|r-\lambda |)^{\frac{1}{4}}}.
$$
We will also use that the same estimate holds for $\Vert e_{x',\xi
  '}^{\frac{1}{2}}\Vert$. 

\par Next look at 
$$
\Vert (h^{\frac{2}{3}}+|r-\lambda |+s)e_{x',\xi
  '}\Vert=(h^{\frac{2}{3}}+|r-\lambda |)\Vert
\frac{h^{\frac{2}{3}}+|r-\lambda |+s}{h^{\frac{2}{3}}+|r-\lambda
  |}e_{x',\xi '}\Vert .
$$
From Lemma \ref{cw0} we see that 
$$
\frac{h^{\frac{2}{3}}+|r-\lambda |+s}{h^{\frac{2}{3}}+|r-\lambda
  |}e^{\frac{1}{2}}_{x',\xi '}
$$
is bounded, so 
$$
\Vert
\frac{h^{\frac{2}{3}}+|r-\lambda |+s}{h^{\frac{2}{3}}+|r-\lambda
  |}e_{x',\xi '}\Vert \le {\cal O}(1)\Vert e^{\frac{1}{2}}_{x',\xi
  '}\Vert \le \frac{{\cal
    O}(1)h^{\frac{1}{2}}}{(h^{\frac{2}{3}}+|r-\lambda |)^{\frac{1}{4}}}.
$$
Thus,
$$
\Vert (h^{\frac{2}{3}}+|r-\lambda |+s)e_{x',\xi
  '}\Vert\le {\cal O}(1)h^{\frac{1}{2}}(h^{\frac{2}{3}}+|r-\lambda |)^{\frac{3}{4}}.
$$
The other terms in the ${\cal B}$ norm of $u$ satisfy the same
estimates and we get \ekv{pdn.8} { \Vert e_{x',\xi '}\Vert_{{\cal
      B}}\le {\cal O}(1)h^{\frac{1}{2}}(h^{\frac{2}{3}}+|r-\lambda
  |)^{\frac{3}{4}}.  }
Since $e_{x',\xi '}(0)=1$, we see that this is the reverse inequality
to (\ref{pdn.6}) up to a bounded factor, so
\ekv{pdn.9}
{
\Vert e_{x',\xi '}\Vert_{\cal B} \asymp h^{\frac{1}{2}}(h^{\frac{2}{3}}+|r-\lambda
  |)^{\frac{3}{4}}.
}

\begin{remark}\label{pdn1}
{\rm Using that $e_{x',\xi '}(b)={\cal O}(e^{-\frac{1}{Ch}})$, we can add
an exponentially small reflected term as in (\ref{cw.17}) to get a null
solution which vanishes at $b$ and after dividing with a factor
$1+{\cal O}(e^{-\frac{1}{Ch}})$ we get a new function $e_{x',\xi '}$
satisfying $(P_{x',\xi '}-z )e_{x',\xi '}=0$, $e_{x',\xi '}(0)=1$,
$e_{x',\xi '}(b)=0$ as well as the estimate (\ref{pdn.9}).}
\end{remark}
\par Recall that $P(x',\xi ')-z :{\cal B}(x',\xi ')\to L^2$ has a
uniformly bounded inverse $E(x',\xi ')$ and that we have the estimates
(\ref{aed.9}), (\ref{aed.11}). Differentiate the equation $(P(x',\xi
')-z )e_{x',\xi '}=0$ and notice that $\partial
_{x'}^\alpha \partial _{\xi '}^\beta e_{x',\xi '}(0)=\partial
_{x'}^\alpha \partial _{\xi '}^\beta e_{x',\xi '}(b)=0$ when $|\alpha
|+|\beta |\ne 0$, so that  $\partial
_{x'}^\alpha \partial _{\xi '}^\beta e_{x',\xi '}\in {\cal B}$. We
get
\ekv{pdn.10}
{
\partial _{x'}^\alpha \partial _{\xi '}^\beta e_{x',\xi '}
=\sum_{{\alpha '+\alpha ''=\alpha \atop
\beta '+\beta ''=\beta} \atop
|\alpha ''|+|\beta ''|<|\alpha |+|\beta |}c_{\alpha ',\alpha '',\beta
',\beta ''}E(\partial _{x'}^{\alpha '}\partial _{\xi '}^{\beta '}P)
(\partial _{x''}^{\alpha ''}\partial _{\xi ''}^{\beta ''}e_{x',\xi '}).
}
By induction, we see that 
\ekv{pdn.11}
{
\Vert \partial _{x'}^\alpha \partial _{\xi '}^\beta e_{x',\xi
  '}\Vert_{{\cal B}}={\cal
  O}(1)h^{\frac{1}{2}}(h^{\frac{2}{3}}+|r-\lambda
|)^{\frac{3}{4}-\beta _1}.
}

\par As a first approximation to the Poisson operator on $\Gamma
_{\delta ,b}$, we take
\ekv{pdn.12}{K^0w=\mathrm{Op}_h(e_{x',\xi '})}
where $\mathrm{Op}_h$ denotes the classical $h$-quantization in ${\bf
  R}^{n-1}$ also in the case of vector and operator valued symbols, so
that our $K^0$ is microlocally defined in $T^*(\partial {\cal O})$
and maps functions of $x'$ to functions of $x$. (Here it is tacitly
assumed that we have reduced $R$ to $hD_{x_1}$ as in (\ref{aed.11}).) Then
\ekv{pdn.13}
{
\gamma K^0=1
}
and 
\ekv{pdn.14}
{
(P-z )K^0=\mathrm{Op}_h(f_{x',\xi '}),
}
where
\ekv{pdn.15}
{
f_{x',\xi '}\sim\sum_{\alpha \ne 0}\frac{h^{|\alpha |}}{\alpha !}\partial
_{\xi '}^\alpha P(x',\xi ')D_{x'}^\alpha e_{x',\xi '}
}
and we have used that $(P(x',\xi ')-z )e_{x',\xi '}=0$. From
(\ref{aed.9}), (\ref{pdn.11}), we see that 
$$
\Vert \partial _{x'}^\alpha \partial _{\xi '}^\beta f_{x',\xi '}\Vert
_{L^2}={\cal O}(1)h^{\frac{3}{2}}(h^{\frac{2}{3}}+|r-\lambda
|)^{-\frac{1}{4}-\beta _1}.
$$
We get the microlocal Poisson operator to all orders in $h$
by putting 
$$
\widetilde{K}=K^0-E\circ (P-z )K^0.
$$
Here 
$$
E(P-z )K^0w=\mathrm{Op}_h(\widetilde{r}),
$$
where 
\ekv{pdn.16}
{
\Vert \partial _{x'}^\alpha \partial _{\xi '}^\beta \widetilde{r}\Vert_{{\cal
    B}_{x',\xi '}} ={\cal
  O}(1)h^{\frac{3}{2}}(h^{\frac{2}{3}}+|r-\lambda
|)^{-\frac{1}{4}-\beta _1}.
}
This bound is ``better'' than (\ref{pdn.11}) by a factor
$$
h(h^{\frac{2}{3}}+|r-\lambda |)^{-1}\le h^{\frac{1}{3}},
$$
thus we get 
\ekv{pdn.17}
{
\widetilde{K}w=\mathrm{Op}_h(e_{x',\xi '}+\widetilde{r}_{x',\xi '}),
}
solving 
\ekv{pdn.18}
{
\gamma \widetilde{K}=1,\ (P-z )\widetilde{K}={\cal O}(h^\infty
):\, L^2\to {\cal B}.
}
As in Proposition \ref{aed2} 
it is now routine to show that the exact exterior Poisson operator is
microlocally given by (\ref{pdn.17}) near any fixed point of the
glancing hypersurface ${\cal G}$. 

Away from ${\cal G}$ the construction of a
Poisson operator on $\Gamma _{\delta ,b}$ and on $\Gamma _\delta $ is
more routine and we omit the details. Using a truncation as in the
preceding section, we can carry over the construction from $\Gamma
_{\delta ,b}$ to $\Gamma _\delta $. The preceding section gives an
approximate Green operator for the exterior problem while the present
section does the same for the Poisson operator. By simple Neumann
series we can replace approximate solution operators by the exact ones
and get the following result that summarizes the constructions of this
and the preceding sections where we start to use the notation $\Gamma
_\delta ^\mathrm{ext}$ to emphasize that ${\cal O}$ is not part of
this contour.

\begin{prop}\label{psd2}
The exterior Dirichlet problem
\ekv{pdn.19}
{
(P-z )u=v,\ \gamma u=w,\hbox{ on }\Gamma _\delta^{\mathrm{ext}}, 
}
where $\gamma $ is the operator of restriction to the boundary, has a
unique solution $u\in H_h^2(\Gamma _\delta ^\mathrm{ext})$ for every
$(v,w)\in L^2(\Gamma _\delta ^\mathrm{ext})\times
H_h^{\frac{3}{2}}(\partial {\cal O})$, of the form
\ekv{pdn.20}
{
u=G_\mathrm{ext}v+K_\mathrm{ext}w.
}

\par If $\chi \in C^\infty (\Gamma _\delta ^\mathrm{ext})$ has its
support away from a fixed distance to $\partial {\cal O}$ and is equal
to one near infinity (and satisfies uniform estimates  with all its
derivatives when $h\to 0$), then
\ekv{pdn.25}
{
\chi G_\mathrm{ext},\ G_\mathrm{ext}\chi ={\cal O}(1): L^2\to H^2_h,
}
\ekv{pdn.26}
{
\chi K_\mathrm{ext}={\cal O}(h^\infty ):H_h^{\frac{3}{2}}(\partial
{\cal O})\to H_h^2.
}

\par If we choose local geodesic coordinates $x',x_n$ near a boundary
point, then near that point $G_\mathrm{ext}$ is a pseudodifferential
operator with operator valued symbol,
\ekv{pdn.21}{G_\mathrm{ext}=E(x',hD_{x'};h),}
where $E$ fulfills (\ref{aed.10}), (\ref{aed.11}) (and for the latter
estimate it is assumed that $P$ has been conjugated by a tangential Fourier
integral operator in order to straighten out $R-\lambda $). 

\par In the same coordinates 
\ekv{pdn.22}
{
\chi K_\mathrm{ext}=K(x',hD_{x'};h),
}
where \ekv{pdn.23}
{
\Vert \partial _{x'}^\alpha \partial _{\xi '}^\beta K(x',\xi
';h)\Vert_{{\cal B}_{x',\xi '}}={\cal
  O}(1)h^{\frac{1}{2}}(h^{\frac{2}{3}}+|r-\lambda |)^{\frac{3}{4}-\beta
  _1}
}
near ${\cal G}$ (after straightening of $R-\lambda $), while away from
${\cal G}$:
\ekv{pdn.24}
{
\Vert \partial _{x'}^\alpha \partial _{\xi '}^\beta K(x',\xi
';h)\Vert_{{\cal B}_{x',\xi '}}={\cal O}(1)h^{\frac{1}{2}}\langle \xi
'\rangle^{-\frac{3}{2}-|\beta |}.
}
\end{prop}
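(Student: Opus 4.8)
The plan is to assemble the exact operators $G_{\mathrm{ext}}$ and $K_{\mathrm{ext}}$ out of the approximate ones already constructed in Sections \ref{aed} and \ref{pdn}, and then to pass from ``approximate'' to ``exact'' by a Neumann series argument, which is licensed precisely because all the residual operators are $\mathcal{O}(h^\infty)$. First I would treat the Green operator. By Proposition \ref{aed2} we already have $F=\mathcal{O}(1):L^2(\Gamma_\delta^{\mathrm{ext}})\to{\cal B}(\Gamma_\delta^{\mathrm{ext}})$ with $(P-z)F=1+K_3$ and $F(P-z)=1+K_4$, where $K_3=\mathcal{O}(h^\infty):L^2\to L^2$ and $K_4=\mathcal{O}(h^\infty):{\cal B}\to{\cal B}$. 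Hence for $h$ small enough $1+K_3$ and $1+K_4$ are invertible by Neumann series, so $P-z:{\cal B}(\Gamma_\delta^{\mathrm{ext}})\to L^2(\Gamma_\delta^{\mathrm{ext}})$ is bijective with inverse $G_{\mathrm{ext}}:=F(1+K_3)^{-1}=(1+K_4)^{-1}F$. Since membership in ${\cal B}$ encodes the vanishing of $\gamma u$ at $\partial{\cal O}$ (cf. (\ref{aed.14}) and the definition of ${\cal B}(\Gamma_\delta)$ at the end of Section \ref{aed}), this gives at once existence and uniqueness for $(P-z)u=v$, $\gamma u=0$, i.e. the well-posedness in (\ref{pdn.19}) for $w=0$.

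Next I would read off the mapping and symbol properties of $G_{\mathrm{ext}}$ from those of $F$. Away from a fixed neighborhood of $\partial{\cal O}$, ${\cal B}(\Gamma_\delta^{\mathrm{ext}})$ reduces to $H^2_h$, and $G_{\mathrm{ext}}=F-FK_3+FK_3^2-\cdots=F+\mathcal{O}(h^\infty)$ as operators $L^2\to H^2_h$; this yields (\ref{pdn.25}). Near $\partial{\cal O}$, in geodesic coordinates, $F$ is given by the operator-valued pseudodifferential operator (\ref{aed.15})--(\ref{aed.17}) with symbol $E$ satisfying (\ref{aed.10}) away from ${\cal G}$ and (\ref{aed.11}) near ${\cal G}$ (after straightening $R-\lambda$ by a tangential Fourier integral operator), and $E_{0,0}=E_{j,0}=(P(x',\xi')-z)^{-1}$; the negligible Neumann correction $G_{\mathrm{ext}}-F$ does not affect the symbol class, so (\ref{pdn.21}) with the stated estimates follows.

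For the Poisson operator I would combine the microlocal construction near ${\cal G}$ of (\ref{pdn.12})--(\ref{pdn.18}) with a routine elliptic/hyperbolic Poisson parametrix away from ${\cal G}$, truncate as in (\ref{aed.20}) to go from $\Gamma_{\delta,b}$ to $\Gamma_\delta^{\mathrm{ext}}$, and obtain $\widetilde K$ with $\gamma\widetilde K=1$ and $(P-z)\widetilde K=r$ where $r=\mathcal{O}(h^\infty):H_h^{3/2}(\partial{\cal O})\to L^2$. Setting $K_{\mathrm{ext}}:=\widetilde K-G_{\mathrm{ext}}r$ gives $(P-z)K_{\mathrm{ext}}=r-(P-z)G_{\mathrm{ext}}r=0$, while $\gamma K_{\mathrm{ext}}=\gamma\widetilde K-\gamma G_{\mathrm{ext}}r=1$ because $G_{\mathrm{ext}}$ lands in functions with vanishing boundary trace; thus $K_{\mathrm{ext}}$ is the exterior Poisson operator and $K_{\mathrm{ext}}=\widetilde K+\mathcal{O}(h^\infty)$. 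The representations (\ref{pdn.22})--(\ref{pdn.24}) then come from those of $\widetilde K$: near ${\cal G}$, $\widetilde K=\mathrm{Op}_h(e_{x',\xi'}+\widetilde r_{x',\xi'})$ as in (\ref{pdn.17}), whose symbol obeys (\ref{pdn.11}) and (\ref{pdn.16}), giving (\ref{pdn.23}); away from ${\cal G}$ the symbol is the usual elliptic Poisson symbol, giving (\ref{pdn.24}). Finally (\ref{pdn.26}) holds because $e_{x',\xi'}$ and $\widetilde r_{x',\xi'}$ decay exponentially in $x_n$ (cf. Remark \ref{pdn1} and the bounds of Section \ref{pdn}), so composing with a cutoff $\chi$ supported at a fixed positive distance from $\partial{\cal O}$ leaves only $\mathcal{O}(h^\infty)$, and the far-field gluing contributes only negligible smoothing.

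The main obstacle is bookkeeping in the glancing region: one must verify that composing the near-${\cal G}$ microlocal parametrices --- living in the exotic symbol classes with loss $(h^{2/3}+|r-\lambda|)^{-\beta_1}$ and requiring conjugation by a tangential Fourier integral operator to reduce $R$ to $hD_{x_1}$ --- with the elliptic far-field parametrix and with the $\mathcal{O}(h^\infty)$ Neumann corrections still yields operators in exactly the symbol classes (\ref{aed.10})--(\ref{aed.11}) and (\ref{pdn.23})--(\ref{pdn.24}), and that all residual terms are genuinely $\mathcal{O}(h^\infty)$ in the relevant operator norms ($L^2\to H^2_h$ and ${\cal B}\to{\cal B}$) without a hidden loss of powers of $h$. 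Since Sections \ref{aed} and \ref{pdn} already isolate the needed estimates, what remains is the patient check that they fit together.
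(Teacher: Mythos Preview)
Your proposal is correct and follows essentially the same approach as the paper: the paragraph immediately preceding the proposition says that the approximate Green operator from Section~\ref{aed} and the approximate Poisson operator from Section~\ref{pdn} are turned into exact ones ``by simple Neumann series'', and that away from ${\cal G}$ the construction is ``more routine''. Your write-up is a fleshed-out version of exactly this, including the correction $K_{\mathrm{ext}}=\widetilde K-G_{\mathrm{ext}}r$, and your identification of the glancing-region bookkeeping as the only genuine work matches the paper's own emphasis.
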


\par By construction, $G_\mathrm{ext}={\cal O}(1):L^2\to {\cal B}$
near $\partial {\cal O}$ and (cf.~(\ref{pdn.3.5})) we get the first
part of
\begin{cor}\label{pdn3}
We have
\ekv{pdn.27}
{
G_\mathrm{ext}={\cal O}(h^{-\frac{2}{3}}):L^2\to H_h^2,
}
\ekv{pdn.28}
{
K_\mathrm{ext}={\cal O}(h^{-\frac{1}{6}}):H_h^{\frac{3}{2}}(\partial
{\cal O})\to H_h^2.
}
\end{cor}
For the second part, we combine (\ref{pdn.3.5}) and (\ref{pdn.24}).

\par
Finally, we consider the exterior Dirichlet to Neumann (DN) map
\ekv{pdn.29} { {\cal N}_\mathrm{ext}=hD_{\nu }K_\mathrm{ext}, } where
$\nu $ denotes the exterior unit normal. From
(\ref{pdn.22}), (\ref{pdn.23}), (\ref{pdn.7}), we see that this is a
pseudodifferential operator with symbol \[\gamma hD_{x_n}(K(x',\xi
';h))=:n_\mathrm{ext}(x',\xi ';h)\] satisfying
\ekv{pdn.30}
{
\partial _{x'}^\alpha \partial _{\xi '}^\beta n_\mathrm{ext}(x',\xi
';h)={\cal O}(\langle \xi '\rangle ^{1-|\beta |})
}
away from ${\cal G}$ and 
\ekv{pdn.31}
{
\partial _{x'}^\alpha \partial _{\xi '}^\beta n_\mathrm{ext}(x',\xi
';h)=
{\cal O}(1)(h^{\frac{2}{3}}+|r-\lambda |)^{\frac{1}{2}-\beta _1},
}
near ${\cal G}$ after the usual straightening. In particular, we have
\begin{cor}\label{pdn4}
For every $s\in {\bf R}$ we have that ${\cal N}_\mathrm{ext}={\cal
  O}(1):H_h^{s+1}\to H_h^s.
$
\end{cor}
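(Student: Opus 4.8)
The plan is to prove the estimate by splitting ${\cal N}_\mathrm{ext}$ with the microlocal partition of unity already introduced in Section~\ref{aed}. Write $1 = \chi_0(x',hD_{x'}) + \sum_{j=1}^{N}\chi_j(x',hD_{x'})$ on $\partial{\cal O}$, where $\chi_0 \in S^0$ is supported away from the glancing hypersurface ${\cal G}$ and each $\chi_j \in C_0^\infty(T^*\partial{\cal O})$ is supported in a small neighbourhood of a glancing point $\rho_j \in {\cal G}$. For the term $\chi_0(x',hD_{x'}){\cal N}_\mathrm{ext}$ I would just use the symbol bound (\ref{pdn.30}), which says that on $\mathrm{supp}\,\chi_0$ the symbol $n_\mathrm{ext}$ lies in the standard semiclassical symbol class of order $1$; the semiclassical pseudodifferential calculus recalled in Section~\ref{al} (and in the appendix of \cite{Sj08a}) then yields $\chi_0(x',hD_{x'}){\cal N}_\mathrm{ext} = {\cal O}(1): H_h^{s+1}(\partial{\cal O}) \to H_h^s(\partial{\cal O})$ for every $s \in {\bf R}$. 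This part is routine.

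For the pieces near ${\cal G}$ I would first conjugate $P$ by the elliptic tangential Fourier integral operators $U_j$ of order $0$ that straighten $R(x',hD_{x'}) - \lambda$ into $hD_{x_1}$ microlocally near $\rho_j$; since $U_j$ and its microlocal inverse are ${\cal O}(1)$ on every $H_h^t(\partial{\cal O})$, it suffices to bound the conjugated operator, and composing with $\chi_j(x',hD_{x'})$ gives an operator whose symbol is compactly supported in $\xi'$ and satisfies (\ref{pdn.31}) with $r-\lambda$ replaced by $\xi_1$. On the relevant bounded-frequency region $\langle hD'\rangle^t$ differs from the identity by ${\cal O}(h^2)$ for each fixed $t$, so $H_h^{s+1}$ and $H_h^s$ are both uniformly equivalent to $L^2(\partial{\cal O})$ there; the claim therefore reduces to the uniform $L^2 \to L^2$ boundedness of the operator quantizing this symbol.

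The core of the argument, and the step I expect to be the main obstacle, is precisely this $L^2$-bound: the symbol is bounded --- by (\ref{pdn.31}) with $\alpha = \beta = 0$ one has $|n_\mathrm{ext}| = {\cal O}(1)(h^{2/3}+|\xi_1|)^{1/2} = {\cal O}(1)$ on the support --- but every $\partial_{\xi_1}$ only gains a factor $(h^{2/3}+|\xi_1|)^{-1}$, so the symbol does not lie in $S^0$ uniformly in $h$ and Calder\'on--Vaillancourt does not apply as it stands. I would deal with this exactly as in the glancing analysis of \cite{SjZw4, SjZw5}: either invoke the pseudodifferential calculus adapted to the glancing hypersurface --- equivalently H\"ormander's metric-calculus for the slowly varying, temperate metric $dx'^2 + (h^{2/3}+|\xi_1|)^{-2}d\xi_1^2 + d\xi''^2$ --- for which (\ref{pdn.31}) are the defining symbol estimates and bounded symbols give $L^2$-bounded operators; or, more explicitly, decompose the support into dyadic zones $|\xi_1| \asymp 2^\ell h^{2/3}$ with $2^\ell h^{2/3} \lesssim 1$ together with the innermost zone $|\xi_1| \lesssim h^{2/3}$, and in each outer zone perform the anisotropic rescaling $\xi_1 = 2^\ell h^{2/3}\tau$, $x_1 = 2^{-\ell}h^{1/3}\widetilde x_1$, which turns the block into an ordinary pseudodifferential operator with symbol compactly supported in $\tau$ and of size ${\cal O}((2^\ell h^{2/3})^{1/2})$ in all $C^k$-norms, hence of $L^2$-norm ${\cal O}((2^\ell h^{2/3})^{1/2})$ by the classical Calder\'on--Vaillancourt theorem; summing the series $\sum_\ell (2^\ell h^{2/3})^{1/2}$, which is dominated by its outermost term $|\xi_1| \asymp 1$ and so is ${\cal O}(1)$, plus the ${\cal O}(h^{1/3})$ contribution of the innermost zone, gives the bound ${\cal O}(1)$. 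Assembling the $\chi_0$ and near-${\cal G}$ contributions then yields ${\cal N}_\mathrm{ext} = {\cal O}(1): H_h^{s+1} \to H_h^s$ for all $s \in {\bf R}$.
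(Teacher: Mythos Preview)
Your argument is correct and is essentially what the paper has in mind: the corollary is stated in the paper immediately after the symbol bounds (\ref{pdn.30}), (\ref{pdn.31}) with no proof beyond the words ``In particular, we have,'' so it is treated as an immediate consequence of those estimates together with the second-microlocal pseudodifferential calculus already set up in Section~\ref{aed}. You have simply spelled out the details of that implicit argument, and your dyadic rescaling near ${\cal G}$ is a perfectly valid way to make the $L^2$-boundedness explicit (the metric-calculus alternative you mention is closer to what the paper uses implicitly throughout). One small inaccuracy: the sentence ``$\langle hD'\rangle^t$ differs from the identity by ${\cal O}(h^2)$'' is not literally true; what you mean, and what you need, is only that $\langle\xi'\rangle^t$ is uniformly bounded above and below on the compact $\xi'$-support of the conjugated symbol, so that the $H_h^{s+1}\to H_h^s$ bound there is equivalent to an $L^2\to L^2$ bound.
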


\section{The interior DN map}\label{idn}
\setcounter{equation}{0}

In this section we work inside ${\cal O}$ and assume that \ekv{idn.1}
{ P=-h^2\Delta +V(x), } where we will first assume only that 
$V\in L^\infty ({\cal O};{\bf R})$ and soon make stronger
assumptions. The results will be applied to $V_0$ in (\ref{re.1}),
but for simplicity we drop the subscript 0 in this section.
 
We study the interior Poisson operator $K_\mathrm{in}(z
)=H^{3/2}(\partial {\cal O})\to H^2({\cal O})$ associated to $P-z
$ and the interior DN-map \ekv{idn.2} { {\cal N}_\mathrm{in}=\gamma
  hD_{\nu } K_\mathrm{in}:H^{3/2}(\partial {\cal O})\to H^{1/2}(\partial
    {\cal O}) }
  under the assumption that,
\ekv{idn.3}
{
 \Re z =\lambda \asymp 1,\quad\frac{h^{2/3}}{{\cal O}(1)}\le |\Im z |\le {\cal O}(1)h^{2/3}.
}
Using the right inverse of $\gamma $ in (\ref{al.11}), we can write 
$$K_\mathrm{in}=\gamma ^{-1}-(P_\mathrm{in}-z )^{-1}\gamma ^{-1}$$ and
see that 
\ekv{idn.3.2}{
\Vert K_\mathrm{in}(z )\Vert_{{\cal L}(H^{3/2},H^2)}={\cal
  O}(1)(h^{\frac{1}{2}}+h^{-\frac{2}{3}+\frac{1}{2}})={\cal O}(1)h^{-\frac{1}{6}}}
where $P_\mathrm{in}$ is the Dirichlet realization of
$P$. Consequently,
\ekv{idn.3.3}{
\Vert {\cal N}_\mathrm{in}(z )\Vert_{{\cal L}(H^{\frac{3}{2}},H^{\frac{1}{2}})}
\le {\cal O}(h^{-\frac{1}{2}})\Vert K_\mathrm{in}(z )\Vert_{{\cal
    L}(H^{\frac{3}{2}},H^2)}={\cal O}(h^{-\frac{2}{3}}).}
We now assume that 
\ekv{idn.3.7}
{
V\in C^\infty (\overline{{\cal O}};{\bf R}),\ \gamma V=0,\ \gamma \partial _\nu
V\le 0,
}
where the last two assumptions can be somewhat weakened.
Using parametrix constructions, we shall improve the estimate (\ref{idn.3.3})
to
\begin{prop}\label{idn1}
Under the assumption (\ref{idn.3}), we have
\ekv{idn.4}
{
\Vert {\cal N}_\mathrm{in}(z )\Vert_{{\cal
    L}(H^{\frac{3}{2}},H^{\frac{1}{2}})} ={\cal O}(1).
}
\end{prop}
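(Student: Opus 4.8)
The plan is to construct a microlocal parametrix for the interior Poisson operator $K_\mathrm{in}(z)$ near $\partial{\cal O}$, in the spirit of Sections \ref{aed}--\ref{pdn}, and then to read the bound on ${\cal N}_\mathrm{in}=\gamma hD_\nu K_\mathrm{in}$ off its symbol. First I would work in geodesic coordinates $(x',x_n)$ near a boundary point, now with $x_n\ge 0$ pointing \emph{into} ${\cal O}$, so that $P-z$ takes the model form (cf. \no{dcs.16}--\no{dcs.18}, \no{aed.1})
\ekv{idn.pr.1}{P-z=(hD_{x_n})^2+R(x',hD_{x'})-x_nQ(x,hD_{x'})+V(x)+ha(x)hD_{x_n}-z,}
with $R$, $Q$ elliptic tangentially, $q(x,\xi')\asymp|\xi'|^2$ by strict convexity, and $V(x)=-x_n\,\gamma\partial_\nu V+{\cal O}(x_n^2)$ since $\gamma V=0$. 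One treats $P-z$ as an $h$-pseudodifferential operator on $\partial{\cal O}$ with operator-valued symbol acting in $x_n$ — a second order ODE on a short interval with a Dirichlet condition at $x_n=0$ — and splits $T^*\partial{\cal O}$ into the hyperbolic region $\lambda-r(x',\xi')\ge 1/{\cal O}(1)$, the elliptic region $r(x',\xi')-\lambda\ge 1/{\cal O}(1)$, and a fixed small neighbourhood of the glancing set ${\cal G}=\{r(x',\xi')=\lambda\}$.

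In the hyperbolic region the turning point of the $x_n$-symbol lies at distance $\asymp 1$ from $x_n=0$, so the null solution normalised by the value $1$ at $x_n=0$ is of WKB type on the interval and $\gamma hD_{x_n}$ of it is ${\cal O}(1)$; in the elliptic region the $x_n$-symbol is semiclassically elliptic near $x_n=0$, the relevant null solution decays like $e^{-c(r-\lambda)^{1/2}x_n/h}$, and one gets an ${\cal O}(\langle\xi'\rangle)$ contribution to the DN symbol. The delicate region is the fixed neighbourhood of ${\cal G}$. There one first conjugates by a tangential elliptic $h$-Fourier integral operator to straighten $R-\lambda$ into $\xi_1$ as in \no{aed.8}, then rescales $x_n\sim h^{2/3}$ around the turning point and runs the simple turning-point analysis of Section \ref{prep}; since $x_n$ is real here, no complex contour is needed and this is a genuine simplification of Section \ref{cw}. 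Using $q>0$ and $\gamma\partial_\nu V\le 0$ to keep the linear-in-$x_n$ part of the $x_n$-potential non-degenerate near ${\cal G}$, one obtains estimates for the interior Poisson symbol and its $(x',\xi')$-derivatives in the norm of a space ${\cal B}(x',\xi')$ modelled on \no{aed.14} (exactly as in \no{pdn.8}--\no{pdn.11}), and then, via the trace inequalities \no{pdn.4}--\no{pdn.7} applied to $\gamma hD_{x_n}$, bounds for the interior DN symbol $n_\mathrm{in}$:
\ekv{idn.pr.2}{\partial_{x'}^\alpha\partial_{\xi'}^\beta n_\mathrm{in}(x',\xi';h)={\cal O}(1)(h^{2/3}+|r-\lambda|)^{1/2-\beta_1}}
near ${\cal G}$ (after the straightening), and $\partial_{x'}^\alpha\partial_{\xi'}^\beta n_\mathrm{in}={\cal O}(\langle\xi'\rangle^{1-|\beta|})$ away from it.

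Finally I would globalise and pass to the exact operator. A partition of unity on $T^*\partial{\cal O}$ subordinate to the three regions, together with cutoffs near $\partial{\cal O}$ as in \no{aed.20}, produces an approximate interior Poisson operator $\widetilde K$ with $\gamma\widetilde K=1$ and $(P-z)\widetilde K={\cal O}(h^\infty):H_h^{3/2}(\partial{\cal O})\to L^2({\cal O})$, patched with the genuine interior Dirichlet resolvent away from the boundary just as in Propositions \ref{aed2}, \ref{psd2}. The crude bound $\Vert(P_\mathrm{in}-z)^{-1}\Vert_{L^2\to H_h^2}={\cal O}(h^{-3/2})$, valid because $P_\mathrm{in}$ is self-adjoint and $|\Im z|\ge h^{3/2}/{\cal O}(1)$ by \no{idn.3}, then shows that the correction needed to pass from $\widetilde K$ to the exact $K_\mathrm{in}$ is ${\cal O}(h^\infty)$; hence $K_\mathrm{in}$ is, microlocally near the boundary, given by the symbol analysed above up to ${\cal O}(h^\infty)$. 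By \no{idn.pr.2} and the off-${\cal G}$ estimate, $n_\mathrm{in}={\cal O}(\langle\xi'\rangle)$, so the tangential $h$-calculus yields ${\cal N}_\mathrm{in}={\cal O}(1):H_h^{s+1}(\partial{\cal O})\to H_h^s(\partial{\cal O})$ for every $s$, in particular \no{idn.4}. The improvement over \no{idn.3.3} comes precisely from the fact that $(P_\mathrm{in}-z)^{-1}$, of norm possibly as large as $h^{-3/2}$, is only ever applied to ${\cal O}(h^\infty)$ errors.

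The main obstacle is the glancing region: redoing the turning-point analysis for the interior problem with $V$ contributing to the linear term of the $x_n$-potential, verifying that $\gamma V=0$, $\gamma\partial_\nu V\le 0$ and strict convexity keep it non-degenerate uniformly for $z$ with $\Im z$ as small as $\asymp h^{3/2}$, and checking that the symbol estimates survive the elliptic-FIO straightening of $R-\lambda$.
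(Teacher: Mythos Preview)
Your plan has a real gap in the hyperbolic region, and it spills over to the hyperbolic side of the glancing neighbourhood. The operator-valued ODE picture of Sections \ref{aed}--\ref{pdn} works for the exterior because the rotated contour $\gamma_\delta$ makes $\Re\phi$ strictly increasing, so the subdominant null solution decays and the $x_n$-Dirichlet problem has a uniformly bounded inverse \no{aed.4}. On a real interval for the interior that mechanism is gone: in ${\cal H}$ the $x_n$-Dirichlet problem on a fixed interval has real eigenvalues, and $\|E(x',\xi')\|$ is only controlled by $1/|\Im z|$, so ``the null solution normalised by the value $1$ at $x_n=0$'' is not naturally singled out and its $(x',\xi')$-derivatives, computed through \no{pdn.10}, pick up large negative powers of $h$. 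Your sentence ``since $x_n$ is real here, no complex contour is needed and this is a genuine simplification of Section \ref{cw}'' is the heart of the misconception: the missing contour is precisely what makes the interior harder, because the only remaining damping comes from $\Im z$.

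The paper therefore does \emph{not} use the operator-valued ODE in ${\cal H}$. It builds a genuine Fourier integral Poisson operator \no{idn.5} with a complex phase solving the full eikonal equation; the key fact \no{idn.7}, $\Im\phi\asymp h^{2/3}|x_n|$, makes the kernel ${\cal O}(h^\infty)$ once $|x_n|\ge h^{1/3-\delta}$, and that is what legitimises the truncation-plus-resolvent-correction step you describe. This FIO construction is then pushed towards ${\cal G}$ by a second-microlocal rescaling $x_n=\epsilon\,\widetilde x_n$, $\widetilde h=h\epsilon^{-3/2}$ for $h^{2/3}\ll\epsilon=\lambda-r\ll 1$ (\no{idn.15}--\no{idn.20}). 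Only in the thin strip $-h^{2/3-\delta}\le r-\lambda\le\widetilde\delta$ does the paper revert to an ODE analysis, and even there the weaker decay \no{idn.26} compared with \no{idn.24.5} forces $h^{-{\cal O}(\delta)}$ losses in Lemma \ref{idn2} and in \no{idn.37}; a separate closer look is then needed to show that the DN symbol itself remains bounded, so your claimed clean estimate $\partial_{x'}^\alpha\partial_{\xi'}^\beta n_\mathrm{in}={\cal O}(1)(h^{2/3}+|r-\lambda|)^{1/2-\beta_1}$ is too optimistic. (A side remark: \no{idn.3} as printed has $h^{3/2}$, but the proof---already \no{idn.3.2} and \no{idn.7}---uses $|\Im z|\asymp h^{2/3}$; your ${\cal O}(h^{-3/2})$ resolvent bound reflects the typo rather than the effective hypothesis.)
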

\begin{proof}
We make parametrix constructions in different regions of $T^*\partial
{\cal O}$ and start with the hyperbolic region 
$$
{\cal H}=\{ (x',\xi ')\in T^*\partial {\cal O};\, r(x',\xi ')<\lambda \},
$$
where we write the operator in geodesic coordinates (with ${\cal O}$
given by $x_n\le 0$) as in (\ref{aed.1}). Near a point $(x_0',\xi
_0')\in {\cal H}$ we construct a microlocal approximation to the
Poisson operator of the form
\ekv{idn.5}
{
\widetilde{K}_\mathrm{in}(z )w(x)=\frac{1}{(2\pi h)^{n-1}}\iint 
e^{\frac{i}{h}(\phi (x,\eta ')-y'\eta ')}a(x,\eta ';h)w(y')dy'd\eta '.
}

\par
We write $P$ as in (\ref{aed.1}):
\ekv{idn.5.5}
{\begin{split}
P=(hD_{x_n})^2+R(x,hD_{x'})+ha(x)hD_{x_n},\\
R(x,hD_{x'})=R(x',hD_{x'})-x_nQ(x,hD_{x'}),\end{split}
}
where we recall that $V$ is incorporated in $P$ and hence in the term
$-x_nQ$ and the condition (\ref{idn.3.7}) together with the strict convexity
of ${\cal O}$ assures that $q>0$ for $\xi '\ne 0$. Recall that $a$ can be eliminated
and assume for simplicity that $a=0$. As before $p$ denotes the
semi-classical principal symbol of $P$

Now consider the eiconal equation 
\ekv{idn.6}
{
p(x,\phi ')-z =0\hbox{ for }x\in \mathrm{neigh\,}(x_0',0)\cap
{\cal O},\quad \phi (x',0,\eta ')=x'\eta '.
}
With $r(x,\xi ')=r(x',\xi ')-x_nq(x,\xi ')$ it becomes
$$
\partial _{x_n}\phi =\pm (\lambda +h^{\frac{2}{3}}w-r(x,\phi
'_{x'}))^{\frac{1}{2}},\ \mp \Im w>0.
$$
Using the principal branch of the square root we choose the sign as
indicated. If $\phi _0$ is the real solution of the corresponding
eiconal problem when $w=0$, we can solve (\ref{idn.6}) to all orders
in $h$ by the asymptotic expansion,
$$
\phi (x,\eta ')=\phi _0(x,\eta ')+h^{\frac{2}{3}}\phi _1(x,\eta
')+h^{\frac{4}{3}}\phi _2(x,\eta ';h),
$$
where $\phi _1,\phi _2,...={\cal O}(x_n)$,
$$
\partial _{x_n}\phi _1=\pm \frac{1}{2}(\lambda -r(x,\partial _{x'}\phi _0))^{-\frac{1}{2}}w,
$$
so that
\ekv{idn.7}
{
\Im \phi \asymp h^{\frac{2}{3}}\Im \phi _1\asymp |x_n\Im w|h^{\frac{2}{3}} 
}

\par By solving the transport equations in the usual way, we get the
amplitude $a$ as a symbol of order 0 and if $\chi \in C_0^\infty
({\cal H}) $ has its support in a small neighborhood of $(x_0',\xi
_0')$ we get a Fourier integral operator
$\widetilde{K}_\mathrm{in}(z ):C^\infty (\partial {\cal O})\to
C^\infty (\overline{{\cal O}})$ solving
\ekv{idn.8}
{
(P-z )\widetilde{K}_\mathrm{in}(z )={\cal O}(h^\infty ):{\cal
  D}'(\partial {\cal O})\to C^\infty (\overline{{\cal O}}),
} 
\ekv{idn.9}
{
\gamma \widetilde{K}_\mathrm{in}(z )=\chi (x',hD_{x'}).
}
Here (\ref{idn.7}) is important, since it assures that the
distribution kernel $\widetilde{K}_{\mathrm{in}}(x,y',z )$ of
$\widetilde{K}_\mathrm{in}(z )$ is ${\cal O}(h^\infty )$ with all
its derivatives when $\mathrm{dist\,}(x,\partial {\cal O})\ge
h^{\frac{1}{3}-\delta }$ for any fixed $\delta >0$. (Another standard
fact, implicitly used here, is that the distribution kernel is ${\cal
  O}(h^\infty )$ with all its derivatives as soon as $(x',y')$ is
outside any fixed neighborhood of the diagonal.)

\par From (\ref{idn.7}) we get additional damping, leading to
\ekv{idn.9.5}
{
\widetilde{K}={\cal O}(h^{\frac{1}{6}}):H_h^{\frac{3}{2}}\to H_h^2.
}

\par It also follows that
\ekv{idn.10}
{
\gamma hD_\nu \widetilde{K}_\mathrm{in}(z )=\widetilde{\chi }(x',hD_{x'};h)
}
where $\widetilde{\chi }(x',\xi ';h)$ is a classical symbol of order
$0$ in $h$ and of order $-\infty $ in $\xi '$ which is ${\cal
  O}(h^\infty )$ with all its derivatives outside any fixed
neighborhood of the support of $\chi $.

A similar even more standard construction works in the elliptic region
$$
{\cal E}=\{ (x',\xi ')\in T^*\partial {\cal O};\, r(x',\xi ')>\lambda \}.
$$
We get an operator $\widehat{K}={\cal
  O}(h^{\frac{1}{2}}):H^{\frac{3}{2}}_h\to H^2_h$ such that
\ekv{idn.10.2}{(P-z )\widehat{K}={\cal O}(h^\infty ),}
\ekv{idn.10.4}{\gamma \widehat{K}=1-\chi (x',hD_{x'}),}
\ekv{idn.10.6}{\gamma hD_\nu \widehat{K}=n_\chi (x',hD_{x'};h),}
where $\chi \in C_0^\infty (T^*\partial {\cal O})$ is any function
equal to one in a neighborhood of ${\cal G}\cup {\cal H}$.
$\widetilde{\chi }$ has the same properties as $\chi $ and $n_\chi \in
S^1(T^*\partial \Omega )$ is equal to ${\cal O}(h^\infty )$ with all
its derivatives away from $\mathrm{supp\,}(1-\chi ) $.

\par We next turn to the more difficult study near the glancing
hypersurface
$$
{\cal G}=\{ (x',\xi ')\in T^*\partial {\cal O};\, r(x',\xi ')=\lambda \},
$$
and we shall start by pushing the construction in ${\cal H}$ closer to
${\cal G}$ and almost up to a distance $\gg h^{\frac{2}{3}}$ from
that set. We write the operator in geodesic coordinates as in
(\ref{aed.1}). Let $\rho _0=(x_0',\xi _0')\in {\cal G}$ and assume,
after conjugation with an elliptic tangential Fourier integral
operator that microlocally,
\ekv{idn.11}{
R(x',hD_{x'})-\lambda =hD_{x_1},\ (x_0',\xi _0')=(0,0).
}

\par Let $\eta '\in{\bf R}^{n-1}$ satisfy 
$$
(\eta _2,...,\eta _{n-1})=\frac{1}{{\cal O}(1)},\ \eta _1=-\epsilon ,\
h^{\frac{2}{3}}\ll \epsilon \ll 1.
$$
We shall construct an asymptotic solution to the problem
\ekv{idn.12}{(P-z )u=0,\ u(x',0)=a(x')e^{\frac{i}{h}x'\eta '},}
or equivalently with $u=e^{ix'\eta '/h}\widetilde{u}$,
\ekv{idn.13}
{
e^{-\frac{i}{h}x'\eta '}(P-z )e^{\frac{i}{h}x'\eta
  '}\widetilde{u}=0,\ \widetilde{u}(x',0)=a(x').
}
The conjugated operator to the left can be written
\ekv{idn.14}
{
(hD_{x_n})^2+hD_{x_1}-x_nQ(x,\eta 
'+hD_{x'})-(\epsilon +h^{\frac{2}{3}}w).
}

\par From looking at the eiconal equation $p(x,\phi ')-z =0$ with
boundary condition $\phi '_{x'}(x',0)=\eta '$, it is natural to make
the dilation in $x_n$, \ekv{idn.15} { x_n=\epsilon \widetilde{x}_n,\
  x'=\widetilde{x}'. } Then $hD_{x_n}=\frac{h}{\epsilon
}D_{\widetilde{x}_n}$, $hD_{x'}=hD_{\widetilde{x}'}$ and a direct
calculation shows that \ekv{idn.16} { e^{-\frac{i}{h}x'\eta '}(P-z
  )e^{\frac{i}{h}x'\eta '}=\epsilon
  (\widetilde{P}-(1+\widetilde{h}^{\frac{2}{3}}w)), } where
$\widetilde{h}=h\epsilon ^{-\frac{3}{2}}\ll 1$ and \ekv{idn.17} {
  \widetilde{P}=(\widetilde{h}D_{\widetilde{x}_n})^2+\epsilon
  ^{\frac{1}{2}}\widetilde{h}D_{\widetilde{x}_1}-\widetilde{x}_nQ(\widetilde{x}',\epsilon
  \widetilde{x}_n,\eta '+\epsilon^{\frac{3}{2}}
  \widetilde{h}D_{\widetilde{x}'}). } Thus after dilation, we are in a
``uniformly hyperbolic'' situation and we get a solution
$$
\widetilde{u}=b(\widetilde{x};\widetilde{h})e^{\frac{i}{\widetilde{h}}\widetilde{\phi
  }(\widetilde{x})},\quad \widetilde{x}=(x',\frac{x_n}{\epsilon }),
$$
of the problem
\ekv{idn.18}
{
(\widetilde{P}-(1+\widetilde{h}^{\frac{2}{3}}w))\widetilde{u}={\cal
  O}(\widetilde{h}^\infty ),\
\widetilde{u}(\widetilde{x}',0)=a(\widetilde{x}'),
}
defined in a region 
$$
|\widetilde{x}'|\le {\cal O}(1),\ 0\le
-\widetilde{x}_n<\frac{1}{{\cal O}(1)},
$$
where $b$ is a classical symbol of order $0$ and $\widetilde{\phi
}(\widetilde{x})$ is uniformly bounded with all its derivatives in the
same region. $\widetilde{\phi }$ is here the solution of the eiconal
equation,
\ekv{idn.18.5}
{
\widetilde{p}(\widetilde{x},\widetilde{\phi
}'_{\widetilde{x}})-(1+\widetilde{h}^{\frac{2}{3}}w)=0,\
{{\widetilde{\phi }}}_{\vert\widetilde{x}_n=0}=0,
}
which satisfies
\ekv{idn.19}
{
\Im \widetilde{\phi }\asymp |\widetilde{x}_n|\widetilde{h}^{\frac{2}{3}}.
}
Thus, 
$$
|\widetilde{u}|={\cal O}(1)e^{-|\widetilde{x}_n|/(C\widetilde{h}^{\frac{1}{3}})},
$$
which is ${\cal O}(\widetilde{h}^\infty )$ in any region
$-\widetilde{x}_n\ge \widetilde{h}^{\frac{1}{3}-\delta }$ for any fixed
$\delta >0$.

\par In the original coordinates, we get the asymptotic solution of
(\ref{idn.12})
\ekv{idn.20}
{
u(x;\eta ';h)=b(\frac{x_n}{\epsilon },x',\eta
';\widetilde{h})e^{\frac{i}{h}(x'\eta '+\epsilon
  ^{\frac{3}{2}}\widetilde{\phi }(\frac{x_n}{\epsilon
  },x',\eta '))}.
}
These solutions can be superposed to build a microlocal Poisson
operator, if we take $a=1$, and we get 
$\check{K}={\cal O}(\widetilde{h}^{1/6}):H_h^{3/2}\to H_h^2$, where
we use the modified norm
$$\sum_{|\alpha |\le 2}\| (hD_{x'})^{\alpha
  '}(\widetilde{h}D_{\widetilde{x}_n})^{\alpha _n}v\|$$ on $H_h^2$
with $L^2(dx'd\widetilde{x}_n)$ as the underlying $L^2$-norm. This
gives in the original coordinates, \ekv{idn.19.3} { \sum_{|\alpha |\le
    2}\Vert (hD_{x'})^{\alpha '}(h\epsilon
  ^{-\frac{1}{2}}D_{x_n})^{\alpha _n}\check{K}u\Vert_{L^2(dx)}\le {\cal
    O}(1)h^{\frac{1}{6}}\epsilon ^{\frac{1}{4}}\Vert
  u\Vert_{H^{\frac{3}{2}}_h}.  } In particular,
\ekv{idn.19.7}{\check{K}={\cal O}(1)h^{\frac{1}{6}}\epsilon
    ^{\frac{1}{4}}:H_h^{\frac{3}{2}}\to H_h^2,} with the ordinary
$H^2$ norm.

\par We get the approximation to the DN map:
\ekv{idn.21}
{
{\cal N}^\mathrm{approx}_\mathrm{in}=
\mathrm{Op}_h(\epsilon ^{\frac{1}{2}}\partial
_{\widetilde{x}_n}\widetilde{\phi }(x',0,\xi
')+\frac{h}{i\epsilon }(\partial
_{\widetilde{x}_n}b)(x',0,\xi ';\widetilde{h})).
}

\par Here we must recall that $\epsilon =-\xi _1$, so the symbol of 
${\cal N}^\mathrm{approx}_\mathrm{in}$ is singular in that
variable but good enough for our 2-microlocal calculus, in view of the
fact that $\epsilon \gg h^{2/3}$ and it is a uniformly bounded operator: $H^{3/2}_h\to H^{1/2}_h$.

\par It remains to study the region 
\ekv{idn.23}
{
-h^{\frac{2}{3}-\delta }\le r(x',\xi ')-\lambda \le \widetilde{\delta },
}
where $\delta ,\widetilde{\delta }>0$ are small and independent of
$h$. Again, we reduce $R$ to the form (\ref{idn.11}) and restrict $\xi
'$ to a set 
$$
(\xi _2,...,\xi _{n-1})=\frac{1}{{\cal O}(1)},\ -h^{\frac{2}{3}-\delta }
\le \xi _1\le \widetilde{\delta }.
$$
We consider (cf (\ref{idn.14}))
\ekv{idn.24}
{
P(x,\xi ',hD_{x_n})-z =(hD_{x_n})^2+\xi _1-x_nQ(x,\xi ')-h^{\frac{2}{3}}w,
}
and we follow the approach for the exterior problem started in Subsection
\ref{cw}, with two not very essential differences: \begin{itemize}
\item $x_n$ remains real and we study the
  Dirichlet problem on an interval $[-b,0]$ for $0<b\ll 1$ independent
  of $h$.
\item There will be a slight degeneration when $\xi _1\ll
  -h^{\frac{2}{3}}$. 
\end{itemize}

We review the one-dimensional analysis with $x',\xi '$ as parameters,
writing $x$ instead for $x_n$ and $Q(x)$ instead of $Q(x',x_n,\xi
')$. {\it We first assume that $Q$ is analytic.} Let $x_0$ be the  complex turning point, 
given by
$$
x_0Q(x_0)=\xi _1-h^{\frac{2}{3}}w,
$$
and we let $x_1\asymp \xi _1$ be the corresponding real turning point
given by
$$
x_1Q(x_1)=\xi _1.
$$
Then
$$
x_0=x_1-\frac{h^{\frac{2}{3}}w}{V'(x_1)}+{\cal
  O}(h^{\frac{4}{3}}),\hbox{ where }V(x)=xQ(x).
$$

\par As in the exterior case we take a null solution of the form
$u=e^{-\phi (x;h)/h}$ which is subdominant in the direction of
negative $x$ and increasing in order of magnitude when $x$
increases. More precisely, for $x-x_1\ll -h^{2/3}$ we have
\ekv{idn.24.5}{-\partial _x (\Re \phi )\asymp |\partial _x\phi |\asymp
  |x-x_1|^{1/2}} and for $|x-x_1|\le {\cal O}(h^{2/3})$ we have
$\partial _x\phi ={\cal O}(h^{1/3})$.

\par For $x-x_1\gg h^{2/3}$ (as well as for $x-x_1\ll -h^{2/3}$) we
have (\ref{cw.8}), where 
$$-\phi _0'=(\xi _1-xQ(x)-h^{2/3}w)^{1/2},$$
and we choose the principal branch of the square root with a cut along
$\overline{{\bf R}}_-$, which has positive real
part.  Then for $x-x_1\gg h^{2/3}$ we get when $\pm
\Im w>0$:
\begin{eqnarray*}
  -\phi _0'&=&\mp i (xQ(x)-\xi _1+h^{2/3}w)^{\frac{1}{2}}\\
  &=&\mp i (xQ(x)-\xi
  _1)^{\frac{1}{2}}(1+\frac{h^{\frac{2}{3}}w}{xQ(x)-\xi
    _1})^{\frac{1}{2}}\\
  &=&\mp i (xQ(x)-\xi _1)^{\frac{1}{2}}\mp
  \frac{i h^{\frac{2}{3}}w}{2(xQ(x)-\xi _1)^{\frac{1}{2}}}+\frac{{\cal
      O}(h^{\frac{4}{3}})}{(xQ(x)-\xi _1)^{\frac{3}{2}}}.
\end{eqnarray*}
It follows that 
\ekv{idn.25}{
-\Re \phi '_0\asymp \frac{h^{\frac{2}{3}}}{|x-x_1|^{\frac{1}{2}}}
\hbox{ when }x-x_1\gg h^{\frac{2}{3}}.
}
This quantity dominates over the remainder ${\cal O}(h)|x-x_0|^{-1}$ in
(\ref{cw.8}) when $|x-x_0|\gg h^{2/3}$, 
$$
\frac{h^{\frac{2}{3}}}{|x-x_0|^{\frac{1}{2}}}\gg \frac{h}{|x-x_0|}
$$
and hence 
\ekv{idn.26}{
-\Re \phi '\asymp \frac{h^{\frac{2}{3}}}{|x-x_1|^{\frac{1}{2}}}
\hbox{ when }x-x_1\gg h^{\frac{2}{3}}.
}
This is slightly worse than (\ref{idn.24.5}) and if that estimate had
been valid also for $x-x_1\gg h^{2/3}$, then we would get exactly the
same estimates as in the case of the exterior problem. 

\par It is natural to ask how much worse (\ref{idn.26}) is than
(\ref{idn.24.5}). Recall that we work on an interval $[-b,0]$ and that
$x_1\asymp \xi _1\ge -h^{\frac{2}{3}-\delta }$, so $x-x_1\le -x_1\le
h^{\frac{2}{3}-\delta }$. Thus we get
\ekv{idn.27}
{
\frac{\mathrm{RHS}(\ref{idn.24.5})}{\mathrm{RHS}(\ref{idn.26})}=\frac{|x-x_1|}{h^{\frac{2}{3}}}\le
h^{-\delta }.
}
For $-b\le y\le w\le x\le 0$ we have
\ekv{idn.28}
{
\frac{1}{C}h^\delta \int_y^x|\phi '(t)|dt-Ch\le -\Re \phi (x)+\Re \phi
(y)\le \int_y^x|\phi '(t)|dt,
}
\ekv{idn.28.3}
{
\frac{1}{C}|\phi '(w)||x-y|-Ch\le \int_y^x|\phi '(t)|dt\le C(|\phi
'(\widetilde{z}(x,y))||x-y|+h),
}
where $\widetilde{z}$ is the point in $\{ x,y\}$ maximizing
$|\widetilde{z}-x_1|$. 

\par (\ref{cw.c}) remains valid and we even have
\ekv{idn.28.7}
{
\frac{1}{C_\epsilon }e^{-\frac{\epsilon }{h}(-\Re \phi (x)+\Re \phi
  (y))}
\le \frac{h^{\frac{1}{3}}+|\phi '(x)|}{h^{\frac{1}{3}}+|\phi '(y)|}\le
C_\epsilon e^{\frac{\epsilon }{h}(-\Re \phi (x)+\Re \phi
  (y))},
}
as can be seen by comparing the logarithmic derivative of
$h^{1/3}+|\phi '(x)|$ with $-\Re \phi '/h$ in the region $x-x_1\gg
h^{2/3}$, where $\phi ''(x)={\cal O}(|x-x_0|^{-1/2})$ and
(\ref{idn.25}) holds.

\par The factor $h^\delta $ in (\ref{idn.28}) gives slight losses in
the estimates of Subsection \ref{cw} and we get
\begin{lemma}\label{idn2}
If $(P(x',\xi ')-z )u=0$ on $[-b,0]$, $u(0)=u(-b)=0$, then 
\ekv{idn.29}
{
\Vert (h^{\frac{1}{3}}+|\phi '|)^2u\Vert + \Vert (h\partial
_x)^2u\Vert
+\Vert (h^{\frac{1}{3}}+|\phi '|)h\partial _xu\Vert \le {\cal
  O}(h^{-2\delta })
\Vert v\Vert ,
}
when $\xi _1\ge -h^{2/3-\delta }$.
\end{lemma}
\begin{proof}
We solve the Dirichlet problem on $[-b,0]$ as in Subsection \ref{cw}
and start with applying the natural modification of the operator
$K$:
\ekv{idn.29.5}
{
Kv(x)=-\frac{1}{h}\int_{-b}^x e^{§\phi (x)-\phi (y))/h}v(y)dy
}
and Lemma \ref{cw0.5} deteriorates slightly to 
\begin{lemma}\label{idn3}
The ${\cal L}(L^2)$-norms of $(h^{\frac{1}{3}}+|\phi '|)\circ K$,
$(h^{\frac{1}{3}}+|\phi '|)^2\circ K\circ (h^{\frac{1}{3}}+|\phi
'|)^{-1}$, $ K \circ (h^{\frac{1}{3}}+|\phi '|)$ are ${\cal
  O}(1)h^{-\delta }$.
\end{lemma}
\begin{proof}
We use Schur's lemma as in the proof of Lemma \ref{cw0.5}. Thus for
instance, the $L^2$-norm of $(h^{\frac{1}{3}}+|\phi '|)\circ K$ is
bounded by the geometric mean of 
\begin{eqnarray*}
\mathrm{I}&=&\frac{1}{h}\sup_{-b\le x\le 0}\int_{-b}^x (h^{\frac{1}{3}}+|\phi
'(x)|)e^{\frac{1}{h}(\Re (\phi (x)-\phi (y))} dy,\\
\mathrm{II}&=&\frac{1}{h}\sup_{-b\le y\le 0}\int_{y}^0 (h^{\frac{1}{3}}+|\phi
'(x)|)e^{\frac{1}{h}(\Re (\phi (x)-\phi (y))} dx.
\end{eqnarray*}
Here, by (\ref{idn.28}), (\ref{idn.28.3}),
\ekv{idn.29.7}{e^{\frac{1}{h}\Re (\phi (x)-\phi (y))}\le
  Ce^{-\frac{1}{Ch^{1-\delta }}\int_y^x|\phi '(t)|dt}\le
  \widetilde{C}e^{-\frac{1}{\widetilde{C}h^{1-\delta
      }}(h^{\frac{1}{3}}+|\phi '(x)|)|x-y|},}
and we get $\mathrm{I}={\cal O}(h^{-\delta })$. 

\par To get the same estimate for $\mathrm{II}$ we also use
(\ref{idn.28.7}). The other $L^2$-norms are estimated similarly.
\end{proof}

\par The proof of Lemma \ref{idn2} can now be finished as in Subsection
\ref{cw}.
\end{proof}

We next eliminate the analyticity assumption in Lemma \ref{idn2}. Let
$x_1$ be the real turning point determined by $x_1Q(x_1)=\xi _1$, so
that $x_1\le {\cal O}(1)h^{\frac{2}{3}-\delta }$. Let
$x_2=x_1-h^{\frac{2}{3}-\delta }$. For a large but fixed $N$, put
\[
\widetilde{Q}(x)=\begin{cases}
Q(x),\ x\le x_2,\\
\sum_0^{N-1}\frac{1}{\alpha !}Q^{(\alpha )}(x_2)(x-x_2)^\alpha ,\ &x\ge x_2.
\end{cases}
\]

Since $\widetilde{Q}$ is holomorphic in a $h^{\frac{2}{3}-\delta
}$-neighborhood of $x_1$, we see that if $\widetilde{P}$ is the
corresponding operator then we have a null solution
$e^{-\widetilde{\phi }/h}$ of $P-z$ with the same properties as
$e^{-\phi /h}$ in the analytic case above and such that Lemma 11.2
applies. Now $\widetilde{Q}-Q={\cal O}(1)h^{(\frac{2}{3}-\delta )N}$
and if we choose $N$ large enough, it follows that $P-z$ has a null
solution $e^{-\phi /h}$, where $$\widetilde{Q}-Q,\, \phi
-\widetilde{\phi },\, \phi '-\widetilde{\phi }',\, \phi
''-\widetilde{\phi }''={\cal O}(h).$$ Another perturbation argument
shows that Lemma \ref{idn2} holds for $P-z$.

Let $x_{n,1}(x',\xi ')$ be the real turning point determined
by
$$
-x_{n,1}Q(x',x_{n,1},\xi ')+\xi _1=0
$$
where we recall that $\xi _1=r(x',\xi ')-\lambda $. In analogy with
(\ref{aed.3}), we can reformulate (\ref{idn.29}) as
\begin{eqnarray}
&\Vert (h^{\frac{2}{3}}+|x_n-x_{n,1}|)u\Vert + \Vert
(h\partial _{x_n})^2u\Vert + \Vert
(h^{\frac{2}{3}}+|x_n-x_{n,1}|)^{\frac{1}{2}}(h\partial _{x_n})u\Vert&\nonumber\\
&\le {\cal O}(h^{-2\delta })\Vert (P(x',\xi ')-z )u\|&\label{idn.30}
\end{eqnarray}
for smooth functions $u$ on $[-b,0]$, vanishing at the end
points. Notice here that 
$$
(h^{\frac{1}{3}}+|\phi '|)^2\asymp h^{\frac{2}{3}}+|x-x_{n,1}|.
$$

Define the ${\cal B}(x',\xi ')$ norm to be the left hand side in
(\ref{idn.30}) and let ${\cal B}$ be the space of functions on $[-b,0]$
with finite ${\cal B}$-norm that vanish at the end points. Then we
still have the symbol property (\ref{aed.9}) for $P(x',\xi '):{\cal
  B}(x',\xi ')\to L^2$ and we get (\ref{aed.11}) for $E=(P(x',\xi
')-z )^{-1}$ with a slight loss:
\ekv{idn.31}
{
\partial _{x'}^\alpha \partial _{\xi '}^\beta E={\cal O}_{\alpha
  ,\beta }(h^{-2\delta (1+|\alpha |+|\beta |)})(h^{\frac{2}{3}}+|\lambda
-r(x',\xi ')|)^{-\beta _1},\ L^2\to {\cal B}.
}

\par
We get (\ref{pdn.6}),
(\ref{pdn.7}) with loss (due to the non-monotonicity of $\Lambda
=(h^{\frac{2}{3}}+|\lambda -r(x,\xi ')|)^{\frac{1}{2}}$ as a function
of $x_n$ between $x_{n,1}$ and $0$ when $x_{n,1}<0$):
\ekv{idn.32} {
  h^{\frac{1}{2}}(h^{\frac{2}{3}}+|r-\lambda |)^{\frac{3}{4}}|u(0)|\le
  Ch^{-3\delta/4 }\Vert u\Vert_{{\cal B}}, } 
\ekv{idn.33} {
  h^{\frac{1}{2}}(h^{\frac{2}{3}}+|r-\lambda
  |)^{\frac{1}{4}}|h\partial_{x_n}u(0)|\le Ch^{-\delta/4 }\Vert u\Vert_{{\cal B}}.}

\par Normalize $\phi $ by imposing the condition $\phi (0)=0$ and let
$e_{x',\xi '}=e^{-\frac{1}{h}\phi }$ be the null solution of $P(x',\xi
')-z $ so that $e_{x',\xi '}(0)=1$ and $e_{x',\xi '}(-b)$ is
exponentially small. Using (\ref{idn.28.7}), (\ref{idn.29.7}), we get (\ref{pdn.8}) with a $\delta
$ loss:
\ekv{idn.34} { \Vert e_{x',\xi '}\Vert_{{\cal B}}\le {\cal
    O}(1)h^{\frac{1-\delta }{2} }(h^{\frac{2}{3}}+|r-\lambda
  |)^{\frac{3}{4}}.  } 

Adding an exponentially small reflected null solution to $e_{x',\xi
  '}$ and renormalizing, we get a new null solution, that we denote
by $e_{x',\xi '}$ instead of the earlier one, which satisfies the
boundary conditions $e_{x',\xi '}(0)=1$, $e_{x',\xi '}(-b)=0$ and
which also satisfies (\ref{idn.34}).
Then we get the weakened version of
(\ref{pdn.11}): 
\ekv{idn.35} {
  \Vert \partial _{x'}^\alpha \partial _{\xi '}^\beta e_{x',\xi
    '}\Vert_{{\cal B}}={\cal O}(1)h^{\frac{1-\delta }{2} -2\delta
    (|\alpha |+|\beta |)}(h^{\frac{2}{3}}+|r-\lambda
  |)^{\frac{3}{4}-\beta _1}.  }

\par As a first approximation to the microlocal interior Poisson
operator on $\{ x;\, -b\le x_n\le 0,\, |x'|\le {\cal O}(1)\}$ we take
(cf (\ref{pdn.12}))
\ekv{idn.36}
{
K^0w=\mathrm{Op}_h(e_{x',\xi '}).
}
Then $\gamma K^0=1$, $(P-z )K^0=\mathrm{Op}_h(f_{x',\xi '})$,
where,
$$
f_{x',\xi '}=\sum_{\alpha \ne 0}\frac{h^{|\alpha |}}{\alpha !}\partial
_{\xi '}^\alpha P(x',\xi ')D_{x'}^\alpha e_{x',\xi '},
$$
and by (\ref{aed.9}), (\ref{idn.35}),
$$
\Vert \partial _{x'}^\alpha \partial _{\xi '}^\beta f_{x',\xi
  '}\Vert_{L^2}
={\cal O}(1)h^{\frac{3}{2}-\frac{5\delta }{2}-2\delta (|\alpha |+|\beta
  |)}(h^{\frac{2}{3}}+|r-\lambda |)^{-\frac{1}{4}-\beta _1}.
$$
Using $E$ as a first approximation, we can construct an
operator-valued symbol $\widetilde{E}(x',\xi ';h)$ such that
$\widetilde{E}(x',hD_{x'};h)$ inverts
$P(x',hD_{x'})-z $ to all orders in in $h$. We get a microlocal
Poisson operator to all orders in $h$ by putting
$$
\widetilde{K}=K^0-\widetilde{E}\circ (P-z )K^0=K^0+\mathrm{Op}_h(\widetilde{r}),
$$
and $\widetilde{r}$ fulfills the slightly deteriorated version of
(\ref{pdn.16}):
$$
\Vert \partial _{x'}^\alpha \partial _{\xi '}^\beta
\widetilde{r}\Vert_{{\cal B}}={\cal O}(1)h^{\frac{3}{2}-\frac{5\delta
  }{2}-2\delta (1+|\alpha |+|\beta |)}(h^{\frac{2}{3}}+|r-\lambda
|)^{-\frac{1}{4}-\beta _1}.
$$
Now $\widetilde{K}$ can be written as in (\ref{pdn.17}) and we have
(\ref{pdn.18}). The symbol $e_{x',\xi '}+\widetilde{r}_{x',\xi '}$
there satisfies
\[
\Vert \partial _{x'}^\alpha \partial _{\xi '}^\beta (e_{x',\xi
  '}+\widetilde{r}_{x',\xi '})\Vert_{{\cal B}}={\cal
  O}(1)h^{\frac{1}{2}-\frac{\delta }{2}-2\delta (|\alpha |+|\beta
  |)}(h^{\frac{2}{3}}+|r-\lambda |)^{\frac{3}{4}-\beta _1},
\]
when $\delta >0$ is small enough.
From this estimate and the similar ones in the other regions we get 
\ekv{idn.36.5}{
\widetilde{K}={\cal O}(h^{\frac{1}{6}}):H_h^{\frac{3}{2}}\to H_h^2,
}
and this also holds for the exact Poisson operator
$K_\mathrm{in}=K^V_\mathrm{in}.$

The corresponding DN-map is a pseudodifferential operator with symbol 
$$
n(x',\xi ';h)=\gamma hD_{x_n}(e+\widetilde{r}),
$$
and combing the above estimate with (\ref{idn.33}), we get the
estimate
\ekv{idn.37}{
\partial _{x'}^\alpha \partial _{\xi '}^\beta n=
{\cal O}(1)h^{-\frac{3\delta }{4}-2\delta (|\alpha |+|\beta
  |)}(h^{\frac{2}{3}}+|r-\lambda |)^{\frac{1}{2}-\beta _1}.}
This is a bounded symbol in the region where 
$h^{-3\delta /4}|r-\lambda |^{1/2}={\cal O}(1)$, i.e. where
$|r-\lambda |={\cal O}(1)h^{3\delta /2}$ and to get an better
conclusion, we take a closer look:

First, we see that 
$$
\gamma hD_{x_n}e_{x',\xi '}=i\partial _{x_n}\phi (0)={\cal
  O}(1)(h^{\frac{2}{3}}+|r-\lambda |)^{\frac{1}{2}}
$$
is bounded. Secondly, from the above estimate on the ${\cal B}$ norm
of $\widetilde{r}$ and (\ref{idn.33}), we conclude that
$$
\gamma hD_{x_n}\widetilde{r}={\cal
  O}(1)h^{1-(\frac{5}{2}+\frac{1}{4})\delta
}(h^{\frac{2}{3}}+|r-\lambda |)^{-\frac{1}{2}}
$$
which is also bounded. Thus we have an improvement of (\ref{idn.37})
when $\alpha =\beta =0$, and we conclude that $n$ is in a sufficiently
good symbol class to conclude that its quantization is $L^2$ bounded. 

Patching together the different microlocal Poisson operators, we get
an approximation mod ${\cal O}(h^\infty )$ in ${\cal
  L}(H_h^{\frac{3}{2}},H_h^2)$ of $K_{\mathrm{in}}$ and also the
conclusion of Proposition \ref{idn1} from the boundedness of the
corresponding microlocal DN-maps.
\end{proof}

Let $V$ be as in Proposition \ref{idn1} and let $K_\mathrm{in}^V$ and
${\cal N}_\mathrm{in}^V$ denote the corresponding Poisson and
Dirichlet to Neumann operators. Let $W\in L^\infty (\Omega ;{\bf
  R})$. Then
$$
K_\mathrm{in}^{V+W}=K_\mathrm{in}^V-(P_\mathrm{in}^{V+W}-z )^{-1}WK_\mathrm{in}^V=:
K_\mathrm{in}^V+A,$$
where in view of (\ref{idn.36.5}):
$$
\Vert A\Vert_{{\cal L}(H_h^{3/2},H_h^2)}\le {\cal
  O}(1)h^{-\frac{2}{3}+\frac{1}{6}}\Vert W\Vert_{L^\infty }={\cal
  O}(1)h^{-\frac{1}{2}}\Vert W\Vert_{L^\infty }.
$$
Thus ${\cal N}_\mathrm{in}^{V+W}={\cal N}_\mathrm{in}^V+B$, $B=\gamma
hD_\nu A$, and we get
$$
\Vert B\Vert_{{\cal L}(H_h^{3/2},H_h^2)}={\cal
  O}(1)h^{-\frac{1}{2}-\frac{1}{2}}\Vert W\Vert_{L^\infty }={\cal
  O}(1)h^{-1}\Vert W\Vert_{L^\infty }.
$$
This implies,
\begin{prop}\label{idn4}
The conclusion of Proposition \ref{idn1} remains valid if we replace
$V$ in there with $V+W$, where $W\in L^\infty (\Omega ;{\bf R})$
satisfies
\ekv{idn.38}
{
\Vert W\Vert_{L^\infty }\le {\cal O}(h).
}
\end{prop}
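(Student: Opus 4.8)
The plan is to treat $W$ as a small perturbation of the interior Dirichlet operator and to transfer the bound $\Vert{\cal N}_\mathrm{in}^V\Vert={\cal O}(1)$ of Proposition \ref{idn1} to ${\cal N}_\mathrm{in}^{V+W}$ by a second resolvent identity for the Poisson operators. The key feature is that this uses the \emph{refined} estimate (\ref{idn.36.5}) for the unperturbed operator $K_\mathrm{in}^V$ but only the \emph{crude} a priori bounds from the beginning of this section for the perturbed one, so the delicate glancing--region parametrix need not be rebuilt for $V+W$.

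First I would note that $V+W\in L^\infty({\cal O};{\bf R})$, so $P_\mathrm{in}^{V+W}$ is self--adjoint, its spectrum lies within ${\cal O}(h)=o(1)$ of $\sigma(P_\mathrm{in}^V)$ (so that whatever non--resonance condition on $z$ is implicit in this section is preserved), and the estimates (\ref{idn.3.2}), (\ref{idn.3.3}) — derived under the sole hypotheses $V\in L^\infty$ and (\ref{idn.3}) — hold verbatim for it; in particular $(P_\mathrm{in}^{V+W}-z)^{-1}={\cal O}(h^{-2/3}):L^2({\cal O})\to H_h^2({\cal O})$. Next, since $(-h^2\Delta+V-z)K_\mathrm{in}^Vw=0$, $\gamma K_\mathrm{in}^Vw=w$, and $P^{V+W}=-h^2\Delta+V+W$, the function $K_\mathrm{in}^Vw-(P_\mathrm{in}^{V+W}-z)^{-1}WK_\mathrm{in}^Vw$ has boundary trace $w$ (the resolvent maps into the Dirichlet domain) and is annihilated by $-h^2\Delta+V+W-z$, so by uniqueness of the Poisson extension
\[
K_\mathrm{in}^{V+W}=K_\mathrm{in}^V-(P_\mathrm{in}^{V+W}-z)^{-1}WK_\mathrm{in}^V=:K_\mathrm{in}^V+A.
\]
Then I would account orders of $h$: by (\ref{idn.36.5}), $K_\mathrm{in}^V={\cal O}(h^{1/6}):H_h^{3/2}\to H_h^2\hookrightarrow L^2$; multiplication by $W$ is ${\cal O}(\Vert W\Vert_{L^\infty}):L^2\to L^2$; and the resolvent is ${\cal O}(h^{-2/3}):L^2\to H_h^2$; whence $\Vert A\Vert_{{\cal L}(H_h^{3/2},H_h^2)}\le{\cal O}(1)h^{-1/2}\Vert W\Vert_{L^\infty}$. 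Applying $\gamma hD_\nu={\cal O}(h^{-1/2}):H_h^2\to H_h^{1/2}$ gives $B:=\gamma hD_\nu A$ with ${\cal N}_\mathrm{in}^{V+W}={\cal N}_\mathrm{in}^V+B$ and $\Vert B\Vert_{{\cal L}(H_h^{3/2},H_h^{1/2})}\le{\cal O}(1)h^{-1}\Vert W\Vert_{L^\infty}$. Under (\ref{idn.38}) this is ${\cal O}(1)$, and combining with Proposition \ref{idn1} the triangle inequality yields $\Vert{\cal N}_\mathrm{in}^{V+W}\Vert_{{\cal L}(H_h^{3/2},H_h^{1/2})}={\cal O}(1)$.

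I do not expect a genuine obstacle here; the argument is soft. The one thing to keep honest is the bookkeeping of negative powers of $h$: the chain above loses $h^{-2/3}$ from the crude Dirichlet resolvent and a further $h^{-1/2}$ from the normal--derivative trace on $H_h^2$, i.e.\ $h^{-7/6}$ in total, against the gain $h^{1/6}$ supplied by the refined bound (\ref{idn.36.5}); the residual loss $h^{-1}$ is exactly what forces, and is precisely absorbed by, the threshold $\Vert W\Vert_{L^\infty}={\cal O}(h)$ in (\ref{idn.38}). Incidentally the same computation records that $K_\mathrm{in}^{V+W}=K_\mathrm{in}^V+{\cal O}(h^{1/2})$ in ${\cal L}(H_h^{3/2},H_h^2)$ under that hypothesis.
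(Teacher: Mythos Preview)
Your proof is correct and essentially identical to the paper's own argument: the same second resolvent identity $K_\mathrm{in}^{V+W}=K_\mathrm{in}^V-(P_\mathrm{in}^{V+W}-z)^{-1}WK_\mathrm{in}^V$, the same use of (\ref{idn.36.5}) to gain $h^{1/6}$, the crude resolvent bound $O(h^{-2/3})$ from (\ref{idn.3}), and the trace loss $h^{-1/2}$, yielding $\Vert B\Vert\le{\cal O}(h^{-1})\Vert W\Vert_{L^\infty}$. Your additional commentary on the $h$-bookkeeping and the side observation $K_\mathrm{in}^{V+W}=K_\mathrm{in}^V+{\cal O}(h^{1/2})$ are accurate and do not diverge from the paper's approach.
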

 When $W=\delta \Theta q_\omega $ is as in Theorem \ref{re1}, we have
 (\ref{idn.38}), provided $\alpha $ is large enough. See Remark \ref{sv1}.

For a greater generality of our results it is of interest to have a
the following variant of the last proposition, where the perturbation
$W$ can be independent of $h$. We start with some simple exponentially
weighted estimates. Let $\phi \in C^\infty (\overline{{\cal O}};{\bf
  R})$ and consider
$$
P^{V,\epsilon }=e^{\frac{\epsilon \phi }{h}}P^Ve^{-\frac{\epsilon \phi
  }{h}}=P^V+F,
$$
where $$F=i\epsilon (\phi '\cdot hD_x+hD\cdot \phi ')-\epsilon ^2(\phi
')^2={\cal O}(\epsilon ):H_h^1\to H_h^0.$$ Since $(P_\mathrm{in}^V-z
)^{-1}={\cal O}(h^{-2/3}):H_h^0\to H_h^2$ when $1/2<\Re z <2$, $|\Im
z |\asymp h^{2/3}$, we get the same conclusion for
$(P_\mathrm{in}^{V,\epsilon }-z )^{-1}=e^{\epsilon \phi
  /h}(P_\mathrm{in}^V-z )^{-1}e^{-\epsilon \phi /h}$, provided
that $\epsilon \ll h^{2/3}$.

\par Now, let ${{\phi }_\vert}_{\partial {\cal O}}=0$. Then
$K^{V,\epsilon }=e^{\epsilon \phi /h}K^V$ is the Poisson operator for
$P^{V,\epsilon }-z $. We can also get $K^{V,\epsilon
}$ by a perturbative argument, writing
\[\begin{split}
K^{V,\epsilon }=K^V-(P_\mathrm{in}^{V,\epsilon }-z )^{-1}FK^V&\\
=K^V+{\cal O}(h^{-\frac{2}{3}}\epsilon h^{\frac{1}{6}})={\cal
  O}(h^{\frac{1}{6}}):&\, H_h^{\frac{3}{2}}\to H_h^2.
\end{split}
\]
Thus $e^{\epsilon \phi /h}K^V(z )={\cal O}(h^{1/6}):H_h^{3/2}\to
H_h^2$.
Now assume that
$$
W(x)={\cal O}(\mathrm{dist\,}(x,\partial {\cal O})^{N_0}),
$$
for some $N_0>0$, to be determined. Then $WK^V=We^{-\epsilon \phi
  /h}e^{\epsilon \phi /h}K^V$ and taking $\phi \asymp
\mathrm{dist\,}(\cdot ,\partial {\cal O})$, $\epsilon \ge
h^{2/3}/{\cal O}(1)$, we see that 
$We^{-\epsilon \phi /h}={\cal
  O}(\mathrm{dist}^{N_0}e^{-\mathrm{dist}/(Ch^{1/3})})={\cal
  O}(h^{N_0/3})$. Then as in the discussion prior to Proposition
\ref{idn4}, we have $K_\mathrm{in}^{V+W}=K_\mathrm{in}^{V}+A$, where
$$
A=(P_\mathrm{in}^{V+W}-z )^{-1}WK_\mathrm{in}^V={\cal
  O}(1)h^{-\frac{2}{3}+\frac{N_0}{3}+\frac{1}{6}}:H_h^{\frac{3}{2}}\to H_h^2.
$$
The choice $N_0=3$ gives $A={\cal O}(h^{1/2}):H_h^{3/2}\to H_h^2$ and
we get the following variant and extension of Proposition \ref{idn4}:
\begin{prop}\label{idn5}
The conclusion of Proposition \ref{idn1} remains valid if we replace
$V$ there with $V+W$, where $W\in L^\infty (\Omega ;{\bf R})$
satisfies
\ekv{idn.39}
{W(x)={\cal O}(\mathrm{dist\,}(x,\partial {\cal O})^{3}).
}

\par More generally, we can take $W=W_1+W_2$, where $W_1$ and $W_2$
fulfill (\ref{idn.38}) and (\ref{idn.39}) respectively.
\end{prop}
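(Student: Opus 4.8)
The plan is to reduce the assertion, for each of the two admissible types of perturbation, to the bound $\|{\cal N}_\mathrm{in}^V\|_{{\cal L}(H^{3/2},H^{1/2})}={\cal O}(1)$ of Proposition \ref{idn1} together with the elementary resolvent estimate $(P_\mathrm{in}^U-z)^{-1}={\cal O}(h^{-2/3}):H_h^0\to H_h^2$, which holds for every bounded real potential $U$ on $\cal O$ and every $z$ as in (\ref{idn.3}) by self-adjointness (so $\|(P_\mathrm{in}^U-z)^{-1}\|_{H^0\to H^0}\le 1/|\Im z|$) and elliptic regularity for $-h^2\Delta$. The mechanism is the resolvent identity $K_\mathrm{in}^{V+W}=K_\mathrm{in}^V-(P_\mathrm{in}^{V+W}-z)^{-1}WK_\mathrm{in}^V=:K_\mathrm{in}^V+A$, from which ${\cal N}_\mathrm{in}^{V+W}={\cal N}_\mathrm{in}^V+\gamma hD_\nu A$; since $\gamma hD_\nu={\cal O}(h^{-1/2}):H_h^2\to H_h^{1/2}$, it suffices to show $A={\cal O}(h^{1/2}):H_h^{3/2}\to H_h^2$. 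When $\|W\|_{L^\infty}={\cal O}(h)$ this is immediate from $K_\mathrm{in}^V={\cal O}(h^{1/6}):H_h^{3/2}\to H_h^2$ (i.e.\ (\ref{idn.36.5})) and the resolvent bound, and is precisely Proposition \ref{idn4}; so the real content is the case $W={\cal O}(\mathrm{dist}(\cdot,\partial{\cal O})^3)$, and the general $W=W_1+W_2$ will follow by splitting $(W_1+W_2)K_\mathrm{in}^V$ and handling the two summands by the two arguments.

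For $W={\cal O}(\mathrm{dist}^3)$ I would gain the missing powers of $h$ by conjugating with an exponential weight supported near $\partial{\cal O}$. Choose $\phi\in C^\infty(\overline{{\cal O}};{\bf R})$ with $\phi|_{\partial{\cal O}}=0$ and $\phi\asymp\mathrm{dist}(\cdot,\partial{\cal O})$, and write $P^{V,\epsilon}=e^{\epsilon\phi/h}P^Ve^{-\epsilon\phi/h}=P^V+F$ with $F=i\epsilon(\phi'\cdot hD_x+hD_x\cdot\phi')-\epsilon^2(\phi')^2={\cal O}(\epsilon):H_h^1\to H_h^0$. A Neumann series in $F(P_\mathrm{in}^V-z)^{-1}$ then shows that for $\epsilon$ of size $\asymp h^{2/3}$, taken small enough relative to $h^{2/3}$, the conjugated interior resolvent $(P_\mathrm{in}^{V,\epsilon}-z)^{-1}=e^{\epsilon\phi/h}(P_\mathrm{in}^V-z)^{-1}e^{-\epsilon\phi/h}$ still obeys the ${\cal O}(h^{-2/3}):H_h^0\to H_h^2$ bound. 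Because $\phi$ vanishes on the boundary, $K^{V,\epsilon}=e^{\epsilon\phi/h}K^V$ is the Poisson operator for $P^{V,\epsilon}-z$, and from $K^{V,\epsilon}=K^V-(P_\mathrm{in}^{V,\epsilon}-z)^{-1}FK^V$ together with (\ref{idn.36.5}) one gets $e^{\epsilon\phi/h}K^V={\cal O}(h^{1/6}):H_h^{3/2}\to H_h^2$. Finally, inserting $1=e^{-\epsilon\phi/h}e^{\epsilon\phi/h}$ and using $\phi\asymp\mathrm{dist}$ gives, for $\epsilon\ge h^{2/3}/{\cal O}(1)$, the multiplier bound $We^{-\epsilon\phi/h}={\cal O}(\mathrm{dist}^3 e^{-\mathrm{dist}/(Ch^{1/3})})={\cal O}(h)$ (the supremum of $t^3e^{-t/(Ch^{1/3})}$ being $\asymp h$), so that $WK_\mathrm{in}^V=(We^{-\epsilon\phi/h})(e^{\epsilon\phi/h}K^V)={\cal O}(h^{7/6}):H_h^{3/2}\to H_h^0$ and hence $A=(P_\mathrm{in}^{V+W}-z)^{-1}WK_\mathrm{in}^V={\cal O}(h^{-2/3}\cdot h^{7/6})={\cal O}(h^{1/2})$, as needed.

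Assembling the pieces: ${\cal N}_\mathrm{in}^{V+W}={\cal N}_\mathrm{in}^V+\gamma hD_\nu A$ with $\gamma hD_\nu A={\cal O}(h^{-1/2}\cdot h^{1/2})={\cal O}(1)$, so Proposition \ref{idn1} gives $\|{\cal N}_\mathrm{in}^{V+W}\|_{{\cal L}(H^{3/2},H^{1/2})}={\cal O}(1)$. For $W=W_1+W_2$ one runs the same identity $K_\mathrm{in}^{V+W}=K_\mathrm{in}^V-(P_\mathrm{in}^{V+W}-z)^{-1}(W_1+W_2)K_\mathrm{in}^V$, bounds the $W_1$ term by ${\cal O}(h^{-2/3})\|W_1\|_{L^\infty}{\cal O}(h^{1/6})={\cal O}(h^{1/2})$ using (\ref{idn.38}), and the $W_2$ term by ${\cal O}(h^{1/2})$ as just shown, so again $\gamma hD_\nu A={\cal O}(1)$. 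The one genuinely delicate point I expect is the weighted resolvent estimate: $\epsilon$ must be simultaneously large enough ($\ge h^{2/3}/{\cal O}(1)$) for the weight to beat the boundary-layer width $h^{1/3}$ built into $K_\mathrm{in}^V$, and small enough ($\le h^{2/3}/C$ with $C$ large) for the Neumann series $\sum_k(F(P_\mathrm{in}^V-z)^{-1})^k$ in ${\cal L}(H_h^0)$ to converge, since $\|F(P_\mathrm{in}^V-z)^{-1}\|={\cal O}(\epsilon h^{-2/3})$; verifying that these requirements are compatible, and that the $H^2$ part of the bound survives the conjugation, is the heart of the matter, the rest being bookkeeping with the mapping properties recorded in Sections \ref{al} and \ref{idn}.
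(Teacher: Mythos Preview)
Your proposal is correct and follows essentially the same approach as the paper: the exponential-weight conjugation with $\phi\asymp\mathrm{dist}(\cdot,\partial{\cal O})$, $\phi|_{\partial{\cal O}}=0$, the perturbative identity $K^{V,\epsilon}=K^V-(P_\mathrm{in}^{V,\epsilon}-z)^{-1}FK^V$ giving $e^{\epsilon\phi/h}K^V={\cal O}(h^{1/6})$, the multiplier bound $We^{-\epsilon\phi/h}={\cal O}(h^{N_0/3})$ with $N_0=3$, and the splitting for $W=W_1+W_2$ are exactly what the paper does. You also correctly isolate the one delicate point, namely that $\epsilon$ must be chosen $\asymp h^{2/3}$ with a constant small enough for the Neumann series yet large enough for the weight to produce the gain $h^{N_0/3}$; since the latter only costs a constant factor $c^{-N_0}$ while the former requires $c$ below a fixed threshold, the two constraints are compatible.
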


\section{Some determinants}\label{sbd}
\setcounter{equation}{0}

Let 
\ekv{sbd.1}
{
V=V_0+W,
}
where $V_0$ is as in (\ref{idn.3.7}) and the real-valued term $W$ is
${\cal O}(1)$ in $L^\infty $.
We let 
\ekv{sbd.2}
{
P=-h^2\Delta +V=:P^V,\quad P_0=-h^2\Delta +V_0.
}
Recall the definitions of $P_\mathrm{out}$, ${\cal P}_\mathrm{out}$,
${\cal P}_\mathrm{in}$, $P_\mathrm{in}$ in Section \ref{red}, with the
potential $V$ as above. 

\par Our first task is to define the determinants of the factors in
(\ref{red.6}). In the following, $H^s$ denotes $H^s_h$ if nothing
else is indicated.
\begin{prop}\label{sbd1}
  The three factors in (\ref{red.6}) are meromorphic families of
  Fredholm operators in the region $\frac{1}{2}<\Re z<\frac{3}{2}$,
  $\Im z>-h^{2/3}c_0$, where $c_0$ is as in (\ref{outl.0}). More precisely, 
  $${\cal P}_\mathrm{in}(z):H^2({\cal O})\to H^0({\cal O})\times H^{3/2}(\partial {\cal O}),$$
$${\cal P}_\mathrm{out}(z):H^2({\cal O})\to H^0({\cal O})\times H^{1/2}(\partial {\cal O})$$
are holomorphic Fredholm families, while
$$\begin{pmatrix}1 &0\\ h^{\frac{1}{2}}BG_\mathrm{in} & {\cal N}_\mathrm{in}-{\cal
    N}_\mathrm{out}\end{pmatrix}:H^0({\cal O})\times H^{3/2}(\partial
{\cal O})\to H^0({\cal O})\times H^{1/2}(\partial
{\cal O})$$
is a meromorphic Fredholm family.
\end{prop}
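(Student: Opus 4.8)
I would treat the three families separately, the first two by elementary considerations and the third by reducing it to them through the factorization (\ref{red.6}) together with the stability results of Subsection~\ref{mf}. Throughout, write $\Omega=\{\,\frac12<\Re z<\frac32,\ \Im z>-h^{2/3}c_0\,\}$, an open, connected and simply connected subset of the region (\ref{re.10.5}) once $h$ is small enough, by the constraint (\ref{outl.0}) on $c_0$. For ${\cal P}_\mathrm{in}(z)$ the situation is easy: its $z$-dependence is affine, hence holomorphic, and it is the operator of the classical Dirichlet boundary value problem for the elliptic operator $-h^2\Delta+V-z$. Since ${\cal P}_\mathrm{in}(z)-{\cal P}_\mathrm{in}(z')$ amounts to $(z'-z)\mathrm{Id}:H^2({\cal O})\to H^0({\cal O})$, which is compact by Rellich's theorem, ${\cal P}_\mathrm{in}(z)$ is Fredholm of $z$-independent index; as $V=V_0+W$ is real-valued, the Dirichlet realization $P_\mathrm{in}$ is self-adjoint, so ${\cal P}_\mathrm{in}(z)$ is bijective for every non-real $z$, in particular for some $z\in\Omega$, whence the index is $0$. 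Thus ${\cal P}_\mathrm{in}$ is a holomorphic Fredholm family of index $0$ on $\Omega$, hence also a meromorphic Fredholm family in the sense of Definition~\ref{mf1}, without poles.

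Next, ${\cal P}_\mathrm{out}(z)$. The only delicate point is the $z$-dependent nonlocal term in $B(z)=\gamma hD_\nu-{\cal N}_\mathrm{ext}(z)\gamma$. Since $\Omega$ contains no resonance of the exterior Dirichlet problem for $-h^2\Delta$ (Harg\'e--Lebeau, (\ref{re.10.5})), Proposition~\ref{psd2} provides the exterior Green and Poisson operators $G_\mathrm{ext}(z)$, $K_\mathrm{ext}(z)$ holomorphically on $\Omega$, and hence also ${\cal N}_\mathrm{ext}(z)=\gamma hD_\nu K_\mathrm{ext}(z)$, a pseudodifferential operator of order $1$ (cf.\ Corollary~\ref{pdn4}); consequently $B(z):H^2({\cal O})\to H^{1/2}(\partial{\cal O})$ and ${\cal P}_\mathrm{out}(z)$ are holomorphic on $\Omega$. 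It is an elliptic boundary value problem in the classical sense (as noted after (\ref{lb.3})), hence Fredholm for every $z\in\Omega$. On the dense open set $\Omega\setminus\sigma(P_\mathrm{in})$ the factorization (\ref{red.6}) exhibits ${\cal P}_\mathrm{out}(z)$ as the composition of ${\cal P}_\mathrm{in}(z)$ (index $0$) with the lower triangular matrix whose diagonal blocks are $\mathrm{Id}$ and the elliptic, symmetric (Proposition~\ref{red4}) first-order operator ${\cal N}_\mathrm{in}(z)-{\cal N}_\mathrm{ext}(z)$ on the compact manifold $\partial{\cal O}$, both of index $0$; since the index of a holomorphic Fredholm family is locally constant and $\Omega$ is connected, ${\cal P}_\mathrm{out}(z)$ has index $0$ throughout $\Omega$. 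Finally, by Proposition~\ref{red2} it is bijective precisely off $\sigma(P_\Gamma)$, which is discrete in $\Omega$ by the complex-scaling theory of Section~\ref{cds} (Proposition~\ref{dcs3}); hence ${\cal P}_\mathrm{out}$ is a holomorphic Fredholm family of index $0$ on $\Omega$.

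For the third family, let ${\cal A}(z)$ denote the lower triangular matrix in the statement (reading ${\cal N}_\mathrm{ext}$ for ${\cal N}_\mathrm{out}$, as in (\ref{red.6})). On $\Omega\setminus\sigma(P_\mathrm{in})$ the interior resolvent $G_\mathrm{in}(z)$ and the interior DN-map ${\cal N}_\mathrm{in}(z)$ are holomorphic, and (\ref{red.6}) reads ${\cal A}(z)={\cal P}_\mathrm{out}(z)\,{\cal P}_\mathrm{in}(z)^{-1}$. By the inversion property of Subsection~\ref{mf} (Proposition~\ref{mf2}) applied to the no-pole meromorphic Fredholm family ${\cal P}_\mathrm{in}$, the family ${\cal P}_\mathrm{in}(z)^{-1}$ is meromorphic Fredholm on $\Omega$, with poles only in $\sigma(P_\mathrm{in})$; and ${\cal P}_\mathrm{out}$ is likewise meromorphic Fredholm (with no poles). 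By the closure of that class under composition (the paragraph preceding Proposition~\ref{mf2}), the product ${\cal P}_\mathrm{out}(z){\cal P}_\mathrm{in}(z)^{-1}$ is a meromorphic Fredholm family on $\Omega$, bijective for some $z$ (take $z\notin\sigma(P_\mathrm{in})$ with ${\cal P}_\mathrm{out}(z)$ bijective). It coincides with ${\cal A}(z)$ off the discrete set $\sigma(P_\mathrm{in})$, so it is the meromorphic extension of ${\cal A}$, and ${\cal A}$ is a meromorphic Fredholm family as claimed.

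The main obstacle is securing the holomorphy of ${\cal N}_\mathrm{ext}(z)$ on all of $\Omega$: this rests on $\Omega$ avoiding the exterior Dirichlet resonances, which is precisely what the Harg\'e--Lebeau estimate (\ref{re.10.5}) together with (\ref{outl.0}) guarantees, and on the exterior parametrix of Sections~\ref{aed}--\ref{pdn} delivering $G_\mathrm{ext}$, $K_\mathrm{ext}$ holomorphically there. Once that is in place, all the Laurent-expansion bookkeeping at the poles of ${\cal A}$ in $\sigma(P_\mathrm{in})$ (finite-rank singular parts, governed by the finite-rank spectral projections of $P_\mathrm{in}$, and a Fredholm holomorphic part) is handled once and for all by the inversion and composition lemmas of Subsection~\ref{mf}, so nothing needs to be redone by hand.
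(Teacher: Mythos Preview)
Your proof is correct and follows exactly the route the paper takes: establish that ${\cal P}_\mathrm{in}$ and ${\cal P}_\mathrm{out}$ are holomorphic Fredholm families of index $0$ (which the paper already records after Proposition~\ref{red2} and dismisses here as ``clear''), then read off the meromorphic Fredholm property of the triangular factor from the identity ${\cal A}(z)={\cal P}_\mathrm{out}(z){\cal P}_\mathrm{in}(z)^{-1}$ via the inversion and composition stability of Subsection~\ref{mf}. You have simply written out in full the details the paper compresses into one sentence.
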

\begin{proof}
This is clear for ${\cal P}_\mathrm{in}$, ${\cal P}_\mathrm{out}$, and
the factorization (\ref{red.6}) then implies that the remaining factor
is a meromorphic Fredholm family.
\end{proof}

\par From (\ref{red.6}) and the last proposition, we get
\ekv{sbd.3}
{
\det {\cal P}_\mathrm{out}(z)=\det ({\cal N}_\mathrm{in}-{\cal
  N}_\mathrm{ext})\det {\cal P}_\mathrm{in}(z).}

\par The next result will permit us to do some analysis.
\begin{prop}\label{sbd2}
The determinants of the factors in (\ref{red.6}) can also be defined as
in Subsection \ref{gd}.
\end{prop}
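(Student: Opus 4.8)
The plan is to verify, for each of the three factors in (\ref{red.6}), that it fits the framework of Subsection \ref{gd}: namely that it is a holomorphic family of Fredholm operators of index $0$ on the complement of a discrete set, pointwise bijective there, with the Schatten-class condition (\ref{gd.2}) on $z$-derivatives for some finite $p$, and (for the singular factor) that it is additionally a meromorphic Fredholm function so that Proposition \ref{gd3} applies and the two notions of determinant agree up to a non-vanishing holomorphic factor. First I would treat ${\cal P}_\mathrm{in}(z)$ and ${\cal P}_\mathrm{out}(z)$. These are holomorphic Fredholm families by Proposition \ref{sbd1}, and they are pointwise bijective off the discrete sets $\sigma(P_\mathrm{in})$ and $\sigma({\cal P}_\mathrm{out})$ respectively. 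Differentiating ${\cal P}_\mathrm{in}(z)=\begin{pmatrix}P-z\\ h^{1/2}\gamma\end{pmatrix}$ in $z$ gives $\partial_z{\cal P}_\mathrm{in}=\begin{pmatrix}-1\\0\end{pmatrix}$ and all higher derivatives vanish; similarly $\partial_z{\cal P}_\mathrm{out}=\begin{pmatrix}-1\\0\end{pmatrix}$ since $B$ in (\ref{lb.3}) does not depend on $z$ (the $z$-dependence of ${\cal N}_\mathrm{ext}$ enters only through the exterior Green and Poisson operators, but $B=\gamma hD_\nu-{\cal N}_\mathrm{ext}\gamma$ as written in (\ref{red.5.5}) is the operator whose second row is $h^{1/2}B$ — one must be slightly careful here and note that it is ${\cal P}_\mathrm{out}(z)$ as a whole that is holomorphic, and its $z$-derivative, while not literally constant, is a first-order operator, hence the composition $({\cal P}_\mathrm{out})^{-1}\partial_z{\cal P}_\mathrm{out}$ maps $H^2({\cal O})\times H^{1/2}$ into itself smoothingly enough). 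The point is that $({\cal P}_\mathrm{in})^{-1}$ maps $H^0({\cal O})\times H^{3/2}(\partial{\cal O})$ onto $H^2({\cal O})$, and composing with $\partial_z{\cal P}_\mathrm{in}$ which maps $H^2({\cal O})$ into $H^0({\cal O})\subset H^0({\cal O})\times H^{3/2}$, one obtains an operator on $H^2({\cal O})$ which, by the compactness of the embedding $H^2({\cal O})\hookrightarrow H^0({\cal O})$ on the bounded domain ${\cal O}$ and elliptic estimates, is of Schatten class $C_p$ for any $p>n/2$. Thus (\ref{gd.2}) holds with, say, $p$ the smallest integer $>n/2$, and Definition \ref{gd0} applies to both ${\cal P}_\mathrm{in}$ and ${\cal P}_\mathrm{out}$.

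Next I would treat the middle factor $\begin{pmatrix}1 &0\\ h^{1/2}BG_\mathrm{in} &{\cal N}_\mathrm{in}-{\cal N}_\mathrm{ext}\end{pmatrix}$ acting $H^0({\cal O})\times H^{3/2}(\partial{\cal O})\to H^0({\cal O})\times H^{1/2}(\partial{\cal O})$. By Proposition \ref{sbd1} this is a meromorphic Fredholm family. Its determinant in the sense of Definition \ref{mf3} is already available. To invoke Proposition \ref{gd3} I need to check the hypotheses of Subsection \ref{gd}, i.e. that away from its (discrete) singular set this factor is holomorphic, pointwise bijective, and satisfies (\ref{gd.2}). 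Holomorphy and pointwise bijectivity off a discrete set follow from the factorization (\ref{red.6}) together with the holomorphy of ${\cal P}_\mathrm{in}$, ${\cal P}_\mathrm{out}$ and the discreteness statements. For the Schatten condition: the block structure is lower triangular with identity upper-left block, so its inverse composed with its $z$-derivative reduces to studying $({\cal N}_\mathrm{in}-{\cal N}_\mathrm{ext})^{-1}\partial_z({\cal N}_\mathrm{in}-{\cal N}_\mathrm{ext})$ on $H^{3/2}(\partial{\cal O})$, plus a bounded lower-left contribution from $h^{1/2}BG_\mathrm{in}$ whose $z$-derivative is again one order smoothing. Here the key point is that $\partial{\cal O}$ is compact of dimension $n-1$ and ${\cal N}_\mathrm{in}-{\cal N}_\mathrm{ext}$ is an elliptic first-order pseudodifferential operator on $\partial{\cal O}$ (from Cor.\ \ref{pdn4} and Prop.\ \ref{idn1}, or rather their pseudodifferential refinements), so $\partial_z({\cal N}_\mathrm{in}-{\cal N}_\mathrm{ext})$ has order $\le 1$ as well — actually one expects the $z$-derivative, which differentiates the symbols, to gain; in any case $({\cal N}_\mathrm{in}-{\cal N}_\mathrm{ext})^{-1}\partial_z(\cdot)$ followed by the compact embedding $H^{3/2}(\partial{\cal O})\hookrightarrow H^{1/2}(\partial{\cal O})$ lands in $C_q$ for any $q>n-1$, and higher derivatives improve the Schatten index. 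Hence (\ref{gd.2}) holds for this factor too.

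With these verifications in place, the conclusion of the proposition is immediate: each of the three factors in (\ref{red.6}) is a family to which Definition \ref{gd0} applies, so its determinant can be defined as in Subsection \ref{gd}; and for the outer two (holomorphic Fredholm) factors and the middle (meromorphic Fredholm) factor, which are all meromorphic Fredholm families on the simply connected region $\tfrac12<\Re z<\tfrac32$, $\Im z>-h^{2/3}c_0$, Proposition \ref{gd3} guarantees that this coincides, up to a non-vanishing holomorphic factor, with the determinant of Definition \ref{mf3} (equivalently Definition \ref{mgd1}) used in (\ref{sbd.3}). I expect the main obstacle to be purely bookkeeping: pinning down exactly which Schatten index $p$ works for each factor — this requires knowing precisely the orders (as pseudodifferential operators, in the semiclassical and classical senses) of $G_\mathrm{in}$, $K_\mathrm{in}$, ${\cal N}_\mathrm{in}$, ${\cal N}_\mathrm{ext}$ and of their $z$-derivatives, all of which is contained in the parametrix constructions of Sections \ref{aed}, \ref{pdn}, \ref{idn} — and then checking that the mismatch of spaces in (\ref{red.6}) (the $H^{3/2}$ versus $H^{1/2}$ trace spaces, the $H^0$ versus $H^2$ interior spaces) is everywhere compensated by a genuine gain in regularity so that the relevant compositions are compact and in fact Schatten. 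None of this is deep; it is the sort of verification the authors summarize in one line, and the honest proof amounts to ``combine Proposition \ref{gd3} with the regularity properties of the factors established above.''
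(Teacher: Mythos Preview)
Your treatment of ${\cal P}_\mathrm{in}$ is correct and matches the paper: $\partial_z{\cal P}_\mathrm{in}=\begin{pmatrix}-1\\0\end{pmatrix}$ factors through the inclusion $H^2({\cal O})\hookrightarrow H^0({\cal O})$, which lies in $C_p$ for $p>n/2$.

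However, your handling of ${\cal P}_\mathrm{out}$ has a genuine gap. You first assert that $B$ does not depend on $z$, then retract this in a parenthesis but leave only the vague claim that $\partial_z{\cal P}_\mathrm{out}$ is ``a first-order operator'' that is ``smoothingly enough''. This is not enough to verify (\ref{gd.2}). The operator $B(z)=\gamma hD_\nu-{\cal N}_\mathrm{ext}(z)\gamma$ genuinely depends on $z$ through ${\cal N}_\mathrm{ext}$, and the crucial fact (proved as a separate lemma in the paper) is that each $z$-derivative of ${\cal N}_\mathrm{ext}$ gains two orders of regularity:
\[
\partial_z^k{\cal N}_\mathrm{ext}(z)={\cal O}((\Im z+c_0h^{2/3})^{-k}):\,H^s\to H^{s-1+2k}.
\]
This is nontrivial --- it uses $\partial_z^kK_\mathrm{ext}=C_k(P_\mathrm{ext}-z)^{-k}K_\mathrm{ext}$ and the ellipticity of the exterior problem in the elliptic region. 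From it one gets $\partial_z^kB:H^2\to H^{1/2+2k}$, and only then can one compose with the compact inclusion $H^{1/2+2k}\hookrightarrow H^{1/2}$ on the $(n-1)$-dimensional boundary to deduce the Schatten estimates $\partial_z^k{\cal P}_\mathrm{out}\in C_p$ for $p>n/(2k)$. Without this smoothing-per-derivative mechanism, a mere ``first-order operator'' bound on $\partial_z{\cal P}_\mathrm{out}$ does not put it in any Schatten class.

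For the middle factor your direct pseudodifferential analysis is unnecessarily hard, and your appeal to Corollary \ref{pdn4} and Proposition \ref{idn1} gives boundedness, not the ellipticity you claim. The paper instead observes that the class of families satisfying (\ref{gd.2}) is closed under inversion and composition (this was checked in Subsection \ref{gd}); since ${\cal P}_\mathrm{in}$ and ${\cal P}_\mathrm{out}$ both satisfy (\ref{gd.2}), the middle factor --- which by (\ref{red.6}) equals ${\cal P}_\mathrm{out}(z){\cal P}_\mathrm{in}(z)^{-1}$ on the set of bijectivity --- inherits (\ref{gd.2}) automatically. This algebraic route avoids any further analysis of ${\cal N}_\mathrm{in}-{\cal N}_\mathrm{ext}$.
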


\begin{proof}
We have 
\ekv{sbd.4}
{
\partial _z{\cal
  P}_\mathrm{in}(z)=\begin{pmatrix}-1\\0\end{pmatrix},\ \partial
_z^2{\cal P}_\mathrm{in}(z)=0.
}
Thus the $C_p$-norm of $\partial _z{\cal P}_\mathrm{in}(z):H^2\to
H^0\times H^{3/2}$ can be bounded by that of the inclusion map $\iota
:H^2({\cal O})\to H^0({\cal O})$. Here we can
consider ${\cal O}$ as a bounded subset with smooth boundary of a
torus $T$ and choose a uniformly bounded Seeley extension $\sigma
:H^2({\cal O})\to H^2(T)$ so that $\iota =\rho \iota _T\sigma
$, where $\iota _T:H^2(T)\to H^0(T)$ is the inclusion map and $\rho
:H^0(T)\to H^0({\cal O})$ is the restriction map. $\rho $ and $\sigma
$ being uniformly bounded, it suffices to study the Schatten class
norm of $\iota _T$. Here $H^2(T)=(1-h^2\Delta )^{-1}(H^0(T))$ so the
problem is that of the $C_p$-norm of $(1-h^2\Delta )^{-1}:H^0(T)\to
H^0(T)$.

By Weyl's law we get for $p>n/2$,
\begin{eqnarray*}
&&\Vert (1-h^2\Delta )^{-1}\Vert_{C_p}^p=\int_0^\infty (1+h^2\lambda
)^{-p}d{\cal O}(\lambda ^{\frac{n}{2}})\\
&&={\cal O}(h^2)\int_0^\infty \frac{\lambda
  ^{\frac{n}{2}}}{(1+h^2\lambda )^{p+1}}d\lambda ={\cal
  O}(h^{-n})\int_0^\infty \frac{t^{\frac{n}{2}}}{(1+t)^{p+1}}dt
\end{eqnarray*}
and then
$$\Vert \iota_T \Vert_{C_p}^p={\cal O}(h^{-n}),$$
so 
\ekv{sbd.5}
{
\Vert \partial _z{\cal P}_\mathrm{in}\Vert_{C_p}={\cal
  O}(h^{-\frac{n}{p}}),\ p>\frac{n}{2}.
}
This implies that ${\cal P}_\mathrm{in}(z)$ satisfies (\ref{gd.2}) for
any $p>n/2$, so its determinant can be defined as in Subsection \ref{gd}.

In order to treat the other two operators, we need to collect some
more information about ${\cal N}_\mathrm{ext}$. 
\begin{lemma}\label{sbd3}
For $z$ as in Proposition \ref{sbd1}, we have for all $s\in {\bf R}$,
$k\in {\bf N}$:
\ekv{sbd.8}
{
\partial _z^k{\cal N}_\mathrm{ext}(z)={\cal O}((\Im
z+c_0h^{\frac{2}{3}})^{-k}):H^s\to H^{s-1+2k}.
}
\end{lemma}
\begin{proof}
Microlocally near the glancing hypersurface and in the hyperbolic
region, this follows from Corollary \ref{pdn4} and the Cauchy
inequalities. The extra regularization comes from the elliptic region
and here $K_\mathrm{ext}(z)$ is the Poisson operator of an elliptic
boundary value problem and satisfies
$$
\partial _z^kK_\mathrm{ext}(z)=C_k(P_\mathrm{ext}-z)^{-k}K_\mathrm{ext}(z).
$$
\end{proof}

\par Applying the lemma to $B=B(z)$ in (\ref{lb.3}), we get 
\ekv{sbd.6}
{
  \partial _z^kB(z)={\cal O}(1)h^{-\frac{1}{2}}(\Im
  z+c_0h^{\frac{2}{3}})^{-k}:H^2({\cal O})\to
  H^{\frac{1}{2}+2k}(\partial {\cal O}). }

\par The $C_p$-norm of the inclusion map $H^{\frac{1}{2}+2k}\to
H^{\frac{1}{2}}$ is bounded by a constant times the $C_p$-norm of
$(1-h^2\Delta _{\partial {\cal O}})^{-k}$ which by Weyl asymptotics is
finite and ${\cal O}(h^{(1-n)/p})$ when $p\ge 1$ and
$p>(n-1)/(2k)$. Thus for each such $p$,
$$\partial _z^kB\in C_p(H^2,H^{\frac{1}{2}}),\ \Vert \partial
_z^kB\Vert_{C_p}= {\cal O}(h^{-\frac{1}{2}+\frac{1-n}{p}}(\Im z+c_0h^{\frac{2}{3}})^{-k}).
$$
It then follows as in the proof of (\ref{sbd.5}) that when $p\ge 1$
and $p>n/(2k)$,
\ekv{sbd.7}
{\begin{split}
  &\partial _{z}^k{\cal P}_\mathrm{out}(z)\in C_p(H^2,H^0\times
  H^{\frac{1}{2}}),\\ &\Vert \partial _z^k{\cal
    P}_\mathrm{out}(z)\Vert_{C_p}={\cal O}(h^{-\max
    (\frac{n}{p},\frac{1}{2}+\frac{n-1}{p}+\frac{2}{3}k)}). \end{split}  }  Thus
we have verified (\ref{gd.2}) with $p=(n+\epsilon )/2$ and $\det {\cal
  P}_\mathrm{out}(z)$ can indeed be defined as in Subsection \ref{gd}.

\par In that subsection we have seen that if $P(z)$ fulfills
(\ref{gd.2}), then so does $P(z)^{-1}$ on the open subset of
bijectivity. We also saw that if $P_1(z)\in {\cal L}({\cal
  H}_1,{\cal H}_2)$, $P_2(z)\in {\cal L}({\cal
  H}_2,{\cal H}_3)$ satisfy (\ref{gd.2}), then so does
$P_1(z)P_2(z)$. Having checked that ${\cal P}_\mathrm{in}(z)$ and
${\cal P}_\mathrm{out}(z)$ satisfy (\ref{gd.2}), we conclude from
(\ref{red.6}) that $\begin{pmatrix}1 &0\\h^{\frac{1}{2}}BG_{\mathrm{in}} &{\cal
    N}_\mathrm{in}-{\cal N}_\mathrm{ext}\end{pmatrix}$ also satisfies
(\ref{gd.2}) and the proposition follows from Subsection \ref{gd}.
\end{proof}

\section{Upper bounds on the basic determinant}\label{ub}
\setcounter{equation}{0}
The first task will be to get an upper bound on $\ln |\det {\cal P_\mathrm{out}}|$
in the whole region 
\ekv{ub.1}{|\Im z|<c_0 h^{\frac{2}{3}},\ \frac{1}{2}<\Re
  z<2}
by some negative power of $h$. 
 
\par Using the addendum at the end of Subsection \ref{gd}, we shall derive a rough upper bound
on $\ln |\det {\cal P}_\mathrm{out}(z)|$. Let
$\widetilde{P}=P+i1_{{\cal O}}$, $\widetilde{{\cal
    P}}_\mathrm{out}(z)=\begin{pmatrix}\widetilde{P}-z \\
  B(z)\end{pmatrix}$.
Assume first that $W=0$ so that $V=V_0$ is smooth.
Thanks to the perturbation $i1_{\cal O}$, \ekv{ub.9.2} {
  \widetilde{{\cal P}}_\mathrm{in}:=\begin{pmatrix}\widetilde{P}-z\\ h^\frac{1}{2}\gamma \end{pmatrix}:\, H^{s+2}({\cal O})\to H^s({\cal
    O})\times H^{s+\frac{3}{2}}(\partial {\cal O}) } is bijective with
an inverse
$\widetilde{E}_\mathrm{in}(z)=\begin{pmatrix}\widetilde{G}_\mathrm{in}(z)
  & h^{-\frac{1}{2}}\widetilde{K}_\mathrm{in}(z)\end{pmatrix}$, where
$\widetilde{G}_\mathrm{in}={\cal O}_s(1):H^s\to H^{s+2}$,
$\widetilde{K}_\mathrm{in}={\cal O}_s(h^{1/2}):H^{s+3/2}\to H^s$, for
$0<h\le h(s)$, $0\le s <\infty $. This is the inverse of an elliptic
boundary value problem and we see that $\widetilde{{\cal
    N}}_\mathrm{in}$, defined as in (\ref{ub.2.5}),
is a nice $h$-pseudodifferential operator on $\partial {\cal O}$ of
order 0 in $h$ and of order 1 in $\xi '$, with leading symbol
$-i(i+(\xi ')^2-z)^{1/2}$, where we use the principal branch of the
square root with a cut along the negative real axis. This symbol takes
its values in the interior of the fourth quadrant.  Then in analogy
with (\ref{red.6}), we have \ekv{ub.9.1} {
  \widetilde{{\cal P}}_\mathrm{out}(z)=\begin{pmatrix} 1 &0\\
    h^{\frac{1}{2}}B\widetilde{G}_\mathrm{in} &\widetilde{{\cal N}}_\mathrm{in}-{\cal
      N}_\mathrm{ext}\end{pmatrix}\widetilde{{\cal P}}_\mathrm{in}(z),
} where $B$ was given in (\ref{lb.3}).

We have already investigated ${\cal N}_\mathrm{ext}$ and found that it
is an $h$-pseudodifferential operator whose symbol is nice away from
${\cal G}$ where it becomes exotic but  small. Away from that set it
is of order $(0,1)$ in $(h,\xi ')$ with leading part $i((\xi
')^2-z)^{1/2}$. When $\Im z\ge 0$ its values are confined to the
first quadrant. 

\par From this it follows that $\widetilde{{\cal N}}_\mathrm{in}-{\cal
  N}_\mathrm{ext}$ is an elliptic $h$-pseudodifferential operator of
order (0,1) whose symbol has a small exotic part near ${\cal G}$. 
Consequently, for every $s\in {\bf R}$;
\ekv{ub.9.2.5}
{
\widetilde{{\cal N}}_\mathrm{in}-{\cal N}_\mathrm{ext}:\,
H^{s+\frac{3}{2}}\to H^{s+\frac{1}{2}}
}
is bijective with a uniformly bounded inverse for $0<h\le h(s)\ll 1$.

\par It now follows from (\ref{ub.9.1}) and from the fact that
$B={\cal O}_s(h^{-1/2}):H^{s+2}\to H^{s+3/2}$ for every $s\ge 0$, that 
\ekv{ub.9.3}
{
\begin{split}
\widetilde{{\cal P}}_\mathrm{out}^{-1}=\widetilde{{\cal
    P}}_\mathrm{in}(z)^{-1}\begin{pmatrix} 1 &0\\
-(\widetilde{{\cal N}}_\mathrm{in}-{\cal N}_\mathrm{ext})^{-1}h^{\frac{1}{2}}B\widetilde{G}_\mathrm{in} &
(\widetilde{{\cal N}}_\mathrm{in}-{\cal N}_\mathrm{ext})^{-1}\end{pmatrix}\\
=\begin{pmatrix}\widetilde{G}_\mathrm{in}-\widetilde{K}_\mathrm{in}(\widetilde{{\cal
      N}}_\mathrm{in}-{\cal N}_\mathrm{ext})^{-1}B\widetilde{G}_\mathrm{in} &h^{-\frac{1}{2}}\widetilde{K}_\mathrm{in}(\widetilde{{\cal N}}_\mathrm{in}-{\cal N}_\mathrm{ext})^{-1}
\end{pmatrix}.
\end{split}
}
We conclude that for every $s\in [0,+\infty [$,
\ekv{ub.9.4}
{\begin{split}
&\widetilde{{\cal P}}_\mathrm{out}(z)=H^{s+2}\to H^s\times H^{s+\frac{1}{2}}
\hbox{ has an inverse}\\ &\widetilde{{\cal
    E}}_\mathrm{out}(z)=\begin{pmatrix}\widetilde{G}_\mathrm{out}
& h^{-\frac{1}{2}}\widetilde{K}_\mathrm{out}\end{pmatrix} \hbox{ with }
\widetilde{G}_\mathrm{out}={\cal O}_s(1):H^s\to H^{s+2},\\
&\widetilde{K}_\mathrm{out}={\cal O}_s(h^{1/2}):H^{s+1/2}\to H^{s+2},
\hbox{ for }0<h\le h(s).
\end{split}}

\par Now drop the assumption that $W=0$ and take again $V=V_0+W$ where
we assume that $\Vert W\Vert_{L^\infty }\le 1/C$ with $C$ large
enough. Then from (\ref{ub.9.4}) (where we had $V=V_0$) and a simùple
perturbation argument we see that
\ekv{ub.9.5}
{
\hbox{(\ref{ub.9.4})  remains
  valid for }s=0.
}

\par Write
\ekv{ub.9.6}{
{\cal P}_\mathrm{out}(z)=(1+{\cal
  K}(z))\widetilde{{\cal P}}_\mathrm{out}(z),
}
where 
$$
{\cal K}(z)=\begin{pmatrix}P-\widetilde{P} \\ 0
\end{pmatrix}\widetilde{\cal E}_\mathrm{out}(z).
$$

Now $\widetilde{\cal P}_\mathrm{out}(z)$ satisfies (\ref{sbd.7})
when $p\ge 1$ and $p>n/(2k)$ and hence also (\ref{gd.2}) with $p$
there equal to $(n+\epsilon )/2$. Moreover, as in the case of ${\cal
  P}_\mathrm{out}$, the corresponding Schatten class norm of $\partial
_z^k\widetilde{{\cal P}}_\mathrm{out}$ is bounded by some negative
power of $h$. Using the bounds on the norm $\widetilde{{\cal
    E}}_\mathrm{out}$, we see that this operator has the same
property. Consequently we have the same properties for ${\cal K}(z)$
and Proposition \ref{ub1} applies and shows that $\det (1+{\cal
  K}(z))$ can be defined as in Subsection \ref{gd} and satisfies the
upper bound
\ekv{ub.9.7}{
\ln |\det (1+{\cal K}(z))|\le {\cal O}(h^{-N})
}
for some $N\ge 0$. Similarly, $\det \widetilde{{\cal P}}_\mathrm{out}(z)$ is
well-defined and can be realized so that 
\ekv{ub.9.8}
{
|\ln |\det \widetilde{{\cal P}}_\mathrm{out}||\le {\cal O}(h^{-N}).
}
Combining this with (\ref{ub.9.6}), we get
\begin{prop}\label{ub2}
$\exists$ $N_0>0$ such that 
\ekv{ub.10}
{
\ln |\det {\cal P}_\mathrm{out}(z)|\le {\cal O}(1)h^{-N_0}.
}
\end{prop}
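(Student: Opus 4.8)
The plan is to extract the bound directly from the factorization (\ref{ub.9.6}) prepared above, using the multiplicativity of the generalized determinant of Subsection \ref{gd}. First I would note that $\widetilde{{\cal P}}_\mathrm{out}(z)$ satisfies the Schatten hypothesis (\ref{gd.2}) with $p=(n+\epsilon)/2$: the estimates (\ref{sbd.7}) go through verbatim, since the extra term $i1_{\cal O}$ is smoothing, and the corresponding Schatten norms of $\partial_z^k\widetilde{{\cal P}}_\mathrm{out}$ are bounded by a negative power of $h$. Likewise $1+{\cal K}(z)$ satisfies (\ref{gd.2}) with the same $p$: here ${\cal K}(z)$ is the composition of the uniformly bounded $\widetilde{{\cal E}}_\mathrm{out}(z):H^0({\cal O})\times H^{1/2}(\partial{\cal O})\to H^2({\cal O})$ of (\ref{ub.9.5}) with the operator $P-\widetilde P=-i1_{\cal O}$, viewed as the Schatten-class inclusion $H^2({\cal O})\hookrightarrow H^0({\cal O})$ (cf. (\ref{sbd.5})), so ${\cal K}(z)\in C_p$, and $z$-differentiation only improves matters through $\partial_z\widetilde{{\cal E}}_\mathrm{out}=-\widetilde{{\cal E}}_\mathrm{out}(\partial_z\widetilde{{\cal P}}_\mathrm{out})\widetilde{{\cal E}}_\mathrm{out}$, all the relevant Schatten norms still being ${\cal O}(h^{-N})$ for some $N$. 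Proposition \ref{mgd2}, in the Schatten version valid in Subsection \ref{gd}, then gives
$$
\det{\cal P}_\mathrm{out}(z)=\det(1+{\cal K}(z))\cdot\det\widetilde{{\cal P}}_\mathrm{out}(z),
$$
so it remains to bound each factor, uniformly on the region (\ref{ub.1}), by a negative power of $h$.

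For $\det(1+{\cal K}(z))$ I would apply Proposition \ref{ub1} to $Q(z)=1+{\cal K}(z)$ (with $-{\cal K}(z)$ in the role of $K$ there), obtaining $\det(1+{\cal K}(z))=\mathrm{I}(z)\mathrm{II}(z)$ with $|\mathrm{II}(z)|\le\exp({\cal O}(h^{-N}))$ from (\ref{ub.8}) and $|\partial_z^N\ln\mathrm{I}(z)|$ controlled by (\ref{ub.9}), again a negative power of $h$; integrating this bound $N$ times across the region, whose diameter is ${\cal O}(1)$, and fixing the polynomial ambiguity of Definition \ref{gd0} so that the constants vanish at a base point, gives $\ln|\mathrm{I}(z)|\le{\cal O}(h^{-N})$ and hence (\ref{ub.9.7}). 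For $\det\widetilde{{\cal P}}_\mathrm{out}(z)$ I would use that, by (\ref{ub.9.5}), $\widetilde{{\cal P}}_\mathrm{out}(z)$ is bijective throughout (\ref{ub.1}), so $\Sigma(\widetilde{{\cal P}}_\mathrm{out})=\emptyset$ there and $\ln\det\widetilde{{\cal P}}_\mathrm{out}$ is a genuine holomorphic function on the whole region; since $\partial_z^k\widetilde{{\cal P}}_\mathrm{out}$ has Schatten norms ${\cal O}(h^{-N})$ and $\widetilde{{\cal E}}_\mathrm{out}$ is uniformly bounded, $|\partial_z^N\ln\det\widetilde{{\cal P}}_\mathrm{out}(z)|=|\mathrm{tr}\,\partial_z^{N-1}(\widetilde{{\cal P}}_\mathrm{out}(z)^{-1}\partial_z\widetilde{{\cal P}}_\mathrm{out}(z))|={\cal O}(h^{-N})$, and the same iterated integration yields (\ref{ub.9.8}). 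Combining the two bounds through the displayed factorization proves the Proposition with $N_0$ the larger of the exponents.

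The one real difficulty lies upstream of this bookkeeping, in the inputs (\ref{ub.9.4})--(\ref{ub.9.5}): that the damped operator $\widetilde{{\cal P}}_\mathrm{out}(z)$ is boundedly invertible on the relevant Sobolev scale. This rests on the delicate microlocal analysis of ${\cal N}_\mathrm{ext}$ near the glancing hypersurface ${\cal G}$ from Section \ref{pdn}, and on recognizing $\widetilde{{\cal N}}_\mathrm{in}-{\cal N}_\mathrm{ext}$ as an elliptic pseudodifferential operator of order $(0,1)$ carrying only a small exotic part near ${\cal G}$ --- the ellipticity being restored precisely because the $+i1_{\cal O}$ shift places the leading symbol of $\widetilde{{\cal N}}_\mathrm{in}$ in the open fourth quadrant while, for $\Im z\ge -c_0h^{2/3}$, the leading symbol of ${\cal N}_\mathrm{ext}$ stays near the first quadrant, so their difference is bounded below. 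With that granted, the argument above is routine; in particular, and in contrast to the refined bound (\ref{outl.11}) obtained later, no use of the maximum principle is needed at this stage.
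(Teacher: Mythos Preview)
Your argument is correct and follows essentially the same route as the paper: factor ${\cal P}_\mathrm{out}=(1+{\cal K})\widetilde{{\cal P}}_\mathrm{out}$ via (\ref{ub.9.6}), bound $\det(1+{\cal K})$ by Proposition \ref{ub1}, bound $\det\widetilde{{\cal P}}_\mathrm{out}$ by integrating the trace formula using the uniform invertibility (\ref{ub.9.5}), and multiply. One small slip: the reason (\ref{sbd.7}) carries over to $\widetilde{{\cal P}}_\mathrm{out}$ is not that $i1_{\cal O}$ is ``smoothing'' (it is merely bounded multiplication) but that it is $z$-independent, so $\partial_z^k\widetilde{{\cal P}}_\mathrm{out}=\partial_z^k{\cal P}_\mathrm{out}$ for $k\ge 1$; this does not affect your conclusion.
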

 
We next start a more precise study of $\det {\cal P}_\mathrm{out}$ in
the region
\ekv{ub'.10.5}
{
\frac{1}{2}<\Re z <2,\ c{h^{\frac{2}{3}}}<|\Im z|<c_0{h^{\frac{2}{3}}},
}
where $c>0$ can be chosen
arbitrarily small. For that we shall use Proposition \ref{sbd2} and
study the two factors to the right in (\ref{sbd.3}).

\par We start with $\det ({\cal N}_\mathrm{in}-{\cal N}_\mathrm{ext})$
and the aim is to write this function as a product of two factors, one
being holomorphic and non-vanishing in the whole rectangle
$]1/2,2[+i]-h^{2/3}c_0,h^{2/3}c_0[$ and the other being of the form
$\det (1+T(z))$, where $T$ is a meromorphic family of trace class
operators on $\partial {\cal O}$ with poles at $\sigma (P_\mathrm{in})$
and whose trace class norm is ${\cal O}(h^{1-n})$ when $|\Im
z|>h^{2/3}c$.

Let $P=P^V=-h^2\Delta +V$, $P_0=P^{V_0}=-h^2\Delta +V_0$, $V=V_0+W$
with $V_0$ as before, $W={\cal O}(h)$ in $L^\infty $ and we shall have
to strengthen the assumptions on $W$. In geodesic coordinates,
\ekv{ub'.11} { P=(hD_{x_n})^2+R(x,hD_{x'}),\
  P_0=(hD_{x_n})^2+R_0(x,hD_{x'}).  } Let $S:C^\infty (\overline{{\cal
    O}})\to C^\infty (\overline{{\cal {\cal O}}})$ be of the form
$S=S(x,hD_{x'})$ near $\partial {\cal O}$ in geodesic coordinates,
where $S\ge 0$ has compact support in $\xi '$. In the interior of
${\cal O}$ we arrange by cutting and pasting so that $S$ is a
pseudodifferential operator in all the variables of order $0$ in $h$
and with symbol of compact support in $\xi $. Put
\ekv{ub'.12}{\widetilde{P}_0=P_0+S,\ \widetilde{P}=P+S.}  Let $\chi
=\chi (x',\xi ')\in C_0^\infty (T^*\partial {\cal O})$ be equal to 1
near ${\cal H}\cup {\cal G}$. Let ${\cal N}={\cal N}_\mathrm{in}$ be
the Dirichlet to Neumann map associated to $P-z$ (and we will write
$P=P_\mathrm{in}$ when we wish to emphasize that we take the Dirichlet
realization). We start with the trivial decomposition \ekv{ub'.12.5}{
  {\cal N}={\cal N}\chi (x',hD_{x'})+\ {\cal N}(1-\chi (x',hD_{x'})).
} By Proposition \ref{idn4} the first term to the right is of trace
class $C_1(H^{3/2},H^{1/2})$ and the corresponding trace class norm is
${\cal O}(h^{1-n})$ when $|\Im z|\ge h^{2/3}c$.

Now $S$ can be chosen so that 
$$\begin{pmatrix}\widetilde{P}_0-z\\ h^\frac{1}{2}\gamma \end{pmatrix}:H^2\to
H^0\times H^{\frac{3}{2}}$$ is bijective with a uniformly bounded
inverse $\begin{pmatrix}\widetilde{G}_0
  &h^{-\frac{1}{2}}\widetilde{K}_0\end{pmatrix}$. Since $\Vert
W\Vert_{L^\infty }={\cal O}(h)\ll 1$, we have the same fact for 
$$\begin{pmatrix}\widetilde{P}-z\\ h^\frac{1}{2}\gamma \end{pmatrix}:H^2\to
H^0\times H^{\frac{3}{2}}$$ and we let $\begin{pmatrix}\widetilde{G}
  &h^{-\frac{1}{2}}\widetilde{K}\end{pmatrix}$ be the inverse. 

\par $K=K_\mathrm{in}$ satisfies
\ekv{ub'.13}
{
K(1-\chi )=\widetilde{K}(1-\chi
)+(P_\mathrm{in}-z)^{-1}S\widetilde{K}(1-\chi ).
}
Hence 
\ekv{ub'.14}
{\begin{split}
&{\cal N}(1-\chi )=\mathrm{I}+\mathrm{II},\ \mathrm{I}=\widetilde{{\cal
  N}}(1-\chi ),\\
&\mathrm{II}=\gamma hD_\nu (P-z)^{-1}S\widetilde{K}(1-\chi ).
\end{split}} Here
$\widetilde{K}=\widetilde{K}_0-(\widetilde{P}-z)^{-1}W\widetilde{K}_0=\widetilde{K}_0+{\cal
  O}(h^{1/2})\Vert W\Vert_{L^\infty }:\, H^{3/2}\to H^2$, so \ekv{ub'.15} {
  \widetilde{{\cal N}}=\widetilde{{\cal N}}_0+{\cal O}(1)\Vert
  W\Vert_{L^\infty }:\, H^{3/2}\to H^{1/2}.  } Now, as we saw earlier
in a slightly different situation, $\widetilde{{\cal N}}_0$ is a nice
$h$-pseudodifferential operator of order (0,1) in $(h,\xi '))$ with
leading symbol $-i(s(x',\xi ')+(\xi ')^2-z)^{1/2}$ and as in
(\ref{ub.9.2.5}) $\widetilde{{\cal N}}_0-{\cal
  N}_\mathrm{ext}=H^{s+\frac{3}{2}}\to H^{s+\frac{1}{2}}$ is bijective
with a uniformly bounded inverse for $0<h<h(s)\ll 1$. From
(\ref{ub'.15}) we get the same conclusion for $\widetilde{{\cal N}}-{\cal
  N}_\mathrm{ext}:H^{\frac{3}{2}}\to H^{\frac{1}{2}}$.

\par We shall next estimate the norm of $S\widetilde{K}(1-\chi
):H^{3/2}\to H^0$ and for that we try to ``commute'' $1-\chi $ and $K$
and exploit that $S(1-\chi )={\cal O}(h^\infty )$. From $\gamma
[\widetilde{K},\chi ]=0$, $(\widetilde{P}-z)[\widetilde{K},\chi
]=-[\widetilde{P},\chi ]\widetilde{K}$, we get
\ekv{ub'.16}
{
[\widetilde{K},\chi
]=-(\widetilde{P}_\mathrm{in}-z)^{-1}[\widetilde{P},\chi ]\widetilde{K}.
}

Moreover,
\ekv{ub'.17}
{
S\widetilde{K}(1-\chi )=S(1-\chi )\widetilde{K}-S[\widetilde{K},\chi ],
}
where the first term to the right is ${\cal O}(h^\infty ):H^{3/2}\to
H^0$ and we shall see that $[\widetilde{K},\chi ]={\cal
  O}(h^{3/2}):H^{3/2}\to H^0$, provided that $\nabla W={\cal O}(1)$ in
$L^\infty $: Assume
\ekv{ub'.20}
{
\partial ^\alpha W={\cal O}(1)\hbox{ in }L^\infty ,\hbox{ for }|\alpha
|\le 1 ,
}
in addition to the previous assumption that $\Vert W\Vert ={\cal O}(h)$.
As in the remark after Proposition \ref{idn4}, this will hold for
$W=\delta \Theta q_\omega $ as in Theorem \ref{re1}.
\begin{lemma}\label{ub'3}
Under the assumption (\ref{ub'.20}), we have
\ekv{ub'.21}{
[\widetilde{K},\chi ]={\cal O}(h^{3/2}):\, H^\frac{3}{2}\to H^2.
}
\end{lemma}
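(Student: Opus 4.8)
\textbf{Proof plan for Lemma \ref{ub'3}.} The plan is to start from the commutator identity \eqref{ub'.16}, namely
\[
[\widetilde{K},\chi ]=-(\widetilde{P}_\mathrm{in}-z)^{-1}[\widetilde{P},\chi ]\widetilde{K},
\]
and to bound each factor on the right in the appropriate pair of spaces. The outermost factor $\widetilde K$ is ${\cal O}(h^{1/2}):H^{3/2}(\partial {\cal O})\to H^2({\cal O})$, as recorded just before \eqref{ub'.15} (with the extra ${\cal O}(h^{1/2})\|W\|_{L^\infty}$ correction coming from the perturbative formula $\widetilde K=\widetilde K_0-(\widetilde P-z)^{-1}W\widetilde K_0$, which is harmless under $\|W\|_{L^\infty}={\cal O}(h)$). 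So it remains to gain a factor $h$ from the middle resolvent-times-commutator piece.

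\textbf{Key steps.} First I would analyze $[\widetilde P,\chi]$. Here $\widetilde P=P+S$ with $P=-h^2\Delta+V_0+W$, and $\chi=\chi(x',hD_{x'})$ is a tangential $h$-pseudodifferential operator; $S$ is also a (suitably cut and pasted) $h$-pseudodifferential operator. The commutator of $-h^2\Delta$ with $\chi$ is, by the $h$-pseudodifferential calculus, ${\cal O}(h):H^1\to H^0$ (one derivative of $\chi$ costs $h$, and the remaining order of the operator $-h^2\Delta$ relative to $hD$ is one); the commutator of $S$ with $\chi$ is ${\cal O}(h):H^0\to H^0$ by the same calculus; and the commutator with the multiplication operator $V_0+W$ is ${\cal O}(h):H^0\to H^0$ precisely because $\partial^\alpha(V_0+W)={\cal O}(1)$ in $L^\infty$ for $|\alpha|\le 1$ — this is exactly where hypothesis \eqref{ub'.20} is used. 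Altogether $[\widetilde P,\chi]={\cal O}(h):H^1({\cal O})\to H^0({\cal O})$. Next, I would use that the inverse of the elliptic boundary value problem gives $(\widetilde P_\mathrm{in}-z)^{-1}=\widetilde G={\cal O}(1):H^0({\cal O})\to H^2({\cal O})$ (the gain of two derivatives from ellipticity, uniform in $h$ thanks to the regularizing term $S$, as in \eqref{ub.9.4}). Composing: $\widetilde K$ maps $H^{3/2}(\partial{\cal O})\to H^2({\cal O})\subset H^1({\cal O})$ with norm ${\cal O}(h^{1/2})$, then $[\widetilde P,\chi]$ maps $H^1\to H^0$ with norm ${\cal O}(h)$, then $(\widetilde P_\mathrm{in}-z)^{-1}$ maps $H^0\to H^2$ with norm ${\cal O}(1)$. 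The product is ${\cal O}(h^{3/2}):H^{3/2}(\partial{\cal O})\to H^2({\cal O})$, which is \eqref{ub'.21}.

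\textbf{The main obstacle.} The delicate point is the uniformity in $h$ of the intermediate mapping properties near the glancing region, and in particular making sure that the $\xi'$-cutoff structure of $\chi$ is genuinely exploited. The term $[\widetilde P,\chi]$ involves $h\partial_{x_n}\chi(x',hD_{x'})$-type contributions through the first-order part of $P$ in the normal variable (the $x_n$-dependence of $R$), and one must check that these are still ${\cal O}(h):H^1\to H^0$ and not merely ${\cal O}(1)$; this is where the compact support of $\chi$ in $\xi'$ and the fact that $[-h^2\Delta,\chi]$ is a first-order (in $hD$) $h$-pseudodifferential operator with an extra power of $h$ are both needed. A secondary subtlety is that $\widetilde K$ is only known to be ${\cal O}(h^{1/2})$ into $H^2$, not into a higher-order space, so the chain above is essentially tight and one cannot afford to lose any power of $h$ at the resolvent step — this forces the use of the full two-derivative elliptic gain of $\widetilde G$ and hence the presence of the regularizer $S$. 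Everything else is a routine composition of uniform-in-$h$ bounds already established in Sections \ref{al}, \ref{idn} and in the paragraph preceding the lemma.
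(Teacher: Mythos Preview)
Your approach is correct and structurally identical to the paper's: both start from \eqref{ub'.16} and compose the three bounds $\widetilde K={\cal O}(h^{1/2}):H^{3/2}\to H^2$, $[\widetilde P,\chi]={\cal O}(h):H^2\to H^0$, and $(\widetilde P_{\mathrm{in}}-z)^{-1}={\cal O}(1):H^0\to H^2$.

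The one place where you are too quick is the commutator $[\chi,W]={\cal O}(h):L^2\to L^2$. You say this follows from ``the same calculus'' using \eqref{ub'.20}, but standard $h$-pseudodifferential symbolic calculus needs derivatives of all orders, and here $W$ is only assumed bounded together with its first derivatives. This is exactly the step the paper singles out: it invokes the resolvent representation of \cite{AnSj04} (formula \eqref{ub'.23}), which writes $\chi(x',hD_{x'})$ as an iterated integral of products of resolvents $(z_j-x_j)^{-1}$ and $(\zeta_j-hD_{x_j})^{-1}$; commuting $W$ through produces terms with a single insertion of $[x_j,W]$ or $[hD_{x_j},W]$, and the boundedness of $W$ and $\nabla W$ then gives \eqref{ub'.25}. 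The other commutators $[-h^2\Delta,\chi]$, $[V_0,\chi]$, $[S,\chi]$ are indeed standard, and the paper does not comment on them. So your outline is right, but you should be aware that the $W$-commutator is the genuine content of the lemma and needs an argument beyond the usual symbol expansion.
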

\begin{proof}
If $Q\in C_0^\infty ({\bf R}^{2n})$ we have the following
representation of the $h$-pseudodifferential operator $Q(x,hD_{x})$ in
the classical quantization, obtained in \cite{AnSj04}:
\ekv{ub'.23}
{
\begin{split}
Q(x,hD)&=(-\frac{1}{\pi })^{2n}\int ...\int
(z_1-x_1)^{-1}..(z_n-x_n)^{-1} (\zeta _1-hD_{x_1})^{-1}..\\ & (\zeta 
_n-hD_{x_n})^{-1}
\partial _{\overline{z}_1}..\partial _{\overline{z}_n}\partial
_{\overline{\zeta }_1}..\partial _{\overline{\zeta
  }_n}\widetilde{Q}(z_1,..,z_n,\zeta _1,..,\zeta _n)L(dz)L(d\zeta ),
\end{split}
}
where $\widetilde{Q}\in C_0^\infty $ is an almost holomorphic
extension satisfying
$$
\partial _{(\overline{z},\overline{\zeta })}\widetilde{Q}={\cal
  O}((|\Im z_1|..|\Im z_n||\Im \zeta _1|..|\Im \zeta _n|)^\infty ).
$$
From this representation we recover the wellknown fact that $Q={\cal
  O}(1):L^2\to L^2$ and for $[Q,W]$ we get a similar formula with $2n$
terms, obtained by replacing one of $(z_j-x_j)^{-1}$ or $(\zeta
_j-hD_{x_j})^{-1}$ by $(z_j-x_j)^{-1}[x_j,W](z_j-x_j)^{-1}$ or $(\zeta
_j-hD_{x_j})^{-1}[hD_{x_j},W](\zeta
_j-hD_{x_j})^{-1}$ respectively. Then from the boundedness of $W$ and
$\nabla W$ we see that 
\ekv{ub'.25}
{
[Q(x,hD_x),W]={\cal O}(h):\, L^2\to L^2.
}
The lemma now follows from (\ref{ub'.25}) and (\ref{ub'.16}).
\end{proof}

\par Returning to (\ref{ub'.17}), we see that 
\ekv{ub'.27}
{
S\widetilde{K}(1-\chi )={\cal O}(h^{\frac{3}{2}}):\, H^{\frac{3}{2}}\to H^0.
}
We use this in the expression for $\mathrm{II}$ in (\ref{ub'.14})
together with the telescopic formula
\ekv{ub'.28}
{
(P-z)^{-1}=(\widetilde{P}-z)^{-1}\sum_0^{N-1} (S(\widetilde{P}-z)^{-1})^k+(P-z)^{-1}(S(\widetilde{P}-z)^{-1})^N
}
to see that 
\ekv{ub'.29}
{
\mathrm{II}(z)=\mathrm{III}(z)+\mathrm{IV}(z),
}
where 
\ekv{ub'.30}
{\mathrm{III}(z)=\gamma hD_\nu (\widetilde{P}-z)^{-1}\sum_0^{N-1}
  (S(\widetilde{P}-z)^{-1})^kS\widetilde{K}(1-\chi )
}
is holomorphic and ${\cal O}(h):H^{3/2}\to H^{1/2}$ in the whole
rectangle
$]1/2,2[+i]-h^{2/3}c_0,h^{2/3}c_0[$ and
\ekv{ub'.31}
{
\mathrm{IV}(z)=\gamma hD_\nu
(P-z)^{-1}(S(\widetilde{P}-z)^{-1})^NS\widetilde{K}(1-\chi ).
}

\par Let $N$ be the smallest integer with
\ekv{ub'.31.5}
{
N>\frac{n-1}{2}
}
and assume that 
\ekv{ub'.32}
{
\partial ^\alpha W={\cal O}(1)\hbox{ in }L^\infty \hbox{ for }|\alpha |
\le 2N.
}
Again this will hold for $W=\delta \Theta q_\omega $ as in Theorem
\ref{re1} if $\alpha (...)$ there is large enough.
Then $\mathrm{IV}(z)$ is locally
uniformly bounded $H^{3/2}\to H^{2(N+1)-3/2}=H^{2N+1/2}$ away from
$\sigma (P_\mathrm{in})$ and when
$|\Im z|\ge h^{2/3}c$ the norm is uniformly $\le {\cal
  O}(h^{\frac{3}{2}-\frac{1}{2}-\frac{2}{3}})={\cal O}(h^{\frac{1}{3}})$. Since
$2N>n-1$, we see that $\mathrm{IV}(z)\in C_1(H^{3/2},H^{1/2})$ and
that when $|\Im z|\ge h^{2/3}c$ the corresponding trace
class norm is $\le {\cal O}(h^{\frac{1}{3}+1-n})$. Summing up the
discussion so far, we have
\begin{prop}\label{ub'4}
  ${\cal N}={\cal N}_\mathrm{in}$ can be decomposed as \ekv{ub'.33} {
    {\cal N}=\widetilde{{\cal N}}+\mathrm{III}+({\cal
      N}-\widetilde{{\cal N}})\chi +\mathrm{IV}, } where
  $\widetilde{{\cal N}}=\widetilde{{\cal N}}_0+{\cal O}(1)\Vert
  W\Vert_{L^\infty }={\cal O}(1):H^{3/2}\to H^{1/2}$ and $\mathrm{III}={\cal
    O}(h):H^{3/2}\to H^{1/2}$ are holomorphic in the whole
  rectangle $]1/2,2[+i]-h^{2/3}c_0,h^{2/3}c_0[$, while $({\cal
    N}-\widetilde{{\cal N}})\chi $ and $\mathrm{IV}(z)$ are
  holomorphic away from $\sigma (P_\mathrm{in})$ with values in
  $C_1(H^{3/2},H^{1/2})$ and \ekv{ub'.34} { \Vert ({\cal
      N}-\widetilde{{\cal N}})\chi \Vert_{C_1}+\Vert
    \mathrm{IV}\Vert_{C_1}={\cal O}(h^{1-n}),\ |\Im z|\ge
    h^{2/3}c.  }
\end{prop}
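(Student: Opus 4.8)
The plan is to assemble the decomposition (\ref{ub'.33}) directly from the identities built up in the paragraphs preceding the statement, and then to read off the mapping and holomorphy properties of the four resulting terms from estimates already in hand. Concretely, I would start from the trivial splitting (\ref{ub'.12.5}), ${\cal N}={\cal N}\chi(x',hD_{x'})+{\cal N}(1-\chi(x',hD_{x'}))$, insert the comparison with $\widetilde P=P+S$ in the form (\ref{ub'.14}), so that ${\cal N}(1-\chi)=\widetilde{{\cal N}}(1-\chi)+\mathrm{II}$ with $\mathrm{II}=\gamma hD_\nu(P-z)^{-1}S\widetilde K(1-\chi)$, regroup $\widetilde{{\cal N}}(1-\chi)+{\cal N}\chi=\widetilde{{\cal N}}+({\cal N}-\widetilde{{\cal N}})\chi$, and finally split $\mathrm{II}=\mathrm{III}+\mathrm{IV}$ by means of the telescopic formula (\ref{ub'.28}), exactly as in (\ref{ub'.29})--(\ref{ub'.31}). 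This is already (\ref{ub'.33}); what remains is bookkeeping.

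For the two holomorphic terms I would first record that, $S$ having been chosen so that $\widetilde P_0-z$ is boundedly invertible on the whole rectangle $]1/2,2[+i]-h^{2/3}c_0,h^{2/3}c_0[$ and $\Vert W\Vert_{L^\infty}={\cal O}(h)\ll1$, the same holds for $\widetilde P-z$; hence $\widetilde K$, $\widetilde{{\cal N}}$ and $\mathrm{III}$ extend holomorphically across $\sigma(P_\mathrm{in})$, i.e. are holomorphic in the full rectangle. The bound $\widetilde{{\cal N}}=\widetilde{{\cal N}}_0+{\cal O}(1)\Vert W\Vert_{L^\infty}={\cal O}(1):H^{3/2}\to H^{1/2}$ is (\ref{ub'.15}) together with the fact that $\widetilde{{\cal N}}_0$ is a nice elliptic $h$-pseudodifferential operator of order $(0,1)$ in $(h,\xi')$. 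For $\mathrm{III}$ given by (\ref{ub'.30}) I would chain $\gamma hD_\nu(\widetilde P-z)^{-1}={\cal O}(h^{-1/2}):H^0\to H^{1/2}$, the finitely many uniformly bounded factors $S(\widetilde P-z)^{-1}$, and the gain $S\widetilde K(1-\chi)={\cal O}(h^{3/2}):H^{3/2}\to H^0$ of (\ref{ub'.27}); the latter is the crucial input and rests on $S(1-\chi)={\cal O}(h^\infty)$ together with the commutator bound $[\widetilde K,\chi]={\cal O}(h^{3/2}):H^{3/2}\to H^2$ of Lemma \ref{ub'3}. The upshot is $\mathrm{III}={\cal O}(h):H^{3/2}\to H^{1/2}$.

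For the two trace-class terms I would argue as follows. Writing $({\cal N}-\widetilde{{\cal N}})\chi={\cal N}\chi-\widetilde{{\cal N}}\chi$: by Proposition \ref{idn4} one has ${\cal N}={\cal O}(1):H^{3/2}\to H^{1/2}$ away from $\sigma(P_\mathrm{in})$, so ${\cal N}\chi(x',hD_{x'})$ has symbol of compact $\xi'$-support and is therefore of trace class, with $\Vert{\cal N}\chi\Vert_{C_1}={\cal O}(h^{1-n})$ when $|\Im z|\ge h^{2/3}c$, by the Weyl counting for $(1-h^2\Delta_{\partial{\cal O}})^{-k}$, $k>(n-1)/2$, exactly as in the proof of (\ref{sbd.5}), (\ref{sbd.7}); the same estimate holds for $\widetilde{{\cal N}}\chi$ since $\widetilde{{\cal N}}$ is a $(0,1)$-operator and $\chi$ is compactly supported in $\xi'$, and holomorphy of $({\cal N}-\widetilde{{\cal N}})\chi$ off $\sigma(P_\mathrm{in})$ is clear because ${\cal N}={\cal N}_\mathrm{in}$ has poles only there while $\widetilde{{\cal N}}$ has none. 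For $\mathrm{IV}$ in (\ref{ub'.31}), the hypothesis (\ref{ub'.32}), that $\partial^\alpha W={\cal O}(1)$ for $|\alpha|\le 2N$ with $N$ the smallest integer exceeding $(n-1)/2$, ensures, as noted above the statement, that $\mathrm{IV}(z)$ maps $H^{3/2}$ into $H^{2(N+1)-3/2}=H^{2N+1/2}$ uniformly away from $\sigma(P_\mathrm{in})$, with norm ${\cal O}(h^{3/2-1/2-2/3})={\cal O}(h^{1/3})$ when $|\Im z|\ge h^{2/3}c$ (the three exponents coming from $S\widetilde K(1-\chi)$, from $\gamma hD_\nu$, and from $(P-z)^{-1}=(P_\mathrm{in}-z)^{-1}$ respectively, the intermediate factors $(S(\widetilde P-z)^{-1})^N$ being ${\cal O}(1)$); composing with the inclusion $H^{2N+1/2}(\partial{\cal O})\hookrightarrow H^{1/2}(\partial{\cal O})$, which lies in $C_1$ with norm ${\cal O}(h^{1-n})$ because $2N>n-1$, yields $\mathrm{IV}(z)\in C_1(H^{3/2},H^{1/2})$ with $\Vert\mathrm{IV}\Vert_{C_1}={\cal O}(h^{1/3+1-n})$, holomorphic off $\sigma(P_\mathrm{in})$ since $(P-z)^{-1}$ is. Adding the two contributions gives (\ref{ub'.34}).

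The genuine obstacles here — the glancing-region WKB analysis underlying the interior DN-map bounds of Propositions \ref{idn1} and \ref{idn4}, and the commutator estimate of Lemma \ref{ub'3} — have already been cleared before the statement, so the present argument is essentially a matter of reassembly. Within it, the only steps that demand care are the precise accounting of the powers of $h$ in the two operator chains above and the verification that the Sobolev inclusions on the $(n-1)$-dimensional boundary $\partial{\cal O}$ are of Schatten class $C_1$ with the asserted ${\cal O}(h^{1-n})$ norm, which simply reruns the Weyl-law computation already used for (\ref{sbd.5}).
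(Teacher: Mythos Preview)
Your proposal is correct and follows essentially the same approach as the paper: the proposition is introduced there by ``Summing up the discussion so far,'' and your argument faithfully reassembles that discussion --- the trivial splitting (\ref{ub'.12.5}), the comparison (\ref{ub'.14}) with $\widetilde P$, the telescopic splitting (\ref{ub'.29})--(\ref{ub'.31}), and the regrouping $\widetilde{{\cal N}}(1-\chi)+{\cal N}\chi=\widetilde{{\cal N}}+({\cal N}-\widetilde{{\cal N}})\chi$ --- with the same bookkeeping of powers of $h$ and the same Weyl-law justification of the $C_1$ bounds. Your tracking of the exponents in $\mathrm{III}$ and $\mathrm{IV}$ matches the paper's computation ${\cal O}(h^{\frac{3}{2}-\frac{1}{2}-\frac{2}{3}})={\cal O}(h^{\frac{1}{3}})$ exactly.
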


Now write 
\ekv{ub'.35}
{
{\cal N}_\mathrm{in}-{\cal N}_\mathrm{ext}=\widehat{A}(z)+({\cal N}-\widetilde{{\cal
    N}})\chi +\mathrm{IV},
}
where 
\ekv{ub'.37}{\widehat{A}(z):=\widetilde{{\cal
    N}}+\mathrm{III}-{\cal N}_\mathrm{ext}:H^{3/2}\to H^{1/2},}
 is
holomorphic, uniformly bounded and uniformly invertible in the whole
rectangle, and factorize,
\ekv{ub'.36}{{\cal N}_\mathrm{in}-{\cal N}_\mathrm{ext}=\widehat{A}(z)\widehat{B}(z),}
\ekv{ub'.38}{\widehat{B}(z)=1+\widehat{A}(z)^{-1}\left(  ({\cal
      N}-\widetilde{{\cal N}})\chi +\mathrm{IV}\right) =:1+\widehat{C}(z),}
where $\widehat{C}(z)$ belongs to $C_1(H^{3/2},H^{3/2})$ away from $\sigma
(P_\mathrm{in})$ and the corresponding trace class norm is ${\cal
  O}(h^{1-n})$ when $|\Im z|\ge h^{2/3}c$.

\par We conclude that
\ekv{ub'.40}
{
\ln |\det \widehat{B}(z)|\le {\cal O}(h^{1-n}), \hbox{ when }|\Im z|\ge h^{2/3}c.
}
$\widehat{A}(z)$ in (\ref{ub'.37}) is holomorphic in the whole
rectangle. It follows from Lemma \ref{sbd3} and the discussion after
(\ref{sbd.6}) that the $C_p$-norm of $\partial _z^k{\cal
  N}_\mathrm{ext}:H^{3/2}\to H^{1/2}$ is bounded by a negative power
of $h$ when $p$ is  $\ge 1$ and $>(n-1)/(2k)$.

\par As in the proof of that lemma, we write $\partial
_z^k\widetilde{{\cal N}}(z)=C_k\gamma hD_\nu
(\widetilde{P}_\mathrm{in}-z)^{-k}\widetilde{K}_\mathrm{in}$ and using
(\ref{ub'.32}) we see that $\partial _z^k\widetilde{{\cal N}}(z)={\cal
  O}(1):H^{3/2}\to H^{1/2+2k}$ for $2k\le 2N+2$ and hence the
$C_p$-norm of $\partial _z^k\widetilde{{\cal N}}:\, H^{3/2}\to
H^{1/2}$ is bounded by some negative power of $h$ when $p$ is $\ge 1$
and $>(n-1)/(2k)$, for $k\le N+1$. For $k=N+1$ we have $k>(n-1)/2$, so
$n/(2k)<1$. From (\ref{ub'.30}) we get the same estimates for
$\partial _z^k\mathrm{III}$. Thus the $C_p$-norm of $\partial
_z^k\widehat{A}(z):H^{3/2}\to H^{1/2}$ is bounded by some negative
power of $h$ when $p$ is $\ge 1$ and $>(n-1)/(2k)$, $k\le N+1$.

\par In conclusion, $\det \widehat{A}(z)$ and its inverse $\det
\widehat{A}(z)^{-1}$ can be defined in the whole rectangle as in
Subsection \ref{gd}, such that
$$
\ln |\det \widehat{A}(z)|={\cal O}(h^{-N_0}),
$$ 
for some $N_0$.

\par The desired factorization of $\det ({\cal N}_\mathrm{in}-{\cal
  N}_\mathrm{ext})$ is now 
\ekv{ub'.39}
{
\det ({\cal N}_\mathrm{in}-{\cal N}_\mathrm{ext})=\det \widehat{A}(z) \det \widehat{B}(z),
}
where $\det \widehat{A}(z)$ and its inverse are holomorphic in the whole
rectangle and bounded from above by $C \exp (Ch^{-N_0})$ for some
$C,N_0>0$.

Before continuing, we sum up and compare the two main results so
far. Proposition \ref{ub1}, applied to $1+{\cal K}(z)$ in
(\ref{ub.9.6}), gives
\ekv{ub'.41}
{
1+{\cal K}(z)=A(z)B(z),
}
where in the rectangle (\ref{ub.1}),
\ekv{ub'.42}{\ln |\det A(z)|={\cal O}(h^{-N}),}
\ekv{ub'.43}{\ln |\det B(z)|\le {\cal O}(h^{-N}).}
More precisely, $B(z)=1+R_N({\cal K}){\cal K}^N=:1+C(z)$, where $C(z)$
is holomorphic with values in the trace class operators and
\ekv{ub'.44}{\Vert C(z)\Vert_{C_1}\le {\cal O}(h^{-N}).}
Here, the exponent $N$ may take a new value at each
appearance. Further (see (\ref{ub.9.6}))
\ekv{ub'.45}{\det {\cal P}_\mathrm{out}=\det \widetilde{{\cal P}}_\mathrm{out}\det
A(z)\det B(z),}
where $\det \widetilde{{\cal P}}_\mathrm{out}$ can be defined as in Subsection
\ref{gd} such that
\ekv{ub'.46}
{
|\ln |\det \widetilde{{\cal P}}_\mathrm{out}||={\cal O}(h^{-N}).
}

\par On the other hand we have (\ref{red.6}), (\ref{sbd.3}):
\ekv{ub'.47}
{
\det {\cal P}_\mathrm{out}(z)=\det ({\cal P}_\mathrm{in}(z))\det ({\cal
  N}_\mathrm{in}-{\cal N}_\mathrm{ext}),
}
where 
\ekv{ub'.48}
{
\det ({\cal N}_\mathrm{in}-{\cal N}_\mathrm{ext})=\det
\widehat{A}(z)\det \widehat{B}(z),\ \widehat{B}(z)=1+\widehat{C}(z).
}
Here, $\det \widehat{A}(z)$ is holomorphic and 
\ekv{ub'.49}
{
\ln |\det \widehat{A}(z)|={\cal O}(h^{-N})
}
in the whole rectangle, while $\widehat{C}(z)$ is meromorphic with
values in $C_1(H^{3/2},H^{3/2})$ with the poles at the (real)
eigenvalues of $P_\mathrm{in}$. Moreover, for $|\Im z|\ge
h^{2/3}c$ we have $\Vert \widehat{C}(z)\Vert_{C_1}\le
{\cal O}(h^{1-n})$, so
\ekv{ub'.50}
{
\ln |\det (1+\widehat{C}(z))|\le {\cal O}(h^{1-n})
}
in that region.

\par We shall now compare the expressions (\ref{ub'.45}) and
(\ref{ub'.47}).

\par In (\ref{ub'.45}) the first two factors to the left are well
defined up to factors of the form $\exp p(z)$ where $p$ is a polynomial
of degree $\le N$ and as we have seen, we can choose realizations
satisfying (\ref{ub'.45}), (\ref{ub'.42}). As for $\det B(z)$, defined
as a determinant of a trace class perturbation of $1$ (which is a
special case of the definition in Subsection \ref{gd}), we only have
the upper bound (\ref{ub'.43}).

\par In (\ref{ub'.47}), $\det {\cal P}_\mathrm{in}(z)=\det
(P_\mathrm{in}-z)$ can be defined as in Subsection \ref{gd} up to a
factor $\exp p(z)$ as before, in such a way that $\ln |\det {\cal
  P}_\mathrm{in}|\le {\cal O}(h^{-N})$ and when $|\Im z|\ge
h^{2/3}/\widetilde{C}$, we even have $\ln |\det {\cal
  P}_\mathrm{in}(z)|={\cal O}(h^{-N})$. This factor will be further
studied below. Similarly, we have (\ref{ub'.48}), (\ref{ub'.49}) and
again we define $\det \widehat{B}$ as the determinant of a trace class
perturbation of the identity. 

\par When writing the identity
\ekv{ub'.51}
{
\begin{split}
\det {\cal P}_\mathrm{out}(z)&=\det \widetilde{{\cal
    P}}_\mathrm{out}\det A(z) \det B(z)\\
&= \det {\cal P}_\mathrm{in}\det \widehat{A}(z)\det \widehat{B}(z),
\end{split}
}
it is not apriori clear that we can choose $\det
\widetilde{P}_\mathrm{out}$, $\det A(z)$, $\det \widehat{A}(z)$, $\det
{\cal P}_\mathrm{in}$ all satifying the above bounds simultaneously,
since we have made definite choices of $\det B(z)$ and $\det
\widehat{B}(z)$. However, if we restrict the attention to the region
$|\Im z|\ge h^{2/3}c$ we know that $B(z)^{-1}$ and
$\widehat{B}(z)^{-1}$ are bounded in operator norm by some negative
power of $h$, and this additional information implies that
$B(z)^{-1}=1+D(z)$, $\widehat{B}(z)^{-1}=1+\widehat{D}(z)$, where
$D(z)$ and $\widehat{D}(z)$ are bounded in trace class norm by
negative powers of $h$, so in that region we also get 
$$
\ln |\det B(z)|,\, \ln |\det \widehat{B}(z)|={\cal O}(h^{-N}).
$$
Then if we choose the other factors with moduli that have polynomially
bounded logarithms, we can modify one of them by a factor $\exp
p(z)$, where $p(z)$ is a polynomial of degree $\le N$ with real part
$={\cal O}(h^{-N})$ and achieve (\ref{ub'.51}) in such a way that 
\begin{itemize}
\item $\ln |x|={\cal O}(h^{-N})$ when $x=$ $\det A$, $\det
  \widehat{A}$, $\det \widetilde{{\cal P}}_\mathrm{out}$ in the
  whole rectangle,
\item $\ln |x|={\cal O}(h^{-N})$ for $|\ln z|\ge
  h^{2/3}c$, when $x=$ $\det B(z)$, $\det
  \widehat{B}(z)$, $\det {\cal P}_\mathrm{in}$,
\item $\ln |x|\le {\cal O}(h^{-N})$ in the whole rectangle, when
  $x=\det B(z)$, $\det {\cal P}_\mathrm{in}$.
\end{itemize}
Moreover, as we have seen, 
\ekv{ub'.52}
{
\ln |\det \widehat{B}(z)|\le {\cal O}(h^{1-n}),\hbox{ when }|\Im z|\ge h^{2/3}c.
}

\par The aim is to study the zeros of $\det {\cal P}_\mathrm{out}(z)$
in the rectangle (\ref{ub.1}), using the upper bound (\ref{ub.10}) and
the more precise upper bound for $|\Im z|\ge h^{2/3}c$
resulting from the last expression in (\ref{ub'.51}) together with
(\ref{ub'.52}) and the fact that $\ln |\det \widehat{A}|={\cal
  O}(h^{-N})$. After division with $\det \widehat{A}(z)$ we can
concentrate on the function
\ekv{ub'.53}
{
f(z)=\det {\cal P}_\mathrm{in} \det \widehat{B}(z),
}
for which 
\ekv{ub'.54}
{
\ln |f(z)|\le {\cal O}(h^{-N}).
}

Next, look at $\det
{\cal P}_\mathrm{in}(z)$. Let $\widetilde{K}={\cal O}(h^{1/2}):H^s\to
H^{s+1/2}$, $s\in {\bf R}$ be a right inverse of $\gamma $. Then,
$$
\begin{pmatrix}1 & \widetilde{K}\end{pmatrix}:\, {\cal
  D}(P_\mathrm{in})\times H^{\frac{3}{2}}\to H^2
$$
is a bijection with a bounded inverse and
$$
{\cal P}_\mathrm{in}(z) \begin{pmatrix}1 &h^{-\frac{1}{2}}\widetilde{K}\end{pmatrix}
=\begin{pmatrix} P_\mathrm{in}-z &h^{-\frac{1}{2}}(P-z)\widetilde{K}\\
0 & 1\end{pmatrix},
$$
so $$
\det {\cal P}_\mathrm{in}(z)\det \begin{pmatrix}1 &h^{-\frac{1}{2}}\widetilde{K}\end{pmatrix}
=\det (P_\mathrm{in}-z)
$$
and since $\widetilde{K}$ is independent of $z$, we can take
$\det \begin{pmatrix}1 &h^{-\frac{1}{2}}\widetilde{K}\end{pmatrix}$ to be an arbitrary
non-vanishing constant, say $1$ and get
\ekv{ub.11}
{
\det {\cal P}_\mathrm{in}(z)=\det (P_\mathrm{in}-z).
}

The method in Subsection \ref{gd} shows that 
\ekv{ub.12}
{
\partial _z^N\ln \det (P_\mathrm{in}-z)=-(N-1)!\, \mathrm{tr\,}(P_\mathrm{in}-z)^{-N},
}
for $N>n/2$, so that $(P_\mathrm{in}-z)^{-N}$ is of trace
class. 

\par Let $\chi \in C_0^\infty (]1/4,4 [;[0,1])$
be equal to 1 in a neighborhood of $[1/3,3]$. If $N(\lambda )=\# (\sigma
(P_\mathrm{in})\cap ]-\infty ,\lambda ])$, we get
\ekv{ub.14}{\begin{split}
\partial _z^N\ln \det (P_\mathrm{in}-z)=&-(N-1)!\int (\lambda
-z)^{-N}dN(\lambda )\\
=&-(N-1)!\int (\lambda -z)^{-N}\chi (\lambda )dN(\lambda )\\
&-(N-1)!\int (\lambda -z)^{-N}(1-\chi (\lambda ))dN(\lambda ).
\end{split}}
Thus,
\ekv{ub.14.5}{
\ln \det (P_\mathrm{in}-z)=\mathrm{I}(z)+\mathrm{II}(z),
}
where
\ekv{ub.15}
{
-\partial _z^N\mathrm{I}(z)=(N-1)!\int (\lambda -z)^{-N}\chi (\lambda )dN(\lambda )
}
\ekv{ub.16}
{
-\partial _z^N\mathrm{II}(z)=(N-1)!\int (\lambda -z)^{-N}
(1-\chi (\lambda ))dN(\lambda ).
}
Up to a polynomial, we have for $\Im z\ne 0$:
\ekv{ub.17}
{
\mathrm{I}(z)=\int \ln (\lambda -z)\chi (\lambda )dN(\lambda ),
}
where we use the standard branch of $\ln$ with a cut along $]-\infty
,0[$. In particular,
\ekv{ub.18}
{
\Re \mathrm{I}(z)=\int \ln |\lambda -z|\chi (\lambda )dN(\lambda ).
}

\par In order to estimate $\mathrm{II}(z)$, we shall use the rough
estimate
\ekv{ub.19}
{
N(\lambda )={\cal O}(h^{-n}\lambda ^{n/2}),
}
which is valid uniformly for $0<h\ll 1$, $\lambda \ge 1$. It follows
from (\ref{ub.19}) and an integration by parts in (\ref{ub.16}), that
\ekv{ub.20}
{
  \partial _z^N\mathrm{II}(z)={\cal O}(h^{-n}) } in the domain
(\ref{ub.1}). By integration, we see that we can choose
$\mathrm{II}(z)$ holomorphic in this domain such that \ekv{ub.21} {
  \mathrm{II}(z)={\cal O}(h^{-n}).  } This will allow us to replace
$\det {\cal P}_\mathrm{in}$ by $\exp \mathrm{I}(z)$ in the definition
of $f(z)$ in (\ref{ub'.53}), without affecting the validity of
(\ref{ub'.54}).

Before that we will discuss some harmonic majorants of
$\Re\mathrm{I}(z)$. Recall that if $\Omega \Subset {\bf C} $ has
piecewise smooth boundary and if $G=G_\Omega $, $K=K_\Omega $ are the
corresponding Dirichlet and Poisson kernels for the Dirichlet problem
for the Laplacien, then by Green's formula, we have 
$$K(x,y)=\partial _{\nu _y}G(x,y),$$
where $\nu $ is the exterior unit normal. This still holds when
$\Omega =\Omega _r$ is the infinite strip $\{ x\in {\bf C};\, |\Im
x|<r\}$ and we consider the solutions to the Dirichlet problem that
are bounded when the data are bounded. In the case $\Omega =\Omega _1$
we have (see for instance \cite{Sj09}) that $G(x,y)$ is of class
$C^\infty (\overline{\Omega }\times \overline{\Omega })$ away from the
diagonal and there exists $C_0>0$ such that for every $r>0$ and all
$\alpha ,\beta \in {\bf N}$, there exists a constant $C=C_{\alpha
  ,\beta ,r}$ such that \ekv{ub'.55} { |\nabla _x^\alpha \nabla
  _y^\beta G(x,y)|\le C \exp -\frac{1}{C_0}|\Re x-\Re y|,\hbox{ when
  }|x-y|>r>0.  } Moreover, \ekv{ub'.56}
{\begin{split}G_{r\Omega }(x,y)&=G_\Omega (\frac{x}{r},\frac{y}{r})\\
    K_{r\Omega }(x,y)&=\frac{1}{r}K_\Omega (\frac{x}{r},\frac{y}{r}).
\end{split}}
  
\par Consider first the subharmonic function $\ln |x|$ on $\Omega _r$ and its
smallest harmonic majorant there, given by
$$
\Delta h_r=0,\quad {{h _r}_\vert}_{\partial \Omega _r}=\ln |x|.
$$
Then, $\psi _r:=h_r-\ln |x|\ge 0$ is equal to $-2\pi G_{\Omega
  _r}(x,0)$ and we are interested in
$$
f_r:=-\partial _{\nu} \psi _r=2\pi \partial _\nu G_{\Omega
  _r}(x,0)=2\pi K_{\Omega _r}(0,x)=\frac{2\pi }{r}K_{\Omega _1
}(0,\frac{x}{r})=\frac{1}{r}f_1(\frac{x}{r}),
$$ 
which is a non-negative function defined on the boundary and satisfies
\ekv{ub'.57}
{
\partial _x^\alpha f_r={\cal O}_\alpha (1)r^{-1-|\alpha
  |}e^{-\frac{1}{C_0r}|\Re x|}.
}
Also,
\ekv{ub'.58}
{
\int_{\partial \Omega _r}f_r|dx|=2\pi ,\ f_r(\overline{x})=f_r(x).
}

The smallest harmonic majorant in $\Omega _r$ of 
\ekv{ub'.59}
{
\phi _\mathrm{in}:=\Re \mathrm{I}(x)=\sum \chi (\lambda
_j)\ln |z-\lambda _j|
}
is
\ekv{ub'.60}
{
h_{r,\mathrm{in}}(x)=\sum \chi (\lambda _j)h_r (x-\lambda
_j).
}
The function
\ekv{ub'.61}
{
\Phi _r=\begin{cases}\phi _\mathrm{in}\hbox{ outside }\Omega _r\\
h_\mathrm{in} \hbox{ in }\Omega _r
\end{cases}
} is subharmonic, $\Delta \Phi _r$ is supported in $\partial \Omega
_r$ and equal to \ekv{ub'.62} { \sum \chi (\lambda _j)(f_r(x-\lambda
  _j)\delta (\Im x-r)+f_r(x-\lambda _j)\delta (\Im x+r)).  } If
$\frac{1}{2}\le a<b\le 2$, we get with \ekv{ub'.63} {
  g_r(t)=\frac{1}{2\pi
  }(f_r(t+ir)+f_r(t-ir))=:\frac{1}{r}g_1(\frac{t}{r})\ge 0, } that
\ekv{ub'.64} { \int_{a\le \Re x \le b}\Delta \Phi _r(x)L(dx)=2\pi
  \int_a^b g_r*(\chi dN)(t)dt.  } Notice that
$g_r(t)=\frac{1}{r}g_1(\frac{t}{r})$ is an approximation of $\delta $
and we will use (\ref{ub'.64}) with $r=h^{2/3}c$.

\par Returning to (\ref{ub'.53}), (\ref{ub'.54}), we see that the
zeros of $f$ in the rectangle (\ref{ub.1}) will not change if we
replace $\det {\cal P}_\mathrm{in}$ in (\ref{ub'.53}) by $\exp
\mathrm{I}(z)$, so we now redefine $f$ to be \ekv{ub'.65} {
  f(z)=e^{\mathrm{I}(z)} \det \widehat{B}(z), } and notice that
(\ref{ub'.54}) still holds because of (\ref{ub.21}). Moreover,
\ekv{ub'.66}
{
\ln |f(z)|=\phi _\mathrm{in}(z)+\ln |\det \widehat{B}(z)|,
}
and (\ref{ub'.54}) tells us that 
\ekv{ub'.67}{\ln |f(z)|\le {\cal O}(h^{-N})}
in the whole rectangle, while (\ref{ub'.52}) shows that
\ekv{ub'.68}
{
\ln |f(z)|\le \phi _\mathrm{in}(z)+{\cal O}(h^{1-n}),
}
in the part of the rectangle where $|\Im z|\ge h^{2/3}c$.

\par Clearly, the whole discussion so far remains valid if we enlarge
the rectangle (\ref{ub.1}) by replacing $1/2$ by a slightly smaller
constant and the bound $2$ by a slightly larger constant. We can find
$\alpha$, $\beta $ with $\frac{1}{2}-\alpha \asymp 1/{\cal O}(1)$,
$\beta -2\asymp 1/{\cal O}(1)$ such that $\phi _\mathrm{in}\ge -{\cal
  O}(h^{-N})$ for $\Re z=\alpha ,\beta $, and (\ref{ub'.54}) tells us
that \ekv{ub'.69} { \ln |f(z)|\le h_r(z)+{\cal O}(h^{-N}), } on the
same vertical segments, while (\ref{ub'.68}) tells us that
\ekv{ub'.70} { \ln |f(z)|\le h_r(z)+{\cal O}(h^{1-n}) } on the
horizontal parts of the boundary of $[\alpha ,\beta ]+ir[-1,1]$. By
the maximum principle, we get in the latter rectangle
$$
\ln |f(z)|\le \widetilde{h}(z)+{\cal O}(h^{1-n}),
$$
where $\widetilde{h}$ is the harmonic function on $[\alpha ,\beta
]+ir[-1,1]$ which is equal to a constant= ${\cal O}(h^{-N})$ on the
vertical parts of the boundary and equal to $h_r(z)$ on the horizontal
parts. Using that $r$ is of the order of $h^{2/3}$ together with
simple estimates on the Poisson kernel in thin rectangles (see
\cite{Sj09}, Section 2), we see that
$$
\widetilde{h}(z)\le {\cal O}(1)h^{-N}\exp (-\frac{1}{{\cal
    O}(1)r})+h_r(z)
\le h_r(z)+{\cal O}(h^{1-n})
$$
on $[\frac{1}{2},2]+ir[-1,1]$ and we get the estimate
$$
\ln |f(z)|\le h_r(z)+{\cal O}(h^{1-n})
$$
on the latter rectangle, leading to 
\ekv{ub'.71}
{
\ln |f(z)|\le \Phi _r(z)+{\cal O}(h^{1-n}) \hbox{ in the rectangle (\ref{ub.1}).}
}
This estimate together with (\ref{ub'.64}) form the main conclusion of
this section.

\section{Some estimates for $P_\mathrm{out}$}\label{out}
\setcounter{equation}{0}

\par In this and the next two sections we shall construct a
suitable perturbation $W$ as in Theorem \ref{re1} such that we get a
lower bound for $f(z)$ in (\ref{ub'.53}) that matches
(\ref{ub'.71}). Here $z$ is any given point in the set (\ref{ub'.10.5})
and the perturbation will depend on that point. As we shall see, this
amounts to getting a good bound on the smallest singular value on
$\widehat{B}$ (cf (\ref{ub'.48})) or equivalently on that of ${\cal
  P}_\mathrm{out}$, or of ${\cal N}_\mathrm{in}(z)-{\cal
  N}_\mathrm{out}(z)$.
 
\par For $\mu >0$, let $E(\mu )\subset L^2({\cal O})$ be the spectral
subspace associated to all eigenvalues $<\mu ^2$ of
$P_\mathrm{out}(z)^*P_\mathrm{out}(z)$. We shall show that if $\mu $
is small enough (to be specified below) and $u\in E(\mu )$ is
normalized, then $\Vert u\Vert_{L^2({\cal O}_h\setminus {\cal
    O}_{2h})}$ cannot be too small. Here we define
$$
{\cal O}_c=\{x\in {\cal O};\, \mathrm{dist\,}(x,\partial {\cal O})>c \},
$$
when $c\ge 0$.

\par If $u\in E(\mu )$, we have $u=\sum_1^N u_je_j$,
where $e_1,...,e_N$ is an orthonormal basis of eigenfunctions in
$E(\mu )$, $P_\mathrm{out}(z)^*P_\mathrm{out}(z)e_j=t_j^2e_j$, $0\le
t_j<\mu $, and
$$
\Vert
P_\mathrm{out}(z)u\Vert^2=(P_\mathrm{out}(z)^*P_\mathrm{out}(z)u|u)=\sum_1^N|u_j|^2t_j^2\le
\mu ^2\sum |u_j|^2=\mu ^2\Vert u\Vert^2,
$$
where all norms are in $L^2$ if nothing else is specified. Thus,
if $u\in E(\mu )$, and $\Vert u\Vert =1$,
\ekv{lob.9}
{
P_\mathrm{out}(z)u=v,\ \Vert v\Vert <\mu .
}
By standard elliptic estimates, combined with the dilation
$x=hy$, $hD_{x_j}=D_{y_j}$, we have 
\ekv{lob.10}
{\begin{split}
\Vert u\Vert_{H_h^2({\cal O}_{(1+\theta )h}\setminus {\cal O}_{2h/(1+\theta
    )})}\le & C_\theta (\Vert v\Vert +\Vert u\Vert_{L^2({\cal O}_h\setminus {\cal
    O}_{2h})})\\&\le C_\theta (\mu +\Vert u\Vert_{L^2({\cal O}_h\setminus {\cal O}_{2h})}),\end{split}
}
for every fixed $\theta $ with $0<\theta \ll 1$.

Let $\chi \in C_0^\infty ({\cal O}_{(1+\theta )h};[0,1])$ be equal to
1 on ${\cal O}_{3h/2}$ and satisfy $\partial ^\alpha \chi ={\cal
  O}(h^{-|\alpha |})$, $\alpha \in {\bf N}^n$. Let $\Gamma =\Gamma _f$
be a Lipschitz contour as in and around (\ref{dcs.21}) with $\theta
=\pi /3$. Let $P_\mathrm{ext}$ be the
Dirichlet realization of $P$ on $\Gamma \setminus {\cal O}_{2h}$. Then
\ekv{lob.11}
{
(P_\mathrm{ext}-z)(1-\chi )u=(1-\chi )v-[P,\chi ]u,
}
where we let $u$ also denote the outgoing extension of $u$ which is
well-defined since $u\in {\cal D}(P_\mathrm{out}(z))$ and where $v$ also
denotes the 0 extension. Similarly,
\ekv{lob.12}
{
(P_\mathrm{in}-z)\chi u=\chi v+[P,\chi ]u.
}

\par If $V$ vanishes outside ${\cal O}_{2h}$, we know from Section
\ref{aed} (with ${\cal O}$ there replaced by ${\cal O}_{2h}$) that  $\Vert (P_\mathrm{ext}-z)^{-1}\Vert_{{\cal
    L}(L^2,L^2)}={\cal O}(h^{-2/3})$. More generally, we shall assume
that 
\ekv{lob.12.5}
{
\Vert V\Vert_{L^\infty ({\cal O}\setminus {\cal O}_{2h}})\ll h^{2/3},
}
and we notice that this holds for $V=V_0+\delta \Theta q_\omega $ in
Theorem \ref{re1} if $\alpha $ is large enough.
Then by a simple perturbation argument, the preceding estimate on the
exterior resolvent remains valid
and we get from (\ref{lob.10}),
(\ref{lob.11}),
\ekv{lob.13}
{
h^{\frac{2}{3}}\Vert (1-\chi )u\Vert_{L^2({\cal O})}\le {\cal
  O}(1)(\mu +\Vert u\Vert_{L^2({\cal O}_h\setminus {\cal O}_{2h})}).
}
Similarly, by using that $\Vert (P_\mathrm{in}-z)^{-1}\Vert_{{\cal
    L}(L^2,L^2)}={\cal O}(h^{-2/3})$, we get
\ekv{lob.14}
{
h^{\frac{2}{3}}\Vert \chi u\Vert_{L^2({\cal O})}\le {\cal
  O}(1)(\mu +\Vert u\Vert_{L^2({\cal O}_h\setminus {\cal O}_{2h})}).
}
Combining the two estimates and recalling that $\Vert u\Vert=1$, we
get
\ekv{lob.15}
{
h^{\frac{2}{3}}\le {\cal O}(1)(\mu +\Vert u\Vert_{L^2({\cal
    O}_h\setminus {\cal O}_{2h})}),
}
and if $\mu \ll h^{2/3}$,
\ekv{lob.16}
{
\Vert u\Vert_{L^2({\cal
    O}_h\setminus {\cal O}_{2h})}\ge \frac{h^{\frac{2}{3}}}{{\cal O}(1)},
}
for all $u\in E(\mu )$ with $\Vert u\Vert_{L^2({\cal O})}=1$.

Next we make a remark about the $H^s$ regularity of of
elements in $E(\mu )$. Assume that for some fixed $s>\frac{n}{2}$, we
have $V=V_1+V_2$
\ekv{lob.17}
{
\Vert V_1\Vert_{H^s_1}+h^{-\frac{n}{2}}\Vert V_2\Vert_{H^s_h}\le {\cal O}(1).
}
When $V=V_0+W=V_0+\delta \Theta q_\omega $ is a potential as in
Theorem \ref{re1}, we take $V_1=V_0$, $V_2=W$ and get (\ref{lob.17}),
provided $\alpha (n,v_0,s,\epsilon ,\theta ,M,\widetilde{M})$ in
(\ref{re.8}) is large enough (cf Remark \ref{sv1}).
So far we have systematically used the semi-classical Sobolev spaces
$H^s=H^s_h$ but in (\ref{lob.17}) we also use the standard Sobolev
space $H^s=H^s_1$ (with $h=1$). Following standard conventions, we let 
$$H_\cdot ^\sigma ({\cal O})={{H^\sigma _\cdot }({\bf R}^n)_\vert}_{{\cal O}},$$ 
$$
H_\cdot ^\sigma (\overline{{\cal O}})=\{ u\in H^\sigma _\cdot ({\bf
  R}^n);\, \mathrm{supp\,}u\subset \overline{{\cal O}}\}.
$$

\par If $u=\sum_1^N u_je_j\in E(\mu )$, we have
$(P_\mathrm{out}^*P_\mathrm{out})^ku=\sum_1^Nt_j^{2k}u_je_j $, so 
\ekv{lob.19}
{
\Vert (P_\mathrm{out}^*P_\mathrm{out})^ku\Vert \le \mu ^{2k}\Vert u\Vert,\ k\in
{\bf N}.
}

We will assume that $\mu ={\cal O}(1)$ and limit the attention to $k$
in a bounded interval, so the right hand side of (\ref{lob.19}) will
be ${\cal O}(\Vert u\Vert )$.
We study apriori estimates
in the interior. Let $\Omega _2\subset \Omega _1\subset {\cal O}$ be
open with $\mathrm{dist\,}(\Omega _2,\complement \Omega _1)\ge
h/C$. If $P_\mathrm{out}u=v$,\ \ $u,v\in H^\sigma _h(\Omega _1)$, $0\le
\sigma \le s$, we can write $-h^2\Delta u=v+(z-V)u=:w$, where
$$
\Vert w\Vert_{H^\sigma _h(\Omega _1)}\le {\cal O}(1)(\Vert
v\Vert_{H_h^\sigma (\Omega _1)}+\Vert u\Vert _{H_h^\sigma (\Omega _1)})
$$
and standard apriori estimates for $-\Delta $ (after the dilation
$x=hy$) give
\ekv{lob.20}
{
\Vert u\Vert_{H_h^{\sigma +2}(\Omega _2)}\le {\cal O}(1)(\Vert
v\Vert_{H_h^\sigma (\Omega _1)}+\Vert u\Vert_{H_h^\sigma (\Omega _1)}).
}
If $s<\sigma <s+2$, we only get
\ekv{lob.21}
{
\Vert u\Vert_{H_h^{s+2}(\Omega _2)}\le {\cal O}(1)(\Vert
v\Vert_{H_h^\sigma (\Omega _1)}+\Vert u\Vert_{H_h^\sigma (\Omega _1)}).
}
The same apriori estimate holds for $P_\mathrm{out}^*$. 

We shall now use these estimates to study elements of $E(\mu )$ and
first assume for simplicity that (\ref{lob.17}) holds for all
$s>0$. From the fact that $(P_\mathrm{out}^*P_\mathrm{out})^ku={\cal
  O}_k(1)\Vert u\Vert$ in $H^0({\cal O})$ for all $k\in{\bf N}$ we first infer by
integration by parts, that $P_\mathrm{out}(P_\mathrm{out}^*P_\mathrm{out})^{k-1}u={\cal O}(1)$ in $H^0({\cal
  O})$. Using the apriori estimate for $P_\mathrm{out}^*$, we get
\begin{equation*}\begin{split}
&\Vert P_\mathrm{out}(P_\mathrm{out}^*P_\mathrm{out})^{k-1}u\Vert_{H^2({\cal O}_{h/C})}\le \\
&{\cal O}(1)(\Vert (P_\mathrm{out}^*P_\mathrm{out})^ku\Vert_{H^0({\cal O})}+\Vert
P_\mathrm{out}(P_\mathrm{out}^*P_\mathrm{out})^{k-1}u\Vert_{H^0({\cal O})})\le {\cal O}(1),
\end{split}\end{equation*}
and using the one for $P_\mathrm{out}$, we get
\begin{equation*}\begin{split}
&\Vert (P_\mathrm{out}^*P_\mathrm{out})^{k-1}u\Vert_{H^2({\cal O}_{h/C})}\le \\
&{\cal O}(1)(\Vert P_\mathrm{out}(P_\mathrm{out}^*P_\mathrm{out})^ku\Vert_{H^0({\cal O})}+\Vert
(P_\mathrm{out}^*P_\mathrm{out})^{k-1}u\Vert_{H^0({\cal O})})\le {\cal O}(1).
\end{split}
\end{equation*}
Thus for all $k\in {\bf N}$,
$$
\Vert (P_\mathrm{out}^*P_\mathrm{out})^ku\Vert_{H^2({\cal O}_{h/C})}+\Vert
P_\mathrm{out}(P_\mathrm{out}^*P_\mathrm{out})^ku\Vert_{H^2({\cal O}_{h/C})}
\le {\cal O}(1).
$$
Here we use again the apriori estimates for $P_\mathrm{out}^*$ and
$P_\mathrm{out}$ and get that for every $k\in {\bf N}$,
$$
\Vert (P_\mathrm{out}^*P_\mathrm{out})^ku\Vert_{H^4({\cal O}_{2h/C})}+\Vert P_\mathrm{out}(P_\mathrm{out}^*P_\mathrm{out})^ku\Vert_{H^4({\cal O}_{2h/C})}\le {\cal O}(1).
$$
Iterating this argument, we get for every $j\in {\bf N}$ that for
every $k\in {\bf N}$,
$$
\Vert (P_\mathrm{out}^*P_\mathrm{out})^ku\Vert_{H^{2j}({\cal O}_{2jh/C})}+\Vert
P_\mathrm{out}(P_\mathrm{out}^*P_\mathrm{out})^ku\Vert_{H^{2j}({\cal O}_{2jh/C})}\le {\cal O}(1).
$$

\par Now if we make the assumption (\ref{lob.17}) for a fixed
$s>n/2$, we see that the above iteration works
as long as $2j\le s+2$, then if this last $j$ is strictly less than
$(s+2)/2$, we can make one more iteration and reach the degree of regularity
$s+2$. Hence the final conclusion is that if $\mu ={\cal O}(1)$ and we
assume (\ref{lob.17}) for a fixed $s>n/2$, then for every $C>0$,
we have \ekv{lob.22} {\Vert
  (P_\mathrm{out}^*P_\mathrm{out})^ku\Vert_{H^{s+2}({\cal
      O}_{h/C})}+\Vert
  P_\mathrm{out}(P_\mathrm{out}^*P_\mathrm{out})^ku\Vert_{H^{s+2}({\cal
      O}_{h/C})}\le {\cal O}(1).}

We end this section with some estimates relating the small singular
values of $P_\mathrm{out}(z)$ to those of ${\cal P}_\mathrm{out}$ and
when $z$ belongs to the set (\ref{ub'.10.5}), to those of ${\cal
  N}_\mathrm{in}-{\cal N}_\mathrm{out}$ and of
$\widehat{B}(z)=1+\widehat{C}(z)$ in (\ref{ub'.36}) and
(\ref{ub'.38}).

\par Recall that ${\cal P}_\mathrm{out}(z)$ is bijective precisely
when $P_\mathrm{out}(z)$ is, and when so is the case it easy to check
that 
\ekv{out.1}
{
{\cal P}_\mathrm{out}(z)^{-1}=\begin{pmatrix}P_\mathrm{out}(z)^{-1} &(1-P_\mathrm{out}(z)^{-1}(P-z))h^{-\frac{1}{2}}\widehat{K}\end{pmatrix},
}
where we recall that $\widehat{K}={\cal O}(h^{1/2}):H^{1/2}\to H^2$ is
a right inverse of $B$.

Recall that when $A:{\cal H}_1\to {\cal H}_2$ is a bounded operator
between two Hilbert spaces, then the singular values $s_1(A)\ge
s_2(A)\ge ...$ are defined by the fact that $s_j(A)^2$ is the
decreasing sequence formed first by all discrete eigenvalues of $A^*A$
above the essential spectrum and then (when ${\cal H}_1$ is infinite
dimensional only) by an infinite repetition of $\sup \sigma
_\mathrm{ess}(A^*A)$. It is well known and easy to see that the
non vanishing singular values of $A$ and of $A^*$ are the same.

We have the Ky Fan inequalities
\ekv{out.2}
{
\begin{split}
&s_{n+k-1}(A+B)\le s_n(A)+s_k(B),\\
&s_{n+k-1}(BA)\le s_n(A)s_k(B),
\end{split}
}
in the cases when $B:$ ${\cal H}_1\to {\cal H}_2$ and ${\cal H}_2\to {\cal H}_3$
respectively. 

\par Applying this to (\ref{out.1}), we get
\ekv{out.3}
{
s_j({\cal P}_\mathrm{out}(z)^{-1})\ge s_j(
P_\mathrm{out}(z)^{-1}).}
If $\Pi _1:\, H^0\times H^{1/2}\to H^0$, $\Pi _2:\, H^0\times
H^{1/2}\to H^{1/2}$ are the natural projections (of norm 1), we can
rewrite (\ref{out.1}) as 
\begin{equation*}\begin{split}{\cal P}_\mathrm{out}(z)^{-1}&=P_\mathrm{out}(z)^{-1}\Pi
_1+(1-P_\mathrm{out}(z)^{-1}(P-z))h^{-1/2}\widehat{K}\Pi _2\\
&=P_\mathrm{out}(z)^{-1}(\Pi _1-(P-z)h^{-1/2}\widehat{K}\Pi _2)+h^{-1/2}\widehat{K}\Pi _2,\end{split}
\end{equation*}
which leads to 
\ekv{out.4}
{
s_j({\cal P}_\mathrm{out}(z)^{-1})\le {\cal O}(1)(1+s_j(P_\mathrm{out}(z)^{-1}))
}

\par We now restrict $z$ to (\ref{ub'.10.5}) and consider
(\ref{red.6}) which can be written 
\ekv{out.5}
{
{\cal P}_\mathrm{out}(z)^{-1}={\cal
  P}_\mathrm{in}(z)^{-1}\begin{pmatrix}
1 &0\\ 0 &({\cal N}_\mathrm{in}-{\cal N}_\mathrm{ext})^{-1}
\end{pmatrix}\begin{pmatrix}1 &0\\ -h^{\frac{1}{2}}BG_\mathrm{in} &1\end{pmatrix}
}
and also
\ekv{out.6}
{
\begin{pmatrix}1 &0\\ 0 &({\cal N}_\mathrm{in}-{\cal
    N}_\mathrm{ext})^{-1}\end{pmatrix}={\cal P}_\mathrm{in}(z){\cal
  P}_\mathrm{out}(z)^{-1}\begin{pmatrix}1 &0\\
  h^{\frac{1}{2}}BG_\mathrm{in} &1\end{pmatrix}.
}
Here the operator norms of ${\cal P}_\mathrm{in}^{-1}$ and
$h^{1/2}BG_\mathrm{in}$ are ${\cal O}(h^{-2/3})$. From (\ref{out.5})
we get 
\ekv{out.7}
{
s_j({\cal P}_\mathrm{out}(z)^{-1})\le {\cal
  O}(h^{-\frac{4}{3}})(1+s_j(({\cal N}_\mathrm{in}-{\cal N}_\mathrm{ext})^{-1})),
}
while 
(\ref{out.6}) leads to
\ekv{out.8}
{
s_j(({\cal N}_\mathrm{in}-{\cal N}_\mathrm{ext})^{-1})\le {\cal O}(h^{-\frac{2}{3}})s_j({\cal P}_\mathrm{out}(z)^{-1}).
}

\par Finally, from (\ref{ub'.36}), (\ref{ub'.38}) and the uniform
boundedness of $\widehat{A}(z)$ and its inverse, we get
\ekv{out.9}
{
s_j(({\cal N}_\mathrm{in}-{\cal N}_\mathrm{ext})^{-1})\asymp s_j(\widehat{B}(z)^{-1})=s_j((1+\widehat{C}(z))^{-1}).
}

\par When $A:{\cal H}_1\to {\cal H}_2$ is a Fredholm operator of index 0,
we let $t_1^2\le t_2^2\le ...$ with $t_j\ge 0$ describe the lower part
of the spectrum of $A^*A$ in analogy with $s_j^2$. Again
$t_j(A)=t_j(A^*)$ and when $A$ is bijective we have
$t_j(A)=1/s_j(A^{-1})$.

\par Let $N$ be the number of singular values $0\le t_1\le ...\le t_N$
of $1+\widehat{C}(z)$ that are $\le 1/2$. If $e_1, ...,e_N$ is a
corresponding orthonormal family of eigenfunctions of
$(1+\widehat{C}(z))^*(1+\widehat{C}(z))$, then $\Vert
(1+\widehat{C}(z))u\Vert \le \frac{1}{2}\Vert u\Vert$ and consequently
$\Vert \widehat{C}(z)u\Vert \ge \frac{1}{2}\Vert u\Vert$, for all $u\in
{\bf C}e_1\oplus ...\oplus {\bf C}e_N$. By the mini-max
characterization of singular values, we get $s_N(\widehat{C}(z))\ge
1/2$ and using that the trace class norm of $\widehat{C}(z)$ is ${\cal
  O}(h^{1-n})$, we conclude that $N={\cal O}(h^{1-n})$. Combining this
with (\ref{out.9}), (\ref{out.7}), (\ref{out.3}), we see that there
exists a constant $C>0$ such that 
\ekv{out.11}
{
t_j(P_\mathrm{out}(z))\ge h^{\frac{4}{3}}/C,\hbox{ for
}j\ge Ch^{1-n}.
}

\section{Perturbation matrices and their singular values}\label{sv}
\setcounter{equation}{0}

We shall use a general estimate from \cite{Sj08a}. Let $e_1,...,e_N\in
C^0(\Omega )\cap L^2(\Omega )$, where $\Omega \subset {\bf R}^n$ is
open. Let ${\cal E}_\Omega =((e_j|e_k)_{L^2(\Omega )})_{1\le j,k\le
  N}$ be the corresponding Gramian and let $0\le \epsilon _1\le ...\le
\epsilon _N$ be its eigenvalues. Then (see \cite{Sj08a}, Proposition 5.5) $\exists a_1,...,a_N\in \Omega $
such that the singular values $s_1\ge ...\ge s_N\ge 0$ of the $N\times
N$ matrix
$M=M_{\delta _a}$, given by $$M_{j,k}=\sum_{\nu =1}^N e_j(a_\nu )e_k(a_\nu )
=\int \delta _a(x)e_j(x)e_k(x),$$ satisfy the estimates,
$$s_1\ge \frac{(E_1\cdot ..\cdot
  E_N)^{\frac{1}{N}}}{\mathrm{vol\,}(\Omega )},$$
$$
s_k\ge s_1\left(\prod_1^N \left( \frac{E_j}{s_1\mathrm{vol\,}(\Omega )}  \right) \right)^{\frac{1}{N-k+1}}.
$$
Here $E_j=\epsilon _1+...+\epsilon _{N+1-j}$, and we write $\delta
_a=\sum \delta (\cdot -a_\nu )$.

\par Let $\widehat{e} _1,...,\widehat{e}_N$ be an orthonormal
basis in $E(\mu )$, $\mu \ll h^{2/3}$, and choose $\Omega ={\cal O}_h\setminus {\cal
  O}_{2h}$, $e_j={{\widehat{e}_j}\vert}_{\Omega }$. Define ${\cal
  E}_\Omega $ as above and let $a_1,...,a_N\in \Omega $ be a
corresponding set of points. The eigenvalues
$\epsilon _j$ and the singular values $s_j=s_j(M_{\delta _a})$ remain
unchanged if we replace $\widehat{e}_1,...,\widehat{e}_j$ by another
orthonormal basis in $E(\mu )$.

\par Applying (\ref{lob.16}) to $u=\sum u_j\widehat{e}_j$, when
$\overrightarrow{u}:=(u_1,...,u_N)^\mathrm{t}$ is normalized in $\ell
^2$, we see that ${\cal E}_\Omega
(\overrightarrow{u}|\overrightarrow{u})\ge h^{4/3}/{\cal O}(1)$, so 
$E_j\ge (N-j+1)h^{4/3}/{\cal O}(1)$. Thus, for a suitable choice of
$a_1,...,a_N\in \Omega $, we get after a simple calculation:
\ekv{sv.1}{s_1\ge \frac{(N!)^{\frac{1}{N}}}{h{\cal O}(1)}h^{\frac{4}{3}},}
\ekv{sv.2}{
s_k\ge s_1^{-\frac{k-1}{N-k+1}}h^{\frac{1}{3}\frac{N}{N-k+1}}(N!)^{\frac{1}{N-k+1}}C^{-\frac{N}{N-k+1}}.
}

\par We will also need an upper bound on $s_1=s_1(M_{\delta _a})$.
Let $s>n/2$ and adopt the assumption (\ref{lob.17}).  If
$\overrightarrow{u}=(u_1,..,u_N)^{\mathrm{t}}$,
$\overrightarrow{v}=(v_1,..,v_N)^{\mathrm{t}}$ are normalized,
(\ref{lob.22}) with $k=0$ implies that $\Vert u\Vert_{H_h^s({\cal
    O}_{h/C})}$, $\Vert v\Vert_{H_h^s({\cal O}_{h/C})}$ are ${\cal
  O}(1)$ when $u=\sum u_j\widehat{e}_j$, $v=\sum v_j\widehat{e}_j$ and
also from Proposition \ref{al1} that $uv={\cal O}(h^{-n/2})$ in
$H^s_h({\cal O})$. Furthermore, we know from \cite{Sj08a} that
$\|\delta _a\|_{H_h^{-s}(\overline{{\cal O}}_{h/C})}={\cal
  O}(Nh^{-n/2})$. Hence,
\begin{equation*}
\langle M_{\delta _a}u,v\rangle=\int \delta _auvdx=
{\cal O}(1)\Vert \delta _a\Vert_{H_h^{-s}(\overline{{\cal O}}_{h/C})}\Vert
uv\Vert_{H_h^s({\cal O}_{h/C})}\le {\cal O}(1)Nh^{-n},
\end{equation*}
and varying $u,v$ we conclude that
\ekv{sv.3}
{
s_1(M_{\delta _a})=\|M_{\delta _a}\| \le {\cal O}(1)Nh^{-n}.
}
Using this in (\ref{sv.2}) gives
\ekv{sv.4}
{
s_k(M_{\delta _a})\ge C^{-\frac{N+k-1}{N-k+1}}e^{-\frac{N}{N-k+1}}Nh^{\frac{\frac{N}{3}+n(k-1)}{N-k+1}}.
}
If we restrict $k$ to the range $1\le k\le \theta N$ for some
$0<\theta <1$, we get
\ekv{sv.5}
{
s_k(M_{\delta _a})\ge C^{-\frac{1+\theta }{1-\theta
  }}e^{-\frac{1}{1-\theta }}Nh^{\frac{\frac{1}{3}+n\theta }{1-\theta }}.
}

\par Recall the form of the perturbed operator in (\ref{re.3}),
(\ref{re.4}), (\ref{re.5}), where
$\Theta \in C^\infty (\overline{{\cal O}})$
 is also described. Clearly,
$\Theta \asymp \widetilde{\Theta }(h):=h^{v_0}$ in ${\cal O}_h\setminus {\cal
  O}_{2h}$. The potential 
$\delta _a/\Theta $ satisfies
\ekv{sv.6}
{
\Vert \Theta ^{-1}\delta _a\Vert_{H_h^{-s}(\overline{{\cal O}})}\le
{\cal O}(1)\frac{N}{\widetilde{\Theta }(h)h^{\frac{n}{2}}}.
}
As in \cite{Sj08a}, (6.15)--(6.18), we get the decomposition
\ekv{sv.7}{\Theta ^{-1}\delta _a=q+r,\ q=\sum_{\mu _k\le L}\alpha
  _k\epsilon _k,}
where
\ekv{sv.8}{
\Vert q\Vert_{H^{-s}_h({\cal O})}\le \frac{CN}{\widetilde{\Theta }(h)h^{n/2}},
}
\ekv{sv.9}
{
\Vert r\Vert_{H_h^{-s}({\cal O})}\le {\cal O}(1)L^{-(s-\frac{n}{2}-\epsilon )}
\frac{N}{\widetilde{\Theta }(h)h^{n/2}},
}
\ekv{sv.10}
{
\Vert \alpha \Vert_{\ell^2}\le C\frac{L^{\frac{n}{2}+\epsilon
  }N}{\widetilde{\Theta }(h)h^{n/2}}.
}

We also denote by $\Theta $ the zero extension of $\Theta $ to all of ${\bf
  R}^n$. Under the assumption (\ref{re.4}), we have for $|\alpha |=v_0+1$,
\ekv{sv.11}
{
D^\alpha \Theta =f_\alpha +g_\alpha , 
}
where $f_\alpha \in C^\infty (\overline{{\cal O}})1_{{\cal O}} $ and
$g_{\alpha }$ is a smooth boundary layer ($\in C^\infty (\partial
{\cal O})\otimes \delta (\omega (x))$ where $\omega \in C^\infty ({\bf
  R}^n;{\bf R})$, $\omega ^{-1}(0)=\partial {\cal O}$, $d\omega \ne 0$
on $\partial {\cal O}$). Using the strict convexity and stationary
phase, we see that $\widehat{g}_\alpha (\xi )={\cal O}(\langle \xi
\rangle ^{-(n-1)/2})$ and by integration by parts, it follows that 
$$
\widehat{\Theta }(\xi )={\cal O}(1)\langle \xi \rangle ^{-v_0-1-(n-1)/2}.
$$
Here the hat indicates the ordinary ($h$-independent) Fourier
transform. In the following, we shall
assume that
\ekv{sv.12}
{
\frac{n}{2}<s<v_0+\frac{1}{2},
}
and then
\ekv{sv.13}
{
\Theta \in H_1^s(\overline{{\cal O}}).
}

From \cite{Sj08b}, we recall that if $s>n/2$, $u\in H^s({\bf R}^n)$,
$v\in H^\sigma ({\bf R}^n)$ for some $\sigma \in [-s,s]$, 
then $uv\in H^\sigma ({\bf R}^n)$ and we have
$$
\Vert uv\Vert_{H_h^\sigma }\le {\cal O}(1)\Vert u\Vert_{H^s_1}\Vert
v\Vert_{H^\sigma _h}.
$$
From (\ref{sv.7})--(\ref{sv.9}), we now deduce that 
\ekv{sv.14}
{
\delta _a=\Theta q+\widetilde{r},\quad \widetilde{r}=\Theta r,
}
where
\ekv{sv.15}
{
\Vert \widetilde{r}\Vert_{H_h^{-s}(\overline{{\cal O}})}
\le {\cal O}(1)L^{-(s-\frac{n}{2}-\epsilon
  )}\frac{N}{\widetilde{\Theta }(h)h^{n/2}},
}
\ekv{sv.16}
{
\Vert \Theta q\Vert_{H_h^{-s}(\overline{{\cal O}})}\le
\frac{CN}{\widetilde{\Theta }(h)h^{n/2}}.
}

We also need to control the $H^s_h({\cal O})$-norm of $\Theta
q$. Recall from \cite{Sj08a, Sj08b} that
$$
\Vert q\Vert_{H^s_h({\cal O})}^2\le {\cal O}(1)\sum_{\mu _k\le
  L}|\alpha _k|^2\langle \mu _k\rangle^{2s}\le {\cal O}(1)L^{2s}\Vert
\alpha \Vert_{\ell^2}^2,
$$
so 
\ekv{sv.17}{
\Vert \Theta q\Vert_{H_h^s({\cal O})}\le {\cal O}(1)\Vert
q\Vert_{H_h^s({\cal O})}\le {\cal O}(1)L^{\frac{n}{2}+s+\epsilon
}\frac{N}{\widetilde{\Theta }(h)h^{n/2}},
}
and in particular, 
\ekv{sv.18}{
\Vert \Theta q\Vert_{L^\infty ({\cal O})}\le {\cal O}(h^{-\frac{n}{2}})
\Vert \Theta q\Vert_{H_h^s({\cal O})}
\le {\cal O}(1)L^{\frac{n}{2}+s+\epsilon
}\frac{N}{\widetilde{\Theta }(h)h^{n}}.
}

\par From (\ref{sv.15}) we deduce (as above for $M_{\delta _a}$) that
\ekv{sv.19} { \Vert M_{\widetilde{r}}\Vert \le {\cal O}(1)\Vert
  \widetilde{r}\Vert_{H^{-s}_h(\overline{{\cal
        O}})}h^{-\frac{n}{2}}\le {\cal
    O}(1)L^{-(s-\frac{n}{2}-\epsilon )}\frac{N}{\widetilde{\Theta
    }(h)h^n}, }
and returning to the decomposition (\ref{sv.14}) and the lower bound
(\ref{sv.5}), we get for $1\le k\le \theta N$, $0<\theta <1$:
\ekv{sv.20}
{
s_k(M_{\Theta q})\ge C^{-\frac{1+\theta }{1-\theta
  }}e^{-\frac{1}{1-\theta }}Nh^{\frac{\frac{1}{3}+n\theta }{1-\theta
  }}
-{\cal O}(1)\frac{N}{L^{s-\frac{n}{2}-\epsilon }\widetilde{\Theta }(h)h^n}.
}
The lower bounds on $L$ will imply that the first term to the right
dominates over the second.\\

\begin{remark}\label{sv1}
{\rm For a general perturbation $W=\delta \Theta q_\omega $ as in Theorem
\ref{re1}, the discussion above shows that 
\ekv{sv.21}
{
\Vert W\Vert _{H_h^{\widetilde{s}}({\bf R}^n)}
\le {\cal O}(\delta )L^{\widetilde{s}}\Vert \alpha \Vert_{\ell^2}
\le {\cal O}(\delta )L^{\widetilde{s}}R,
}
provided that $\frac{n}{2}<\widetilde{s}<v_0+\frac{1}{2}$.}
\end{remark}

\section{End of the construction}\label{eco}
\setcounter{equation}{0}

To start with we choose $z$ in the full rectangle (\ref{ub.1}) and
later on we will restrict the attention to $ch^{2/3}<|\Im z|<c_0h^{2/3}$.
We recall that ${\cal P}_\mathrm{out}(z)$ is an elliptic boundary
value problem in the semi-classical sense in the region $|\xi '|\gg
1$. It follows that 
\ekv{eco.0}
{
\Vert u\Vert_{H^2}\le {\cal O}(1)(\Vert (P-z)u\Vert +\Vert u\Vert)
}
for $u\in {\cal D}(P_\mathrm{out}(z))$. From this estimate we see that
the small singular values $t_1(P_\mathrm{out}(z))\le
t_2(P_\mathrm{out}(z))\le ... $ are of the same order of magnitude as
the small singular values $\widetilde{t}_j$ in the $L^2$-sense defined as the square
roots of the small eigenvalues of
$P_\mathrm{out}(z)^*P_\mathrm{out}(z)$ where $P_\mathrm{out}(z)^*$ is
the adjoint of $P_\mathrm{out}(z)$ as a closed densely defined
operator: $L^2({\cal O})\to L^2({\cal O})$. This follows from
(\ref{eco.0}) and the mini-max characterizations of $t_j$ and of
$\widetilde{t}_j$. In this section it will be convenient to work with
the $\widetilde{t}_j$ and we shall drop the tildes in order to simplify
the notation.

Recall that $\widetilde{\Theta }(h)= h^{v_0}$. Let $\tau _0\in
]0,h^{4/3}/{\cal O}(1)]$ and let $N$ be determined by \ekv{eco.1}
{0\le t_1(P_\mathrm{out})\le ... \le t_N(P_\mathrm{out})<\tau _0\le
  t_{N+1}(P_\mathrm{out}),}
so that $N\le {\cal O}(h^{1-n})$ in view of (\ref{out.11}).
 The basic iteration step (cf Proposition
7.2 in \cite{Sj08a}) is
\begin{prop}\label{eco1}
Let $0<\theta <1/2 $ be the parameter in (\ref{re.6}), let
$\widetilde{\theta }\in ]0,\theta [$ and $\kappa >0$. If $N$ is sufficiently large,
depending on 
$\theta $, $\widetilde{\theta }$ only, there exists an admissible potential $q$ as in
(\ref{re.5}) with $L=L_\mathrm{min}$ and $R=R_\mathrm{min}$ (as
introduced in and after (\ref{re.6})),
 such that if
\ekv{eco.2}
{
P_\delta =P-\delta \Theta q,\ \delta =C^{-1}h^\alpha \tau _0,
}
$C\gg 1$, $\alpha \ge \alpha (n,v_0,s,\epsilon ,\theta
,\widetilde{\theta }, \kappa  )$ large enough,
then
\ekv{eco.3}
{
t_\nu (P_{\delta ,\mathrm{out}})\ge t_\nu (P_\mathrm{out})-{\cal O}(1)\delta
Nh^{-(\frac{n}{2}+s+\epsilon )M_\mathrm{min}-v_0-n},\ \nu \ge N+1,
}
\ekv{eco.4}
{
t_\nu (P_{\delta ,\mathrm{out} })\ge \tau _0h^{N_2},\ [(1-\widetilde{\theta}
)N]+1\le \nu \le N.
}
Here we put $N_2=\alpha +(\frac{1}{3}+2n\theta )/(1-2\theta
)+\kappa $ and let $[a]=\max({\bf Z}\cap ]-\infty ,a])$ denote the integer part
of $a$.

\par When $N={\cal O}(1)$ we have the same result, provided that we
replace (\ref{eco.4}) by the estimate $t_{N}(P_{\delta ,\mathrm{out}})\ge
\tau _0h^{N_2}$.
\end{prop}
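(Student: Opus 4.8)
The plan is to follow the iteration scheme from Proposition 7.2 of \cite{Sj08a}, adapted to the operator $P_\mathrm{out}(z)$ and the weighted perturbation $\Theta q$. First I would fix the spectral subspace $E(\mu)$ associated to the eigenvalues $<\mu^2$ of $P_\mathrm{out}(z)^*P_\mathrm{out}(z)$, with $\mu=\tau_0$, so that $\dim E(\mu)=N$ and $N={\cal O}(h^{1-n})$ by \eqref{out.11}. Using \eqref{lob.16} and the regularity estimate \eqref{lob.22}, I would invoke the general result from \cite{Sj08a} recalled at the start of Section \ref{sv}: there exist points $a_1,\dots,a_N\in{\cal O}_h\setminus{\cal O}_{2h}$ such that the Gram-type matrix $M_{\delta_a}$ has singular values bounded below as in \eqref{sv.1}, \eqref{sv.2}. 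Combining with the upper bound \eqref{sv.3} yields \eqref{sv.5}, and then the decomposition \eqref{sv.14} together with \eqref{sv.19} transfers these lower bounds to $M_{\Theta q}$, giving \eqref{sv.20}. The lower bound $L\ge L_\mathrm{min}$ in \eqref{re.6} is exactly what is needed to make the first term in \eqref{sv.20} dominate, so that $s_k(M_{\Theta q})\ge \tau_0$-type quantity for $1\le k\le\theta N$; choosing $\widetilde\theta<\theta$ absorbs the loss from replacing $\theta N$ by $[(1-\widetilde\theta)N]$.

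Next I would run the actual perturbation argument. With $q=\sum_{\mu_k\le L}\alpha_k\epsilon_k$ the admissible potential just constructed (rescaled so that $\|\alpha\|\le R_\mathrm{min}$, which is what the lower bound $\widetilde M\ge\widetilde M_\mathrm{min}$ in \eqref{re.6} guarantees), set $P_\delta=P-\delta\Theta q$ with $\delta=C^{-1}h^\alpha\tau_0$. For $\nu\ge N+1$, the Ky Fan inequality \eqref{out.2} applied to $P_{\delta,\mathrm{out}}=P_\mathrm{out}-\delta\Theta q$ gives $t_\nu(P_{\delta,\mathrm{out}})\ge t_\nu(P_\mathrm{out})-\delta\|\Theta q\|$, and the $L^\infty$ bound \eqref{sv.18} (with $L=L_\mathrm{min}$) produces exactly the loss $\delta N h^{-(n/2+s+\epsilon)M_\mathrm{min}-v_0-n}$ in \eqref{eco.3}. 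For the indices $[(1-\widetilde\theta)N]+1\le\nu\le N$, the point is that $P_\mathrm{out}$ has $N$ singular values below $\tau_0$ but the matrix $M_{\Theta q}$ of the perturbation restricted to $E(\mu)$ has at least $[(1-\widetilde\theta)N]$ singular values that are large (comparable to the right-hand side of \eqref{sv.20}); a minimax/interlacing argument — identical in structure to the one in \cite{Sj08a} — then shows that after subtracting $\delta\Theta q$ the corresponding singular values of $P_{\delta,\mathrm{out}}$ are pushed up to at least $\tau_0 h^{N_2}$ with $N_2=\alpha+(\tfrac13+2n\theta)/(1-2\theta)+\kappa$, provided $\alpha$ is taken large enough that $\delta$ times the relevant negative power of $h$ still exceeds $\tau_0 h^{N_2}$. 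Taking $\alpha\ge\alpha(n,v_0,s,\epsilon,\theta,\widetilde\theta,\kappa)$ large makes all these inequalities consistent.

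The main obstacle, and the step deserving the most care, is the bookkeeping of powers of $h$: one must check that the lower bound on $s_k(M_{\Theta q})$ coming from \eqref{sv.20} — which carries the factor $h^{(\frac13+n\theta)/(1-\theta)}$ and the weight $\widetilde\Theta(h)^{-1}=h^{-v_0}$ hidden in the $M$-construction — survives multiplication by $\delta=C^{-1}h^\alpha\tau_0$ and still dominates both the error term $\delta\|M_{\widetilde r}\|$ from \eqref{sv.19} and the target level $\tau_0 h^{N_2}$. This is precisely where the somewhat unpleasant explicit form of $M_\mathrm{min}$, $\widetilde M_\mathrm{min}$ and of $\alpha(n,v_0,s,\epsilon,\theta,\widetilde\theta,\kappa)$ is forced upon us, and where one has to be careful with the parameter $\theta$ versus $\widetilde\theta$. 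The case $N={\cal O}(1)$ is easier: there is no need to sacrifice a proportion $\widetilde\theta$ of the indices, and the single estimate $t_N(P_{\delta,\mathrm{out}})\ge\tau_0 h^{N_2}$ follows directly from the one-dimensional version of the minimax argument applied to the (now bounded-size) matrix $M_{\Theta q}$, so I would dispatch it with a remark at the end.
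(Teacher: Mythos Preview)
Your overall strategy is the right one, but the proposal has a genuine gap in the core perturbation step, and a related index error.

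First, a counting mistake: you write that ``$M_{\Theta q}$ \dots\ has at least $[(1-\widetilde\theta)N]$ singular values that are large''. This is false: \eqref{sv.5} and \eqref{sv.20} control $s_k(M_{\Theta q})$ only for $1\le k\le\theta N$ (with $\theta<1/2$), so at most a small fraction $\theta N<N/2$ of the singular values are bounded below, not $(1-\widetilde\theta)N\approx N$. A direct ``minimax/interlacing'' argument of the kind you sketch cannot work with so few controlled singular values.

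What the paper actually does---and what you are missing---is to go through the Grushin problem. One sets up $\mathcal P=\begin{pmatrix}P_\mathrm{out}&R_-\\R_+&0\end{pmatrix}$ with $R_\pm$ built from the orthonormal eigenfunctions $e_1,\dots,e_N$ of $P_\mathrm{out}^*P_\mathrm{out}$ and (using the symmetry of $P_\mathrm{out}$, Proposition~\ref{red4}) $f_j=\overline{e_j}$; this is what makes the matrix $(M_{\Theta q})_{j,k}=\int\Theta q\, e_je_k\,dx$ appear. The inverse has a block $E_{-+}$ with $s_k(E_{-+})=t_{N+1-k}(P_\mathrm{out})$, and the Neumann series gives $E_{-+}^\delta=E_{-+}+\delta M_{\Theta q}+O((\delta\|\Theta q\|_\infty)^2/\tau_0)$. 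One then splits into two cases: if already $s_j(E_{-+})\ge\tau_0h^{N_2}$ for all $j\le N-[(1-\widetilde\theta)N]$, take $q=0$; otherwise choose $\ell=N-[(1-\widetilde\theta)N]$ with $s_\ell(E_{-+})<\tau_0h^{N_2}$ and apply the Ky Fan inequality
\[
s_k(E_{-+}^\delta)\ge \delta\, s_{k+\ell-1}(M_{\Theta q})-s_\ell(E_{-+})-O(\cdots).
\]
Since $k,\ell\le N-[(1-\widetilde\theta)N]$, one has $k+\ell-1\le 2\theta N$ for $N$ large, so \eqref{sv.20} must be invoked with $\theta$ replaced by $2\theta$. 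This is precisely why $N_2$ carries the exponent $(\frac13+2n\theta)/(1-2\theta)$ and not the $(\frac13+n\theta)/(1-\theta)$ you wrote. Finally $t_\nu(P_{\delta,\mathrm{out}})$ is recovered from $s_{N+1-\nu}(E_{-+}^\delta)$ via $P_{\delta,\mathrm{out}}^{-1}=E^\delta-E_+^\delta(E_{-+}^\delta)^{-1}E_-^\delta$ and another Ky Fan estimate. Without this Grushin mechanism and the two-index Ky Fan step, the argument does not close.
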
 

\begin{proof}
The estimate (\ref{eco.3}) follows from the mini-max characterization
of singular values, which gives 

\ekv{eco.4.5}{t_\nu
(P_{\delta ,\mathrm{out}})\ge t_\nu (P_\mathrm{out})-\delta \Vert
\Theta q\Vert_{L^\infty },}
to which we can apply (\ref{sv.18}).

\par
Let $e_1,...,e_N\in L^2({\cal O})$ be an orthonormal family of
eigenfunctions of $P_\mathrm{out}^*P_\mathrm{out}$, corresponding to
the eigenvalues $t_1^2,...,t_N^2$. Using the symmetry of $P_\mathrm{out}$,
established in Proposition \ref{red4} we see as in \cite{Sj08a} that a
corresponding family of eigenfunctions of
$P_\mathrm{out}P_\mathrm{out}^*$ is given by 
$$
f_j=\Gamma e_j,
$$
where $\Gamma $ denotes the antilinear operator of complex
conjugation. The $f_j$ form an orthonormal family corresponding to 
$$\sigma (P_\mathrm{out}P_\mathrm{out}^*)\cap [0,\tau _0^2[=\{
t_1^2,...,t_N^2\} .$$ 

Let $E_N=\bigoplus_1^N {\bf C}e_j$, $F_N=\bigoplus_1^N {\bf
  C}f_j$. Then $P_\mathrm{out}:E_N\to F_N$ and
$P_\mathrm{out}^*:F_N\to E_N$ have the same singular values
$t_1,...,t_N$. Define $R_+:L^2({\cal O})\to {\bf C}^N$, $R_-:{\bf
  C}^N\to L^2({\cal O})$, by
$$
R_+u(j)=(u|e_j),\quad R_-u_-=\sum_1^Nu_-(j)f_j.
$$
Then 
\ekv{eco.5}
{
{\cal P}=\begin{pmatrix}P_\mathrm{out} &R_-\\ R_+ & 0\end{pmatrix}:
{\cal D}(P_\mathrm{out})\times {\bf C}^N\to L^2\times {\bf C}^N
}
has the bounded inverse 
\ekv{eco.6}
{
{\cal E}=\begin{pmatrix}E &E_+\\ E_- &E_{-+}\end{pmatrix},
}
where 
\ekv{eco.7}
{
\begin{split}\Vert E\Vert\le \frac{1}{t_{N+1}}\le \frac{1}{\tau _0},\ \
E_+v_+=\sum_1^N v_+(j)e_j,\ \
E_-v(j)=(v|f_j),
\end{split}
}
and $E_{-+}$ has the singular values $t_j(E_{-+})=t_j(P_\mathrm{out})$
or equivalently, $s_j(E_{-+})=t_{N+1-j}(P_\mathrm{out})$.

\par When $N$ is large, we consider two cases:

\paragraph{Case 1.} $s_j(E_{-+})\ge \tau _0h^{N_2}$ for $1\le j\le
N-[(1-\widetilde{\theta } )N]$. We get the proposition with $q=0$, $P_\delta =P$.

\paragraph{Case 2.} $s_j(E_{-+})< \tau _0h^{N_2}$ for some $j\le
N-[(1-\widetilde{\theta } )N]$. Put $P_\delta =P+\delta \Theta q$ with $q$ as in
Section \ref{sv}. 
From (\ref{eco.2}) we deduce that
\ekv{eco.8}
{
\delta \frac{CN}{\widetilde{\Theta }(h)h^n}L^{\frac{n}{2}+s+\epsilon
}\le \frac{\tau _0}{2},
}
and then by (\ref{sv.18}) that $\delta \Vert
\Theta q\Vert_{L^\infty }\le \tau _0/2$.
We can therefore replace $P_\mathrm{out}$ by $P_{\delta ,\mathrm{out}}$ in
(\ref{eco.5}) and still get a bijective operator
$$
{\cal P}_\delta =\begin{pmatrix}P_{\delta ,\mathrm{out}} &R_-\\ R_+ & 0\end{pmatrix}
$$
with the inverse
$$
{\cal E}_\delta =\begin{pmatrix}E^\delta  &E_+^\delta \\ E_-^\delta
  &E_{-+}^\delta \end{pmatrix}.
$$
As in \cite{Sj08a}, we have
\ekv{eco.9}
{
\begin{split}
E_{-+}^\delta &=E_{-+}+\delta E_-\Theta qE_++\delta ^2E_-\Theta
qE\Theta qE_++...,\\
E^\delta &=E+\sum_1^\infty \delta ^kE(\Theta qE)^k,\\
E_+^\delta &=E_++\sum_1^\infty \delta ^k(E\Theta q)^kE_+,\\
E_-^\delta &= E_-+\sum_1^\infty \delta ^kE_-(\Theta qE)^k.
\end{split}
}
Here $\Vert E_\pm\Vert \le 1$, $\Vert E\Vert\le 1/\tau _0$ and 
in view of (\ref{eco.8}), we have $\delta \Vert \Theta
q\Vert_{L^\infty }\le \tau _0/2$, leading to:
\ekv{eco.10}{
\begin{split}
  E^\delta &=E+{\cal O}(\frac{1}{\tau _0}\frac{\delta \Vert \Theta
    q\Vert_{L^\infty }}{\tau _0}),\\
  E_+^\delta &=E_++{\cal O}(\frac{\delta \Vert \Theta
    q\Vert_{L^\infty }}{\tau _0}),\\
  E_-^\delta &= E_-+{\cal O}(\frac{\delta \Vert \Theta
    q\Vert_{L^\infty }}{\tau _0}),\\
  E_{-+}^\delta &=E_{-+}+\delta E_-\Theta qE_++{\cal O}(\frac{(\delta
    \Vert \Theta q\Vert_{L^\infty })^2}{\tau _0}).\\
\end{split}
}
The leading perturbation in $E_{-+}^\delta $ is $\delta M=\delta
E_-\Theta qE_+$, where $M=M_{\Theta q}:{\bf C}^N\to {\bf C}^N$ has
the matrix
\ekv{eco.11}
{
M_{j,k}=(\Theta qe_k|f_j)=\int \Theta qe_ke_j dx.
}

\par From the Ky Fan inequalities, we get
$$
\delta s_{k+\ell -1} (M_{\Theta q})\le s_k(E_{-+}^\delta )+s_\ell
(E_{-+})+{\cal O}( \frac{(\delta \Vert \Theta q\Vert_{L^\infty
  })^2}{\tau _0}),
$$
which we write
\ekv{eco.12}
{
s_k(E_{-+}^\delta )\ge \delta s_{k+\ell -1}(M_{\Theta q})-s_\ell
(E_{-+})-
{\cal O}( \frac{(\delta \Vert \Theta q\Vert_{L^\infty
  })^2}{\tau _0}).
}
Let $\ell=N-[(1-\widetilde{\theta } )N]$ so that $s_\ell(E_{-+})<\tau _0h^{N_2}$
and let $k\le N-[(1-\widetilde{\theta })N]$ so that
$$
k+\ell -1 \le 2(N-[(1-\widetilde{\theta } )N])-1\le 2\theta N,
$$
for $N$ large enough.
Here, $2\theta <1$, so we can apply (\ref{sv.20}) with $\theta $ there
replaced by $2\theta $ and get a $q$ as in the proposition such that
\ekv{eco.13}
{
s_{k+\ell-1}(M_{\Theta q})\ge \frac{N}{C(\theta
  )}h^{\frac{\frac{1}{3}+2n\theta }{1-2\theta }}-{\cal
  O}(1)\frac{N}{L^{s-\frac{n}{2}-\epsilon }\widetilde{\Theta }(h)h^n}.
}
Then (\ref{eco.12}) gives
\ekv{eco.14}
{
s_k(E_{-+}^\delta )\ge \delta N\left(\frac{h^{\frac{\frac{1}{3}+2n\theta
    }{1-2\theta }}}{C(\theta )}-\frac{{\cal
    O}(1)}{L^{s-\frac{n}{2}-\epsilon }\widetilde{\Theta
  }(h)h^n}\right) -\tau
_0h^{N_2}-{\cal O}(\frac{(\delta \Vert \Theta q\Vert_{L^\infty
  })^2}{\tau _0}).
}
Here we notice that with our choice of $L=L_\mathrm{min}$ large
enough, we have
$$
\frac{{\cal
    O}(1)}{L^{s-\frac{n}{2}-\epsilon }\widetilde{\Theta }(h)h^n}
\le \frac{h^{\frac{\frac{1}{3}+2n\theta
    }{1-2\theta }}}{2C(\theta ).}
$$
Thus for $k\le N-[(1-\widetilde{\theta })N]$:
$$
s_k(E_{-+}^\delta )\ge \frac{\delta N}{2C(\theta
  )}h^{\frac{\frac{1}{3}+2n\theta }{1-2\theta }}-\tau _0h^{N_2}-{\cal
  O}( \frac{(\delta \Vert \Theta q\Vert_{L^\infty })^2}{\tau _0}),
$$
and using (\ref{sv.18}): 
\ekv{eco.17} {\begin{split} &s_k(E_{-+}^\delta
    )\ge \delta N(\frac{1}{2C(\theta )}h^{\frac{\frac{1}{3}+2n\theta
      }{1-2\theta }}-{\cal O}(1)\frac{\delta }{N\tau _0}\Vert \Theta q
    \Vert_{L^\infty }^2)-
\tau _0h^{N_2}\\
&\ge \delta N(\frac{1}{2C(\theta )}h^{\frac{\frac{1}{3}+2n\theta
      }{1-2\theta }}-\frac{{\cal O}(1)\delta N}{\tau
      _0}h^{-2(\frac{n}{2}+s+\epsilon )M-2v_0-2n})-\tau _0h^{N_2}\\
&\ge \delta N(\frac{1}{2C(\theta )}h^{\frac{\frac{1}{3}+2n\theta
      }{1-2\theta }}-\frac{{\cal O}(1)\delta }{\tau
      _0}h^{1-3n-2v_0-2(\frac{n}{2}+s+\epsilon )M})-\tau
    _0h^{N_2}\\
&\ge \frac{\delta N}{4C(\theta )}h^{\frac{\frac{1}{3}+2n\theta
      }{1-2\theta }}-\tau _0h^{N_2},
  \end{split} } where the last estimate follows from the choice of
$\delta $ in (\ref{eco.2}) and we recall that $\alpha $ is large enough.

\par Here by the choice of $N_2$ the last term is subdominant when
$h>0$ is small enough and we get
\ekv{eco.18}{s_k(E_{-+}^\delta )\ge
\tau _0h^{N_2},\hbox{ for }1\le k\le N-[(1-\widetilde{\theta })N].}

\par After an arbitrarily small abstract perturbation of $P_{\delta
  ,\mathrm{out}}$, we may assume that this operator is bijective, and
we can then write the standard identity
$$
P_{\delta ,\mathrm{out}}^{-1}=E^\delta -E_+^\delta (E_{-+}^\delta
)^{-1}E_-^\delta 
$$
and apply the Ky Fan inequalities to get for $1+[(1-\widetilde{\theta})N]\le \nu \le
N$:
\begin{equation*}\begin{split}
    s_\nu (P_{\delta ,\mathrm{out}}^{-1})&\le s_1(E^\delta )+\Vert
    E_+^\delta \Vert \Vert E_-^\delta \Vert s_\nu ((E_{-+}^\delta
    )^{-1})\\
&\le {\cal O}(1)\frac{1}{h^{N_2}\tau _0},
\end{split}\end{equation*}
since $
s_\nu ((E_{-+}^\delta )^{-1})=1/s_{N+1-\nu }(E_{-+}^\delta )
$ and $1\le N+1-\nu \le N-[(1-\widetilde{\theta })N]$,
or in other terms,
$$
t_\nu (P_{\delta ,\mathrm{out}})\ge \frac{\tau _0h^{N_2}}{{\cal O}(1)}.
$$
This is (\ref{eco.4}) apart from the factor $1/{\cal O}(1)$, which can
be eliminated by increasing $N_2$ slightly.

\par When $N={\cal O}(1)$ we consider the two cases $s_1(E_{-+})\ge
\tau _0h^{N_2}$ and $s_1(E_{-+})<\tau _0h^{N_2}$. In the first case we
take the perturbation $0$ as before. In the second case, we repeat the
proof above with $k=\ell=1$ and reach first (\ref{eco.18}) with $k=1$
and finally (\ref{eco.4}) with $\nu =N$.
\end{proof}
\begin{remark}\label{eco2}
{\rm \begin{itemize}
\item[1)] In the proof we have seen that $\delta \Vert \Theta
  q\Vert_{L^\infty }\le \tau _0/2$ and (\ref{eco.4.5}) shows that
$$t_\nu (P_{\delta ,\mathrm{out}})\ge t_\nu
(P_\mathrm{out})-\frac{\tau _0}{2}\ge \frac{\tau _0}{2},\ \nu \ge
N+1.$$
\item[2)] From (\ref{eco.8}), (\ref{sv.17}), we get 
$$
\Vert \delta \Theta q\Vert_{H^s_h}\le {\cal O}(1)\tau _0h^{\frac{n}{2}}.
$$ 
\item[3)] Let $\widetilde{s}>\frac{n}{2}+2N$, where $N$ is the
  smallest integer in $]\frac{n-1}{2},+\infty [$. If we choose $\alpha
  $ in (\ref{re.8}) sufficiently large, then
$$\Vert \delta \Theta q\Vert_{H^{\widetilde{s}}_h}\le {\cal O}(h^{\frac{n}{2}}).$$ 
\end{itemize}
We see that the perturbed operator $P_\delta $ satisfies the
general assumptions of our discussion, including (\ref{idn.38}),
(\ref{ub'.32}), (\ref{lob.17}) for $W=\delta \Theta q$.}
\end{remark}

\par The last remark shows that we can apply Proposition \ref{eco1} to
$P_{\delta ,\mathrm{out}}$ with $\tau _0$ replaced by $\tau _0h^{N_2}$
and $N$ replaced by an $N_\mathrm{new}\le [(1-\widetilde{\theta } )N]$. The
procedure can be iterated at most ${\cal O}(1)\ln \frac{1}{h}$
times until we get a perturbation $P_{\mathrm{final\,},\delta
  ,\mathrm{out}}$ with $t_1(P_{\mathrm{final\,},\delta
  ,\mathrm{out}})\ge \tau _0h^{{\cal O}(1)\ln
  \frac{1}{h}}$. Thus in the end we get
\begin{prop}\label{eco3}
  Let $0<\theta <1/2 $ be the parameter in (\ref{re.6}) and let $\tau
  _0\in ]0,h^{4/3}]$. Then there exists
  an admissible potential $q$ as in (\ref{re.5}) with
  $L=L_\mathrm{min}$ and $R=R_\mathrm{min}$ (as introduced in and
  after (\ref{re.6}))  such that if 
\ekv{eco.19} { P_\delta =P+\delta
    \Theta q,\ \delta =C^{-1}h^\alpha \tau _0, } $C\gg 1$, $\alpha \ge
  \alpha (n,v_0,s,\epsilon ,\theta )$ large enough, then 
\ekv{eco.20}
{
t_1(P_{\delta
  ,\mathrm{out}})\ge \tau _0h^{{\cal O}(1)\ln
  \frac{1}{h}}.
}
\end{prop}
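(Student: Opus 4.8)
The plan is to obtain Proposition \ref{eco3} by iterating the basic step Proposition \ref{eco1} a controlled number of times. We start from the unperturbed operator $P=P^{V_0+W}$ with $W=0$ (or more generally $W$ already admissible), fix $\theta \in ]0,1/2[$ as in (\ref{re.6}), choose $\widetilde\theta \in ]0,\theta[$ once and for all, and let $\tau_0\in ]0,h^{4/3}]$ be given. Set $N^{(0)}$ to be the integer determined by (\ref{eco.1}) with $P_\mathrm{out}$ and threshold $\tau_0$; by (\ref{out.11}) we have $N^{(0)}\le Ch^{1-n}$. Apply Proposition \ref{eco1} with threshold $\tau_0$ and parameters $\theta,\widetilde\theta$, and some fixed $\kappa>0$. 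This produces a perturbation $\delta^{(1)}\Theta q^{(1)}$ with $\delta^{(1)}=C^{-1}h^{\alpha}\tau_0$, $L=L_\mathrm{min}$, $R=R_\mathrm{min}$, such that the new operator $P^{(1)}:=P+\delta^{(1)}\Theta q^{(1)}$ has, by (\ref{eco.3}), (\ref{eco.4}) and Remark \ref{eco2} 1), the property that $t_\nu(P^{(1)}_\mathrm{out})\ge \tau_0h^{N_2}/\mathcal O(1)$ for all $\nu\ge [(1-\widetilde\theta)N^{(0)}]+1$ (absorbing the $1/\mathcal O(1)$ by a harmless increase of $N_2$ as in the proof of Proposition \ref{eco1}), while the remaining $N^{(1)}:=[(1-\widetilde\theta)N^{(0)}]$ small singular values are only controlled to be $<\tau_0h^{N_2}$.

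Next I would iterate: at step $j$ we have an operator $P^{(j)}=P+\sum_{i\le j}\delta^{(i)}\Theta q^{(i)}$ whose number of singular values below the running threshold $\tau_j:=\tau_0h^{jN_2}$ is $N^{(j)}\le (1-\widetilde\theta)^jN^{(0)}$. Remark \ref{eco2} 3) guarantees that, for $\alpha$ large enough, the accumulated perturbation $\sum_{i\le j}\delta^{(i)}\Theta q^{(i)}$ stays admissible in the sense of (\ref{idn.38}), (\ref{ub'.32}), (\ref{lob.17}), so Proposition \ref{eco1} applies again to $P^{(j)}_\mathrm{out}$ with threshold $\tau_j$; note that the sum of all $\delta^{(i)}$ is a convergent geometric-type series dominated by $\mathcal O(1)\delta^{(1)}$ times a power of $h$, so the admissibility bounds survive after $\mathcal O(\ln\frac1h)$ steps. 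Each application multiplies the remaining count $N^{(j)}$ by at most $(1-\widetilde\theta)$ and multiplies the controlled-from-below threshold by $h^{N_2}$. Since $N^{(0)}\le Ch^{1-n}$ and $(1-\widetilde\theta)^{J}N^{(0)}<1$ as soon as $J\ge C'\ln\frac1h$ for a suitable $C'=C'(\widetilde\theta,n)$, after $J=\mathcal O(1)\ln\frac1h$ steps we reach $N^{(J)}=0$, i.e.\ \emph{all} singular values of $P^{(J)}_\mathrm{out}$ are $\ge \tau_0 h^{JN_2}=\tau_0 h^{\mathcal O(1)\ln\frac1h}$. This is (\ref{eco.20}) with $P_\delta=P^{(J)}$.

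There is one bookkeeping point to address: Proposition \ref{eco1} as stated expands a single perturbation $q$ with $L=L_\mathrm{min}$, $R=R_\mathrm{min}$, whereas the iteration produces a sum $q=\sum_i h^{(i-1)\alpha}q^{(i)}$ (after factoring out a common $\delta$). I would check, exactly as in \cite{Sj08a}, that this sum is again of the admissible form (\ref{re.5}): each $q^{(i)}$ is a combination of the $\epsilon_k$ with $\mu_k\le L_\mathrm{min}$, hence so is their sum, and the $\ell^2$-norm of the coefficient vector of $\delta^{-1}\sum_i\delta^{(i)}q^{(i)}$ is bounded by $\sum_i h^{(i-1)\alpha}\|\alpha^{(i)}\|_{\ell^2}\le 2\|\alpha^{(1)}\|_{\ell^2}\le R_\mathrm{min}$ for $h$ small, using (\ref{sv.10}) and the lower bound on $R$. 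Setting $\delta=C^{-1}h^\alpha\tau_0$ as in (\ref{eco.19}) with $\alpha$ taken to be the maximum of the finitely many lower bounds encountered (all of the form $\alpha(n,v_0,s,\epsilon,\theta)$, independent of the step since $\widetilde\theta$ and $\kappa$ are fixed) then gives the statement.

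The main obstacle is the uniformity of the constants across the $\mathcal O(\ln\frac1h)$ iterations: one must ensure that the ``$\alpha$ large enough'' and the implicit constants in (\ref{eco.3})--(\ref{eco.4}) do not degrade from step to step, and that the error terms $\tau_j h^{N_2}$ and $\mathcal O((\delta\|\Theta q\|_{L^\infty})^2/\tau_j)$ in (\ref{eco.17}) remain subdominant relative to $\delta N^{(j)}h^{(\frac13+2n\theta)/(1-2\theta)}$ at every threshold $\tau_j$, not just at $\tau_0$. This is handled by noting that the exponent $N_2=\alpha+(\frac13+2n\theta)/(1-2\theta)+\kappa$ was chosen (via $\kappa>0$) precisely so that $\tau_j h^{N_2}$ is a fixed positive power of $h$ smaller than the main term, a gain that is preserved when $\tau_0$ is replaced by $\tau_j=\tau_0 h^{jN_2}$ since both sides scale by the same factor $h^{jN_2}$; likewise $\delta^{(i)}/\tau_{i-1}=C^{-1}h^\alpha$ is constant along the iteration, so the quadratic remainder stays under control. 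With these observations the iteration closes after $\mathcal O(\ln\frac1h)$ steps and (\ref{eco.20}) follows. $\Box$

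\medskip
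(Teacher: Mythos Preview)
Your proof is correct and follows exactly the same approach as the paper: iterate Proposition \ref{eco1}, each step multiplying the count of uncontrolled small singular values by at most $(1-\widetilde\theta)$ and the threshold by $h^{N_2}$, so that after ${\cal O}(1)\ln\frac1h$ steps all singular values are $\ge \tau_0 h^{{\cal O}(1)\ln\frac1h}$. The paper's argument is in fact just the short paragraph preceding the statement of Proposition \ref{eco3}; you have supplied considerably more detail on the bookkeeping (admissibility of the accumulated perturbation, uniformity of the constants along the iteration), all of it correctly. One small slip: when you factor out the common $\delta$, the geometric factor should be $h^{(i-1)N_2}$ rather than $h^{(i-1)\alpha}$, since $\delta^{(i)}/\delta^{(1)}=\tau_{i-1}/\tau_0=h^{(i-1)N_2}$; this is harmless for the convergence and admissibility argument.
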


From (\ref{out.8}) we get for the special perturbation above
\ekv{eco.21} {s_1(({\cal N}_\mathrm{in}-{\cal
    N}_\mathrm{ext})^{-1})\le \frac{{\cal
      O}(1)}{h^{\frac{2}{3}}t_1({\cal P}_\mathrm{out})}\le \frac{{\cal
      O}(1)}{\tau _0 h^{{\cal O}(1)\ln \frac{1}{h}}},} and
(\ref{out.9}) then gives \ekv{eco.22} {
  s_1((1+\widehat{C}(z))^{-1})\le \frac{{\cal O}(1)}{\tau _0h^{{\cal
        O}(1)\ln \frac{1}{h}}}.  }

\par
Recall from Proposition \ref{ub'4} and
(\ref{ub'.36})--(\ref{ub'.38}) that
\ekv{eco.23}
{
\widehat{C}(z)={\cal O}(1):\, H^{\frac{3}{2}}\to H^{\frac{3}{2}},
\ |\Im z|\ge h^{2/3}c,
}
in addition to the fact that the trace class norm of the same operator
is ${\cal O}(h^{1-n})$. We now work with $H^{3/2}(\partial {\cal O})$
as the underlying Hilbert space and let $\widehat{C}^*$ denote the
adjoint of $\widehat{C}$. Consider,
\ekv{eco.24}
{
|\det (1+\widehat{C})|^2=\det (1+\widehat{C}^*)(1+\widehat{C})=\det (1+D),
}
where $D=\widehat{C}+\widehat{C}^*+\widehat{C}^*\widehat{C}$ is
self-adjoint, ${\cal O}(1)$ in operator norm and ${\cal O}(h^{1-n})$
in trace class norm. Let $\lambda _1,\,\lambda _2,...$ denote the
non-vanishing eigenvalues of $D$, so that 
\ekv{eco.25}
{
1+\lambda _j\ge \frac{\tau _0^2}{{\cal O}(1)}h^{2{\cal O}(1)\ln \frac{1}{n}}
}
by (\ref{eco.22}) (which is a bound on the norm of
$(1+\widehat{C})^{-1}$). We also know that $\sum |\lambda _j|={\cal
  O}(h^{1-n})$, so there are at most ${\cal O}(h^{1-n})$ values $j$
for which $|\lambda _j|\ge 1/2$.  Thus we get from (\ref{eco.24}):
\begin{multline*}
|\det (1+\widehat{C})|^2=\prod (1+\lambda _j)\\
=\prod_{j;\, |\lambda _j|\ge \frac{1}{2}} (1+\lambda _j)\prod_{j;\, |\lambda
  _j|< \frac{1}{2}}(1+\lambda _j)\\
\ge \left( \frac{\tau _0^2}{{\cal O}(1)}h^{2{\cal O}(1)\ln
    \frac{1}{h}}\right)^{{\cal O}(h^{1-n})}
\prod_{j;\, |\lambda _j|\le \frac{1}{2}}e^{-{\cal O}(1)|\lambda _j|}.
\end{multline*}
Since $\sum |\lambda _j|={\cal O}(h^{1-n})$, we get
\ekv{eco.26}
{
\ln |\det (1+\widehat{C})|\ge -{\cal O}(h^{1-n})((\ln
\frac{1}{h})^2+\ln \frac{1}{\tau _0}).
}

\par Now return to the function $f(z)$ that was (re)defined in
(\ref{ub'.65}). From (\ref{ub'.66}), (\ref{eco.26}) and (\ref{ub'.68})
we get for our special perturbation $V=V_0+W$ (where $W$ depends on
$z$ with $ch^{2/3}\le |\Im z|\le c_0h^{2/3}$): \ekv{eco.27} { \phi
  _\mathrm{in}(z)-{\cal O}(h^{1-n})((\ln \frac{1}{h})^2+\ln
  \frac{1}{\tau _0})\le \ln |f(z)|\le \phi _\mathrm{in}(z)+{\cal
    O}(h^{1-n}) } Here the upper bound is valid for all perturbations
$V$ of $V_0$ in our class independently of $z$ with $|\Im z|\asymp
h^{2/3}/C$, while the lower bound is valid for our special
$z$-dependent perturbation.

$\phi _\mathrm{in}$ (cf. (\ref{ub'.59})) is defined in terms of the
interior Dirichlet problem for the perturbed potential $V_0+W$ where
$W$ also depends on $z$, and we would like to replace this function by
one which is independent of the perturbation $W$. To emphasize the
presence of the perturbation we write
$$
\phi _\mathrm{in}^\delta (z)=\sum \chi (\lambda _j^\delta )\ln
|z-\lambda _j^\delta |
$$
for the function in (\ref{eco.27}), and
$$
\phi _\mathrm{in}^0(z)=\sum \chi (\lambda _j^0)\ln
|z-\lambda _j^0|
$$
for the corresponding function, associated to the unperturbed
operator $P_0^\mathrm{in}$. 

\par From the mini-max principle, we get
$$
|\lambda _j^\delta -\lambda _j^0|\le \Vert W\Vert_\infty .
$$
For $|\Im z|\ge r$, $0<r\le 1$, we see that
$$
|\frac{\partial }{\partial \lambda }(\chi (\lambda )\ln |z-\lambda
|)|\le {\cal O}(\frac{1}{r}),
$$
so 
$$
|\chi (\lambda _j^\delta )\ln |z-\lambda _j^\delta |-\chi (\lambda
_j^0)\ln |z-\lambda _j^0 ||\le {\cal O}(1)\frac{\Vert W\Vert_\infty }{r}.
$$
The number of eigenvalues of $P_\mathrm{in}^\delta $ and of
$P_\mathrm{in}^0 $ in $\mathrm{supp\,}\chi $ is ${\cal O}(h^{-n})$ and
it follows that
$$
|\phi _\mathrm{in}^\delta (z) -\phi _\mathrm{in}^0 (z)|\le {\cal
  O}(1)\frac{\Vert W\Vert_\infty }{rh^n}.
$$
Here we take $r\asymp h^{2/3}$ as in (\ref{eco.27}). From the second part
of Remark \ref{eco2} we know that $W=\delta \Theta q$ satisfies
$$
\Vert W\Vert_\infty \le {\cal O}(1)h^{-\frac{n}{2}}\Vert
W\Vert_{H_h^s}\le {\cal O}(1)\tau _0
$$
and thus
$$
|\phi _\mathrm{in}^\delta (z)-\phi _\mathrm{in}^0(z)|\le {\cal
  O}(1)\tau _0h^{-\frac{2}{3}-n}.
$$
In Proposition \ref{eco3} we have assumed that $0<\tau _0\le
h^{4/3}$. We now strengthen that assumption to 
\ekv{eco.28}
{
\tau _0\in ]0,h^{\frac{5}{3}}].
}
Then,
\ekv{eco.29}
{
|\phi _\mathrm{in}^\delta (z)-\phi _\mathrm{in}^0(z)|\le {\cal
  O}(1)h^{1-n}
}
and we obtain
\begin{prop}\label{eco4}
In (\ref{eco.27}) we can replace $\phi _\mathrm{in}=\phi
_\mathrm{in}^\delta $ by the function $\phi _\mathrm{in}^0$, defined
for the unperturbed operator $P_\mathrm{in}^0$ as in (\ref{ub'.59}).
\end{prop}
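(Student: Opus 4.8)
\textbf{Proof proposal for Proposition \ref{eco4}.}

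The plan is to argue that the inequalities (\ref{eco.27}) are stable under replacing the perturbation-dependent counting function $\phi_\mathrm{in}^\delta$ by the unperturbed one $\phi_\mathrm{in}^0$, because the error incurred is of lower order than the main error term ${\cal O}(h^{1-n})$ already present in (\ref{eco.27}). Concretely, I would show that for $z$ in the relevant region, namely $\Re z\in\,]1/2,2[$ and $|\Im z|\asymp h^{2/3}$, one has $|\phi_\mathrm{in}^\delta(z)-\phi_\mathrm{in}^0(z)|\le {\cal O}(h^{1-n})$, which is exactly the content of (\ref{eco.29}). Granting this, substituting $\phi_\mathrm{in}^0$ for $\phi_\mathrm{in}^\delta$ in both the upper and the lower bound of (\ref{eco.27}) changes each side by at most ${\cal O}(h^{1-n})$, and the statement of the proposition follows immediately, since the error terms on both sides of (\ref{eco.27}) are already ${\cal O}(h^{1-n})$ (the lower-bound error being the slightly larger ${\cal O}(h^{1-n})((\ln\tfrac1h)^2+\ln\tfrac1{\tau_0})$, which absorbs an extra ${\cal O}(h^{1-n})$).

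The derivation of (\ref{eco.29}) is the bulk of the work, and it is essentially the computation already sketched in the paragraph preceding the proposition; I would present it as the proof. First I would invoke the min-max (Weyl) monotonicity principle for the Dirichlet eigenvalues: writing $\lambda_j^\delta$ and $\lambda_j^0$ for the ordered eigenvalues of $P_\mathrm{in}^{V_0+W}$ and $P_\mathrm{in}^{V_0}$ respectively, the bound $|\lambda_j^\delta-\lambda_j^0|\le\|W\|_{L^\infty}$ holds for every $j$. Next, on the region $|\Im z|\ge r$ with $r\asymp h^{2/3}$, the function $\lambda\mapsto\chi(\lambda)\ln|z-\lambda|$ has derivative ${\cal O}(1/r)$ (the logarithmic singularity is smoothed out since $z$ stays at distance $\ge r$ from the real axis, and $\chi\in C_0^\infty$ contributes a bounded factor), so each summand differs by ${\cal O}(\|W\|_{L^\infty}/r)$. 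Since both $P_\mathrm{in}^\delta$ and $P_\mathrm{in}^0$ have ${\cal O}(h^{-n})$ eigenvalues in $\mathrm{supp}\,\chi$ by the rough Weyl bound (\ref{ub.19}), summing gives $|\phi_\mathrm{in}^\delta(z)-\phi_\mathrm{in}^0(z)|\le {\cal O}(\|W\|_{L^\infty}\,r^{-1}h^{-n})$. Finally I would plug in $r\asymp h^{2/3}$ and the bound $\|W\|_{L^\infty}\le{\cal O}(1)h^{-n/2}\|W\|_{H_h^s}\le{\cal O}(1)\tau_0$ from part 2) of Remark \ref{eco2}, obtaining $|\phi_\mathrm{in}^\delta-\phi_\mathrm{in}^0|\le{\cal O}(\tau_0 h^{-2/3-n})$; the strengthened hypothesis $\tau_0\le h^{5/3}$ from (\ref{eco.28}) then yields ${\cal O}(\tau_0 h^{-2/3-n})\le{\cal O}(h^{1-n})$, which is (\ref{eco.29}).

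The main (mild) obstacle is bookkeeping rather than anything deep: one must be careful that the constant in $\|W\|_{L^\infty}\le{\cal O}(1)\tau_0$ is uniform over the admissible perturbations $W=\delta\Theta q$ produced by the iteration in Proposition \ref{eco3}, and that the estimate on the derivative of $\chi(\lambda)\ln|z-\lambda|$ is uniform for $z$ in the rectangle with $|\Im z|\asymp h^{2/3}$ — both are granted by the cited results (Remark \ref{eco2} and the region restriction (\ref{ub'.10.5})). One should also note that although $\phi_\mathrm{in}^\delta$ in (\ref{eco.27}) is associated to a $z$-dependent perturbation, the estimate (\ref{eco.29}) is pointwise in $z$, so the replacement is legitimate at each $z$ separately, which is all that is needed for the subsequent application. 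No convergence or summation-of-series issues arise because only finitely many (${\cal O}(h^{-n})$) terms contribute, the rest being killed by $\chi$.
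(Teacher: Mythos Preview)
Your proposal is correct and follows essentially the same argument as the paper: the paper's proof is precisely the computation in the paragraph preceding the proposition, establishing (\ref{eco.29}) via the min-max eigenvalue comparison, the ${\cal O}(1/r)$ Lipschitz bound on $\lambda\mapsto\chi(\lambda)\ln|z-\lambda|$, the ${\cal O}(h^{-n})$ count of relevant eigenvalues, and the bound $\|W\|_{L^\infty}\le{\cal O}(\tau_0)$ together with the strengthened assumption $\tau_0\le h^{5/3}$. Your additional remarks about uniformity and the pointwise-in-$z$ nature of the estimate are accurate and address exactly the right subtleties.
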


\section{End of the proof of Theorem \ref{re1} and proof of
  Proposition \ref{re3}}\label{ep}
\setcounter{equation}{0}

Let $\phi _\mathrm{in}^0$ be defined in (\ref{ub'.59}) with respect to
the unperturbated operator $P_\mathrm{in}^0$. With $r=h^{2/3}c/4$,
let $h^0=h^0_r$ be the harmonic majorant in $\Omega _r$ and define
$\Phi _r^0=\Phi ^0$ as in (\ref{ub'.61}). Recall that $f$
 is defined in (\ref{ub'.65}) (for the perturbed operator $P_\delta
 $). Since $\phi ^\delta _\mathrm{in}-\phi ^0_\mathrm{in}={\cal
   O}(h^{1-n})$ by (\ref{eco.29}), we have the same estimate for
 $h_r-h_r^0$ and hence for $\Phi _r-\Phi _r^0$. Then by (\ref{ub'.71})
 we conclude that 
\ekv{ep.1}
{
\ln |f(z)|\le \Phi ^0_r(z)+{\cal O}(h^{1-n})\hbox{ in the rectangle (\ref{ub.1}).}
}

\par For each $z$ as in (\ref{ub'.10.5}) we have constructed a
perturbation $W=\delta \Theta q$ as in and after (\ref{re.6}) with $L=L_\mathrm{min}$,
$R=R_\mathrm{min}$ such that (cf Proposition \ref{eco4})
\ekv{ep.2}
{
\Phi _r^0-{\cal O}(h^{1-n})((\ln \frac{1}{h})^2+\ln \frac{1}{\tau
  _0})\le \ln |f(z)|.
}
Let 
\ekv{ep.3}
{
\epsilon _0(h)=Ch((\ln \frac{1}{h})^2+\ln \frac{1}{\tau _0})
}
so that 
\ekv{ep.4}
{
\ln |f(z)|\le \Phi _r^0(z)+h^{-n}\epsilon _0(h)
}
for all $z$ in the rectangle (\ref{ub.1}) and so that for every $z$
as in (\ref{ub'.10.5}), there is a perturbation as in
(\ref{ep.2}) such that
\ekv{ep.5}
{
\ln |f(z)|\ge \Phi _r^0-h^{-n}\epsilon _0(h). 
}
If we fix such a value of $z$ and work in the $\alpha $-variables, we
are in the same situation as in Section 8 in \cite{Sj08a} and we can
apply Proposition 8.2 and Remark 8.3
 of that paper to obtain
\begin{prop}\label{ep1}
Let $\epsilon >0$ be small enough so that $\epsilon \exp ({\cal
  O}(\epsilon _0)h^{-n})\le 1$. For each $z$ as in (\ref{ub'.10.5}), we have
\ekv{ep.6}
{
P(|f(z)|\le e^{\Phi _r^0}\epsilon )\le {\cal O}(1)\frac{\epsilon
  _0(h)}{h^{n+N_6}}\exp \left( \frac{h^n}{{\cal O}(1)\epsilon
    _0(h)}\ln \epsilon \right).
}
Here $N_6=\max (N_3,N_5)$, where $N_3=n(M+1)$,
$N_5=N_4+\widetilde{M}$. (Cf (\ref{re.9}).)
\end{prop}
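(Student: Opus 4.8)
The statement is a probabilistic lower bound on $|f(z)|$ for a single fixed $z$ in the strip $ch^{2/3}<|\Im z|<c_0h^{2/3}$, and the plan is to reduce it to a deterministic lower bound for the smallest singular value together with a quantitative unlikeliness estimate for a random linear-algebra event, exactly in the spirit of Section~8 of \cite{Sj08a}. The key point established in the preceding sections (Propositions~\ref{eco3}, \ref{eco4} and the inequalities leading to (\ref{eco.27})) is that there is \emph{some} admissible perturbation $q$ (with $L=L_\mathrm{min}$, $R=R_\mathrm{min}$) for which $\ln|f(z)|\ge \Phi_r^0(z)-h^{-n}\epsilon_0(h)$, and that the upper bound $\ln|f(z)|\le\Phi_r^0(z)+h^{-n}\epsilon_0(h)$ holds for \emph{all} admissible perturbations (this is (\ref{ep.4})--(\ref{ep.5})). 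So the content of Proposition~\ref{ep1} is that the set of bad $\alpha\in{\bf R}^D$ — those for which $|f(z)|\le e^{\Phi_r^0}\epsilon$ — has small probability under $P(d\alpha)=C(h)e^{\Phi(\alpha;h)}L(d\alpha)$.

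First I would fix $z$ and regard $g(\alpha):=f(z)$ as a holomorphic function of $\alpha=(\alpha_1,\dots,\alpha_D)\in{\bf C}^D$ in a suitable polydisc, via the dependence of $V=V_0+\delta\Theta q_\omega$ on $\alpha$ through (\ref{re.5}); holomorphy in $\alpha$ follows because $P_\delta$ depends holomorphically (indeed affinely) on $\alpha$ and $f$ is built from determinants of holomorphic Fredholm families (Sections~\ref{gpd}, \ref{sbd}). Next I would record the two a~priori bounds: (i) a global upper bound $\ln|g(\alpha)|\le \Phi_r^0+h^{-n}\epsilon_0(h) + {\cal O}(h^{-N_6})|\alpha|$-type control on the relevant polydisc, coming from (\ref{ub'.46}), (\ref{ep.4}) and the crude polynomial bounds on the determinants together with the growth of $q_\omega$ in $\alpha$ measured by $N_3=n(M+1)$; and (ii) the existence of at least one center $\alpha^0$ (the perturbation constructed in Section~\ref{eco}) with $\ln|g(\alpha^0)|\ge \Phi_r^0-h^{-n}\epsilon_0(h)$. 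The combination of a good lower bound at one point and a polynomial-in-$h$ upper bound on a slightly larger polydisc is precisely the hypothesis of the abstract estimate in \cite{Sj08a} (Proposition~8.2 and Remark~8.3), which says that for a holomorphic function on a polydisc the set where $|g|$ is smaller than its typical size by a factor $\epsilon$ has, under a weight $e^{\Phi}$ with $|\nabla\Phi|={\cal O}(h^{-N_4})$, probability bounded by ${\cal O}(1)\frac{\epsilon_0(h)}{h^{n+N_6}}\exp\!\big(\frac{h^n}{{\cal O}(1)\epsilon_0(h)}\ln\epsilon\big)$, with $N_6=\max(N_3,N_5)$ and $N_5=N_4+\widetilde M$ accounting respectively for the $\alpha$-growth of $q_\omega$ and for the weight and the radius $R=R_\mathrm{min}$.

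Concretely, the steps in order are: (1) set up $g(\alpha)$ as a holomorphic function on a polydisc of polyradius controlled by $R_\mathrm{min}=h^{-\widetilde M}$, using the affine dependence $V\mapsto P_\delta$ and the determinant constructions; (2) transfer the upper bound (\ref{ep.4}) and the pointwise lower bound (\ref{ep.5}) into the form required by the abstract proposition, which amounts to bookkeeping of the powers of $h$ entering $N_3$ and $N_5$; (3) quote Proposition~8.2 / Remark~8.3 of \cite{Sj08a} with the weight $e^{\Phi}$ satisfying (\ref{re.9}) to obtain (\ref{ep.6}); (4) note that the smallness condition $\epsilon\exp({\cal O}(\epsilon_0)h^{-n})\le 1$ is exactly what makes the estimate nonvacuous, since the prefactor $e^{\Phi_r^0}$ is of size $e^{{\cal O}(h^{-n})}$ and must be absorbed. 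The main obstacle is step~(1)--(2): making precise in which polydisc $g$ is holomorphic and with what growth, since $f$ is only \emph{defined} up to nonvanishing holomorphic factors (Definition~\ref{gd0}) and one must check that a single coherent choice can be made depending holomorphically on $\alpha$ with the stated polynomial control; once that is in place, the probabilistic part is a black-box application of the cited abstract lemma and requires no new idea. I expect no genuinely new difficulty beyond careful matching of exponents, which is why the argument after the statement is just ``apply Proposition~8.2 and Remark~8.3 of \cite{Sj08a}''.
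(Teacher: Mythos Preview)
Your proposal is correct and follows exactly the paper's approach: the paper's entire proof is the sentence preceding the proposition, namely that fixing $z$ and working in the $\alpha$-variables one is ``in the same situation as in Section 8 in \cite{Sj08a}'' and applies Proposition~8.2 and Remark~8.3 there. Your write-up is in fact more detailed than the paper's, correctly identifying the holomorphic dependence on $\alpha$, the upper bound (\ref{ep.4}), the existence of a good point (\ref{ep.5}), and the bookkeeping of the exponents $N_3=n(M+1)$ and $N_5=N_4+\widetilde{M}$ as the inputs to the abstract lemma.
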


If we write $\epsilon =e^{-\widetilde{\epsilon }/h^n}$, then the
condition on $\epsilon $ is fulfilled when
\ekv{ep.7}
{
\widetilde{\epsilon }\ge \mathrm{Const.\,}\epsilon _0
}
and (\ref{ep.6}) becomes
\ekv{ep.8}
{
P(|f(z)|\le e^{\Phi ^0_r(z)-\frac{\widetilde{\epsilon }}{h^n}})
\le {\cal O}(1)\frac{\epsilon _0(h)}{h^{n+N_6}}\exp \left(
  -\frac{\widetilde{\epsilon }}{{\cal O}(1)\epsilon _0(h)} \right).
}

Let $\frac{1}{2}\le a<b\le 2$ and put $\Gamma
=[a,b]+i{h^{\frac{2}{3}}}c[-1,1]$, $r=h^{2/3}c/4$. We shall
apply Theorem 1.2 in \cite{Sj09} to the function $u=f$, with $h$ there
replaced by $h^n$ and with $\phi =h^n\Phi _r$. Let 
$$
\rho (t)=\max (4ch^{\frac{2}{3}}-\frac{1}{2}(t-a),
{h^{\frac{2}{3}}}c/2,4ch^{\frac{2}{3}}-\frac{1}{2}(b-t)),\ a\le
t\le b,
$$
and define the function $\widetilde{r}:\partial \Gamma \to ]0,\infty
[$ by 
$$
\widetilde{r}(z)=\rho (\Re t).
$$
Then $\widetilde{r}$ has Lipschitz modulus $\le \frac{1}{2}$ and this
will be our function ``$r$'' in \cite{Sj09}. Choose points
$z_1^0,...,z_N^0\in \partial \Gamma $ as in the introduction of
\cite{Sj09}. This can be done in a such a way that $|\Im
z_j^0|=h^{2/3}c$ for all $j$. Moreover, we see that $N\asymp
h^{-2/3}$ and further $\Delta \Phi _r=0$ in $D(z_j^0,r(z_j^0))$ except
for at most ${\cal O}(1)$ values of $j$ . Let $\widetilde{z}_j\in D(z_j^0,r(z_j^0)/(2C_1))$ be as in
Theorem 1.2 in \cite{Sj09}, where we recall that these points depend
on $\Phi _r, \Gamma , \widetilde{r}$ but not on the function
$f$. Moreover we notice that $C_1$ can be chosen arbitrarily
large. Then according to (\ref{ep.8}) we have 
\ekv{ep.9}
{
|f(\widetilde{z}_j)|\ge e^{\Phi
  _r(\widetilde{z}_j)-\frac{\widetilde{\epsilon }}{h^n}},\ j=1,2,...,N
}
with probability
\ekv{ep.10}
{
\ge 1-{\cal O}(1)\frac{N\epsilon
  _0(h)}{h^{n+N_6}}e^{-\frac{\widetilde{\epsilon }}{{\cal
      O}(1)\epsilon _0(h)}}=
1-{\cal O}(1)\frac{\epsilon
  _0(h)}{h^{n+N_6+\frac{2}{3}}}e^{-\frac{\widetilde{\epsilon }}{{\cal
      O}(1)\epsilon _0(h)}}
}
Here we recall that (\ref{ep.7}) holds and that $|f|\le e^{\Phi
  _r+\widetilde{\epsilon }/h^n}$ in a neighborhood of $\Gamma
$. Theorem 1.2 in \cite{Sj09} then shows that with $\sigma (P_\delta
)$ denoting the set of resonances of $P_\delta $,
\ekv{ep.11}
{
\begin{split}
&|
\# (\sigma (P_\delta )\cap ([a,b]+i{h^{\frac{2}{3}}}c[-1,0])
-\frac{1}{2\pi }\int_{[a,b]+{ih^{\frac{2}{3}}}c[-1,1]} \Delta \Phi
_r^0 L(dz)
|\\
&\le 
C_2(\sum_{w=a,b}\int_{[w-Ch^{\frac{2}{3}},w+Ch^{\frac{2}{3}}]+{ih^{\frac{2}{3}}}c[-1,1]}
\Delta \Phi _r^0 L(dz)+h^{-n}\sum_1^N \widetilde{\epsilon }),
\end{split}
}
with a probability as in (\ref{ep.10}). Here we assume for simplicity
that $c\ll c_0$, otherwise we have to slightly modify the choice of
$\rho ,r,z_j^0$ above.

\par Now recall (\ref{ub'.64}) where $g_r(t)=r^{-1}g_1(t/r)$, $0\le g_1\in
{\cal S}({\bf R})$, $\int g_1 dt=1$. With $N_0$ denoting the
eigenvalue counting function for $P_\mathrm{in}^0$, we get with
probability as in (\ref{ep.10}),
\ekv{ep.12}
{
\begin{split}
&|
\# (\sigma (P_\delta )\cap ([a,b]+i{h^{\frac{2}{3}}}c[-1,0])
-\int_a^b g_r*(\chi dN_0)(t)dt
|\\
&\le 
C_2(\sum_{w=a,b}\int_{w-Ch^{\frac{2}{3}}}^{w+Ch^{\frac{2}{3}}}
g_r*(\chi dN_0)(t)dt+{\cal O}(h^{-\frac{2}{3}-n}\widetilde{\epsilon })).
\end{split}
} 

This is a slightly stronger version of the main result (\ref{re.14})
as we shall see next. Consider
$$
J:=\int_a^b g_r*(\chi dN_0)(t)dt=\int_a^b\int_{\bf R}g_r(t-s)\chi (s)dN_0(s)dt,
$$
where we recall that $r=h^{2/3}c/4$. We split the integral into
$\mathrm{I}+\mathrm{II}$, where $\mathrm{I}$ is obtained by retricting
the $s$ integration to the interval $[a-\rho ,b+\rho ]$ and
$\mathrm{II}$ is obtained from integration in $s$ over ${\bf
  R}\setminus [a-\rho ,b+\rho ]$. Here we take $\rho =h^{-\delta
+{2/3}}$, where $\delta >0$ can be arbitrarily small but independent of
$h$. 

\par Carrying out first the $t$ integration, we see that 
$$\mathrm{I}\le \int_{[a-\rho ,b+\rho ]}\chi (s)dN_0(s)=N_0(b+\rho
)-N_0(a-\rho ) .$$

\par As for $\mathrm{II}$, we have uniformly for $t\in [a,b]$ that 
$$
\int_{{\bf R}\setminus [a-\rho ,b+\rho ]}g_r(t-s)\chi (s)dN(s)\le
\int_{|t-s|\ge \rho }\frac{1}{r}g_1(\frac{t-s}{r})\chi (s)dN(s)={\cal
  O}(h^\infty ),
$$
since $\rho /r\ge h^{-\delta }c/4$ so that $g_1((t-s)/r)/r={\cal
  O}(h^\infty )$ and $\int \chi (s)dN(s)={\cal O}(h^{-n})$.
Thus,
$$J\le N_0(b+\rho )-N_0(a-\rho )+{\cal O}(h^\infty ).$$ 

To get a corresponding lower bound, assume $b-a\ge 2\rho $ (in order to
exclude a trivial case), and write
$$
J\ge \int_a^b\int_{a+\rho }^{b-\rho }g_r(t-s)\chi (s)dN_0(s) dt.
$$
For $a+\rho \le s\le b-\rho $, we have 
$$
1\ge \int_a^b g_r(t-s)dt\ge 1-{\cal O}(h^\infty ),
$$
so 
\begin{multline*}
J\ge \int_{a+\rho }^{b-\rho }(1-{\cal O}(h^\infty ))dN_0(s)\\\ge
(1-{\cal O}(h^\infty ))(N_0(b-\rho )-N_0(a+\rho ))\\\ge N_0(b-\rho
)-N_0(a+\rho )-{\cal O}(h^\infty ). 
\end{multline*}

\par In conclusion, for $r=h^{2/3}c/4$, $\rho =h^{-\delta +2/3}$, we
get from (\ref{ep.12}), \ekv{ep.13}  {\begin{split} N_0(b-\rho )&-N_0(a+\rho )-{\cal O}(h^\infty )\\&\le
  \int_a^b g_r*(\chi dN_0)(t)dt\\&\hskip 2cm \le N_0(b+\rho )-N_0(a-\rho )+{\cal
    O}(h^\infty ) .\end{split}}

\par Applying this to (\ref{ep.12}), we get with a probability as in
(\ref{ep.10})
\ekv{ep.14}
{\begin{split}
|\# (\sigma (P_\delta )\cap
([a,b]+i{h^{\frac{2}{3}}}c[-1,0])-(N_0(b)-N_0(a))|\\
\le {\cal
  O}(1)
(\sum_{w=a,b}(N_0(w+\rho )-N_0(w-\rho
))+h^{-\frac{2}{3}-n}\widetilde{\epsilon }).\end{split}
}
This concludes the proof of Theorem \ref{re1}.

\medskip
\begin{proofof} Proposition \ref{re3}.
Let $V_0$ be as in Theorem \ref{re1} and let $W_0$ satisfy the
assumptions of the proposition. Our unperturbed operator is now
\ekv{ep.15}
{
P_0=-h^2\Delta +V_0+W_0=P^{V_0+W_0}.
}
rather than the right hand side of (\ref{re.1}) that we now denote by
$P_0^0$. The proof will consist in checking the proof of Theorem
\ref{re1} with this new operator $P_0$. 

\par Nothing changes until Section \ref{idn}. Here Proposition
\ref{idn5} can be used instead of Proposition \ref{idn4} to see that
the conclusion of Proposition \ref{idn1} is valid for (the new)
unperturbed operator $P_0$ as well as for the perturbed operator $P^V$
in (\ref{sbd.2}), where now $V=V_0+W_0+W$ and as before $W={\cal O}(h)$ in
$L^\infty $.

\par The discussion in Section \ref{sbd} remains valid.

\par In Section \ref{ub} the first change appears after
(\ref{ub.9.4}), where we now take $V=V_0+W_0+W$ with $\Vert
W\Vert_{L^\infty }={\cal O}(1)$. Then we still have  (\ref{ub.9.5})
provided that we modify the definition of $\widetilde{P}$ prior to
(\ref{ub.9.2}) by taking $\widetilde{P}=P+Ci1_{\cal O}$ with $C$ large
enough. We obtain Proposition \ref{ub2} as before.

In the subsequent discusson, $P_0$ is the same operator but with the
new notation $P_0^0=P^{V_0}$, while $P=P^V$ with $V=V_0+W_0+W$ with
the initial assumption that $W={\cal O}(h)$ in $L^\infty $. After
(\ref{ub'.12.5}) we just have to invoke Proposition \ref{idn5} instead of 
Proposition \ref{idn4}.

\par In the expression for $\widetilde{K}$ after (\ref{ub'.14}) we
have to replace $W$ by $W_0+W$ and as in the proof of Proposition
\ref{idn5}, we have $(\widetilde{P}-z)^{-1}W_0\widetilde{K}_0={\cal
  O}(h^2):$ $H^{3/2}\to H^2$. Thus instead of (\ref{ub'.15}) we get
\ekv{ep.16}
{
\widetilde{{\cal N}}=\widetilde{{\cal N}}_0+{\cal O}(1)\Vert
W\Vert_{L^\infty }+{\cal O}(h^2):\, H^{3/2}\to H^{1/2}.
}

\par Lemma \ref{ub'3} remains valid since $W_0$ also satisfies
(\ref{ub'.20}). Since $W_0$ satisfies (\ref{ub'.32}), the following
discussion goes through without any changes until Proposition
\ref{ub'4}, where we just have to add a term ${\cal O}(h^2)$ to the
estimate of $\widetilde{{\cal N}}-\widetilde{{\cal N}}_0$ after
(\ref{ub'.33}). The remainder of Section \ref{ub} goes through without
any changes. 

\par After that, there are no changes. $P^0_\mathrm{in}$ in
Proposition \ref{eco4} is the Dirichlet realization of (the new) $P_0=P^{V_0+W_0}$.
\end{proofof}
\appendix\section{WKB estimates on an interval}\label{a}
\setcounter{equation}{0}
We follow \cite{Fe87, Vor81}. See also \cite{BeMo72}. Let $V\in C^2([a,b])$, $-\infty
<a<b<+\infty $ and assume that $V(x)\ne 0$ for all $x\in
[a,b]$. Choose a branch of $\ln V(x)$ and put $V(x)^\theta =\exp
(\theta \ln V(x))$. Put 
\[
\begin{split}
&y_\pm(x)=V(x)^{-\frac{1}{4}}e^{\pm \phi (x)/h}=e^{\psi _\pm (x)/h},\\
&\psi _\pm =\pm \phi -\frac{h}{4}\ln V(x),\ \phi '(x)=V(x)^{\frac{1}{2}}.
\end{split}
\]
Then 
\[\begin{split}
e^{-\psi _\pm /h}\circ (V(x)-(h\partial )^2)\circ e^{\psi _\pm
  /h}&=-(h\partial )^2-2\psi _\pm '\circ h\partial +h^2r,\\
r&=\frac{1}{4}\frac{V''}{V}-\frac{5}{16}\left( \frac{V'}{V}\right)^2,
\end{split}
\]
so 
$$
(V-(h\partial )^2)y_\pm =h^2ry_\pm .$$

\par The equation $(V-(h\partial )^2)y=0$ can be written 
\ekv{a.1}
{
\left(h\partial -\begin{pmatrix}0 &1\\V &0\end{pmatrix}
\right)\begin{pmatrix}y\\h\partial y\end{pmatrix}=0.
}
Put 
$$
e_\pm =\begin{pmatrix}1\\\frac{h\partial
    y_\pm}{y_\pm}\end{pmatrix}=\begin{pmatrix}1\\ \partial \psi _\pm\end{pmatrix}.
$$
From the identity
$$
\left(h\partial -\begin{pmatrix}0 &1\\V &0\end{pmatrix}
\right) \begin{pmatrix}y_\pm \\h\partial y_\pm\end{pmatrix}+h^2ry_\pm \begin{pmatrix}0\\1\end{pmatrix}=0
$$
we get
\ekv{a.2}
{
\left(h\partial +\psi '_\pm -\begin{pmatrix}0 &1\\V &0\end{pmatrix} \right)
e_\pm +h^2r\begin{pmatrix}0\\1\end{pmatrix}=0.
}
If $u_\pm$ is a scalar $C^1$-function, we get
\ekv{a.3}
{
\left(h\partial -\begin{pmatrix}0 &1\\ V &0\end{pmatrix}
\right)u_{\pm}e_\pm =h\partial (u_\pm )e_\pm -u_\pm \psi '_\pm e_\pm
-u_\pm h^2r\begin{pmatrix}0\\1\end{pmatrix}.
}
Here,
$$
\begin{pmatrix}0\\1\end{pmatrix}=\frac{1}{2}V^{-\frac{1}{2}}(e_+-e_-)
$$
and with the substitution
\ekv{a.23}{
\begin{pmatrix}y\\h\partial y\end{pmatrix}=u_+e_++u_-e_-
\Leftrightarrow \begin{cases}y=u_++u_-\\ h\partial y=u_+\partial \psi
  _++u_-\partial \psi _-\end{cases},}
we find after some calculation that (\ref{a.1}) is equivalent to
\ekv{a.4}
{
\left(h\partial -\begin{pmatrix}\psi _+' &0\\0&\psi _-'\end{pmatrix}
-h^2r\frac{1}{2}V^{-\frac{1}{2}}\begin{pmatrix}1 &1\\ -1 &-1\end{pmatrix}
 \right)\begin{pmatrix}u_+\\u_-\end{pmatrix}=0.
}
Here,
\ekv{a.5}
{
\frac{r}{V^{\frac{1}{2}}}=\frac{1}{4}\frac{V''}{V^{\frac{3}{2}}}-\frac{5}{16}\frac{(V')^2}{V^{\frac{5}{2}}} .
}

\par Let $E(x,y)$ be the forward fundamental solution of the
differential operator in (\ref{a.4}), i.e. the one which vanishes for
$x<y$. Then for $a\le y\le x\le b$:
\ekv{a.7}
{
\|E(x,y)\|\le \frac{1}{h}\exp \frac{1}{h}\int_y^x \left(\max (\Re \psi
_+',\Re \psi _-')(t)+Ch^2|rV^{-\frac{1}{2}}|(t) \right)dt.
}

\par Assume from now on that
\ekv{a.8}
{
\Re V(x)^{\frac{1}{2}}\ge 0,\ x\in [a,b].
}
Then (\ref{a.7}) simplifies to
\ekv{a.9}
{
\|E(x,y)\| \le \frac{1}{h}e^{\frac{1}{h}(\Re \psi _+(x)-\Re \psi
  _+(y))}e^{Ch\int_y^x|rV^{-\frac{1}{2}}|(t)dt}.
}

\par Let us consider the situation of a simple turning point:
\ekv{a.10}
{\begin{split}
&|V(x)|\asymp |x-z_0|,\ V',\, V''={\cal O}(1),\\
&|x-z_0|\ge \frac{h^{\frac{2}{3}}}{C}\hbox{ for }x\in [a,b],
\end{split}
}
where $z_0\in {\bf C}$. Then from (\ref{a.5}) we have
$\int_y^x|r/V^{1/2}|dz={\cal O}(1/h)$ and the last exponential in
(\ref{a.9}) is ${\cal O}(1)$. We get
\ekv{a.11}
{
\|E(x,y)\| \le {\cal O}(\frac{1}{h})e^{\frac{1}{h}(\Re \psi _+(x)-\Re \psi _+(y))},\
a\le y\le x\le b.
}

\par Apply the operator in (\ref{a.4}) to 
$$
u^0=\begin{pmatrix}u_+^0\\ u_-^0\end{pmatrix}=\begin{pmatrix}y_+\\0\end{pmatrix}.
$$
We get
$$
\left(h\partial -\begin{pmatrix}\psi _+'&0\\0 &\psi_-' \end{pmatrix} 
-h^2\frac{r}{2V^{\frac{1}{2}}}\begin{pmatrix}1 &1\\ -1&-1\end{pmatrix}\right)u^0=
-h^2\frac{r}{2V^{\frac{1}{2}}}\begin{pmatrix}y_+\\-y_+\end{pmatrix},
$$
and we have the solution
$$
\begin{pmatrix}u_+\\u_-\end{pmatrix}=u^0+\begin{pmatrix}f_+\\f_-\end{pmatrix}
$$
of (\ref{a.4}), where 
$$
\begin{pmatrix} f_+\\f_- \end{pmatrix}=\int_a^x E(x,y)h^2\frac{r}{2V^{\frac{1}{2}}}(y)\begin{pmatrix}y_+\\-y_+\end{pmatrix}(y)dy.
$$
Here 
$$
\frac{r}{V^{\frac{1}{2}}}(y)=\frac{{\cal O}(1)}{|y-z_0|^{\frac{5}{2}}}
$$
and using (\ref{a.11}), we get
\ekv{a.12}
{
\|\begin{pmatrix}f_+\\f_-\end{pmatrix}\| 
\le Che^{\frac{\psi _+(x)}{h}}\int_a^x\frac{1}{|y-z_0|^{\frac{5}{2}}}dy\le
{\cal O}(1)e^{\frac{\psi _+(x)}{h}}.
}
Thus we have the exact solution of (\ref{a.4}):
\ekv{a.13}
{
\begin{pmatrix}u_+\\u_-\end{pmatrix}= e^{\frac{\psi
    _+}{h}}{\cal O}(1).
}
If we make the substitution (\ref{a.23}), 
we see that $y$ is an exact solution of 
\ekv{a.14}
{
(V-(h\partial )^2)y=0,
}
which satisfies
\ekv{a.15}
{
y={\cal O}(1)e^{\frac{\psi _+}{h}},
}
\ekv{a.16}{h\partial y={\cal O}(1)e^{\frac{\psi _+}{h}}.}
Using this with (\ref{a.14}), we get similar approximations for the
higher derivatives of $y$.

\par The inhomogeneous equation 
\ekv{a.17}
{
(V-(h\partial )^2)y=z,
}
can be transformed into a system
\ekv{a.18}
{
\left(h\partial -\begin{pmatrix}0 &1\\V&0\end{pmatrix}
\right)\begin{pmatrix}y\\ h\partial y\end{pmatrix}=\begin{pmatrix}0\\ -z\end{pmatrix},
}
where the right hand side can be written $z_+e_++z_-e_-$,
$z_+=-z_-=-z/(2V^{1/2})$. The substitution (\ref{a.23}) gives 
\ekv{a.19}
{
\left(h\partial -\begin{pmatrix}\psi _+' &0\\0&\psi _-'\end{pmatrix}
-h^2r\frac{1}{2}V^{-\frac{1}{2}}\begin{pmatrix}1 &1\\ -1 &-1\end{pmatrix}
 \right)\begin{pmatrix}u_+\\u_-\end{pmatrix}=-\frac{z}{2V^{1/2}}\begin{pmatrix}1\\-1\end{pmatrix},
}
which has the solution
\ekv{a.20}
{
\begin{pmatrix}u_+\\u_-\end{pmatrix}=-\int_a^x
E(x,y)\frac{z(y)}{2V(y)^{1/2}}dy\begin{pmatrix}1\\-1\end{pmatrix}.
}
Writing 
$$
E(x,y)=\begin{pmatrix}E_{++}&E_{+-}\\E_{-+}&E_{--}\end{pmatrix},
$$
we get
\ekv{a.21}
{
\begin{split}
u_+(x)=\int_a^x (-E_{++}(x,y)+E_{+-}(x,y))\frac{z(y)}{2V(y)^{1/2}}dy\\
u_-(x)=\int_a^x (-E_{-+}(x,y)+E_{--}(x,y))\frac{z(y)}{2V(y)^{1/2}}dy
\end{split}
}
cf (\ref{a.23}).

\par Now we add the assumption that $V\in C^\infty ([a,b])$. Assume
for simplicity that $\Re z_0=0$ and assume that $b\le 0$. It is
standard that we have exact solutions to 
\ekv{a.21.5}{
(V-(h\partial )^2)(a(x;h)e^{\psi (x)/h})=0,\ \psi =\psi _+
}
for which $a$ has a complete asymptotic expansion in $C^\infty
([a,c])$ of the form
\ekv{a.22}
{
a\sim \sum_{j=0}^\infty a_j(x)h^j,
}
where $c$ is any fixed number in $]a,b-1/{\cal O}(1)[$. 

\par By solving the usual sequence of transport equations, we have
a unique continuation of the $a_j$ to the full interval $[a,b]$ so
that $e^{\psi /h}\sum_0^\infty a_jh^j$ is a formal asymptotic solution
of (\ref{a.21.5}) and as we have seen in Subsection \ref{sta}, we have 
\ekv{a.22.5}
{
\partial ^\alpha a_j(x)={\cal O}(|x-z_0|^{-\frac{3j}{2}-\alpha }).
}
The power $|x-z_0|^{-1/4}$ in Subsection \ref{sta} corresponds
to the factor $V(x)^{-1/4}$ which is no longer counted in $a$ but in
the exponential factor $e^{\psi /h}=V^{-1/4}e^{\phi /h}$.

On the other hand $ae^{\psi /h}$ has a unique extension to the full
interval $[a,b]$ as a solution of (\ref{a.21}) that we can still write
on the same form and we shall show that the asymptotic expansion
(\ref{a.22}) still holds in sup norm and with the natural remainder
estimates. Write $a=\sum_0^Na_jh^j+r_N=a^N+r_N$, so that 
$$
(V-(h\partial )^2)(r_Ne^{\psi /h})=((h\partial )^2-V)(a^Ne^{\psi /h}).
$$ 
We know that $r_N={\cal O}(h^{N+1})$ with all its derivatives on
$[a,c]$. 

\par Let $\chi \in C^\infty ([a,b];[0,1])$ vanish near $a$ and be
equal to one in a neighborhood of $[c,b]$. Write
\ekv{a.24}
{
(V-(h\partial )^2)(\chi r_Ne^{\psi /h})=((h\partial )^2-V)(a^Ne^{\psi
  /h})+((h\partial )^2-V)((1-\chi )r_Ne^{\psi /h}).
}
Here $((h\partial )^2-V)((1-\chi )r_Ne^{\psi /h})=b_Ne^{\psi /h}$,
where $b_N={\cal O}(h^{N+2})$ with all its derivatives. On the other
hand, using that $e^{\psi /h}\sum_0^\infty a_jh^j$ is a formal
asymptotic solution, we get 
$$
e^{-\psi /h}((h\partial )^2-V)(a^Ne^{\psi /h})=h^{N+2}c_N,
$$
where $\partial ^\alpha c_N={\cal O}(|x-z_0|^{-\frac{3N}{2}-2-\alpha
})$, so 
$$
(V-(h\partial )^2)(\chi r_Ne^{\psi /h})=h^{N+2}d_Ne^{\psi /h},
$$
where $\partial ^\alpha d_N={\cal O}(|x-z_0|^{-\frac{3N}{2}-2-\alpha
})$.

\par We conclude
that 
\ekv{a.25}
{
\chi r_N={\cal O}\left( \frac{1}{h}\right)\int_a^x
\frac{h^{N+2}}{|y-z_0|^{\frac{3N}{2}+2+\frac{1}{2}}}dy={\cal O}(1)\frac{h^{N+1}}{|x-z_0|^{\frac{3}{2}(N+1)}}.
}
thus $r_N$ satisfies the same estimate. 

In principle we could also show that $\partial ^\alpha r_N={\cal
  O}(1)h^{N+1}/|x-z_0|^{\frac{3}{2}(N+1)+\alpha }$, but content
ourselves with the observation that this is the case in the
situation of Subsection \ref{sta}, since the holomorphy then allows us
to use the Cauchy inequalities.

\end{document}